\theoremstyle{plain}
				\newtheorem{thm}{Theorem}[section]
				\newtheorem{prop}[thm]{Proposition}
				\newtheorem{lem}[thm]{Lemma}		
						\newtheorem{cor}[thm]{Corollary}
\theoremstyle{definition}		\newtheorem{df}[thm]{Definition}
						\newtheorem{ex}[thm]{Example}	
\newtheorem{rem}[thm]{Remark}
	\theoremstyle{remark} 	
\DeclareMathOperator{\diam}{diam}
\DeclareMathOperator{\length}{length}
\title{Product Set Growth in Groups and Hyperbolic Geometry}
\date{}
\author{Thomas Delzant}
\address{IRMA\\ Universit\'e de Strasbourg et CNRS\\ 67000 Strasbourg \\ France}
\email{delzant@math.unistra.fr}
\author{Markus Steenbock }
\address{IRMAR\\ Univ Rennes et CNRS \\  35000 Rennes \\ France}
\email{markus.steenbock@univ-rennes1.fr}
\subjclass[2010]{20F65; 20F67, 20E08, 20F70}
\keywords{product sets, small tripling, growth, hyperbolic groups, acylindrical actions, small cancellation}
\begin{document}

\begin{abstract}
Generalising results of Razborov and Safin, and answering a question of Button, 
we prove that for every hyperbolic group there exists a constant $\alpha >0$ such that for every finite subset $U$ that is not contained in a virtually cyclic subgroup $|U^n|\geqslant (\alpha |U|)^{[(n+1)/2]}$. 
Similar estimates are established for groups acting acylindrically on trees or hyperbolic spaces. 
\end{abstract}

 \maketitle

\smallskip

\section{Introduction}

  If $U$ is a subset of a group $G$, then $U^n$ denotes the subset of $G$ of all products of $n$ elements of $U$. 
 The study of the growth of $U^n$ is motivated by the theory of approximate subgroups in the sense of Tao, see \cite{terence_product_2008}. Approximate subgroups of abelian, nilpotent or solvable groups have been extensively studied in additive and arithmetic combinatorics, for example in \cite{tao_freiman_2010,breuillard_structure_2012,button_explicit_2013}. 
  
We study the growth of  $U^n$ in the framework of groups acting on hyperbolic spaces, for instance hyperbolic groups and groups acting on trees. This extends previous results for $SL_2(\mathbb{Z})$, free groups, free products and surface groups \cite{chang_product_2008,razborov_product_2014,safin_powers_2011,button_explicit_2013}.

  \subsection{Hyperbolic groups}  A discrete group $G$ is hyperbolic if it admits a proper and cocompact action by isometries on a $\delta$--hyperbolic geodesic proper space $X$ \cite{gromov_hyperbolic_1987}. The cardinality of $U$ is denoted by $|U|$.

\begin{thm}\label{IT: hyperbolic groups} 
For every hyperbolic group $G$ there is a constant $\alpha>0$ such that for every finite subset $U\subset G$ that is not contained in a virtually cyclic subgroup $|U^3|\geqslant (\alpha |U|)^2.$ 
\\
More generally,  for all natural numbers $n$, $|U^n|\geqslant (\alpha |U|)^{[(n+1)/2]}$.
\end{thm}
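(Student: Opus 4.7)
The strategy is a ping-pong injection after reducing to two well-chosen elements. The main geometric step is to find two elements $a, b$ in some small product set $U^k$ (with $k$ bounded in terms of $G$) that are loxodromic on $X$, have disjoint fixed-point pairs in $\partial X$, have translation lengths bounded by a constant depending only on the geometry of $X$, and satisfy a strong transversality (small-cancellation) condition with a uniform constant. Qualitatively, such elements exist because $U$ generates a non-elementary subgroup of the hyperbolic group $G$. The delicate part is to ensure these constants are independent of $U$; I would proceed by a dichotomy. Either a positive fraction of pairs in $U \times U$ yields a good loxodromic pair, in which case we pick any such pair, or most elements of $U$ share a common fixed-point set on $\partial X$, in which case $U$ lies in a virtually cyclic subgroup, contradicting the hypothesis.

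Once such $a, b$ are fixed, the second step is a small-cancellation / ping-pong injection. For any sequence $(u_1, \ldots, u_m) \in U^m$, consider the alternating product
\[
w(u_1, \ldots, u_m) \;=\; a\, u_1\, b\, u_2\, a\, u_3\, b\, u_4 \cdots \;\in\; U^n,
\]
with $n$ linear in $m$ and depending on $k$. Small cancellation applied to the translates of the axes of $a$ and $b$ appearing in the geodesic realization of $w$, together with a Greendlinger-type argument, forces the map $(u_i) \mapsto w(u_i)$ to be injective on $U^m$; hence $|U^n| \geqslant |U|^m$. Absorbing the $k$-overhead into the constant and re-indexing produces $|U^n| \geqslant (\alpha |U|)^{[(n+1)/2]}$ with $\alpha$ depending only on $G$.

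The main obstacle I expect is securing uniformity of constants in the first step, in particular bounding the translation lengths of $a$ and $b$ independently of $U$. This relies critically on the acylindricity of the $G$-action on $X$: acylindricity rules out long segments on which many elements of $U$ can accumulate without generating a virtually cyclic subgroup, which is exactly what makes the above dichotomy quantitative. Once the base case $|U^3| \geqslant (\alpha |U|)^2$ is settled by this method, the general estimate for $|U^n|$ follows by iterating the injection with longer alternating words of the form $a\, u_1\, b\, u_2 \cdots$, at the cost only of rescaling $\alpha$.
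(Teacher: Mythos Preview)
Your approach has a genuine gap: the injectivity of the map $(u_1,\ldots,u_m)\mapsto a\,u_1\,b\,u_2\,a\,u_3\cdots$ fails in exactly the example that makes the exponent $[(n+1)/2]$ optimal. Take $G$ free on $g,h$ and $U=\{g^{-N},\ldots,g^N,h\}$. Any $a,b\in U^k$ of bounded translation length must be built from at most $k$ copies of $h$ and bounded powers of $g$; in particular one can (and essentially must) take $a=g$, $b=h$ up to short conjugation. Then for $u_i=g^{j_i}$ the alternating word
\[
g\,u_1\,h\,u_2\,g\,u_3\,h\,u_4\cdots \;=\; g^{j_1+1}\,h\,g^{j_2+j_3+1}\,h\,g^{j_4+j_5+1}\cdots
\]
depends only on $j_1$ and the sums $j_2+j_3,\,j_4+j_5,\ldots$, so the map is far from injective on $U^m$. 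No small-cancellation or Greendlinger argument can repair this: the obstruction is that a positive fraction of $U$ may lie on the axis of $a$, so the ``pinging'' element $a$ does not separate consecutive $u_i$'s at all. More generally, whenever $|U|$ is dominated by elements of a single maximal virtually cyclic subgroup $E$ while $U\not\subset E$, any bounded-length $a\in U^k$ whose axis is that of $E$ will cause these collisions; yet this is precisely the regime where the theorem is tightest.

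The paper's proof avoids fixing two elements and instead works with two large \emph{subsets} $U_1,U_2\subset U$ whose mutual products are reduced at an energy-minimising basepoint. The heart of the argument is an analysis of equations $u_1vw_1=\cdots=u_nvw_n$ with reduced factors: either few such equations hold, giving $|U_1vW|\gtrsim|U_1||W|$ directly, or many hold, which forces $v$ to be \emph{periodic} with a well-defined period $E$, and then all of $U_2$ is shown to lie in a single coset of $E$. That residual ``bi-periodic'' case is exactly the Safin configuration above, and it is handled by a separate ping-pong after passing to a coset representative. The conceptual point you are missing is that the periodic/aperiodic dichotomy has to be run on the whole of $U_2$, not on two fixed elements, and the periodic case cannot be absorbed into an injectivity claim---it genuinely requires a different counting argument.
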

Here $[(n+1)/2]$ is the integral part of $(n+1)/2$. Theorem \ref{IT: hyperbolic groups} answers a question in \cite{button_explicit_2013}. In contrast, if $U$ is a finite subset of $\mathbb{Z}$ (respectively a nilpotent group), the growth of $U^n$ is linear (respectively polynomial) in $n$. 

\begin{rem}\label{IR: breuillard fujiwara} Let $h(U):= \frac{1}{n} \log |U^n|$ be the algebraic entropy of $U$. By Theorem \ref{IT: hyperbolic groups}, for all $U$ in a hyperbolic group that do not generate a virtually cyclic subgroup $h(U)> \frac{1}{2}\log \left( \alpha |U|\right)$. Note that every hyperbolic group has uniform positive lower bound on the algebraic entropy of a finite generating set (i.e. uniform exponential growth), cf. \cite{koubi_croissance_1998,arzhantseva_lower_2006,besson_cuvature_2017,breuillard_joint_2018}.
\end{rem}

A natural question is to understand the meaning of $\alpha$ in geometric terms, such as, the diameter of the thin part and the injectivity radius of $X/G$, for convex cocompact Kleinian groups for example. If $G$ is hyperbolic, then $\alpha$ can be explicitly estimated in terms of the cardinality of the (closed) ball of radius $10\delta$ in a Cayley graph of $G$, where $\delta$ is the hyperbolicity constant from the four point definition of hyperbolicity (Definition \ref{D: hyperbolicity} below), see Theorem \ref{IT: hyperbolic groups}.

\begin{ex} Let  $F$ be  a finite group, $H$  a free group and $G=F\times H$. If $U_1\subset H$ and $U:= F\times U_1$, then $U^n=F\times U_1^n$ and $\alpha$ depends on the size of $F$, which will be elliptic on every space $G$ acts.  
\end{ex}

By the following example of \cite{safin_powers_2011}, the exponent $[(n+1)/2]$, see Theorem \ref{IT: hyperbolic groups}, is optimal.

 \begin{ex} Let $g$ and $h$ generate a free subgroup and let $U_N:=\{g^{-N},\, \ldots,\, g^{-1},\, 1,\, g,\, \ldots, \, g^N,\, h\}$.  Then $|U_N^{n}| \sim N^{[(n+1)/2]}$, while $|U_N|=2N+1$.
\end{ex}

\subsection{Energy and acylindricity}

To analyse growth of products of a subset $U$ of a group acting on a $\delta$--hyperbolic space $X$, we consider the \emph{normalised $\ell^1$--energy} 
$$E(U):=\inf_{x\in X} \frac{1}{|U|} \sum_{u\in U}|ux-x|$$
 and fix a point $x_0$ where this infimum is achieved (up to an error of  $\delta$). 
 
 The \emph{displacement of $U$} is defined by $\lambda_0(U):=\max_{u\in U} |ux_0-x_0
 |$.

\begin{rem}
  If $G$ acts on a $\delta$--hyperbolic space and $U$ is contained in an elliptic subgroup, then $E(U)\leqslant 10 \delta$, see Proposition \ref{P: Energy and displacement}. For instance, if $\delta=0$ then $U$ is fixes a point and $\lambda_0(U)=0$.
\end{rem}

\begin{ex} If $X/G$ is a manifold (orbifold) of constant curvature $-1$ and $\lambda_0(U)$ is smaller than the Margulis constant, then $U$ generates a  cyclic (virtually cyclic) subgroup. 
\end{ex}
 
 \begin{df}[\hspace{-0.15mm}{\cite{sela_acylindricity_1997,bowditch_tight_2008}}] \label{ID: acylindrical}
The action of $G$ on a $\delta$--hyperbolic space $X$ is  \emph{acylindrical}, or more precisely \emph{$(\kappa_0,N_0)$--acylindrical}, if there is $\kappa_0\geqslant \delta$ and $N_0>0$ such that for all $x$ and $y$ in $X$ that are at least $\kappa_0$ apart, there are at most $N_0$ isometries $g\in G$ such that $|gx-x| \leqslant 100 \delta$ and $|gy- y|\leqslant 100 \delta$.
\end{df}

\begin{ex}\label{IE: Riemannian manifold}
In the case of the fundamental group of a compact Riemannian manifold of constant negative curvature $ -a^2$, the hyperbolicity constant is (of the order of) $1/a$. If $l$ denotes the injectivity radius and is smaller than the Margulis constant, then  the acylindricity constant $\kappa_0$ is about $1/l$, so that a bound on the acylindricity constant yields a lower bound on the injectivity radius. 
 If one considers fundamental groups of orbifolds, however, the size of the elliptic subgroups (this is the meaning of $N_0$), that can be arbitrarily large, must be taken into account in growth estimates. 
\end{ex}

\begin{ex}\label{IE: groups acting on trees} For groups acting on trees, typically an amalgamated product $A*_CB$ acting on its Bass-Serre tree, the acylindricity constant can be expressed in algebraic terms. If $C$ is finite, the action is $(1,|C|)$--acylindrical. If $C$ is  malnormal in $A$ (that is, $aCa^{-1}\cap C=\{1\}$ unless $a\in C$), the action is $(2,1)$--acylindrical. Groups acting acylindrically on trees are discussed, for instance, in \cite{sela_acylindricity_1997,delzant_access_1999}.   
\end{ex}

\begin{ex} Most mapping class groups act acylindrically on curve graphs \cite{bowditch_tight_2008}.
\end{ex}

\subsection{Groups acting on trees}

Recall that a metric tree is a  $0$--hyperbolic space. Let $X$ be a simplicial tree endowed with the geodesic metric such that every  edge has length $\rho_0$. The action of $G$ on $X$ is $\kappa$--acylindrical (in the sense of Sela \cite{sela_acylindricity_1997}) if it is $(\kappa,1)$--acylindrical in the sense of Definition \ref{ID: acylindrical} above, namely, if the pointwise stabiliser of a segment of length $\kappa$ is trivial. Note that  $\rho_0$ as well as $\kappa$ are lengths. If $\kappa< \rho_0$,  then $G$ is a free product of groups, studied in \cite{button_explicit_2013}.

\begin{thm}\label{IT: acylindrical trees}  For every $\rho_0>0$ and $\kappa$ there is a constant $\alpha>0$ such that for every group $G$ with a $\kappa$--acylindrical action on a simplicial tree of edge length $\rho_0$  
and for every finite $U\subset G$  of displacement $\lambda_0(U)\geqslant 10^{14} \kappa$  
 that is not contained in an infinite virtually cyclic  
subgroup 
 $|U^3|\geqslant ( \alpha  |U|)^2$.  \\
More generally,  for all natural numbers $n$, $|U^n|\geqslant (\alpha |U|)^{[(n+1)/2]}$.
\end{thm}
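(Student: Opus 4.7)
The plan is to adapt the Razborov--Safin approach to the tree setting, using the $\kappa$--acylindricity of the action. The central idea is to find a pivot element $p \in U$ of long translation length together with two large subsets $V_1, V_2 \subseteq U$ such that the map $(v_1, v_2) \mapsto v_1 p v_2$ from $V_1 \times V_2$ to $U^3$ has fibres of bounded size; the general bound $|U^n| \geqslant (\alpha |U|)^{[(n+1)/2]}$ then follows by iterating with $k$ pivots and $k+1$ (resp.\ $k$) free slots when $n = 2k+1$ (resp.\ $n = 2k$).

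Concretely, I fix a base point $x_0 \in X$ minimising the $\ell^1$--energy $E(U)$, obtained as a median of the orbit $U x_0$ in the tree. Set $\lambda := \lambda_0(U) \geqslant 10^{14} \kappa$. A simple averaging shows that a positive proportion of $U$ consists of elements $u$ with $|u x_0 - x_0| \geqslant c_0 \lambda \gg \kappa$. For each such $u$, consider the initial directions $d_+(u)$ and $d_-(u)$ at $x_0$ of the geodesics $[x_0, u x_0]$ and $[x_0, u^{-1} x_0]$. Since $U$ is not contained in a virtually cyclic subgroup, these directions cannot all lie on a common pair of opposite directions at $x_0$: otherwise an acylindricity argument would force $U$ to stabilise a common axis, contradicting the hypothesis.

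A pigeonhole argument over the directions then produces subsets $V_1, V_2 \subseteq U$ of size at least $c_1 |U|$ and a pivot $p \in U$ with $|p x_0 - x_0| \geqslant c_2 \lambda \geqslant \kappa$, so that, for every $(v_1, v_2) \in V_1 \times V_2$, the concatenation $[x_0, v_2 x_0] \cup [v_2 x_0, p v_2 x_0] \cup [p v_2 x_0, v_1 p v_2 x_0]$ is the geodesic $[x_0, v_1 p v_2 x_0]$. Under this non-backtracking, the endpoint together with the known lengths recovers both intermediate orbit points $v_2 x_0$ and $p v_2 x_0$. Since $|p x_0 - x_0| \geqslant \kappa$, $\kappa$--acylindricity bounds the number of group elements simultaneously fixing $x_0$ and $p x_0$ by $N_0$, which is $1$ in the simplicial tree setting of this theorem. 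Hence the map $(v_1, v_2) \mapsto v_1 p v_2$ has fibres of bounded size, yielding $|U^3| \geqslant |V_1||V_2|/O(1) \geqslant (\alpha |U|)^2$.

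For general $n = 2k+1$, I iterate the construction: choose pivots $p_1, \ldots, p_k \in U$ compatible with the directional constraints and $k+1$ free slots, producing pairwise distinct products $v_1 p_1 v_2 p_2 \cdots p_k v_{k+1} \in U^n$, yielding the required count $(\alpha |U|)^{k+1}$; the even case $n = 2k$ is handled analogously with $k$ pivots and $k$ free slots. The main obstacle is this simultaneous pigeonhole: one must extract $V_1, V_2$ and all the pivots with consistent non-backtracking at every junction point while keeping all subsets proportional to $|U|$. The large margin $10^{14}$ in the hypothesis $\lambda_0(U) \geqslant 10^{14} \kappa$ is precisely what absorbs the quantitative losses accumulated through these pigeonhole steps and ensures every segment invoked remains of length at least $\kappa$, so that $\kappa$--acylindricity applies throughout the argument.
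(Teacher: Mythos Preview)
Your outline follows the Razborov--Safin philosophy at a high level, but it skips precisely the two places where the real work lies, and as written the argument would not go through.

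\textbf{The ``pigeonhole over directions'' is not a pigeonhole.} The Bass--Serre tree of an amalgam $A*_CB$ has unbounded valence as soon as $A$ or $B$ is infinite, so there is no finite set of directions at $x_0$ to pigeonhole over. Button's insight, reproduced in the paper as Lemma~\ref{L: reduction trees}, is a recursive partition argument on the (finite) set of directions actually hit by $U$: one moves directions one at a time from a set $A$ to its complement $B$ and uses the minimal-energy property of $x_0$ (Lemma~\ref{L: minimal energy}) to rule out the bad configuration in which almost all of $U$ enters and exits through a single direction. Your sentence ``otherwise an acylindricity argument would force $U$ to stabilise a common axis'' does not cover this: nothing prevents $90\%$ of $U$ from pointing the same way while the remaining $10\%$ is scattered, and that already defeats a naive pigeonhole.

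\textbf{The periodic obstruction is entirely missing.} Even after one has the reduced subsets $U_1,U_2$, the map $(v_1,v_2)\mapsto v_1 p v_2$ need \emph{not} have bounded fibres. The model example is $U=\{h,h^2,\dots,h^N,t\}$ in a free product: here all directions are fine, all products are reduced, yet $|U_1 v U_1|$ can be as small as $O(N)$ for any single $v$. Safin's argument, and the paper's Lemma~\ref{L: periodic} and Proposition~\ref{P: periods in product sets}, show that large fibres force $v$ to be \emph{periodic}, and then that either some $v\in U_2$ is aperiodic (and one wins) or all of $U_2$ lies in a single coset of a maximal loxodromic subgroup, which is handled by a separate ping-pong (Section~\ref{S: Estimation}). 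Your proposal has no analogue of this dichotomy.

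Two smaller points: your geodesic decomposition $[x_0,v_2x_0]\cup[v_2x_0,pv_2x_0]\cup[pv_2x_0,v_1pv_2x_0]$ is the wrong one (the natural one is $x_0\to v_1x_0\to v_1px_0\to v_1pv_2x_0$, whose segments have the base-point lengths); and the claim that a positive proportion of $U$ has displacement $\geqslant c_0\lambda$ is exactly what fails in the concentrated-energy case, which the paper treats separately in Section~\ref{S: small displacement}.
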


\begin{rem} \label{IR: acylindrical tree} We may take $\alpha= \frac{1}{10^{15}}\frac{\rho_0^2}{\kappa^2}$. Note that $\alpha$ is unit free and does not depend on the group, for instance, this gives uniform estimates for all acylindrical amalgamated products $A*_CB$. 
\end{rem}

Note that Theorem \ref{IT: acylindrical trees} extends the main result of \cite{button_explicit_2013}:

\begin{rem} In the case of a free product acting on its Bass-Serre tree, we can take $\kappa=\rho_0/10^{14}$. Then, if $U$ is not conjugated into one of the vertex stabilisers, $\lambda_0(U)\geqslant 10^{14}\kappa$. By Theorem \ref{IT: acylindrical trees}, for every such $U$ that is not in a virtually cyclic subgroup $|U^n|\geqslant (\alpha |U|)^{[(n+1)/2]}$.
\end{rem}

To underline the importance of the acylindricity hypothesis, we recall that in the case of Baumslag-Solitar groups, studied by Button  \cite[Section 4, p. 76]{button_explicit_2013}, there is no (uniform) $\alpha$ such that for all $U$ that do not generate virtually nilpotent subgroups  $|U^3|\geqslant ( \alpha  |U|)^2$ .

\subsection{Groups acting on hyperbolic spaces}

Finally, we assume that $\delta >0$, and discuss groups acting acylindrically on $\delta$--hyperbolic spaces.   

\begin{thm}\label{IT: acylindrical hyperbolic}
For every $\delta> 0$, $\kappa_0\geqslant \delta$, $N_0>0$ there is a constant $\alpha >0$ such that for every group $G$ that acts $(\kappa_0,N_0)$--acylindrically on a $\delta$--hyperbolic space $X$ and for every finite $U\subset G$ of displacement $\lambda_0(U)\geqslant 10^{14}\kappa_0 \log_2(2|U|)$ that is not contained in an infinite virtually cyclic subgroup 
 $$|U^3|\geqslant \left( \frac{\alpha}{\log_2^6(2|U|)}  |U| \right)^2.$$
More generally,  for all natural numbers $n$,  $$|U^n|\geqslant \left(\frac{\alpha}{\log_2^6(2|U|)} |U|\right)^{[(n+1)/2]}.$$
\end{thm}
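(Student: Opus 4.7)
The plan is to first reduce the estimate for general $n$ to the tripling estimate
$$|U^3| \geq \left(\frac{\alpha}{\log_2^6(2|U|)} |U|\right)^2$$
by a nonabelian Plünnecke--Ruzsa / Tao-style submultiplicativity argument, a formal group-theoretic step contributing no new logarithmic losses (at the cost of adjusting $\alpha$ by a universal constant). The remaining work is to establish this tripling estimate, following the architecture of Theorem~\ref{IT: acylindrical trees} but with dyadic refinements whose cumulative cost is the factor $\log_2^6(2|U|)$.

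The geometric heart of the proof is a double pigeonhole followed by a Schottky extraction. The displacement hypothesis $\lambda_0(U) \geq 10^{14} \kappa_0 \log_2(2|U|)$, together with the choice of $x_0$ as an (approximate) energy-minimising basepoint, ensures that a subfamily of $U$ of size at least $|U|/\log_2(2|U|)$ has translation length $|u x_0 - x_0|$ in a fixed dyadic range $[L, 2L]$ with $L \gtrsim \kappa_0 \log_2(2|U|)$. A second pigeonhole on the initial direction of the geodesic $[x_0, u x_0]$, using a covering of the sphere of radius $\kappa_0$ around $x_0$ by $O(\log_2(2|U|))$ balls of radius $\delta$, extracts a further subfamily $V \subset U$ whose members move $x_0$ by essentially the same vector. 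Since $U$, and hence some bounded power of $V V^{-1}$, does not lie in a virtually cyclic subgroup, one locates two loxodromic elements $a, b$ with four distinct endpoints at infinity and translation lengths commensurate with $L$. A Schottky construction at the scale $L$ (which exceeds $\kappa_0$ by a factor logarithmic in $|U|$, exactly as required) yields a free subgroup $\langle a, b\rangle$ with disjoint ping-pong neighbourhoods at the four endpoints.

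Finally, the triple-product map $\Phi : V^3 \to G$, $(v_1, v_2, v_3) \mapsto v_1 v_2 v_3$, is shown to be almost injective: a coincidence $\Phi(v_1, v_2, v_3) = \Phi(v_1', v_2', v_3')$ combined with the uniform displacement of $V$ forces an element of $G$ to move two points far apart from $x_0$ by at most $100\delta$, whence acylindricity bounds the fibres by $N_0$ up to polylogarithmic factors from the initial pigeonholes. This yields $|V^3| \geq |V|^3 / (N_0 \log_2^{c}(2|U|))$, which combined with $|V| \geq |U|/\log_2^{c'}(2|U|)$ delivers the tripling bound, and hence the theorem via Step 1. The main obstacle, as in Theorems~\ref{IT: hyperbolic groups} and~\ref{IT: acylindrical trees}, is the Schottky extraction in the purely acylindrical (non-cocompact) setting: one must manufacture a free ping-pong pair whose dynamics are controlled only by $(\delta, \kappa_0, N_0)$, and this is precisely what forces the displacement hypothesis to scale as $\kappa_0 \log_2(2|U|)$ — the translation scale must dominate the acylindricity scale by a logarithmic margin so that the Margulis-type dichotomy (elliptic versus free) applies uniformly across the pigeonhole strata.
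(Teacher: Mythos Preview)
Your proposal has several genuine gaps, and the overall architecture differs substantially from the paper's proof.

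\textbf{The Pl\"unnecke--Ruzsa reduction does not work.} Nonabelian Pl\"unnecke--Ruzsa inequalities convert an upper bound on tripling into upper bounds on higher products; they do not convert a lower bound $|U^3|\geqslant (\alpha'|U|)^2$ into $|U^n|\geqslant (\alpha'|U|)^{[(n+1)/2]}$. The exponent $[(n+1)/2]$ is sharp (witness Safin's example $U_N=\{g^{-N},\ldots,g^N,h\}$) and is not a formal consequence of the $n=3$ case. The paper obtains it directly: for $l=[(n+1)/2]$ one sets $W_l=(U_1U_2)^{l-2}U_1$, so that $|U^n|\geqslant |U_1U_2W_l|$, and proves $|U_1U_2W_l|\geqslant \tfrac{1}{C}|U_1||W_l|$ by induction (Proposition~\ref{P: periods in product sets} and Proposition~\ref{P: general product set growth}).

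\textbf{The directional pigeonhole is unjustified.} In a general $\delta$-hyperbolic space with no bounded-geometry assumption, the sphere $S(x_0,\kappa_0)$ need not be coverable by $O(\log_2|U|)$ balls of radius~$\delta$; there is no control on the local cardinality of $X$. The paper's actual source of the logarithm is Gromov's tree approximation lemma (Theorem~\ref{T: tree approximation}), applied to the finite configuration $\{x_0\}\cup Ux_0\cup U^{-1}x_0$: the approximation error $2\delta(\log_2(2|U|)+1)$ allows Button's tree argument to be run on the approximating tree, yielding two subsets $U_1,U_2$ with all products reduced at~$x_0$ up to an error of $1000\delta\log_2(2|U|)$ (Lemma~\ref{L: reduction acylindrical}).

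\textbf{The almost-injectivity of the triple-product map is too strong.} Your claim $|V^3|\geqslant |V|^3/C$ would give a cubic lower bound, but the correct and optimal bound is quadratic: for $V=\{g^N,\ldots,g^{2N}\}$ one has $|V^3|\asymp |V|$. Nothing in your dyadic-plus-directional pigeonhole prevents $V$ from falling into a single coset of a maximal loxodromic subgroup, and in that case the fibres of $\Phi$ are not bounded by $N_0$. The paper confronts exactly this obstruction via the periodic/aperiodic dichotomy (Lemma~\ref{L: periodic}, Proposition~\ref{P: periodic}, Proposition~\ref{P: periods in product sets}): either $|U_1U_2W_l|\geqslant \tfrac{1}{C}|U_1||W_l|$, or $U_2$ is \emph{bi-periodic}, i.e.\ contained in a coset $Et$ of a maximal loxodromic subgroup, in which case a separate ping-pong argument (Section~\ref{S: Estimation}) finishes the job.

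In summary, the paper's route is: energy-minimising basepoint; split into concentrated vs.\ diffuse energy; in the diffuse case, tree approximation to get reduced-product subsets $U_1,U_2$; then Safin's periodic/aperiodic dichotomy, with the bi-periodic case handled by a coset ping-pong. Your Schottky-extraction sketch does not substitute for this dichotomy, and the two formal shortcuts you take (Pl\"unnecke--Ruzsa for higher~$n$, sphere-covering for directions) both fail in this setting.
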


\begin{rem}\label{IR: acylindrical 1} 
 We may take $\alpha = \frac{1}{10^{50}N_0^{6}} \cdot \frac{\delta^2 }{\kappa_0^2}$. 
 To compare with Theorem \ref{IT: acylindrical trees} and Remark \ref{IR: acylindrical tree} note that by acylindricity the injectivity radius of the action is $\rho_0\geqslant \delta/N_0$. 
\end{rem}

\begin{ex}
In the case of torsion-free ``convex cocompact'' groups in the hyperbolic space $\mathbb{H}_n$, the constant $\alpha$ depends only on the injectivity radius. However, if torsion is allowed, then it also depends on the maximal order of a finite subgroup. 
\end{ex}

\begin{rem} 
If $E(U)>10^{14}\kappa_0  \log_2(2|U|) $, then $\lambda_0(U)\geqslant 10^{14}\kappa_0 \log_2(2|U|)$ so that the theorem applies.
 \end{rem}

\begin{rem}\label{IR: acylindrical 2}
The logarithmic term in Theorem \ref{IT: acylindrical hyperbolic} is due to the correction term in Gromov's tree approximation lemma, see Theorem \ref{T: tree approximation}. Due to the work of Alice Kerr \cite{kerr_tree_2020}, if X is a \emph{quasi-tree} in the sense of Bestvina-Bromberg-Fujiwara \cite{bestvina_constructing_2015}, then the tree approximation is uniform, and this logarithmic term is not necessary.  Important examples of groups acting on quasi-trees are discussed in \cite{bestvina_constructing_2015}. In many cases, one can therefore improve Theorem \ref{IT: acylindrical hyperbolic}, and can get rid of the logarithmic term in Theorem \ref{IT: acylindrical hyperbolic}. 
\end{rem}

\subsection{Strategy of proofs} \label{S: strategy of proofs}

Let us fix $x_0$  as a point minimising (up to an error of $\delta$) the normalised $\ell^1$--energy $$E(U)(x):=\frac{1}{|U|}\sum_{u\in U} |x-ux|.$$ 
 
As in the case of free products \cite{button_explicit_2013}, the proof of Theorem \ref{IT: hyperbolic groups}, \ref{IT: acylindrical trees} and \ref{IT: acylindrical hyperbolic} splits into two cases: 
\begin{description}
\item[Case 1 (Concentrated energy)] for more than $1/4$ of the elements of $U\subset G$,  $$|ux_0-x_0|\leqslant 10^{-4}\lambda_0(U),$$
\item[Case 2 (Diffuse energy)]  for at least $3/4$ of the elements of $U\subset G$, $$|ux_0-x_0|>10^{-4}\lambda_0(U).$$
\end{description} 

As in \cite[Lemma 2.4]{button_explicit_2013}, the proof in Case 1 of concentrated energy relies on acylindricity and a standard ping pong argument, see Lemma \ref{L: small displacement} for details. 

Case 2 of diffuse energy is more involved. In this case, we closely follow the strategy of Safin \cite{safin_powers_2011} in the case of a free group. The proof is based on two definitions: reduced products and periodic elements. Hyperbolic geometry enables us to extend it to the case of a group acting on a hyperbolic space.  

Let us sketch this proof in the case where $X$ is a $\delta$--hyperbolic  graph and the action of $G$ on $X$ is cocompact and has a large injectivity radius (i.e. $\inf_{g\not=1,x\in X}|gx-x|>10^{14}\kappa_0$).

The \emph{Gromov product} of $p,\, q \in X$ at $x\in X$ is $(p,q)_x:=\frac12 \left(|p-x| + |q-x| -|p-q|\right)$.
\begin{df}[cf. Definition \ref{D: reduced products}]
The product $gh$ is reduced at $x_0$ if $(g^{-1}x_0,hx_0)_{x_0}\leqslant \delta$.
\end{df}

In the case of a free group acting on its Cayley graph with $x_0=1$, the product of two elements $g$ and $h$ (we represent $g$ and $h$ in normal form) is reduced if the last letter of $g$ is not inverse to the first letter of $h$.

Our first lemma generalises \cite[Lemma 1]{safin_powers_2011}. 

\begin{lem}[cf. Lemma \ref{L: reduction}]\label{IL: reduction} Let $\alpha= 100 \,  |B(1000 \delta)|^2$ and $U\subset G$ a finite set.  
Then there are two subsets $U_1$ and $U_2$ of $U$ of cardinality $>\frac{1}{\alpha} $ such that all products $uv$ and $vu$ where  $u\in U_1$ and $v\in U_2$ are reduced at $x_0$.
\end{lem}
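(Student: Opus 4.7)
\smallskip

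\noindent\textbf{Proof plan for Lemma \ref{IL: reduction}.} My plan is to assign to each $u \in U$ two ``types'' $\tau^-_u, \tau^+_u$ living in the finite set $T := Gx_0 \cap \overline{B(x_0, 1000\delta)}$, which has cardinality at most $B := |B(1000\delta)|$ in the cocompact large-injectivity-radius setting sketched above. Concretely, pick $\tau^+_u$ to be an orbit point closest to the point of a geodesic $[x_0, ux_0]$ at distance $\min(1000\delta, |ux_0 - x_0|)$ from $x_0$, and define $\tau^-_u$ symmetrically starting from $u^{-1}x_0$. A $\delta$-slim triangle argument applied to the triangle with vertices $u^{-1}x_0$, $x_0$, $vx_0$ then yields the key criterion: \emph{if the orbit points $\tau^-_u$ and $\tau^+_v$ are distinct (and therefore separated by at least the injectivity radius), then $(u^{-1}x_0, vx_0)_{x_0} \leq \delta$, so $uv$ is reduced at $x_0$.}

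Next I would pigeonhole on the pair $(\tau^-_u, \tau^+_u) \in T \times T$: with at most $B^2$ possible pairs, some dominant pair $(a^*, b^*)$ captures a subset $V \subseteq U$ of size $\geq |U|/B^2$. If $a^* \neq b^*$, set $U_1 = U_2 = V$; for any $u, v \in V$ the criterion gives both $uv$ and $vu$ reduced, and the size bound $|V| \geq |U|/B^2 > |U|/\alpha$ is more than enough. Otherwise $a^* = b^* =: c$, and I would then examine the complementary set $U' := \{u \in U : \tau^-_u \neq c \text{ and } \tau^+_u \neq c\}$. When $|U'| \geq |U|/\alpha$, take $U_1 = U'$ and $U_2 = V$: for any $u \in U_1, v \in U_2$ one has $\tau^-_u \neq c = \tau^+_v$ and $\tau^-_v = c \neq \tau^+_u$, so both products are reduced at $x_0$.

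The hard part is the residual ``aligned'' case $|U'| < |U|/\alpha$, in which more than $(1-1/\alpha)|U|$ elements satisfy $\tau^-_u = c$ or $\tau^+_u = c$: nearly all of $U$ sends $x_0$ into, or arrives at $x_0$ from, a single channel pointing toward $cx_0$. This concentration is highly structured and cannot be dissolved by pigeonhole on the $1000\delta$-types alone. I would attempt to break it either by introducing secondary types at distance $2000\delta$ along the channel, or by ``shifting'' the common direction via replacing candidate elements by $cu$ or $uc^{-1}$ and rerunning the pigeonhole on the translates. This step leaves the purely combinatorial regime and uses the hyperbolic geometry of $X$ in an essential way, mirroring Safin's treatment of common prefixes and suffixes in the free-group case; ensuring that the resulting subsets still have cardinality $\geq |U|/\alpha$ with $\alpha = 100\,B^2$ is the most delicate point of the argument.
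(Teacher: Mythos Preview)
Your setup and the easy cases are fine and match the paper's approach: both proofs pigeonhole on a pair of ``in/out'' types on the sphere $S(x_0,1000\delta)$, and both handle the case $\tau^-\neq\tau^+$ by taking $U_1=U_2$ equal to the dominant class.

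The gap is in your treatment of the aligned case. Neither of your proposed fixes works. Secondary types at distance $2000\delta$ can be just as aligned as the primary ones: take $U$ to consist mostly of powers $g^k$ of a single hyperbolic isometry whose axis passes through $x_0$; then for every $u=g^k$ both $\tau^+_u$ and $\tau^-_u$ lie on the axis, and this persists at every scale. Replacing elements by $cu$ or $uc^{-1}$ gives subsets of $cU$ or $Uc^{-1}$, not of $U$, so even if you could make those products reduced you would not conclude the lemma as stated.

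What actually dissolves the aligned case is the choice of $x_0$ as a minimiser of the $\ell^1$--energy $E(U)(x)=\frac{1}{|U|}\sum_{u\in U}|ux-x|$, which you never use. The paper isolates this as a separate ``minimal energy'' lemma (Lemma~\ref{L: minimal energy}): for any $y_0\in S(x_0,1000\delta)$, at most $\tfrac{2}{3}|U|$ elements $u$ can have \emph{both} $[x_0,ux_0]$ and $[x_0,u^{-1}x_0]$ passing near $y_0$. The proof is an energy comparison: if more than $\tfrac{2}{3}|U|$ elements were so aligned, moving $x_0$ to $y_0$ would save roughly $2\cdot 1000\delta$ on each of them while losing at most $2\cdot 1000\delta$ on the remaining third, for a net decrease of $E(U)$, contradicting minimality. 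With this lemma in hand, your aligned case is impossible once the dominant class has size $>\tfrac{2}{3}|U|$; and if it has size $\leqslant\tfrac{2}{3}|U|$ (more precisely, the paper arranges a $\tfrac{74}{100}$ threshold), a second pigeonhole among the remaining pairs produces the required $U_2$. This is exactly Safin's ``Case~2'' in his Lemma~1, transported to hyperbolic geometry; the energy argument is the missing idea.
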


We now discuss our second definition, \emph{periodic isometries}.
The axis $C_u$ of an hyperbolic isometry $u\in G$ is the set of $x\in X$ such that $|x-ux| \leqslant \inf_{x\in X} |x-ux | +8 \delta$.  The axis is a $\delta$--quasi-convex $u$--invariant subset that contains a bi-infinite line at finite Hausdorff distance \cite[Proposition 2.3.5]{delzant_courbure_2008}. 

\begin{df}[cf. Definition \ref{D: periodic}]\label{ID: periodic} An isometry $g\in G$ is \emph{$u$--periodic at $x_0$} if $x_0$ and $gx_0$ lie in $C_u$ and if $|x_0-gx_0|>2 \inf_{x\in X} |x-ux|+1000 \delta$.  The \emph{period} of $g$ is the maximal cyclic subgroup containing $u$. 
\end{df}
The period of $g$ at $x_0$ is unique, that is, it does only depend on $x_0$ and $g$, but not on $u$. 

In the case of a free group acting on its Cayley graph with $x_0=1$, $1\in C_u$ if  $u$ is cyclically reduced, that is, its first letter is not inverse to its last letter, and in this case, $g$ is $u$-periodic at the origin  if $g=u^ku_1$ where $\kappa\geqslant 2$, $u_1$ is non-empty and $u=u_1u_2$ as a (cyclically) reduced product. Therefore, our definition  is a geometric adaption of the notion of a periodic word, a key concept in small cancellation theory, cf. \cite[Section 2]{safin_powers_2011}.

Our second main lemma generalises another  observation of Safin \cite[Lemma 3]{safin_powers_2011}.

\begin{lem}[cf. Lemma \ref{L: periodic}]\label{IL: small cancellation} Let  $u_1vw_1=u_2vw_2=u_3vw_3=u_4vw_4$ where all  the products $u_iv$ and $vw_i$ are reduced at $x_0$ and $|vx_0-x_0|\geqslant |u_ix_0-u_jx_0|$, where $1\leqslant i,j \leqslant 4$. Then $v$ is periodic  at $x_0$. 
More precisely, the period of $v$ is the maximal cyclic subgroup containing $u_1^{-1}u_2$.  
\end{lem}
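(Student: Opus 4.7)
Write $\sigma := u_1 v w_1 = \ldots = u_4 v w_4$ and, for $1 \leq i \leq 4$, set $p_i := u_i x_0$ and $q_i := u_i v x_0$. Pulling the reducedness hypotheses back through the isometries $u_i$ and $u_i v$ respectively, reducedness of $u_i v$ at $x_0$ reads $(x_0, q_i)_{p_i} \leq \delta$, and reducedness of $v w_i$ at $x_0$ reads $(p_i, \sigma x_0)_{q_i} \leq \delta$. Hence the broken path $[x_0, p_i] \cup [p_i, q_i] \cup [q_i, \sigma x_0]$ is an $O(\delta)$-local geodesic for each $i$, and by the local-to-global lemma in $\delta$-hyperbolic geometry it fellow-travels the single geodesic $[x_0, \sigma x_0]$. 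In particular, all eight points $\{p_i, q_i\}$ lie within $O(\delta)$ of that geodesic, with $p_i$ at arc parameter $\approx |u_i x_0|$ and $q_i$ at parameter $\approx |u_i x_0| + L$, where $L := |v x_0 - x_0|$. Setting $D := \max_{i,j} |p_i - p_j|$, the hypothesis $L \geq D$ forces the four sub-segments $[p_i, q_i]$, all of length $L$, to overlap mutually.

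Next I would extract the candidate period. Since four points $p_1, \ldots, p_4$ sit in an interval of length $D$, pigeonhole produces a pair---after relabeling, the indices $1$ and $2$---with $|p_1 - p_2| \leq D/3 \leq L/3$. From $u_1 v w_1 = u_2 v w_2$ one derives the conjugation relation $uv = vs$, where $u := u_1^{-1} u_2$ and $s := w_1 w_2^{-1}$. The conjugate $\tilde u := u_1 u u_1^{-1} = u_2 u_1^{-1}$ then satisfies $\tilde u p_1 = p_2$ and $\tilde u q_1 = q_2$: it sends a long sub-arc of the near-geodesic $[x_0, \sigma x_0]$ to a second, parallel sub-arc, shifted by $|p_1 - p_2|$. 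By the fellow-traveling/axis criterion in $\delta$-hyperbolic geometry (the relevant quantitative lemma being among those collected in Section~2), $\tilde u$ is hyperbolic, its axis $C_{\tilde u}$ lies within $O(\delta)$ of $[x_0, \sigma x_0]$ along the overlap, and its translation length is $\tau(\tilde u) = |p_1 - p_2| \pm O(\delta)$. Because $u$ is conjugate to $\tilde u$ by $u_1$, we have $\tau(u) = \tau(\tilde u)$ and $C_u = u_1^{-1} C_{\tilde u}$; since $p_1, q_1 \in C_{\tilde u}$, it follows that $x_0 = u_1^{-1} p_1 \in C_u$ and $v x_0 = u_1^{-1} q_1 \in C_u$.

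Finally, the periodicity condition $|v x_0 - x_0| > 2 \tau(u) + 1000 \delta$ becomes $L > 2|p_1 - p_2| + O(\delta)$, which holds thanks to the pigeonhole bound $|p_1 - p_2| \leq L/3$ as soon as $L$ exceeds a modest multiple of $\delta$---always guaranteed by the ambient displacement hypotheses in the contexts where the lemma is applied. Thus $v$ is $u$-periodic at $x_0$. Since, as noted after Definition~\ref{ID: periodic}, the period depends only on $v$ and $x_0$, the maximal cyclic subgroup containing $u = u_1^{-1} u_2$ is precisely the period of $v$; the same argument, applied to any other pair of indices, shows that each $u_i^{-1} u_j$ lies in this same max-cyclic subgroup. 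The main obstacle I foresee is the quantitative fellow-traveling step: one must upgrade the geometric observation ``$\tilde u$ shifts one long overlapping sub-arc of $[x_0, \sigma x_0]$ onto another'' into the algebraic conclusion ``$\tilde u$ is hyperbolic with axis $O(\delta)$-close to that geodesic and explicit translation length.'' This is a delicate but standard piece of $\delta$-hyperbolic geometry, which I expect to be isolated as one of the preparatory lemmas in Section~2 of the paper.
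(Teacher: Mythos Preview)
Your outline is essentially the paper's. The ``axis criterion'' you anticipate is packaged as Proposition~\ref{P: reduced products}: from a single equation $u_ivw_i=u_jvw_j$ with reduced products and $|vx_0-x_0|\geqslant|u_ix_0-u_jx_0|$, one already obtains $x_0,\,vx_0\in C_{u_i^{-1}u_j}^{+190\delta}$. The key preparatory lemma behind it is the midpoint lemma (Lemma~\ref{L: midpoint}): for any isometry $g$ and any point $x$, the midpoint of $[x,gx]$ lies in $C_g^{+70\delta}$. Applied with $g=\tilde u=u_2u_1^{-1}$, this puts the midpoints of $[p_1,p_2]$ and of $[q_1,q_2]$ on the axis of $\tilde u$, and quasi-convexity then fills in the segment between them---so no explicit overlap argument is needed, though yours is a legitimate alternative. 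Your pigeonhole step (four points in an interval of length $D\leqslant L$ give a consecutive pair at distance $\leqslant L/3$) is exactly why four equations suffice for the inequality in Definition~\ref{ID: periodic}.

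One point to tighten: the final claim that ``the same argument, applied to any other pair, shows each $u_i^{-1}u_j$ lies in this max-cyclic subgroup'' does not follow from uniqueness of periods alone. For a non-pigeonhole pair the bound $2[u_i^{-1}u_j]<L$ may fail, so $v$ need not be $u_i^{-1}u_j$-periodic and uniqueness does not apply. What \emph{does} hold for every pair, via Proposition~\ref{P: reduced products}, is the axis containment $[x_0,vx_0]\subset C_{u_i^{-1}u_j}^{+O(\delta)}$. One then needs a small-cancellation step: the pigeonhole axis and the axis of $u_i^{-1}u_j$ share a segment of length $L$, while the former has translation length $\leqslant L/3$; two distinct maximal cyclic subgroups cannot have cylinders overlapping that much, so $u_i^{-1}u_j$ lies in the period. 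In the full version (Lemma~\ref{L: periodic}) the paper carries this out explicitly by showing each $u_i^{-1}u_{i+1}$ conjugates $E$ to itself and then invoking maximality of $E$.
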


With these two lemmas in hands, we mimic Safin's final counting argument \cite[Section 4]{safin_powers_2011} to estimate $|U_1U_2U_1|\geqslant |U_1vU_1|$. For instance, if   $v\in U_2$ is not periodic at $x_0$, then $|U_1U_2U_1|\geqslant |U_1vU_1|\geqslant \frac{1}{4} |U_1|^2$ by the previous lemma. Otherwise, all elements in $U_2$ are periodic with same period and same tale, see details in Section \ref{S: Periodicity}. The result then follows by  a ping pong argument, see details in Section \ref{S: Estimation}.
\medskip

The main difficulty, when one wants to extend this result to groups acting on a hyperbolic space is the choice of two large subsets of $U$ such that all products are reduced. In the case of trees this difficulty was resolved by Button \cite{button_explicit_2013}. We adapt Button's argument with help of Gromov's tree approximation lemma  \cite[Section 6.1]{gromov_hyperbolic_1987},  Theorem \ref{T: tree approximation}, to the case of hyperbolic spaces.

\subsection{Structure of the paper} 

The paper has five parts. In Section \ref{S: small cancellation theory}, we discuss reduced products and periodic elements, familiar in small cancellation theory, but we adapt these notions to our geometric setting. In Section \ref{S: Estimation}, we discuss the case of sets of elements of same period and same tale. 
In Section \ref{S: energy discussion}, we discuss the energy and displacement of $U$ and fix the base point $x_0$. The proof now splits into the aforementioned cases of concentrated energy (Section \ref{S: small displacement}) and diffuse energy at $x_0$ (Section \ref{S: the case of diffuse energy}).

\medskip
 
 \noindent
\textbf{Acknowledgements.} We thank Emmanuel Breuillard, Fran\c{c}ois Dahmani and Audrey Vonseel for drawing our attention to Safin's paper and for important discussions on the topic, and the referees for their careful reading and numerous useful comments that helped us to greatly improve the exposition. T.D. thanks the Isaac Newton Institute (Cambridge) for hospitality during the programme ``Non-positive curvature group actions and cohomology''. M.S. was supported by Labex IRMIA at the University of Strasbourg, ERC-grant GroIsRan no.725773 of A. Erschler and   the Austrian Science Fund (FWF) project J 4270-N35.

\section{Small cancellation theory} \label{S: small cancellation theory}

The main notions used in Safin's argument for free groups \cite{safin_powers_2011} and in Button's generalisation to free products \cite{button_explicit_2013} are reduced products and periodic elements, familiar in small cancellation theory. We recall that the product $gh$ of two elements $g$ and $h$ in a free group is reduced if the last letter of $g$ is not inverse to the first letter of $h$; and $g$ is periodic if it can be written as $(uv)^ku$ where the products $uv$ and $vu$ are reduced. 

In this part our goal is to explain geometric interpretations of reduced products and periodic elements that we can use in the general setting for groups acting on hyperbolic spaces.

\subsection{Hyperbolic spaces}\label{S: hyperbolic and isometries}

We collect some facts of the geometry of hyperbolic spaces in the sense of \cite{gromov_hyperbolic_1987}, see \cite{coornaert_geometry_1990,ghys_hyperboliques_1990,delzant_courbure_2008,coulon_geometry_2014,coulon_partial_2016}.

\subsubsection{Hyperbolic spaces} Let  $X$ be a geodesic metric space. The distance of two points $x$ and $y$ in $X$ is denoted by $|x-y|$, and $[x,y]$ is a geodesic segment  between $x$ and $y$. If $A$ is a set, and $x$ a point, we write $d(x,A)=\inf_{a\in A}\{|x-a|\}$  to denote the distance from $x$ to $A$, and let $A^{+a}:=\{ x\in X \mid d(x,A)\leqslant a \}$. 

We recall that the \emph{Gromov product} of $p,\, q \in X$ at $x\in X$ is defined by $$(p,q)_x:=\frac12 \left(|p-x| + |q-x| -|p-q|\right).$$ 

\begin{df}\label{D: hyperbolicity} Let $\delta\geqslant 0$. The geodesic metric space $X$ is \emph{$\delta$--hyperbolic}, if for every choice of four points $p,\, q,\, r,$ and $x$ in $X$ the inequality $(p,r)_x\geqslant \min \{ (p,q)_x,(q,r)_x\} -\delta$ is satisfied. 
\end{df}
From now on, we assume that $\delta\geqslant 0$ and we assume that $X$ is $\delta$--hyperbolic.
If $\delta=0$, then $X$ is an $\mathbb{R}$--tree \cite[Chapitre 3, Th\'eor\`eme 4.1]{coornaert_geometry_1990}.

We frequently use the following observations.

\begin{rem}\label{R: inversed triangle inequality} If  $x,y,z\in X$, $d\geqslant 0$ and $(x,z)_y\leqslant d\delta $, the following type of inversed triangle inequality holds: $|x-z|\geqslant |x-y|+|y-z| -2 (x,z)_y\geqslant |x-y|+|y-z| - 2d\delta.$
\end{rem}

\begin{lem}\label{L: Gromov product}
If $x,y,z\in X$ then $(x,z)_y\leqslant d(y,[x,z])$. 
\end{lem}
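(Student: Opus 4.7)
The plan is to bound the Gromov product by comparing with the triangle inequality applied at any foot of perpendicular from $y$ to the geodesic $[x,z]$. This is a direct computation that does not require hyperbolicity; it holds in any geodesic metric space.

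First, I would fix an arbitrary point $p$ on the geodesic segment $[x,z]$. Since $p$ lies on a geodesic joining $x$ to $z$, we have the exact equality $|x-z|=|x-p|+|p-z|$. The triangle inequality applied to the triples $\{x,p,y\}$ and $\{y,p,z\}$ gives
\begin{align*}
|x-y| &\leqslant |x-p|+|p-y|,\\
|y-z| &\leqslant |p-y|+|p-z|.
\end{align*}
Adding these two and subtracting the equality $|x-z|=|x-p|+|p-z|$ yields
$$|x-y|+|y-z|-|x-z|\leqslant 2|p-y|.$$
Dividing by $2$ gives $(x,z)_y\leqslant |p-y|$.

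Finally, taking the infimum over all points $p\in [x,z]$ produces the desired inequality $(x,z)_y\leqslant d(y,[x,z])$. There is no real obstacle here: the argument is just the standard triangle inequality bookkeeping, and it is worth noting that no hyperbolicity constant appears, so the statement is sharp in trees (where it becomes an equality) and only an upper bound in general geodesic spaces.
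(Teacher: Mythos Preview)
Your proof is correct and follows essentially the same approach as the paper: both use the triangle inequality together with the fact that any point $p\in[x,z]$ satisfies $|x-p|+|p-z|=|x-z|$ (equivalently $(x,z)_p=0$). The only cosmetic difference is that the paper fixes $p$ to be a projection of $y$ onto $[x,z]$ from the start, whereas you prove the bound for arbitrary $p\in[x,z]$ and then take the infimum.
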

 The proof of this lemma does not rely on hyperbolicity.
\begin{proof} Let $p$ denote a projection of $y$ on $[x,z]$, so that $d(y,[x,z])=|p-y|$. 
\mbox{} Then $(x,z)_y= \frac{1}{2} \left( |x-y|+ |z-y| -|x-z|\right) \leqslant \frac{1}{2} \left( |x-p|+ |z-p| -|x-z|\right) + |y-p| =  (x,z)_p + d(y,[x,z])$. 
But $p$ lies on $[x,z]$, so that $(x,z)_p=0$. We conclude the claim.
\end{proof}

On the other hand, hyperbolicity yields the following.
\begin{lem}[cf. {\cite[Chapitre 1 Proposition 3.1]{coornaert_geometry_1990}}]\label{L: Gromov product 2}
If $x,y,z\in X$ then $d(z,[x,y])\leqslant (x,y)_z + 4\delta$. 
\end{lem}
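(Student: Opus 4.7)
The plan is to pin down a specific point $c\in[x,y]$ and verify $|z-c|\leqslant(x,y)_z+4\delta$ by hand. The geometric picture is that two geodesics emanating from a common endpoint $x$ stay close for an initial segment of length equal to the Gromov product at $x$, and $\delta$--hyperbolicity converts this into a quantitative statement via the four-point condition.

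First I would set $t:=(y,z)_x$, note that $t\leqslant\min(|x-y|,|x-z|)$ by the triangle inequality, and pick $c\in[x,y]$ and $d\in[x,z]$ both at distance $t$ from $x$. A direct computation from the definition of the Gromov product gives $(c,y)_x=(z,d)_x=t$, and since $d$ lies on $[x,z]$ the identity $|x-z|=(x,y)_z+(y,z)_x$ yields $|z-d|=(x,y)_z$. The point $d$ will serve as a bridge: it is automatically close to $z$ by construction, and I will argue next that it is close to $c\in[x,y]$.

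Next I would apply the four-point inequality twice to control $|c-d|$. Taking $(p,q,r)=(c,y,d)$ gives $(c,d)_x\geqslant\min\{(c,y)_x,(y,d)_x\}-\delta$, and taking $(p,q,r)=(y,z,d)$ gives $(y,d)_x\geqslant\min\{(y,z)_x,(z,d)_x\}-\delta$. Since $(c,y)_x=(y,z)_x=(z,d)_x=t$, both inequalities collapse to $(c,d)_x\geqslant t-2\delta$. Rewriting $(c,d)_x=t-\tfrac12|c-d|$ yields $|c-d|\leqslant4\delta$, and then the triangle inequality delivers
\[
d(z,[x,y])\leqslant|z-c|\leqslant|z-d|+|d-c|\leqslant(x,y)_z+4\delta.
\]

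The main difficulty, I expect, is only bookkeeping: verifying that each application of the four-point inequality is legal and that the constant $4\delta$ comes out sharp. The factor $4$ arises because each of the two four-point applications costs a $\delta$, and the identity $|c-d|=2(t-(c,d)_x)$ then doubles the resulting $2\delta$ defect.
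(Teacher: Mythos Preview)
Your proof is correct and follows essentially the same approach as the paper: both pick companion points at distance $(y,z)_x$ from $x$ on the geodesics $[x,y]$ and $[x,z]$ (your $c,d$ are the paper's $p_2,p_1$), use the four-point condition to bound their distance by $4\delta$, and finish with the triangle inequality. The only cosmetic difference is that the paper compresses the two applications of the four-point inequality into a single chain $\min\{(p_1,z)_x,(z,y)_x,(y,p_2)_x\}-2\delta$, whereas you spell them out separately.
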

\begin{proof}
Let $p_1$ be  on a geodesic $[x,z]$ such that $|p_1-z|=(x,y)_z$, and $p_2$ on $[x,y]$ such that $|p_1-x|=|p_2-x|$. 
Then $|p_1-p_2|\leqslant 4\delta$. Indeed, by hyperbolicity, 
$(p_1,p_2)_x\geqslant \min\{(p_1,z)_x,(z,y)_x,(y,p_2)_x \}-2\delta.$ 
As $(z,y)_x=|x-z|-(x,y)_z=|p_1-x|$ and $(p_1,z)_x=|p_1-x|=|p_2-x|=(y,p_2)_x$, 
$(p_1,p_2)_x= |p_1-x|-\frac{1}{2} |p_1-p_2|\geqslant |p_1-x| -2\delta,$
 hence the claim. 
Thus, $d(z,[x,y])\leqslant |p_2-z|\leqslant  (x,y)_z + 4\delta$. 
\end{proof}

\begin{lem}\label{L: thin triangles}
If $x$, $y$, $z$  are points in $X$ and if $p$ is on a geodesic $[x,y]$ such that $(y,z)_x\geqslant |p-x|$, then $(x,z)_p \leqslant \delta$. In particular, $d(p,[x,z])\leqslant 5\delta$.
\end{lem}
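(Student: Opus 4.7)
The plan is to derive the sharp bound $(x,z)_p\leq\delta$ by a single application of the $\delta$-hyperbolicity inequality in its Gromov product formulation, with base point at $x$ and triple $(z,y,p)$. This reads
\[
(z,p)_x \;\geq\; \min\{(z,y)_x,\,(y,p)_x\} - \delta.
\]
I will then unpack the two quantities in the minimum. Since $p\in[x,y]$, we have $|p-x|+|p-y|=|x-y|$, hence $(y,p)_x=|p-x|$. Combined with the hypothesis $|p-x|\leq (y,z)_x$, the minimum is attained at $(y,p)_x=|p-x|$, so the inequality becomes $(z,p)_x\geq |p-x|-\delta$.

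The next step is to translate this back to a statement based at $p$. Expanding both sides using the definition of the Gromov product, the inequality rearranges to $|z-p|\leq |z-x|-|p-x|+2\delta$, which, combined with $(x,z)_p=\tfrac12(|x-p|+|z-p|-|x-z|)$, is precisely the claim $(x,z)_p\leq\delta$.

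For the ``in particular'' part I will invoke Lemma \ref{L: Gromov product 2}, but applied to the triple $(x,z,p)$ in place of $(x,y,z)$: it yields $d(p,[x,z])\leq (x,z)_p+4\delta$, which combines with the first part to give $d(p,[x,z])\leq 5\delta$. I do not anticipate any substantive obstacle; the only thing to watch is choosing $x$ (not $p$) as the base point of the hyperbolicity inequality, so that the hypothesis $|p-x|\leq(y,z)_x$ determines which term realises the minimum.
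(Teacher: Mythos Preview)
Your proof is correct and essentially identical to the paper's: both apply the hyperbolicity inequality at base point $x$ to the triple $(p,y,z)$, use $(p,y)_x=|p-x|$ since $p\in[x,y]$, conclude $(p,z)_x\geq |p-x|-\delta$, and then invoke Lemma~\ref{L: Gromov product 2} for the distance bound. The only cosmetic difference is that the paper passes from $(p,z)_x\geq |p-x|-\delta$ to $(x,z)_p\leq\delta$ via the identity $(x,z)_p=|x-p|-(p,z)_x$ in one line, whereas you expand the Gromov products into distances; the content is the same.
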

\begin{proof}
By definition of hyperbolicity, $(p,z)_x\geqslant \min ((p,y)_x,(y,z)_x) - \delta$.
By assumptions, $(p,y)_x=|p-x|\leqslant(y,z)_x$, so that $(p,z)_x\geqslant |p-x| - \delta$. By definition of the Gromov product, then  
$(x,z)_p = |x-p| -(p,z)_x \leqslant \delta.$ By Lemma \ref{L: Gromov product 2}, $d(p,[x,z])\leqslant 5\delta$.
\end{proof}

\begin{lem}\label{L: thin triangles 2}
If $x$, $y$ and $z$ are points in $X$ and $p$ a projection of $z$ on a geodesic segment $[x,y]$, then 
$(z,x)_p\leqslant 4\delta$, $(y,z)_p\leqslant 4\delta$ and $|x-p|\leqslant (z,y)_x+4\delta$.
\end{lem}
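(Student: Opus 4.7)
The plan is to exploit the defining property of projection, namely $|z-p'|\geqslant |z-p|$ for every $p'\in[x,y]$, together with the thin-triangle estimate of Lemma \ref{L: thin triangles}. The middle inequality $(y,x)_p\leqslant 4\delta$ is in fact trivial, since $p$ lies on the geodesic segment $[x,y]$, so $(y,x)_p=0$; the substance is in the first and third inequalities.

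For $(z,x)_p\leqslant 4\delta$, I would introduce the auxiliary point $p_1$ on $[x,p]$ with $|x-p_1|=(p,z)_x$. The elementary identity $(p,z)_x+(x,z)_p=|x-p|$ (obtained by summing the defining formulas of the two Gromov products) gives $|p-p_1|=(x,z)_p$. Applying Lemma \ref{L: thin triangles} to the triangle with vertices $x,p,z$, and to the point $p_1\in[x,p]$ with $|x-p_1|=(p,z)_x$, yields $(x,z)_{p_1}\leqslant \delta$. Since $p_1\in[x,y]$, the projection property forces $|z-p_1|\geqslant |z-p|$. Writing out the definitions,
$$
2(z,x)_p-2(z,x)_{p_1}=(|z-p|-|z-p_1|)+|p-p_1|\leqslant |p-p_1|=(x,z)_p,
$$
which rearranges to $(z,x)_p\leqslant 2(z,x)_{p_1}\leqslant 2\delta$, stronger than the claimed $4\delta$.

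For $|x-p|\leqslant (z,y)_x+4\delta$, I would unfold $(z,x)_p\leqslant 2\delta$ into the bound $|x-p|\leqslant |x-z|-|z-p|+4\delta$, then invoke Lemma \ref{L: Gromov product} to get $|z-p|=d(z,[x,y])\geqslant (x,y)_z$, and finally use the trivial identity $(x,y)_z=|x-z|-(z,y)_x$ (again obtained by adding the two Gromov products at $x$ and at $z$) to conclude.

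I do not expect any serious obstacle: the one point requiring care is bookkeeping of the constants. The argument naturally delivers $(z,x)_p\leqslant 2\delta$, which is exactly what is needed so that the unfolding in the last step yields the stated $+4\delta$ without any further slack; had we only established $(z,x)_p\leqslant 4\delta$, we would have arrived at $+8\delta$ and missed the target.
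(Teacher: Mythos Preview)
Your argument is correct, and in fact yields the slightly sharper bound $(z,x)_p\leqslant 2\delta$, which you then use to hit the stated $+4\delta$ in the third inequality exactly. You are also right that $(y,x)_p=0$ is immediate since $p\in[x,y]$.

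The paper's proof takes a different, more direct route to the first inequality: rather than introducing the auxiliary point $p_1$ and invoking Lemma~\ref{L: thin triangles}, it writes $(z,x)_p=|z-p|-(p,x)_z\leqslant |z-p|-(y,x)_z$ (the last step because $p\in[x,y]$ forces $(p,x)_z\geqslant(y,x)_z$), and then applies Lemma~\ref{L: Gromov product 2} in the form $|z-p|=d(z,[x,y])\leqslant (x,y)_z+4\delta$ to conclude $(z,x)_p\leqslant 4\delta$. For the third inequality the paper unfolds this to $|z-x|\geqslant|z-p|+|p-x|-8\delta$ and combines with the triangle inequality $|z-y|\leqslant|z-p|+|p-y|$, whereas you instead use $|z-p|\geqslant(x,y)_z$ from Lemma~\ref{L: Gromov product} together with the identity $(x,y)_z=|x-z|-(z,y)_x$. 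Your approach trades one citation (Lemma~\ref{L: Gromov product 2}) for another (Lemma~\ref{L: thin triangles}) plus an auxiliary point; in return you get the sharper $2\delta$ for the first bound, which is what lets you reach the third inequality with the exact $+4\delta$ without slack. Both arguments are short and of comparable difficulty.
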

 \begin{proof}
As $p$ is on $[x,y]$, $(y,x)_z\leqslant (p,x)_z$. Thus, $(z,x)_p=|z-p|-(p,x)_z\leqslant |z-p| -(y,x)_z$. By Lemma \ref{L: Gromov product 2},  $(z,x)_p\leqslant 4\delta$. Similarly, $(y,z)_p\leqslant 4\delta$. Finally, by Remark \ref{R: inversed triangle inequality}, $|z-x|\geqslant |z-p|+|p-x| -8\delta$, so that $(z,y)_x\geqslant (p,y)_x-4\delta = |x-p| - 4\delta$, which completes the proof. 
 \end{proof}
 In general, we have the following important theorem of Gromov, called tree approximation lemma.

\begin{thm}[Tree approximation lemma {\cite[Section 6.1]{gromov_hyperbolic_1987} \cite[Chapitre 8, Th\'eor\`eme 1]{coornaert_geometry_1990}}] \label{T: tree approximation}
 Let $x_0$, $x_1$, $\ldots$, $x_n\in X$ and let $A$ be the union of geodesic segments $\bigcup_{i=1}^n[x_0,x_i]$. There is a metric tree $T$ and a map $f:A\to T$ such that 
 \begin{enumerate}
 \item for all $1\leqslant i\leqslant n$, the restriction of $f$ to the geodesic segment $[x_0,x_i]$ is an isometry  and
 \item  for all $x,x'\in A$, $|x-x'|-2\delta (\log_2 (n)+1) \leqslant |f(x)-f(x')|\leqslant  |x-x'|$. 
 \end{enumerate} \qed
\end{thm}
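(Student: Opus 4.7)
I would prove this by induction on $n$, the number of segments emanating from $x_0$. The base case $n = 1$ is trivial: the union $A = [x_0, x_1]$ is itself an interval, hence a tree, and one takes $f$ to be the identity. The case $n = 2$ is the classical tripod construction. Set $c := (x_1, x_2)_{x_0}$, and let $T$ be the tripod with root $f(x_0)$, a central vertex $m$ at distance $c$ from $f(x_0)$, and two arms of lengths $|x_0 - x_1| - c$ and $|x_0 - x_2| - c$ ending at $f(x_1)$ and $f(x_2)$. Define $f$ on each $[x_0, x_i]$ as the isometry onto $[f(x_0), f(x_i)]$ in $T$. The upper bound $|f(p) - f(q)| \leq |p - q|$ is immediate from the triangle inequality in $X$. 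For the lower bound, given $p \in [x_0, x_1]$ and $q \in [x_0, x_2]$ with $|x_0 - p| = s$ and $|x_0 - q| = t$, an elementary case analysis based on whether $s, t$ lie above or below $c$, combined with Lemma \ref{L: thin triangles} (to compare $p$ with the corresponding point on $[x_0, x_2]$ when $s \leq c$) and Lemma \ref{L: thin triangles 2} (to control projections), shows $|p - q| - |f(p) - f(q)|_T \leq 2\delta$.

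For the inductive step with $n \geq 3$, I partition the indices into two halves $I_1, I_2$ of sizes $\lceil n/2 \rceil$ and $\lfloor n/2 \rfloor$, and apply the induction hypothesis to each sub-union $A_j := \bigcup_{i \in I_j} [x_0, x_i]$, obtaining trees $T_j$ and maps $f_j : A_j \to T_j$ with distortion at most $2\delta(\log_2 \lceil n/2 \rceil + 1)$. I then glue $T_1$ and $T_2$ into a single tree $T$ using the $n=2$ tripod construction applied to representatives $y_j \in A_j$: one chooses the Gromov product $(y_1, y_2)_{x_0}$ as the amount by which the two subtrees should be pulled apart at a common interior vertex, then reroutes $f_1, f_2$ through the tripod. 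This final merging adds at most $2\delta$ to the additive error. Since $\log_2 \lceil n/2 \rceil + 1 \leq \log_2 n$ for $n \geq 2$, the total error is at most $2\delta(\log_2 n + 1)$, as required.

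The hardest part will be showing that the merging step produces only additive rather than compounded error: one must check that the tripod gluing is compatible with both inductive trees simultaneously, i.e., that the approximation errors inside $A_1$ and $A_2$ do not interact badly with the distances between points in different halves. The key input is Gromov's four-point condition $(x,y)_w \geq \min\{(x,z)_w, (y,z)_w\} - \delta$, which follows from $\delta$-hyperbolicity and controls how Gromov products at $x_0$ propagate across the configuration. Concretely, for a point $p \in A_1$ close to $x_0$ and $q \in A_2$ close to $x_0$, one shows that $(p, q)_{x_0}$ is essentially determined by $(y_1, y_2)_{x_0}$ up to $\delta$, which is exactly the content encoded in the merged tripod, while for points far from $x_0$ on the same side the inductive estimate applies directly. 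The $\log_2 n$ levels of merging accumulate $2\delta$ each, which is the origin of the correction term.
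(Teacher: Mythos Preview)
The paper does not prove this theorem at all: it is quoted from \cite{gromov_hyperbolic_1987} and \cite{coornaert_geometry_1990} and closed with a \qed. Your outline follows the standard argument found in those references --- binary induction on $n$, with $2\delta$ of error accumulating at each of the $\lceil\log_2 n\rceil$ levels --- so in spirit you are reproducing exactly the proof the paper defers to.

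One point does need tightening. In the inductive step you describe the merge as ``choose representatives $y_j\in A_j$ and apply the tripod construction to $(x_0,y_1,y_2)$''. That is not enough: the merged tree must control \emph{every} cross-distance $|p-q|$ with $p\in A_1$, $q\in A_2$, not just the one between the chosen representatives. The clean way to phrase the merge is at the level of Gromov products: by induction each half carries tree Gromov products $c'_{ij}\geq (x_i,x_j)_{x_0}$ satisfying the ultrametric inequality with $c'_{ij}-(x_i,x_j)_{x_0}\leq 2\delta(\log_2\lceil n/2\rceil+1)$; for a cross-pair $i\in I_1$, $j\in I_2$ one sets $c'_{ij}:=\min(c'_{ik},c'_{kj})$ for a suitable intermediary and checks, using the four-point inequality once, that this overshoots $(x_i,x_j)_{x_0}$ by at most one extra $2\delta$. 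You gesture at this (``the four-point condition\dots controls how Gromov products at $x_0$ propagate''), which is the right idea, but the tripod-with-representatives picture as written does not literally produce the merged tree nor the cross-pair estimate. Once you rewrite the merge in terms of Gromov products rather than a single tripod, the argument goes through as in \cite[Chapitre~8]{coornaert_geometry_1990}.
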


\subsubsection{Stability of local quasi-geodesics}

A path $\gamma: [a,b] \to X$  is a \emph{$1$--quasi-geodesic} if for all $[a',b']\subseteq [a,b]$ the
 $\length \left( \gamma([a',b'])\right) \leqslant  |\gamma(a')-\gamma(b')| + \delta.$
 We call $\gamma$ a \emph{$L$-local $1$--quasi-geodesic} if $L\geqslant 0$ and if the restriction of $\gamma$ to every interval of diameter at most $L$ is a $1$--quasi--geodesic.
 
We use the next lemma to study (Lemmas \ref{L: stability finite}, \ref{L: stability infinite})  and to construct (bi-infinite) local quasi-geodesics (Lemma \ref{L: invariant lines}). Part (1) is also used in ping-pong arguments, for instance to construct free subgroups in hyperbolic groups \cite{delzant_sous_1991,delzant_sous-groupes_1996}. Such arguments will be used in Sections \ref{S: Estimation} and \ref{S: small displacement}.

\begin{lem}[Discrete quasi-geodesics {\cite[7.2.C]{gromov_hyperbolic_1987} \cite{delzant_sous_1991} \cite[Proposition 1.3.4]{delzant_sous-groupes_1996}}] \label{L: discrete quasi-geodesics} Let $(x_i)_i$   be a sequence of points $x_i\in X$.
\begin{enumerate}
\item If for all $i$ 
$$ (x_{i-1},x_{i+1})_{x_i} \leqslant 1/2 \; \min\{ |x_i-x_{i-1}|, \, |x_i-x_{i+1}|\} -\alpha -\delta,$$
then $|x_i-x_j|\geqslant \alpha |i-j|$.
\item If, in addition, $\alpha \geqslant 9\delta$ and if for all $i$ we have $(x_{i-1}, x_{i+1})_{x_i}\leqslant \beta$, then for every $m\geqslant n+1$ the piece-wise geodesic 
$$ [x_n,x_{n+1}]\cup [x_{n+1},x_{n+2}] \cup \ldots \cup [x_{m-1},x_m] $$
is at distance at most $10\delta +\beta$ from any geodesic $[x_n,x_m]$. Inversely, every point on $[x_n,x_m]$  is at distance at most $10\delta + \beta$ from this piece-wise geodesic. \qed
\end{enumerate} 
\end{lem}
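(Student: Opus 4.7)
The plan for Part (1) is induction on $\Delta := |i - j|$. The first observation is that every Gromov product is non-negative, so the hypothesis $(x_{i-1}, x_{i+1})_{x_i} \leq \tfrac{1}{2}\min\{|x_{i-1} - x_i|, |x_i - x_{i+1}|\} - \alpha - \delta$ forces each edge length $d_i := |x_i - x_{i+1}|$ to satisfy $d_i \geq 2(\alpha + \delta)$. This handles the base case $\Delta = 1$, and $\Delta = 2$ follows from Remark \ref{R: inversed triangle inequality}.

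For the inductive step, I would carry a strengthened claim alongside the distance bound, namely
\[
(*)\qquad (x_i, x_j)_{x_{i+1}} \leq (x_i, x_{i+2})_{x_{i+1}} + \delta \quad (j > i+1),
\]
which, combined with the hypothesis, gives $(x_i, x_j)_{x_{i+1}} \leq \tfrac{1}{2} d_i - \alpha$. Granting $(*)$, the identity $|x_i - x_j| = d_i + |x_{i+1} - x_j| - 2(x_i, x_j)_{x_{i+1}}$ together with the inductive bound $|x_{i+1} - x_j| \geq \alpha(\Delta - 1)$ yields $|x_i - x_j| \geq \alpha(\Delta + 1)$, stronger than needed.

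Propagating $(*)$ is the main technical step and my expected obstacle. The four-point condition at $x_{i+1}$ applied with middle point $x_i$ and outer points $x_{i+2}, x_j$ gives the dichotomy: either $(*)$ holds for $(i,j)$, or $(x_{i+2}, x_j)_{x_{i+1}} \leq (x_i, x_{i+2})_{x_{i+1}} + \delta$. The second alternative is ruled out via the elementary identity $(x_{i+2}, x_j)_{x_{i+1}} = d_{i+1} - (x_{i+1}, x_j)_{x_{i+2}}$: applying $(*)$ inductively to the pair $(i+1, j)$ of length $\Delta - 1$, followed by the hypothesis at $x_{i+2}$, produces the matching lower bound $(x_{i+2}, x_j)_{x_{i+1}} \geq \tfrac{1}{2} d_{i+1} + \alpha$, contradicting the second case. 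The delicate point is that the four-point condition only delivers \emph{lower} bounds on Gromov products, so the needed upper bound on $(x_i, x_j)_{x_{i+1}}$ has to be extracted by playing hyperbolicity against the inductive assumption, which is why the strengthened claim $(*)$ is carried along.

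For Part (2), Part (1) shows that the piecewise geodesic $\gamma = [x_n, x_{n+1}] \cup \ldots \cup [x_{m-1}, x_m]$ is a discrete quasi-geodesic with edge lengths $\geq \alpha \geq 9\delta$ and local backtracking at each vertex bounded by $\beta$. I would then obtain the Hausdorff bound $10\delta + \beta$ by a standard stability argument: project each $x_k$ onto $[x_n, x_m]$ and use Lemmas \ref{L: Gromov product} and \ref{L: Gromov product 2} to bound the projection distance by $\beta + O(\delta)$, then transfer this control to points in the interior of an edge by the thin-triangle estimate (Lemma \ref{L: thin triangles}), and argue symmetrically for points on the geodesic $[x_n, x_m]$. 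The hypothesis $\alpha \geq 9\delta$ is exactly what guarantees that the projections of successive vertices occur in the expected order along $[x_n, x_m]$, which is what allows the local estimates to be glued into a global Hausdorff bound.
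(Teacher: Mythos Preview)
The paper does not prove this lemma: it is stated with citations to Gromov and Delzant and closed with a \qed, with no argument given. So there is no in-paper proof to compare against; I can only assess your proposal on its own merits.

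Your Part~(1) is correct and is essentially the standard proof found in the cited references. The induction on $\Delta=|i-j|$ with the auxiliary claim $(*)$ is exactly the right mechanism: applying the four-point inequality at $x_{i+1}$ with middle point $x_{i+2}$ gives the dichotomy you describe, and the second branch is killed by the inductive instance of $(*)$ for $(i+1,j)$ combined with the identity $(x_{i+2},x_j)_{x_{i+1}}=d_{i+1}-(x_{i+1},x_j)_{x_{i+2}}$. One small remark: the contradiction $\tfrac12 d_{i+1}+\alpha\leqslant \tfrac12 d_{i+1}-\alpha$ only bites when $\alpha>0$; for $\alpha\leqslant 0$ the conclusion is vacuous, so this is harmless but worth saying. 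Also note that your final inequality actually gives $|x_i-x_j|\geqslant \alpha(\Delta+1)$, matching what the references obtain.

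Your Part~(2) is a reasonable outline of the standard stability argument, and the ingredients you name (Lemmas~\ref{L: Gromov product}, \ref{L: Gromov product 2}, \ref{L: thin triangles}, together with the monotonicity of projections guaranteed by $\alpha\geqslant 9\delta$) are the right ones. The one place where you should be more careful is in tracking the constants to land exactly on $10\delta+\beta$: the bound on $(x_n,x_m)_{x_k}$ does not follow from a single application of hyperbolicity, and you will need to feed in the strengthened bound $(*)$ from Part~(1) (in both directions, i.e.\ also $(x_n,x_{k+1})_{x_k}\leqslant\beta+\delta$) before invoking Lemma~\ref{L: Gromov product 2}. Since the paper simply cites this result, your sketch is already more than what appears there.
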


Lemma \ref{L: discrete quasi-geodesics}(2) yields: 
 
\begin{lem}[Stability of local quasi-geodesics, cf. {\cite[Th\'eor\`eme 2.1.4]{delzant_courbure_2008}}] \label{L: stability finite} 
The Hausdorff distance of any two $200\delta$-local $1$--quasi--geodesics with same endpoints in $X$ is at most $22\delta$.  
\end{lem}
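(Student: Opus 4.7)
The plan is to bound the Hausdorff distance between each $200\delta$-local $1$-quasi-geodesic $\gamma$ with endpoints $x,y$ and a geodesic segment $[x,y]$, and then conclude by the triangle inequality for the Hausdorff metric applied to the two quasi-geodesics.

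To bound $\gamma$ against $[x,y]$, I would parametrise $\gamma:[0,L]\to X$ by arc length and discretise by sampling $x_i:=\gamma(100\delta\cdot i)$ for $0\leqslant i\leqslant m-1$, together with $x_m:=\gamma(L)$, choosing $m$ so that the final step has length in $(0,100\delta]$. For each interior index the $1$-quasi-geodesic hypothesis applied to the subinterval of length $200\delta$ containing $x_{i-1},x_i,x_{i+1}$ gives $|x_i-x_{i\pm 1}|\geqslant 100\delta-\delta$ and $|x_{i-1}-x_{i+1}|\geqslant 200\delta-\delta$, whence
\[
(x_{i-1},x_{i+1})_{x_i}\leqslant \tfrac{1}{2}\bigl(|x_i-x_{i-1}|+|x_i-x_{i+1}|-|x_{i-1}-x_{i+1}|\bigr)\leqslant \tfrac{\delta}{2}.
\]
Applying Lemma \ref{L: discrete quasi-geodesics}(2) with $\alpha=9\delta$ and $\beta=\delta/2$ (the condition reduces, after multiplication by $2$, to $\delta\leqslant 99\delta-20\delta$) then shows that the piecewise geodesic $P:=[x_0,x_1]\cup\cdots\cup[x_{m-1},x_m]$ and any geodesic $[x,y]=[x_0,x_m]$ lie at mutual Hausdorff distance at most $10\delta+\delta/2$.

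To compare $\gamma$ with $P$, for any $z=\gamma(s)$ with $s$ in the $i$-th subinterval $[t_i,t_{i+1}]$ the analogous Gromov product calculation on $\gamma|_{[t_i,t_{i+1}]}$ gives $(x_i,x_{i+1})_z\leqslant\delta/2$, and Lemma \ref{L: Gromov product 2} yields $d(z,[x_i,x_{i+1}])\leqslant 9\delta/2$; conversely, for $w\in[x_i,x_{i+1}]$ the continuity of $s\mapsto|x_i-\gamma(s)|$ on $[t_i,t_{i+1}]$, which varies from $0$ to $|x_i-x_{i+1}|$, produces a parameter $s$ with $|x_i-\gamma(s)|=|x_i-w|$, and a thin-triangle argument based on Lemma \ref{L: thin triangles} bounds $|\gamma(s)-w|$ by a small multiple of $\delta$. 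Combining these estimates with careful constant tracking yields that $\gamma$ lies at Hausdorff distance at most $11\delta$ from $[x,y]$, and the triangle inequality delivers the announced bound of $22\delta$. The main delicate points are the reverse arc-to-$P$ comparison (every point of $P$ close to $\gamma$) and the possibly short final interval near $x_m$; both require only boundary adjustments rather than new ideas, while the interior estimate is handled uniformly by the discretisation together with Lemma \ref{L: discrete quasi-geodesics}(2).
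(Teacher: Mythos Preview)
Your proposal is correct and follows essentially the same approach as the paper: discretise the local quasi-geodesic, bound the Gromov products at the sample points using the $1$--quasi-geodesic condition on $200\delta$-subarcs, invoke Lemma~\ref{L: discrete quasi-geodesics}(2) with $\alpha=9\delta$ to control the piecewise geodesic against $[x,y]$, and finish by the triangle inequality. The paper samples so that consecutive distances lie in $[50\delta,100\delta]$ (thereby sidestepping the short-final-interval issue you flag) and simply asserts the $11\delta$ bound for $\gamma$ versus $[x,y]$; your version is slightly more explicit in separating the $\gamma$-to-$P$ and $P$-to-$[x,y]$ comparisons, but the substance is the same.
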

\begin{proof}
Let $\gamma$ be a $200\delta$-local $1$--quasi--geodesic from $x$ to $y$. Take $x_0:=x, \ldots ,x_i, \ldots, x_n:=y$ on $\gamma$ such that $50\delta \leqslant |x_i-x_{i+1}|\leqslant 100\delta$. As $x_{i-1}$, $x_i$, and $x_{i+1}$ are on a $1$--quasi--geodesic we have that $(x_{i-1},x_{i+1})_{x_i}\leqslant \delta$. By Lemma \ref{L: discrete quasi-geodesics}(2) with $\beta = \delta$ and $\alpha = 9 \delta$, the Hausdorff distance between $\gamma$ and $[x,y]$ is at most $11\delta$. Therefore, two $200\delta$-local $1$--quasi--geodesics from $x$ to $y$ are of Hausdorff distance at most $22\delta$.
\end{proof}

We denote by $\partial  X$ the \emph{boundary at infinity of $X$}, see \cite[Chapitre 2]{coornaert_geometry_1990}. The stability of local quasi-geodesics extends to bi-infinite local quasi-geodesics with endpoints in $\partial  X$.

\begin{lem}[cf. {\cite[Chapitre 3, Th\'eor\`eme  3.1]{coornaert_geometry_1990}, \cite[Corollary 2.6]{coulon_geometry_2014}}] \label{L: stability infinite}
The Hausdorff distance of any two $200\delta$-local $1$--quasi--geodesics with same endpoints in $X\cup \partial  X$ is at most $60\delta$.  \qed
\end{lem}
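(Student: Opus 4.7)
The plan is to reduce the infinite-endpoint case to the finite-endpoint case (Lemma~\ref{L: stability finite}) by truncation, and then compare the resulting geodesic chords using the thin-quadrilateral property in a $\delta$-hyperbolic space. Let $\gamma_1,\gamma_2$ be two $200\delta$--local $1$--quasi-geodesics with common endpoints $a,b\in X\cup\partial X$, and let $p$ be a point on $\gamma_1$; it suffices to produce a point of $\gamma_2$ at distance at most $60\delta$ from $p$.

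First, I would truncate both paths. Pick $x_1,x_2\in\gamma_1$ lying far to either side of $p$ in the intrinsic parameter along $\gamma_1$; if an endpoint is in $X$, take $x_i$ equal to it, otherwise push $x_i$ arbitrarily far out along $\gamma_1$ toward the ideal endpoint. Since $\gamma_2$ shares its limit points with $\gamma_1$ in $X\cup\partial X$, I can similarly choose $y_1,y_2\in\gamma_2$ with either $y_i=x_i$ (finite endpoint) or with the Gromov product $(x_i,y_i)_p$ as large as desired (ideal endpoint, using that $\gamma_1(t)$ and $\gamma_2(s)$ converge in the Gromov boundary to the same point). The restrictions $\gamma_1|_{[x_1,x_2]}$ and $\gamma_2|_{[y_1,y_2]}$ are $200\delta$--local $1$--quasi-geodesics with finite endpoints, so by Lemma~\ref{L: stability finite} each is within Hausdorff distance $22\delta$ of its geodesic chord $[x_1,x_2]$, respectively $[y_1,y_2]$.

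Next I would compare the two chords. Let $p'$ be a projection of $p$ onto $[x_1,x_2]$, so $|p-p'|\leqslant 22\delta$. A geodesic quadrilateral with vertices $x_1,y_1,y_2,x_2$ is $2\delta$--thin (cut it along a diagonal into two $\delta$--thin triangles), so $p'$ lies in the $2\delta$--neighbourhood of $[x_1,y_1]\cup[y_1,y_2]\cup[x_2,y_2]$. Since $p$ was chosen far from $x_1$ and $x_2$ along $\gamma_1$, and a $1$--quasi-geodesic moves roughly linearly in distance, $p'$ is far from $x_1,x_2$ along the chord. When $(x_i,y_i)_p$ is sufficiently large, the short transverse sides $[x_i,y_i]$ meet a neighbourhood of $[x_1,x_2]$ only close to $x_i$, so $p'$ must be $2\delta$--close to some point $q\in[y_1,y_2]$, which in turn is $22\delta$--close to some point $q'\in\gamma_2|_{[y_1,y_2]}$. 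Summing, $d(p,\gamma_2)\leqslant 22\delta+2\delta+22\delta=46\delta<60\delta$.

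The main obstacle is the bookkeeping around those transverse sides $[x_i,y_i]$: in the ideal case they may be arbitrarily long, and I must ensure that the projection $p'$ really projects onto the "middle" side $[y_1,y_2]$ rather than sliding back along one of them. This amounts to quantifying simultaneously the two conditions "$p$ far from $x_i$ along $\gamma_1$" and "$(x_i,y_i)_p$ large", and tracking constants through the two sub-triangles of the quadrilateral by means of Lemma~\ref{L: thin triangles 2}. This is where the slack between $46\delta$ and $60\delta$ is absorbed, but the estimate should comfortably remain within the stated bound.
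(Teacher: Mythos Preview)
The paper does not supply a proof of this lemma: it is stated with a \qed and a citation to \cite{coornaert_geometry_1990,coulon_geometry_2014}, so there is no in-paper argument to compare against. Your truncation-and-quadrilateral approach is the standard one and is correct in outline.

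One constant to watch: with the paper's convention (the four-point inequality on Gromov products), geodesic triangles are $4\delta$--slim rather than $\delta$--slim, so a quadrilateral is $8\delta$--thin, not $2\delta$--thin. Your sum then becomes $22\delta+8\delta+22\delta=52\delta$, still comfortably under $60\delta$. The ``transverse side'' concern you flag is handled exactly as you suggest, via Lemma~\ref{L: thin triangles} or~\ref{L: thin triangles 2}: once $(x_i,y_i)_p$ exceeds, say, $30\delta$, any point within $4\delta$ of $[x_i,y_i]$ is within $8\delta$ of both $x_i$ and $y_i$'s side of the picture only near the corner, so $p'$ (which is far from $x_i$) must fall on $[y_1,y_2]$.
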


\subsubsection{Isometries} Let $g$ be an isometry of $X$. 

\begin{df}[Translation length] The \emph{translation length $[g]$ of $g$} is 
$$[g]:=\inf_{x\in X} |gx-x|.$$
\end{df}

If $[g]>16\delta$, then $g$ is a hyperbolic isometry \cite[Chapitre 10,  Propositions 6.3 et 6.4]{coornaert_geometry_1990}.

\begin{df}[\hspace{-0.15mm}{\cite[D\'efinition 2.3.2]{delzant_courbure_2008} \cite[Definition 2.27]{coulon_geometry_2014}}] Let 
$$C_g:=\{ x\in X\mid |gx-x|\leqslant [g]+8 \delta\}.$$ 
\end{df}

A subset $C\subset X$ is \emph{$\alpha$--quasi-convex} if for all two points $x,y\in C$ the distance of each point on a geodesic segment $[x,y]$ to $C$ is at most $\alpha$. For instance, geodesic segments or lines are $2\delta$--quasi-convex. 

\begin{lem}[{\hspace{-0.15mm}\cite[Proposition 2.3.3]{delzant_courbure_2008} \cite[Proposition 2.28]{coulon_geometry_2014}}] \label{L: axis} Let $g$ be an isometry of $X$.  Then $C_g$ is $10\delta$--quasi-convex and $g$--invariant. Moreover, for all $x\in X$, 
$$[g]+ 2d(x, C_g) - 10\delta \leqslant |gx-x| \leqslant [g] + 2d(x, C_g) +10\delta.$$ \qed
\end{lem}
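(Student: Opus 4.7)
The statement has three parts: $g$-invariance of $C_g$, quasi-convexity of $C_g$, and the two-sided displacement formula. My plan is to prove them in an order that avoids circularity: $g$-invariance first, then the upper bound of the displacement formula, then a preliminary version of quasi-convexity, then the lower bound of the displacement formula, and finally the sharpened quasi-convexity constant $10\delta$.

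$g$-invariance is immediate, since $|g(gx)-gx|=|gx-x|\leqslant [g]+8\delta$ whenever $x\in C_g$ (using that $g$ is an isometry). For the upper bound, I would take a nearest-point projection $p\in C_g$ of $x$. Then $gp\in C_g$ by $g$-invariance, so $|p-gp|\leqslant [g]+8\delta$, and because $g$ is an isometry $|gx-gp|=|x-p|=d(x,C_g)$. The triangle inequality along the broken path $x\to p\to gp\to gx$ immediately yields the upper bound $|gx-x|\leqslant [g]+2d(x,C_g)+8\delta$.

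Quasi-convexity should follow from an \emph{approximate convexity} of the displacement function $f(z):=|gz-z|$ along geodesics: for $x,y\in X$ and $p\in[x,y]$, $f(p)\leqslant \max(f(x),f(y))+C\delta$ for an absolute constant $C$. I would obtain this from the $2\delta$-thinness of the geodesic quadrilateral $x,y,gy,gx$, using the fact that $g$ maps $[x,y]$ isometrically to $[gx,gy]$ so that for each $p\in[x,y]$ one has a pairing between $p$ and $gp$ via the thin-quadrilateral decomposition. Applied to $x,y\in C_g$ this gives $f(p)\leqslant [g]+(C+8)\delta$ on $[x,y]$. To convert such a displacement bound into a distance bound to $C_g$, one needs the lower bound of the displacement formula; so at this stage one only gets coarse quasi-convexity, and the sharp constant $10\delta$ is extracted after the lower bound is proved.

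For the lower bound, the idea is that the broken path $x\to p\to gp\to gx$ behaves like a geodesic. The two bending Gromov products $(x,gp)_p$ and $(p,gx)_{gp}$ must be shown to be $O(\delta)$: the first because $p$ is a nearest-point projection of $x$ onto a quasi-convex set containing $gp$ (an analogue of Lemma \ref{L: thin triangles 2} for quasi-convex subsets rather than geodesic segments, proved by projecting $x$ onto an auxiliary geodesic $[p,gp]$ and combining with the quasi-convexity constant), and the second by symmetry, applying $g^{-1}$. With both bending products controlled, Remark \ref{R: inversed triangle inequality} applied at both bending points (equivalently, Lemma \ref{L: discrete quasi-geodesics}(1)) gives $|gx-x|\geqslant |x-p|+|p-gp|+|gp-gx|-O(\delta)\geqslant 2d(x,C_g)+[g]-O(\delta)$. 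The main obstacle is precisely the potential circularity here: the bending-product bound uses a projection principle for the quasi-convex set $C_g$, while quasi-convexity of $C_g$ is what we are trying to prove. The bootstrap outlined above resolves this, and careful bookkeeping of the universal constants produced by each use of $\delta$-hyperbolicity, $2\delta$-thinness, and the inverse triangle inequality is what delivers the specific constant $10\delta$ in both the quasi-convexity statement and the displacement formula.
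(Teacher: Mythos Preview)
The paper does not prove this lemma at all: the statement carries a terminal \qed{} and is simply quoted from \cite{delzant_courbure_2008,coulon_geometry_2014}. So there is no ``paper's own proof'' to compare against; your outline is essentially the standard argument one finds in those references.

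Your treatment of $g$--invariance and of the upper bound is correct and matches the usual proof. The use of approximate convexity of the displacement $f(z)=|gz-z|$ along geodesics is also the standard ingredient for quasi-convexity, and the lower bound via the two bending products of the broken path $x\to p\to gp\to gx$ is exactly how the cited sources proceed.

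One point deserves more care than ``bookkeeping''. Max-convexity of $f$ tells you that a geodesic $[p,gp]$ between two points of $C_g$ stays in the \emph{larger} sublevel set $\tilde C_g=\{f\leqslant [g]+(8+C)\delta\}$; that is not the same as saying it stays within bounded distance of $C_g$. When you then try to bound $(x,gp)_p$ by projecting $x$ onto $[p,gp]$, the foot $p'$ lies only in $\tilde C_g$, and the fact that $p$ minimises $d(x,\cdot)$ over $C_g$ (not over $\tilde C_g$) gives no immediate control on $|p-p'|$. In other words, ``coarse quasi-convexity'' in your sense does not yet feed into the projection argument. The cited proofs close this gap by an independent step --- for instance, showing directly from the $\delta$--thin triangle $g^{-1}x,x,gx$ (or from the midpoint inequality with crossed pairings) that the midpoint of $[x,gx]$ already lies in $C_g$, which yields the lower bound without invoking quasi-convexity at all. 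Once the two-sided displacement formula is in hand, quasi-convexity with the sharp constant follows as you indicate. Your sketch has the right architecture, but the sentence ``one only gets coarse quasi-convexity'' hides the one genuinely nontrivial step.
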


In the case of a tree or a negatively curved (CAT(-1)) space and a hyperbolic isometry $g$, the set $C_g$ is a neighbourhood of the bi-infinite geodesic $g$--invariant line $L_g$, the axis of $g$.  In general, $L_g$ is a $[g]$--local $1$--quasi-geodesic, and we have:

\begin{lem}[cf. {\cite[Proposition 2.3.5]{delzant_courbure_2008}}]\label{L: invariant lines}
Let $g$ be an isometry with $[g]>200 \delta$ and let $x\in C_g$ with $|gx-x|\leqslant [g] + \delta$. Then the piece-wise geodesic $L_g:= \bigcup _{n\in \mathbb{Z}} [g^nx,g^{n+1}x]$ is a $[g]$--local $1$--quasi--geodesic bi-infinite $g$--invariant line. 
The set of endpoints of $L_g$ in $\partial X$ is fixed by $g$. Moreover, $L_g$ is $11\delta$--quasi-convex. In particular, $C_g\subset L_g^{+30\delta}$. 
\end{lem}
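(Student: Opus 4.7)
My plan is to verify each property of $L_g$ in sequence. The central step is that $L_g$ is a $[g]$-local $1$-quasi-geodesic; the remaining claims will then follow from general stability and quasi-convexity results.

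Since $x \in C_g$ and $C_g$ is $g$-invariant by Lemma \ref{L: axis}, every $g^n x$ lies in $C_g$, and the construction makes $L_g$ manifestly $g$-invariant. The hypothesis $[g] > 200\delta > 0$ forces $g$ to be hyperbolic, so $|g^n x - x| \to \infty$ as $|n| \to \infty$ and $L_g$ is bi-infinite. To establish the local quasi-geodesic property, by $g$-equivariance it suffices to bound the Gromov product at one vertex, say $(g^{-1}x, gx)_x$. Using $|g^{-1}x - gx| = |x - g^2 x|$ and $|g^{-1}x - x| = |gx - x|$,
\[
(g^{-1}x,\, gx)_x \;=\; |gx - x| - \tfrac12 |g^2 x - x|,
\]
so since $|gx - x| \leq [g] + \delta$ the task reduces to proving $|g^2 x - x| \geq 2[g] - C\delta$ for some universal constant $C$. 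This is the classical near-additivity of translation lengths in $\delta$-hyperbolic geometry once $[g]$ exceeds a small multiple of $\delta$; I would derive it via a midpoint bootstrap. Let $m$ be a midpoint of $[x, g^2 x]$. The $10\delta$-quasi-convexity of $C_g$ (which contains $x$ and $g^2 x$) gives $d(m, C_g) \leq 10\delta$, hence $|gm - m| \leq [g] + 30\delta$ by Lemma \ref{L: axis}; comparing the near-minimal displacement of $g$ at $m$ and at $x$ via hyperbolicity in the quadrilateral $(x, gx, g^2 x, m)$ then yields the desired lower bound on $|g^2 x - x|$. This produces a bound $(g^{-1}x, gx)_x \leq C'\delta$ for a universal $C'$, and since $[g] > 200\delta$ this is much smaller than $[g]$. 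Because each segment of $L_g$ has length $\leq [g] + \delta$, any sub-path of length $\leq [g]$ meets at most two consecutive segments, and the vertex Gromov-product bound translates directly into the $1$-quasi-geodesic inequality for such sub-paths, so $L_g$ is a $[g]$-local $1$-quasi-geodesic.

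The remaining assertions then follow. Being a $g$-invariant bi-infinite $[g]$-local $1$-quasi-geodesic, $L_g$ has two well-defined endpoints in $\partial X$ by Lemma \ref{L: stability infinite}, and they are fixed by $g$ because $g$ translates non-trivially along $L_g$. For the $11\delta$-quasi-convexity, I apply Lemma \ref{L: discrete quasi-geodesics}(2) to the vertices $(g^n x)$ of $L_g$ with $\beta = \delta$ and $\alpha = 9\delta$: every point on a geodesic $[g^n x, g^m x]$ lies within $10\delta + \delta = 11\delta$ of the corresponding sub-arc of $L_g$. For $C_g \subset L_g^{+30\delta}$, given $y \in C_g$ I form the analogous piece-wise geodesic $L_y := \bigcup_n [g^n y, g^{n+1} y]$; the same argument shows $L_y$ is a local $1$-quasi-geodesic with the same endpoints at infinity as $L_g$, and Lemma \ref{L: stability infinite} bounds their Hausdorff distance. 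The sharper constant $30\delta$ is then obtained by combining the $11\delta$-quasi-convexity of $L_g$ with the tighter displacement bound $|gy - y| \leq [g] + 8\delta$ available at $y \in C_g$. The main obstacle throughout is the inequality $|g^2 x - x| \geq 2[g] - O(\delta)$; the hypothesis $[g] > 200\delta$ is precisely what makes the bootstrap work, keeping all $O(\delta)$ error terms from quasi-convexity and from the $2$-Lipschitz displacement function negligible compared to $[g]$.
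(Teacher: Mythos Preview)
Your overall strategy is sound, but the key step—the ``midpoint bootstrap'' for $|g^2x-x|\geqslant 2[g]-C\delta$—does not work as you describe it. Taking $m$ to be the midpoint of $[x,g^2x]$ and bounding $|gm-m|$ from \emph{above} via Lemma~\ref{L: axis} yields no lower bound on $|x-g^2x|$; the quadrilateral comparison you invoke has no evident way to close the loop, since the quantity you are trying to control ($|x-g^2x|=2|x-m|$) is precisely what locates $m$ in the first place. The paper uses a different and far simpler midpoint: set $m_n$ to be the midpoint of the edge $[g^nx,g^{n+1}x]$ (choosing the segments $g$-equivariantly so that $m_{n+1}=gm_n$). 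Then $|m_n-m_{n+1}|=|m_n-gm_n|\geqslant[g]$ \emph{immediately} from the definition of translation length, while the arc of $L_g$ joining $m_n$ to $m_{n+1}$ has length $|x-gx|\leqslant[g]+\delta$. This one line replaces your bootstrap entirely and, moreover, delivers the $1$-quasi-geodesic inequality with the exact additive error $+\delta$ demanded by the paper's definition—whereas a vertex bound of the form $(g^{-1}x,gx)_x\leqslant C'\delta$ with $C'>\tfrac12$ would only give $+2C'\delta$.

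For the inclusion $C_g\subset L_g^{+30\delta}$, your plan (build $L_y$ for each $y\in C_g$ and invoke Lemma~\ref{L: stability infinite}) would work in principle but overshoots the constant: membership $y\in C_g$ only guarantees $|gy-y|\leqslant[g]+8\delta$, so $L_y$ is at best a local $8$-quasi-geodesic, and stability then gives a Hausdorff bound nearer $60\delta$. The paper instead argues by direct contradiction: if some $y\in C_g$ had $d(y,L_g)>30\delta$, then the $11\delta$-quasi-convexity and $g$-invariance of $L_g$ force $|gy-y|>[g]+8\delta$, contradicting $y\in C_g$.
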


\begin{proof} Let us first note that $[g]\leqslant |g^nx-g^{n+1}x|\leqslant [g] +\delta$. Let $m_n$ be the midpoint of $[g^nx,g^{n+1}x]$. Then $|m_n-m_{n+1}|\leqslant [g] + \delta$ and as $m_{n+1}=gm_n$ also $|m_n-m_{n+1}|\geqslant [g]$. Hence, $L_g$ is a $[g]$--local $1$--quasi--geodesic. Using Lemma \ref{L: discrete quasi-geodesics} as in the proof of Lemma \ref{L: stability finite}, we see that $L_g$ is indeed a bi-infinite line. It is $g$--invariant by definition. We argue similarly to see that $L_g$ is quasi-convex: for every two points $z$, $y$ of distance at least $200\delta$ on $L_g$, the proof of Lemma \ref{L: stability finite} implies that $[z,y]$ is at Hausdorff distance at most $11\delta$ from $L_g$. 
Now, if $x\in C_g$, then $d(x,L_g)\leqslant 30\delta$. Indeed, otherwise quasi-convexity and $g$--invariance of $L_g$ would imply that $|x-gx| > [g]+8\delta$, which is a contradiction. Hence, $C_g\subset L_g^{+30\delta}$.
\end{proof}
\begin{rem} \label{R: neighbourhood quasi-convex}
If $\alpha \geqslant 10\delta$, then $C_g^{+\alpha}$ is $10\delta$--quasi-convex, see {\cite[Chapitre 10, Proposition 1.2]{coornaert_geometry_1990}}. Similarly, if $\alpha \geqslant 11\delta$, then $L_g^{+\alpha}$ is $11\delta$--quasi-convex.
\end{rem}
Furthermore, we have:

\begin{lem}\label{L: midpoint} Let $g$ be an isometry of $X$ and let $x,y \in X$.  
If $m_1 $ is the midpoint of $[x,gx]$ and $m_2$ is the midpoint of $[y,gy]$, then $m_1 \in C_g^{+60\delta}$ and $m_2\in C_g^{+60 \delta}$. Thus  $[m_1,m_2] \subset C_g^{+ 70 \delta}.$
\end{lem}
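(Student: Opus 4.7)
My plan is to first show that each midpoint $m_1, m_2$ individually lies in $C_g^{+12\delta}$, and then conclude the claim about $[m_1, m_2]$ from the quasi-convexity of $C_g^{+12\delta}$ (Remark \ref{R: neighbourhood quasi-convex}). By symmetry it suffices to bound $d(m_1, C_g)$.

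Write $A := |x - gx|$ and $a := d(x, C_g)$, and pick $p \in C_g$ with $|x - p| \leq a$ (up to an arbitrarily small error if the infimum is not attained). Since $C_g$ is $g$-invariant (Lemma \ref{L: axis}), we have $gp \in C_g$, so $|gx - gp| = |x - p| \leq a$ and $|p - gp| \leq [g] + 8\delta$ by the definition of $C_g$. The lower bound in Lemma \ref{L: axis} yields $A \geq [g] + 2a - 10\delta$, hence $A/2 \geq a + [g]/2 - 5\delta$.

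Next I apply quadrilateral thinness to the geodesic quadrilateral with vertices $x, p, gp, gx$ (obtained by splitting along a diagonal and iterating triangle thinness): every point of $[x, gx]$ lies within $2\delta$ of the union $[x, p] \cup [p, gp] \cup [gp, gx]$. In particular some $r$ on one of these three sides satisfies $|m_1 - r| \leq 2\delta$. If $r \in [p, gp]$, then $r \in C_g^{+10\delta}$ by the $10\delta$-quasi-convexity of $C_g$, so $m_1 \in C_g^{+12\delta}$. If $r \in [gp, gx]$, then $|m_1 - gx| = A/2$ combined with $|r - gx| \leq |gp-gx| \leq a$ gives $A/2 - 2\delta \leq a$; together with $A \geq [g] + 2a - 10\delta$ this forces $|r - gp| = |gp - gx| - |r - gx| \leq a - (A/2 - 2\delta) \leq 7\delta$, hence $|m_1 - gp| \leq 9\delta$ and $m_1 \in C_g^{+9\delta}$ since $gp \in C_g$. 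The case $r \in [x, p]$ is symmetric and gives the same bound.

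With $m_1, m_2 \in C_g^{+12\delta}$ established, I finish using Remark \ref{R: neighbourhood quasi-convex}: since $12\delta \geq 10\delta$, the thickening $C_g^{+12\delta}$ is itself $10\delta$-quasi-convex, so every point of $[m_1, m_2]$ lies within $10\delta$ of $C_g^{+12\delta}$, hence within $22\delta \leq 70\delta$ of $C_g$. The main obstacle is the first step, namely showing that the midpoint of $[x, gx]$ lies close to $C_g$ at all; the quadrilateral argument works, but requires the two-sided control of $|gx - x|$ from Lemma \ref{L: axis} to rule out, in the two "edge" cases, that $m_1$ drifts far from the short piece $[p, gp] \subset C_g^{+10\delta}$.
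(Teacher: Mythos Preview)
Your overall strategy matches the paper's: show each midpoint lies in a bounded neighbourhood of $C_g$, then invoke the $10\delta$--quasi-convexity of that neighbourhood (Remark~\ref{R: neighbourhood quasi-convex}) to control the whole segment $[m_1,m_2]$. The paper carries out the first step via a case distinction on $d(x,C_g)$ and a sequence of Gromov-product estimates, whereas your quadrilateral-thinness argument on $x,p,gp,gx$ is more direct and arguably cleaner.

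One correction: with the paper's definition of $\delta$--hyperbolicity (the four-point inequality on Gromov products), geodesic triangles are not $\delta$--slim but only $4\delta$-- or $5\delta$--slim (cf.\ Lemma~\ref{L: thin triangles}, which gives $d(p,[x,z])\leqslant 5\delta$). Iterating along a diagonal therefore gives roughly $8\delta$--$10\delta$ for the quadrilateral, not $2\delta$. This only inflates your constants---e.g.\ $m_1\in C_g^{+25\delta}$ rather than $C_g^{+12\delta}$---and the conclusion $[m_1,m_2]\subset C_g^{+70\delta}$ still holds with room to spare; but as written the $2\delta$ is not justified by anything in the paper.
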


\begin{ex} Let us illustrate this for the free group acting on its Cayley graph. Then the vertices $1$ and $g$ are in $C_g$ if and only if $g$ is cyclically reduced, that is, the first  and last letter of $g$ are not inverse to each other. If $g$ is not cyclically reduced, then $g=hg'h^{-1}$ for some non-trivial element $h$ and a cyclically reduced $g'$. As $1$ and $g'$ and therefore the midpoint of the geodesic between these points  lie in $C_{g'}$, the midpoint of the geodesic from $1$ to $g$ lies in $C_g$. 
\end{ex}

\begin{proof} 
If  $d(x, C_g)\leqslant 50\delta$, then $d(gx,C_g)\leqslant 50\delta$ as $C_g$ is $g$-invariant (Lemma \ref{L: axis}). Thus $m_1$ is in $C_g^{+60\delta}$ by $10\delta$--quasi-convexity of $C_g^{+50\delta}$ (Remark \ref{R: neighbourhood quasi-convex}).

Otherwise, let $p \in C_g^{+23\delta}$ such that $d(x,C_g^{+23\delta}) - \delta/2 \leqslant |x-p|\leqslant d(x,C_g^{+23\delta})$.  
 By Lemma \ref{L: axis}, $gp\in C_g^{+23\delta}$ and  $|p-gp|\geqslant 34 \delta$. 
As $C_g^{+23\delta}$ is $10\delta$--quasi-convex, then (by \cite[Chapitre 10, Proposition 2.1]{coornaert_geometry_1990}), 
$|x-gx|\geqslant 2|x-p|+|p-gp|-33\delta \geqslant 2|x-p|$. Thus,  $|p-x|\leqslant |m_1-x|$.

Then $(p,gx)_x\leqslant |p-x|\leqslant |m_1-x|$, and as $(p,x)_{gx}=|gx-x|-(p,gx)_x$, $(p,x)_{gx}\geqslant |m_1-gx|$. Thus, $d(m_1,[p,gx])\leqslant 5\delta$ by Lemma \ref{L: thin triangles}.

If $m_1'$ denotes  a projection of $m_1$ on $[p,gx]$, then  $(p,gp)_{gx}\leqslant |gx-gp|\leqslant |m_1-gx| \leqslant  |m_1'-gx|+5\delta$.  
 Therefore, $(gx,gp)_p=|p-gx| -(p,gp)_{gx}\geqslant |m_1'-p|-5\delta$. Thus, by Lemma \ref{L: thin triangles}, $d(m_1',[p,gp])\leqslant 5\delta +5\delta =10\delta$. We conclude that $d(m_1,[p,gp])\leqslant 15 \delta$. 

By $10\delta$--quasi-convexity (Remark \ref{R: neighbourhood quasi-convex}) of $C_g^{+23\delta}$, $m_1\in C_g^{+48\delta}\subset C_g^{+60\delta}$.  Analogously $m_2\in C_g^{+60\delta}$. By $10 \delta$--quasi-convexity (Remark \ref{R: neighbourhood quasi-convex}), $[m_1,m_2]\subset C_g^{+ 70 \delta}$. This completes the proof. 
\end{proof}

\subsection{Acylindrical actions on hyperbolic spaces}\label{S: acylindrical actions}

In a free group acting on its Cayley graph, or more generally, a group acting $\kappa$--acylindrically on a tree, two hyperbolic elements $h$ and $g$ commute if and only if the diameter of the intersection of their axis is larger than $[g]+[h]+\kappa$. Indeed, the commutator $[g,h]$ then pointwise stabilises  a geodesic segment of length $\kappa$ in the intersection of the axis, and by acylindricity this is possible only if $g$ and $h$ commute.

In this section we discuss a well known generalisation  of this fact for groups acting on hyperbolic spaces (Lemma \ref{L: small cancellation}), see  \cite{delzant_courbure_2008,coulon_geometry_2014,coulon_partial_2016}. We later use this to show that periods are unique.

\subsubsection{Acylindrical actions}  

We use the following definition of acylindricity of {\cite[Proposition 5.31]{dahmani_hyperbolically_2017}. 
\begin{df}[Acylindrical action]\label{D: acylindrical} Let $\kappa_0 \geqslant \delta  \hbox{ and } N_0 \geqslant 1.$ The action of $G$ on the $\delta$--hyperbolic space $X$ is \emph{$(\kappa_0, N_0)$--acylindrical} 
 if for all $x,y\in X$ of distance $|x-y|\geqslant \kappa_0$ there are at most $N_0$ isometries $g\in G$  with 
$|gx-x|\leqslant 100\delta \hbox{ and } |gy-y|\leqslant 100 \delta.$
\end{df}

\begin{lem}[{\hspace{-0.15mm}\cite[Proposition 5.31]{dahmani_hyperbolically_2017}}] \label{L: acylindrical} Let $d\geqslant 1$. If  
$$ \kappa(d):= \kappa_0 + 400 d \delta +100 \delta  \hbox{ and } N(d):= 23 d N_0,$$
then for all $x,y\in X$ of distance $|x-y|\geqslant \kappa(d)$ there are at most $N(d)$ isometries $g\in G$  with 
$$|gx-x|\leqslant 100d\delta \hbox{ and } |gy-y|\leqslant 100 d\delta.$$ \qed
\end{lem}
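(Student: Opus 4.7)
The goal is to bootstrap the base $(\kappa_0, N_0)$-acylindricity, which bounds the number of isometries of displacement $\leqslant 100\delta$ at two points of distance $\geqslant \kappa_0$, to a linear-in-$d$ bound at displacement $\leqslant 100 d\delta$. Throughout I set $A := \{g \in G : |gx-x|\leqslant 100d\delta,\ |gy-y|\leqslant 100d\delta\}$; the aim is to show $|A|\leqslant 23 d N_0$.

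First I would choose auxiliary points $x', y' \in [x,y]$ at distance $200 d\delta$ from $x$ and $y$ respectively. The hypothesis $|x-y|\geqslant\kappa_0 + 400d\delta + 100\delta$ then yields $|x'-y'|\geqslant \kappa_0 + 100\delta\geqslant \kappa_0$, so that $(x',y')$ is admissible for the base acylindricity hypothesis. Using the quasi-convexity of the displacement function along a geodesic in a $\delta$-hyperbolic space---a consequence of the thin-quadrilateral inequality applied to the quadrilateral $x,y,gy,gx$ together with the $\delta$-stability of geodesic segments (Lemma \ref{L: stability finite})---one checks that every $g\in A$ also satisfies $\max(|gx'-x'|,|gy'-y'|)\leqslant 100d\delta + O(\delta)$.

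The main step is to partition $A$ into at most $23d$ classes such that, whenever $g_1, g_2$ lie in the same class, $h := g_1^{-1}g_2$ satisfies $|hx'-x'|\leqslant 100\delta$ and $|hy'-y'|\leqslant 100\delta$. Once this is established, the base acylindricity hypothesis applied at $(x',y')$ bounds each class by $N_0$, giving $|A|\leqslant 23 d\, N_0$. I would build the partition from a single one-dimensional shift invariant: to each $g\in A$ I assign $k(g) := \lfloor s(g)/(10\delta)\rfloor$, where $s(g)$ is the signed distance from $x'$ to the nearest-point projection of $gx'$ onto the geodesic $[x',y']$. Since $|s(g)|\leqslant 100d\delta + O(\delta)$, there are at most $20d + O(1) \leqslant 23d$ possible values of $k(g)$ for $d\geqslant 1$.

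\textbf{Main obstacle.} The delicate point is verifying that two $g_1, g_2$ with $k(g_1) = k(g_2)$ genuinely satisfy the small-displacement bounds $|g_1^{-1}g_2 x' - x'|\leqslant 100\delta$ and the analogous bound at $y'$. Along the direction of $[x',y']$ this is immediate from $|s(g_1)-s(g_2)|\leqslant 10\delta$, but one must also control the component perpendicular to $[x',y']$, which in a $\delta$-hyperbolic space need not vanish. The key geometric input is that any element of $A$ acts as a near-translation of the entire segment $[x',y']$ up to an $O(\delta)$ error: combining the displacement bounds at $x'$ and at $y'$ via stability of local quasi-geodesics, the perpendicular deviations of $g_1 x'$ and $g_2 x'$ from $[x',y']$ must cancel modulo $O(\delta)$ once one forms $g_1^{-1} g_2$. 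This is the technical heart of the argument and explains why the extra length $400d\delta$ appears in the definition of $\kappa(d)$: it provides exactly the slack required for the stability estimates of Lemma \ref{L: stability finite} to apply uniformly along the full length of $[x',y']$.
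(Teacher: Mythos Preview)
The paper does not give a proof of this lemma at all: it is stated with a trailing \qed\ and attributed to \cite[Proposition 5.31]{dahmani_hyperbolically_2017}. So there is no in-paper argument to compare your proposal to; one can only compare it to the standard argument in the cited reference, and your overall strategy---pass to interior points $x',y'$ with $|x'-y'|\geqslant\kappa_0$, bucket the set $A$ by the approximate longitudinal shift along $[x,y]$, and apply the base acylindricity inside each bucket---is exactly that argument.

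One clarification is worth making. You describe the ``main obstacle'' as controlling a possibly large perpendicular component of $g_ix'$ off the geodesic $[x',y']$, and you suggest these perpendicular components \emph{cancel} when forming $g_1^{-1}g_2$. That framing is misleading: there is no cancellation to arrange, because for every $g\in A$ the point $gx'$ already lies within $O(\delta)$ of $[x,y]$ outright. Indeed, $gx'$ sits on the geodesic $[gx,gy]$ at distance $200d\delta$ from $gx$; by the thin-quadrilateral estimate $[gx,gy]\subset [gx,x]^{+2\delta}\cup[x,y]^{+2\delta}\cup[y,gy]^{+2\delta}$, and since $|gx'-gx|=200d\delta>100d\delta+2\delta\geqslant |gx-x|+2\delta$ (and symmetrically at the $y$-end), the first and third options are excluded, forcing $d(gx',[x,y])\leqslant 2\delta$. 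This is precisely why one moves $200d\delta$ inward rather than, say, $100d\delta$: it pushes $gx'$ past the ``turning region'' near $gx$. Once you have $d(g_ix',[x,y])=O(\delta)$ for $i=1,2$, the bound $|g_1x'-g_2x'|\leqslant |s(g_1)-s(g_2)|+O(\delta)\leqslant 10\delta+O(\delta)\leqslant 100\delta$ is immediate, and the same at $y'$. With this correction your sketch goes through and matches the reference.
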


\begin{rem}\label{R: acylindricity trees} Let $G$ act  $\kappa$--acylindrically \cite{sela_acylindricity_1997} on a tree $X$ of edge-length $\rho_0$. 
 Then $X$ is $0$--hyperbolic, and the action of $G$ on $X$ is $(\kappa,1)$--acylindrical.
\end{rem}

\begin{ex}Let $G$ split as a free product $G=A*_C B$ amalgamated over a subgroup $C$. Then $G$ acts on the Basse-Serre tree of this splitting. If $C$ is finite, the edge stabilisers are conjugate to $C$, therefore for all $\varepsilon > 0$ the action is $(\varepsilon ,|C|)$--acylindrical. If $C$ is malnormal in $A$, the action is $(2 ,1)$--acylindrical.
\end{ex} 

\begin{ex} Let $G$ be a hyperbolic group that acts properly and cocompactly on a $\delta$--hyperbolic space $X$. Let $b_0$ be a uniform bound on the number of $g\in G$ such that for all $x\in X$, $|gx-x|\leqslant 102 \delta$. Then $G$ acts $(\delta, b_0)$--acylindrically on $X$. If $X$ is a Cayley graph of $G$, then $b_0=|B(1,102\delta)|$. 
\end{ex}

\begin{df} \label{D: injectivity radius}
If $\delta>0$ and $X$ is $\delta$--hyperbolic we let $\rho_0:= \delta/N_0$ and if $X$ is a simplicial tree, we let $\rho_0$ be the edge length of $X$. 
\end{df}
The \emph{stable injectivity radius} of the action of $G$, denoted by  $\rho$, is the infimum of the stable translation lengths $lim_{n\to \infty} \frac{1}{n} |g^nx-x|$ of hyperbolic elements $g\in G$.  If $X$ is a simplicial tree, then $\rho$ is always at least the edge-length. In general, we have:
\begin{rem}[{\hspace{-0.15mm}\cite[Lemma 2.2]{bowditch_tight_2008}, cf.  \cite[Lemma 3.9]{coulon2017small}}]\label{R: injectivity radius} 
If the action of $G$ on $X$ is $(\kappa_0,N_0)$--acylindrical, then $\rho \geqslant \rho_0$. 
\end{rem}

From now on we assume that $\delta >0$ and that $\kappa_0\geqslant \delta$, or that $X$ is a simplicial tree of edge length $\rho_0$ and $\kappa_0\geqslant \rho_0$. Moreover, we assume that the action is $(\kappa_0,N_0)$--acylindrical in the sense of  Definition \ref{D: acylindrical}.

\begin{rem} If $X$ is a compact Riemannian manifold of constant negative curvature whose injectivity radius is smaller than the Margulis constant,  then $\kappa_0$ can be taken to be the diameter of the thin part.  In our setting there is no lower bound on the curvature, but the acylindricity assumption replaces the Margulis lemma.
\end{rem}

\subsubsection{Loxodromic subgroups}

The limit set of $G$ acting on $X$ is the set of accumulation points in $\partial X$ of one, and hence all, orbits of $G$. A subgroup of $G$ is \emph{elementary} if its limit set consists of at most two points. Elementary subgroups are  discussed, for instance, in \cite[Section 3.4]{coulon_partial_2016}.  

A \emph{loxodromic} subgroup $E$ of $G$ is an elementary subgroup containing a hyperbolic isometry. It is contained in a unique maximal loxodromic subgroup. The action of $E$ on $\partial X$ fixes a subset of exactly two points, $x^+$ and $x^-$, which are those in the limit set of $E$. The maximal loxodromic subgroup containing $E$ is the stabiliser of the set $\{ x^+, x^-\}$. 
We define  $$[E]:=\min \{ [g] \mid g\in E \hbox{ with } [g]> 200\delta\}.$$ 
 For a hyperbolic $h\in G$, we denote by $E(h)$ the \emph{maximal loxodromic subgroup} containing $h$.

 If $G$ is a hyperbolic group, or more generally, if $\delta>0$ and $G$ acts $(\kappa_0,N_0)$--acylindrically on $X$, then maximal loxodromic subgroups are maximal infinite virtually cyclic subgroups \cite[Proposition 3.27]{coulon_partial_2016}.

\begin{df}[Invariant cylinder] Let $E$ be a loxodromic subgroup with limit set $\{ x^+,x^-\}$.  The \emph{$E$--invariant cylinder}, denoted by $C_E$, is the $40\delta$--neighbourhood of all $200 \delta$-local  $1$--quasi--geodesics with endpoints $x^+$ and $x^-$ at infinity. 
\end{df}

\begin{ex}
If $G$ is acting on a tree, $C_E$ coincides with the axis of any hyperbolic element in $E$, which is a bi-infinite geodesic line, with endpoints $x^+$ and $x^-$ at infinity. 
\end{ex}

In general, we think of $C_E$ as of a quasi-convex thickening of the axis of a hyperbolic element $g$ of $E$. As such it does only depend on the maximal loxodromic subgroup containing $E$. 

\begin{ex} Let $X$ be the hyperbolic plane of  curvature $-1$ and $g$ a hyperbolic isometry of translation length $\varepsilon$. Then $C_g$ is the $1/\varepsilon$--neighbourhood of the $g$--invariant geodesic $L$ whereas $C_E$ is the $1$--neighbourhood of this geodesic.
\end{ex}

If $[g]>200\delta$, we denote by  $L_g$  a $[g]$--local geodesic bi-infinite $g$--invariant line constructed in Lemma \ref{L: invariant lines}. The next lemma shows that $L_g$ and $C_E$ are close as sets in the Gromov-Hausdorff distance.

\begin{lem}[Invariant cylinder, cf. {\cite[Lemma 3.12]{coulon_partial_2016}}]\label{L: invariant cylinder}
Let $E$ be a loxodromic subgroup. Then
\begin{itemize}
\item $C_E$ is $10\delta$--quasi-convex and invariant under the action of $E$, 
\item if $g\in E$ and $[g]>200\delta$, then $L_g\subset C_g\subset C_E$ and $C_E\subset L_g^{+100\delta}\subset C_g^{+ 100\delta}$. In particular, if $x_0\in C_E$,  then $|gx_0-x_0|\leqslant [g] + 210 \delta $.
\end{itemize}     
\end{lem}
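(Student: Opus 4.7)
The main strategy is to reduce all statements about $C_E$ to information about a single bi-infinite $g$-invariant line $L_g$ for a well-chosen $g \in E$. First I would pick a hyperbolic $g \in E$ with $[g] > 200\delta$ (passing to a suitable power of any hyperbolic element of $E$ if necessary), so that the line $L_g$ from Lemma \ref{L: invariant lines} is available. Its endpoints in $\partial X$ are fixed by $g$, and because $E$ is loxodromic with limit set $\{x^+, x^-\}$ these endpoints must coincide with $x^+$ and $x^-$. In particular $L_g$ is itself one of the $200\delta$-local $1$-quasi-geodesics entering the definition of $C_E$, so $L_g^{+40\delta} \subset C_E$. Conversely, the stability lemma at infinity (Lemma \ref{L: stability infinite}) shows every $200\delta$-local $1$-quasi-geodesic from $x^+$ to $x^-$ lies in $L_g^{+60\delta}$, whence $C_E \subset L_g^{+100\delta}$.

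Next I would relate $L_g$ to $C_g$. Lemma \ref{L: invariant lines} already supplies $C_g \subset L_g^{+30\delta}$. For the reverse inclusion $L_g \subset C_g$, observe that with $x \in C_g$ chosen so that $|gx-x| \leq [g] + \delta$, any point $y \in [x, gx]$ satisfies
\[
  |y - gy| \;\leq\; |y - gx| + |gx - gy| \;=\; (|gx - x| - |x - y|) + |x - y| \;=\; |gx - x| \;\leq\; [g] + \delta,
\]
so $y \in C_g$; the $g$-invariance of $C_g$ then propagates this to all of $L_g = \bigcup_n [g^n x, g^{n+1} x]$. Chaining yields $L_g \subset C_g \subset L_g^{+30\delta} \subset L_g^{+40\delta} \subset C_E$ and $C_E \subset L_g^{+100\delta} \subset C_g^{+100\delta}$, which is the bilateral comparison claimed.

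For $E$-invariance: any $e \in E$ either fixes or swaps the ordered pair $(x^+, x^-)$, so it permutes the set of $200\delta$-local $1$-quasi-geodesics between $x^+$ and $x^-$ and therefore preserves $C_E$. For $10\delta$-quasi-convexity I would invoke Lemma \ref{L: invariant lines} (which gives that $L_g$ is $11\delta$-quasi-convex) together with the principle underlying Remark \ref{R: neighbourhood quasi-convex}, namely that in a $\delta$-hyperbolic space a sufficiently thick neighbourhood of a quasi-convex set is $10\delta$-quasi-convex; the sandwich $L_g^{+40\delta} \subset C_E \subset L_g^{+100\delta}$ then transfers this property to $C_E$. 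The displacement bound is the easy consequence: if $x_0 \in C_E$, then $x_0 \in C_g^{+100\delta}$ so $d(x_0, C_g) \leq 100\delta$, and Lemma \ref{L: axis} yields $|gx_0 - x_0| \leq [g] + 2\, d(x_0, C_g) + 10\delta \leq [g] + 210\delta$.

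The hard part is the quasi-convexity claim with the stated constant $10\delta$. Since $C_E$ is defined as a union of neighbourhoods of different quasi-geodesics rather than as a single neighbourhood of a quasi-convex set, the sandwich $L_g^{+40\delta} \subset C_E \subset L_g^{+100\delta}$ loses $60\delta$ of slack which must be recovered: a naive appeal to the neighbourhood-of-quasi-convex-is-quasi-convex principle would leave a constant on the order of $110\delta$ rather than $10\delta$. Getting down to the sharp $10\delta$ requires either arguing directly from the definition of $C_E$ or tracking the constants through the stability lemma more carefully; everything else in the proof is bookkeeping with the inclusions established in the first two paragraphs.
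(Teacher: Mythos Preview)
Your approach is essentially the paper's: invariance is by definition, the sandwich $L_g\subset C_g\subset C_E\subset L_g^{+100\delta}\subset C_g^{+100\delta}$ comes from Lemma~\ref{L: invariant lines} together with the stability lemma at infinity (Lemma~\ref{L: stability infinite}), and the displacement bound is read off from Lemma~\ref{L: axis}. Your direct verification that $L_g\subset C_g$ (by the triangle computation on $[x,gx]$) is a detail the paper leaves implicit, and it is correct.

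On the point you flag as hard, the $10\delta$--quasi-convexity of $C_E$: the paper does not chase constants through the sandwich. It simply invokes \cite[Lemma 3.12]{coulon_partial_2016}, whose proof works directly from the definition of $C_E$ as a union of $40\delta$--neighbourhoods of $200\delta$--local $1$--quasi--geodesics with common endpoints, using that each such line is quasi-convex and any two are at bounded Hausdorff distance. So your diagnosis is right---the naive transfer through $L_g^{+40\delta}\subset C_E\subset L_g^{+100\delta}$ loses too much---but the resolution here is by citation rather than a sharper in-paper argument.
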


\begin{proof} The cylinder $C_E$ is invariant by definition. It is quasi-convex as  $200 \delta$-local  $1$--quasi--geodesic bi-infinite lines are quasi-convex and any two such lines with same endpoints are at finite Hausdorff distance, see \cite[Lemma 3.12]{coulon_partial_2016}. Let $g\in E$ and $[g]>200\delta$. By Lemma \ref{L: invariant lines}, $L_g\subset C_g\subset L_g^{+30\delta}$. With Lemma \ref{L: stability infinite}, this implies the second assertion. 
\end{proof}

\subsubsection{Small cancellation lemma}
We recall that $G$ acts $(\kappa_0,N_0)$--acylindrically on the $\delta$--hyperbolic space $X$ and $\rho_0$ is the lower bound on the stable injectivity radius of the action, see Definition \ref{D: injectivity radius} and Remark \ref{R: injectivity radius}.
 We define 
 $$\nu:=4N_0\frac{\kappa_0}{\rho_0}\hbox{ and } A:=10^7  N_0^2 \frac{\kappa_0}{\rho_0} .$$

We let $E$ be a maximal loxodromic subgroup, and recall that $$[E]=\min \{ [g] \mid g\in E \hbox{ with } [g]> 200\delta\}.$$
\begin{lem} \label{L: small cancellation}
If $E\not= E'$, then 
$$\diam(C_E^{+400 \delta}\cap C_{E'}^{+ 400 \delta}) \leqslant   3\nu \, \max\left\{[E],[E']\right\}  + A \delta + 1684 \delta .$$
\end{lem}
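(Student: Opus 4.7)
The plan is to argue by contradiction. Assume
$$D := \diam\bigl(C_E^{+400\delta}\cap C_{E'}^{+400\delta}\bigr) > 3\nu\max\{[E],[E']\} + A\delta + 1684\delta,$$
pick $x,y$ in this intersection with $|x-y|>3\nu\max\{[E],[E']\} + A\delta + 1684\delta$, and choose $g\in E$ and $g'\in E'$ realising $[g]=[E]$ and $[g']=[E']$; by definition both translation lengths exceed $200\delta$. Set $K:=\max\{[g],[g']\}$. Lemma~\ref{L: invariant cylinder} gives $C_E^{+400\delta}\subset C_g^{+500\delta}$, and then Lemma~\ref{L: axis} yields $|gp-p|\leqslant [g]+1010\delta$ for every $p$ in the overlap, with the analogous bound for $g'$.

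The two cylinders, being $10\delta$-quasi-convex bi-infinite sets, fellow-travel along the overlap by Lemma~\ref{L: stability infinite}. Parameterising the overlap as a $200\delta$-local $1$-quasi-geodesic, $g$ and $g'$ act on it as approximate translations of lengths $[g]$ and $[g']$, either in the same or opposite direction; encode this relative orientation by $\varepsilon\in\{\pm 1\}$. For each integer $i$ with $|i|\leqslant M:=\lfloor |x-y|/(3K)\rfloor$, pick $j_i\in\mathbb Z$ minimising $|i[g]+\varepsilon j_i[g']|$, so that this quantity is at most $K/2$, and set $h_i:=g^i(g')^{j_i}$. The elements $h_i$ are pairwise distinct: an equality $h_i=h_{i'}$ would produce a non-trivial hyperbolic element $g^{i-i'}=(g')^{j_{i'}-j_i}$ in $E\cap E'$, but since any hyperbolic isometry is contained in a unique maximal loxodromic subgroup, $E\cap E'$ contains no hyperbolic element (otherwise $E=E'$), and we must have $i=i'$.

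The crucial estimate is that each $h_i$ displaces both $x$ and $y$ by at most $K/2+O(\delta)$. One tracks the broken path $x\to (g')^{j_i}x\to g^i(g')^{j_i}x$: first, since $x$ lies within $400\delta$ of $C_{E'}$, Lemma~\ref{L: axis} controls where $(g')^{j_i}x$ lands along $C_{E'}$; the choice of $M$ keeps that image inside the overlap, so it is still within $O(\delta)$ of $C_E$; finally the axis lemma for $g$ is applied to compute the last step. The thin-triangles estimates (Lemmas~\ref{L: thin triangles} and \ref{L: thin triangles 2}) control the projection errors at each stage, and the net motion along the overlap is the translation by $i[g]+\varepsilon j_i[g']\in[-K/2,K/2]$ plus an additive $O(\delta)$ correction. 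The same argument applies to $y$ in place of $x$.

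Now choose $d$ of order $K/\delta$, so that all these displacements are at most $100d\delta$. Provided $D$ is large enough that $|x-y|\geqslant\kappa(d)=\kappa_0+400d\delta+100\delta$, Lemma~\ref{L: acylindrical} bounds the number of isometries displacing both $x$ and $y$ by $\leqslant 100d\delta$ by $N(d)=23dN_0$. Comparing with the $2M+1\sim |x-y|/K$ distinct $h_i$ produced above gives an upper bound on $|x-y|$ in terms of $N_0$, $K$, $\delta$ and $\kappa_0$; using $\rho_0=\delta/N_0$ and collecting all the $O(\delta)$ and $O(\kappa_0)$ corrections (as well as the $\kappa(d)$ needed to apply acylindricity) into the additive constants $A\delta+1684\delta$ yields the desired bound $|x-y|\leqslant 3\nu K+A\delta+1684\delta$, contradicting the choice of $x,y$. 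The main technical obstacle is the displacement estimate for $h_i$, in which one must simultaneously handle (i) the gap between $x,y$ and the cylinders $C_E,C_{E'}$, (ii) the fellow-travelling error between the two cylinders on the overlap, and (iii) the hyperbolic corrections to the composition $g^i(g')^{j_i}$; the interplay of these three sources of error is what forces the large constant $A$.
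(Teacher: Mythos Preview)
Your approach is quite different from the paper's: the paper does not argue directly but simply quotes \cite[Proposition~3.44 and Lemma~2.16]{coulon_partial_2016} for the inequality
\[
\diam\bigl(C_g^{+500\delta}\cap C_{g'}^{+500\delta}\bigr)\leqslant [g]+[g']+\nu(G,X)\max\{[g],[g']\}+A(G,X)+1684\delta,
\]
and then cites \cite[Section~6.2]{coulon_partial_2016} and \cite[Lemma~3.12]{coulon2017small} to bound the abstract invariants $\nu(G,X)\leqslant\nu$ and $A(G,X)\leqslant A\delta$ under acylindricity. Your attempt to replace all of this by a single pigeonhole--acylindricity argument runs into two difficulties.

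The first is the claim that ``the choice of $M$ keeps $(g')^{j_i}x$ inside the overlap''. This cannot hold for \emph{both} $x$ and $y$ simultaneously when they sit at the extremities of the overlap: $(g')^{j_i}$ translates along $L_{g'}$ in a fixed direction (determined by the sign of $j_i$), so it necessarily pushes one of the two endpoints out of the overlap. For those $i$, the second leg of your broken path leaves $C_E^{+O(\delta)}$ and the displacement estimate collapses. This is fixable by choosing $x,y$ at positions $D/3$ and $2D/3$ rather than at the ends.

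The second difficulty is not fixable within this framework. Your displacement bound for $h_i$ is $K/2+O(\delta)$, so you must take $d\sim K/\delta$ in Lemma~\ref{L: acylindrical}, whence $N(d)\sim KN_0/\delta$. Comparing with $2M+1\sim |x-y|/K$ yields only
\[
|x-y|\ \lesssim\ K^2N_0/\delta,
\]
which is \emph{quadratic} in $K=\max\{[E],[E']\}$. The target $3\nu K=12N_0(\kappa_0/\rho_0)K$ is \emph{linear} in $K$, with a coefficient that does not involve $K$ at all; since $[E]$ can be arbitrarily large relative to $N_0\kappa_0$, no amount of ``collecting the $O(\delta)$ and $O(\kappa_0)$ corrections'' will turn your inequality into the stated one. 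The linear dependence in \cite{coulon_partial_2016} comes from a two-step argument: first an estimate in terms of the abstract invariant $\nu(G,X)$ (which counts chains of conjugates of bounded translation length, a notion independent of $K$), and only then an acylindricity bound on $\nu(G,X)$ carried out at scale $\delta$ rather than at scale $K$. Your single-step approach conflates the two scales and loses a factor of $K$.
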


\begin{proof} Let $g\in E$ realise $[E]$, and let $g'\in E'$ realise $[E']$. Let us recall that $C_E\subset C_g^{+100 \delta}$ and that $C_{E'}\subset C_{g'}^{+100 \delta}$, see Lemma \ref{L: invariant cylinder}. By
 \cite[Proposition 3.44]{coulon_partial_2016} (together with \cite[Lemma 2.16]{coulon_partial_2016})
 $$\diam\left( C_g^{+ 500 \delta}\cap C_{g'}^{ +500 \delta}\right) \leqslant [g]+[g'] + \nu(G,X) \max\left\{[g],[g']\right\} + A(G,X) + 1684 \delta.$$
By \cite[Section 6.2]{coulon_partial_2016},  $ \nu(G,X) \leqslant N_0+\kappa_0/\rho+2\leqslant \nu$. Moreover, if $\delta >0$ (cf. \cite[Lemma 3.12]{coulon2017small}, in their notation, $\kappa_0=L$, $N_0=N$ and we take  $L_S:=200$) we have:
$ A(G,X) \leqslant 10\cdot 200^2 N_0^3(\kappa_0+ 5\delta) \leqslant A\delta.$ If $X$ is a simplicial tree $A(G,X)=0$ by its definition \cite[Definition 3.40]{coulon_partial_2016}, which yields the claim. 
\end{proof}

\subsection{Reduced products and periodic isometries} \label{S: reduced and periodich}

We  discuss reduced products and periodic elements. These concepts are familiar in combinatorial small cancellation theory, for example in Novikov-Adian's famous study of the Burnside problem. In this paragraph we develop a geometric point of view.

\subsubsection{Reduced products} 

\begin{df}[Reduced products]\label{D: reduced products} Let $u$ and $v$ be two isometries of $X$. Their product $uv$  is \emph{reduced at the point $x_0 \in X$} if 
$$(u^{-1}x_0,vx_0)_{x_0}\leqslant \delta .$$
\end{df}

In the case of a free group, this means that the first letter of $v$ is not inverse to the last letter of $u$. In a hyperbolic space this means that the point $ux_0$ is $5\delta$ close to a geodesic $[x_0,uvx_0]$, see Lemma \ref{L: Gromov product 2}.

Our main observation concerning reduced products is the following Proposition, which extends  \cite[Lemma 3]{safin_powers_2011}, and for free products \cite[Lemma 2.5]{button_explicit_2013}.  
\begin{prop} \label{P: reduced products} Let $u_1vw_1=u_2vw_2$ and assume that the products $u_1v$, $u_2v$, $vw_1$ and $vw_2$ are reduced at ${x_0}$. If 
$$\hbox{ $|v{x_0}-{x_0}|>26 \delta$,   $ |u_1{x_0}- u_2{x_0}|\leqslant |v{x_0}-{x_0}|$, and   $|u_1{x_0}-{x_0}|\leqslant |u_2{x_0}-{x_0}|$}$$ 
 then 
$$\hbox{${x_0}$ and $v{x_0}$ are in } C_{u_1^{-1}u_{2}}^{+190\delta}.$$

Moreover, $(x_0,u_2x_0)_{u_1x_0}\leqslant 24 \delta$, $(u_1x_0,u_1vx_0)_{u_2x_0}\leqslant 66 \delta$ and $(u_2x_0,u_2vx_0)_{u_1vx_0}\leqslant 138 \delta$.
\end{prop}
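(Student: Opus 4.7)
Denote $a := u_1 x_0$, $b := u_2 x_0$, $a' := u_1 v x_0$, $b' := u_2 v x_0$, $p := u_1 v w_1 x_0 = u_2 v w_2 x_0$, and let $\lambda_i := |u_i x_0 - x_0|$, $\mu := |v x_0 - x_0|$, $\nu_i := |w_i x_0 - x_0|$. Conjugating each of the four reduced-product hypotheses by the appropriate element translates them into the Gromov product conditions $(x_0, a')_a \leqslant \delta$, $(x_0, b')_b \leqslant \delta$, $(a, p)_{a'} \leqslant \delta$ and $(b, p)_{b'} \leqslant \delta$, so that the two broken-geodesic paths $x_0 \to a \to a' \to p$ and $x_0 \to b \to b' \to p$ have small Gromov product at each interior joint.

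The first step I would take is to prove that each of these paths is close to the geodesic $[x_0, p]$. Iterating Remark \ref{R: inversed triangle inequality} at the joints gives $|x_0 - a'| \geqslant \lambda_1 + \mu - 2\delta$ and $|a - p| \geqslant \mu + \nu_1 - 2\delta$; feeding these into the four-point formulation of $\delta$-hyperbolicity applied to $(x_0, a, a', p)$ and exploiting $\mu > 26\delta$ to identify which two of the three pair-sums are largest, one obtains $|x_0 - p| \geqslant \lambda_1 + \mu + \nu_1 - 6\delta$, and symmetrically $|x_0 - p| \geqslant \lambda_2 + \mu + \nu_2 - 6\delta$. Together with the trivial triangle upper bounds, this forces $|(\lambda_1+\nu_1) - (\lambda_2+\nu_2)| \leqslant 6\delta$. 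Applying the four-point condition once more to the quadruples $(x_0, a, b, p)$ and $(x_0, a', b', p)$ pins down both $|a - b|$ and $|a' - b'|$ to the interval $[\lambda_2 - \lambda_1,\ \lambda_2 - \lambda_1 + O(\delta)]$. The three Gromov product bounds of the ``Moreover'' clause then follow by direct substitution into the defining formula, e.g.\ $(x_0, b)_a = \tfrac{1}{2}(\lambda_1 + |a-b| - \lambda_2) = O(\delta)$; keeping careful track of the accumulated constants across all the four-point estimates gives the stated $24\delta$, $66\delta$ and $138\delta$.

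For the $C_g$ conclusion with $g := u_1^{-1} u_2$, I would rewrite $u_1 v w_1 = u_2 v w_2$ as $g = v(w_1 w_2^{-1}) v^{-1}$, whence $|g x_0 - x_0| = |a - b|$ and $|g(v x_0) - v x_0| = |a' - b'|$, both equal to $(\lambda_2 - \lambda_1) + O(\delta)$. By Lemma \ref{L: axis} it then remains to establish the matching lower bound $[g] \geqslant (\lambda_2 - \lambda_1) - O(\delta)$. For this I would apply Lemma \ref{L: midpoint} on the pair $(x_0, v x_0)$: the midpoints $m_1$ of $[x_0, g x_0]$ and $m_2$ of $[v x_0, g v x_0]$ both lie in $C_g^{+70\delta}$, and by Remark \ref{R: neighbourhood quasi-convex} so does the whole geodesic $[m_1, m_2]$. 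Since $|x_0 - v x_0| = \mu$ is large compared with the displacements of $x_0$ and $v x_0$ under $g$, and the near-geodesic structure from the previous paragraph places both $x_0$ and $v x_0$ close to $[m_1, m_2]$, both points sit within distance $190\delta$ of $C_g$.

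The principal obstacle is the sharp quantitative control of $[g]$ from below, with a constant small enough to fit inside the $190\delta$-neighbourhood. A naive orbit analysis via Lemma \ref{L: discrete quasi-geodesics}(1) only yields $[g] \geqslant \tfrac{1}{2}|g x_0 - x_0| - O(\delta)$, which is too weak. What makes the sharper estimate possible is the two-point nature of the hypothesis: both $x_0$ and $v x_0$ are moved by $g$ by approximately the same amount, and they are separated by a distance $\mu > 26\delta$ that dominates this common displacement. This lets one argue that $g$ acts as an approximate translation along the line spanned by $x_0$ and $v x_0$, so that the quasi-axis $C_g$ passes close to both points simultaneously.
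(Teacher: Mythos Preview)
Your overall strategy---use Lemma~\ref{L: midpoint} to place the midpoints $m_1,m_2$ in $C_g^{+70\delta}$, then quasi-convexity to get the segment $[m_1,m_2]$ there, then locate $x_0$ and $vx_0$ near that segment---is exactly what the paper does (the paper works with $u_2u_1^{-1}$ upstairs and conjugates by $u_1^{-1}$ at the end, but that is the same argument). However, two steps in your write-up do not go through as stated.

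First, the detour through $[g]$ is both unnecessary and unfinished. You write that by Lemma~\ref{L: axis} ``it remains to establish $[g]\geqslant(\lambda_2-\lambda_1)-O(\delta)$'', and then invoke Lemma~\ref{L: midpoint} ``for this''. But Lemma~\ref{L: midpoint} says nothing about $[g]$; it puts the midpoints into $C_g^{+70\delta}$ directly, with no reference to the translation length. The midpoint/quasi-convexity argument \emph{bypasses} Lemma~\ref{L: axis} entirely, so your ``principal obstacle'' paragraph is addressing a difficulty that does not arise. Drop the attempt to bound $[g]$ from below and simply conclude membership in $C_g^{+\cdot}$ from the quasi-convex segment.

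Second, and more substantively, the sentence ``the near-geodesic structure \ldots\ places both $x_0$ and $vx_0$ close to $[m_1,m_2]$'' is false for $x_0$. Along the (approximate) geodesic the order is $x_0,\,gx_0,\,vx_0,\,gvx_0$; hence $m_1$ lies between $x_0$ and $gx_0$, and $m_2$ between $vx_0$ and $gvx_0$. The points trapped between $m_1$ and $m_2$ are $gx_0$ and $vx_0$, not $x_0$: the distance from $x_0$ to $[m_1,m_2]$ is roughly $\tfrac12|x_0-gx_0|=\tfrac12|a-b|$, which may be as large as $\mu/2$. The paper handles this by concluding $gx_0,vx_0\in C_g^{+\cdot}$ from the segment, and then invoking the $g$-invariance of $C_g$ to obtain $x_0\in C_g^{+\cdot}$ as well. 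You need the same invariance step.

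A minor point on the ``Moreover'' constants: the paper obtains the specific numbers $24\delta$, $66\delta$, $138\delta$ by projecting $u_1x_0,u_2x_0,u_1vx_0,u_2vx_0$ onto the geodesic $\gamma=[x_0,u_1vw_1x_0]$ and carefully ordering the projections $p_1,p_2,q_1,q_2$ with explicit distance bounds $6\delta,18\delta,42\delta,78\delta$. Your four-point-inequality route would give \emph{some} $O(\delta)$ bound, but recovering exactly these constants from repeated four-point estimates is unlikely without essentially redoing the projection argument.
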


We think of  $ C_{u_1^{-1}u_{2}}^{+190 \delta}$ as of a thickening of an axis of $u_1^{-1}u_2$, see Lemma \ref{L: invariant lines}. 
This follows from the quasi-convexity of  $C_{u_1^{-1}u_{2}}$, see Lemma \ref{L: axis}. 

Let us first explain the case where $X$ is a tree ($\delta=0$). Let $g=u_1vw_1=u_2vw_2$ as in the proposition. Then $u_1x_0$, $u_2x_0$, $u_1vx_0$ and $u_2vx_0$ lie in this order on the geodesic $[x_0,gx_0]$. Moreover, the midpoints $m_1$ of $[u_1x_0,u_2x_0]$ and $m_2$ of $[u_1vx_0,u_2vx_0]$ lie in $C_{u_2u_1^{-1}}$ by Lemma \ref{L: midpoint}. This axis is a geodesic line, and the geodesic segment $[m_1,m_2]$ is in this line. We conclude that $u_2x_0$ and $u_1vx_0$ are in the axis, hence, so are $u_1x_0$ and $u_2vx_0$. As $u_1^{-1}C_{u_2u_1^{-1}}= C_{u_1^{-1}u_{2}}$ this concludes the proof in this case. 
 
 In general hyperbolic spaces the proof is similar.

\begin{lem}\label{L: reduced products and geodesics} Under the assumptions of Proposition \ref{P: reduced products}, the points $u_1x_0$, $u_2x_0$, $ u_1vx_0$ and $u_2vx_0$ are $6\delta$ close to a geodesic $[x_0,gx_0]$. 
  \end{lem}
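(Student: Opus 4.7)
\medskip
\noindent\textbf{Proof plan.} The strategy is to translate each of the four reducedness hypotheses into a Gromov-product inequality at a vertex of the piecewise geodesic $x_0 \to u_1x_0 \to u_1vx_0 \to gx_0$ (and similarly with $u_2$ in the middle), and then apply the four-point $\delta$-hyperbolicity condition to force the Gromov product at each interior vertex \emph{with respect to the two extreme points $x_0,gx_0$} to be small.

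\medskip
\noindent\textbf{Step 1: translating the hypotheses.} Since $G$ acts by isometries, Gromov products are $G$-equivariant. The hypothesis that $u_1v$ is reduced at $x_0$ reads $(u_1^{-1}x_0, vx_0)_{x_0}\leqslant \delta$, which after translating by $u_1$ becomes $(x_0, u_1vx_0)_{u_1x_0}\leqslant \delta$. Similarly, applying $u_1$ to $(v^{-1}x_0,w_1x_0)_{x_0}\leqslant \delta$ (the reducedness of $vw_1$) yields $(u_1x_0, gx_0)_{u_1vx_0}\leqslant \delta$. In the same way we obtain $(x_0, u_2vx_0)_{u_2x_0}\leqslant \delta$ and $(u_2x_0, gx_0)_{u_2vx_0}\leqslant \delta$.

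\medskip
\noindent\textbf{Step 2: proximity of $u_1x_0$ to $[x_0, gx_0]$.} Applying the four-point $\delta$-hyperbolicity condition at $u_1x_0$ to the triple $(x_0,gx_0,u_1vx_0)$ gives
\[
(x_0, u_1vx_0)_{u_1x_0} \geqslant \min\bigl\{(x_0, gx_0)_{u_1x_0},\ (gx_0, u_1vx_0)_{u_1x_0}\bigr\} - \delta.
\]
Now the second quantity on the right is large: since $(u_1x_0,gx_0)_{u_1vx_0}\leqslant \delta$, the inversed triangle inequality (Remark \ref{R: inversed triangle inequality}) gives
\[
(gx_0, u_1vx_0)_{u_1x_0} = |u_1x_0-u_1vx_0| - (u_1x_0,gx_0)_{u_1vx_0} \geqslant |vx_0-x_0| - \delta > 25\delta.
\]
Combined with $(x_0,u_1vx_0)_{u_1x_0}\leqslant \delta$, this forces the minimum above to equal $(x_0,gx_0)_{u_1x_0}$, and hence
\[
(x_0, gx_0)_{u_1x_0} \leqslant (x_0, u_1vx_0)_{u_1x_0} + \delta \leqslant 2\delta.
\]
By Lemma \ref{L: Gromov product 2}, $d(u_1x_0, [x_0, gx_0])\leqslant (x_0,gx_0)_{u_1x_0}+4\delta \leqslant 6\delta$.

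\medskip
\noindent\textbf{Step 3: the three symmetric cases.} The same argument handles $u_1vx_0$ (applying four-point hyperbolicity at $u_1vx_0$ to the triple $(x_0,gx_0,u_1x_0)$, using $(u_1x_0,gx_0)_{u_1vx_0}\leqslant \delta$ as the small Gromov product and the reducedness of $u_1v$ plus $|vx_0-x_0|>26\delta$ to make the other Gromov product exceed $25\delta$), and analogously for $u_2x_0$ and $u_2vx_0$ using the hypotheses for the index $2$. In each of the four cases one concludes that the Gromov product with respect to $x_0,gx_0$ at the point in question is at most $2\delta$, and Lemma \ref{L: Gromov product 2} then produces the required $6\delta$-bound. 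The only nontrivial input beyond bookkeeping is confirming the strict inequality $|vx_0-x_0|>26\delta$ used to identify which term realises the minimum in the four-point condition; this is guaranteed by the standing hypothesis of the proposition.
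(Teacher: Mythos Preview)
Your proof is correct and follows essentially the same approach as the paper: translate the reducedness hypotheses by the group action to get small Gromov products at $u_ix_0$ and $u_ivx_0$, apply the four-point inequality to bound $(x_0,gx_0)_{u_ix_0}$ (and symmetrically at $u_ivx_0$) by $2\delta$, and finish with Lemma~\ref{L: Gromov product 2}. The only cosmetic difference is that the paper only needs $|vx_0-x_0|>3\delta$ to get the other Gromov product above $2\delta$, whereas you invoke the full hypothesis $|vx_0-x_0|>26\delta$; also, the identity $(gx_0,u_1vx_0)_{u_1x_0}=|u_1x_0-u_1vx_0|-(u_1x_0,gx_0)_{u_1vx_0}$ is straight from the definition of the Gromov product rather than Remark~\ref{R: inversed triangle inequality}.
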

\begin{proof} As the product $u_1v$ is reduced, $(x_0,u_1vx_0)_{u_1x_0}\leqslant \delta$, and as $vw_1$ is reduced, $ (u_1x_0,gx_0)_{u_1vx_0}\leqslant \delta$. 

By definition of hyperbolicity, 
$$ \min \{ (x_0,gx_0)_{u_1x_0},(gx_0,u_1vx_0)_{u_1x_0}\}\leqslant (x_0,u_1vx_0)_{u_1x_0} +\delta  \leqslant 2\delta.$$ 
As $|vx_0-x_0|>3 \delta$,  
$$(gx_0,u_1vx_0)_{u_1x_0}=|u_1vx_0-u_1x_0|-(u_1x_0,gx_0)_{u_1vx_0}>2\delta. $$
Therefore $ (x_0,gx_0)_{u_1x_0} \leqslant 2\delta$ and $u_1x_0$ is $6\delta$ close to $[x_0,gx_0]$, see Lemma \ref{L: Gromov product 2}. 
 
 Similarly, the point $u_1vx_0$ is $6\delta$ close to $[x_0,gx_0]$.  The same arguments apply to $u_2x_0$ and $u_2vx_0$, which yields the assertion.
\end{proof}

\begin{proof}[Proof of Proposition \ref{P: reduced products}] Let $\gamma=[x_0,gx_0]$ be a geodesic. 
 We first construct four points $p_1$, $p_2$, $q_1$, and $q_2$  that lie in this order on $\gamma$ and such that $|u_1x_0-p_1|\leqslant 6 \delta$, $|u_2x_0-p_2|\leqslant 18\delta$, $|u_1vx_0-q_1|\leqslant 42 \delta$ and $|u_2vx_0-q_2|\leqslant 78 \delta$.

Let $p_1$ be a point on $\gamma$ that is closest to $u_1x_0$. Then $|u_1x_0-p_1|\leqslant 6\delta $ by Lemma \ref{L: reduced products and geodesics}. 

Let $p_2'$ be  a point on $\gamma$ that is closest $u_2x_0$. If $p_2'$ is on the right of $p_1$ we let $p_2=p_2'$. Otherwise, we let $p_2=p_1$ and as $|u_2x_0-x_0|\geqslant |u_1x_0-x_0| \geqslant |p_1-x_0| -6\delta$,  $|p_2'-p_1|\leqslant 12\delta$. Hence, $|u_2x_0-p_2|\leqslant 18\delta$. 

We have fixed $p_1$ and $p_2$ as required. Next we fix $q_1$.

Let $q_1'$ be  a point on $\gamma$ that is closest $u_1vx_0$.  As $|vx_0-x_0|>14\delta$, $q_1'$ is on the right of $p_1$. Indeed, otherwise, by the triangle inequality, $2(u_1vx_0,x_0)_{u_1x_0}\geqslant |vx_0-x_0| -|u_1x_0-p_1|-|u_1vx_0-q_1'|+|q_1'-p_1|>2\delta$, a contradiction. If $q_1'$ is on the right of $p_2$ we let $q_1=q_1'$. Otherwise, we let $q_1=p_2$ and  
\begin{align*}
|q_1'-p_2|&  = |p_1-p_2| - |p_1-q_1'| \\
& \leqslant |u_1x_0-u_2x_0| - |vx_0-x_0| +36\delta \leqslant 36 \delta, 
\end{align*}
so that $|u_1vx_0-q_1|\leqslant 42 \delta $. 

Finally, we fix $q_2$: let $q_2'$  be  a point on $\gamma$ that is closest $u_2vx_0$.  As $|vx_0-x_0|>26\delta$, $q_2'$ is on the right of $p_2$. Indeed, otherwise, $2(u_2vx_0,x_0)_{u_2x_0}\geqslant |vx_0-x_0| -|u_2x_0-p_2|-|u_2vx_0-q_2'|+|q_2'-p_2|>2\delta$, a contradiction.   If $q_2'$ is on the right of $q_1$ we let $q_2=q_2'$. Otherwise, we let $q_2=q_1$ and 
\begin{align*}
|q_2'-q_2|&=|q_1-p_1|-|q_2'-p_2|-|p_1-p_2|\\
& \leqslant |u_1vx_0-u_1x_0|-|u_2vx_0-u_2x_0| +72\delta \leqslant 72 \delta,
\end{align*}
so that $|u_2vx_0-q_2|\leqslant 78 \delta$. 

We have fixed $p_1$, $p_2$, $q_1$ and $q_2$ on $\gamma$ as required. Using the triangle inequality, we conclude that $(x_0,u_2x_0)_{u_1x_0}\leqslant (x_0,p_2)_{p_1} +|p_1-u_1x_0|+|u_2x_0- p_2| \leqslant 24 \delta,$ and, similarly, that $(u_1x_0,u_1vx_0)_{u_2x_0}\leqslant 66 \delta$ and $(u_2x_0,u_2vx_0)_{u_1vx_0}\leqslant 138 \delta$.
 
Now let $m_1$ be the midpoint in $[u_1x_0,u_2x_0]$ and let $m_1'$ be the midpoint in  $[p_1,p_2]$. Then (see \cite[Chapitre 10, Corollaire 5.2]{coornaert_geometry_1990}) $|m_1-m_1'|\leqslant \frac{1}{2}(|u_1x_0-p_1|+|u_2x_0-p_2|)+8\delta\leqslant 20\delta.$ 
Similarly, let $m_2$ be the midpoint in $[u_1vx_0,u_2vx_0]$, let $m_2'$ be the midpoint in  $[q_1,q_2]$, and $|m_2-m_2'|\leqslant  \frac{1}{2}(|u_1vx_0-q_1|+|u_2vx_0-q_2|)+8\delta\leqslant 68\delta$. 

By Lemma \ref{L: midpoint} $m_1$ and $m_2$ are in $C_{u_2u_1^{-1}}^{+70\delta}$, so that  $m_1'$ and $m_2'$ are in  $C_{u_2u_1^{-1}}^{+138 \delta}$. By quasi-convexity of the axis, $[m_1',m_2']$ is in $C_{u_2u_1^{-1}}^{+148 \delta}$.  
As $p_2$ and $q_1$ lie in $[m_1',m_2']$, the points $u_2x_0$ and $u_1vx_0$ are in $C_{u_2u_1^{-1}}^{+190\delta}$. As $C_{u_2u_1^{-1}}^{+190 \delta}$ is invariant under the action of $u_2u_1^{-1}$ and of $u_2^{-1}u_1$, the points $u_1x_0$ and $u_2vx_0$ are in $C_{u_2u_1^{-1}}^{+190\delta}$.

 Finally, $u_1^{-1}C_{u_2u_1^{-1}}^{+190\delta}=C_{u_1^{-1}u_2}^{+190\delta}$. This concludes the proof.
  \end{proof}

\subsubsection{Periodic isometries}

In this section, we geometrise the idea of periodic words considered by Razborov \cite{razborov_product_2014} and Safin \cite{safin_powers_2011}, familiar in combinatorial small cancellation theory. We recall that $G$ acts on $X$, that the action is $(\kappa_0,N_0)$--acylindrical and that $$\nu=4N_0\frac{\kappa_0}{\rho_0}\hbox{ and } A=10^7  N_0^2 \frac{\kappa_0}{\rho_0} .$$ The main definition is:

\begin{df}[$E$--periodic] \label{D: periodic} Let $E$ be a maximal loxodromic subgroup.  An isometry $v\in G$ is  $E$--periodic at $x_0\in X$ if 
$$\hbox{$x_0$ and $vx_0$ are in $C_E^{+190 \delta}$ and } |vx_0-x_0|> 3 \nu [E] + A \delta + 10^7\delta.$$
 In this case, we call $E$ the \emph{period of $v$ at $x_0$}.
\end{df}

\begin{ex} Let $G$ be the free group acting on its Cayley graph. Then $\delta =0$ and we let $\kappa_0=\rho_0$, so that $\nu=4$. Let $u\not=1 \in G$ and let $E=E(u)$. We recall that $1\in C_u=C_E$ if, and only if, $u$ is cyclically reduced.  An element $v\in G$ is $E$--periodic at $1$ if, and only if, $u$ is cyclically reduced and decomposes as a (cyclically) reduced product $u=u_1u_2$  such that $v=u^ku_1$, where $k\geqslant 13$. 
\end{ex}

The small cancellation lemma \ref{L: small cancellation} implies:

\begin{lem}[Uniqueness of periods] \label{L: uniqueness of periods} If $v$ is periodic with periods $E_1$ and $E_2$ at $x_0$, then $E_1=E_2$. \qed 
\end{lem}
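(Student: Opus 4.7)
\textbf{Proof plan for Lemma \ref{L: uniqueness of periods}.} The plan is to use the small cancellation lemma (Lemma \ref{L: small cancellation}) to derive a contradiction from the assumption $E_1 \neq E_2$. The key observation is that the lower bound on $|vx_0 - x_0|$ built into the definition of $E$-periodicity is designed to be strictly larger than the diameter bound on intersections of thickened cylinders provided by Lemma \ref{L: small cancellation}.

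First, I would unpack the definition of $E$-periodic. Since $v$ is $E_i$-periodic at $x_0$ for $i = 1, 2$, both endpoints $x_0$ and $vx_0$ lie in $C_{E_i}^{+190\delta}$, and in particular in $C_{E_i}^{+400\delta}$. Thus
\[
x_0,\ vx_0 \in C_{E_1}^{+400\delta}\cap C_{E_2}^{+400\delta},
\]
so that $|vx_0 - x_0| \leqslant \diam\bigl(C_{E_1}^{+400\delta}\cap C_{E_2}^{+400\delta}\bigr)$.

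Next, assuming for contradiction that $E_1 \neq E_2$, apply Lemma \ref{L: small cancellation} to obtain
\[
\diam\bigl(C_{E_1}^{+400\delta}\cap C_{E_2}^{+400\delta}\bigr) \leqslant 3\nu\,\max\{[E_1],[E_2]\} + A\delta + 1684\delta.
\]
On the other hand, since $v$ is periodic with period $E_i$ for both $i = 1, 2$, applying the defining inequality to whichever of $E_1, E_2$ realises $\max\{[E_1],[E_2]\}$ gives
\[
|vx_0 - x_0| > 3\nu\,\max\{[E_1],[E_2]\} + A\delta + 10^7\delta.
\]
Combining these two bounds yields $10^7\delta < 1684\delta$, which is absurd when $\delta > 0$. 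In the simplicial tree case where $\delta = 0$, both inequalities reduce to comparing $|vx_0 - x_0|$ with $3\nu\max\{[E_1],[E_2]\}$, and the strict lower bound contradicts the non-strict upper bound, again giving the contradiction. Hence $E_1 = E_2$.

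There is really no obstacle here: the constants $190\delta$, $400\delta$, $10^7\delta$ appearing in Definitions \ref{D: periodic} and in the statement of Lemma \ref{L: small cancellation} have been engineered precisely so that the uniqueness of periods follows at once. The only thing to double-check is that the additive constant $10^7\delta$ in the definition of periodic is indeed larger than $1684\delta$ (which it obviously is), so that the two bounds are incompatible.
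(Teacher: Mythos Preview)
Your proposal is correct and is exactly the argument the paper has in mind: the lemma is stated immediately after ``The small cancellation lemma~\ref{L: small cancellation} implies:'' and carries only a \qed, so the intended proof is precisely the comparison you wrote out between the lower bound in Definition~\ref{D: periodic} and the upper bound from Lemma~\ref{L: small cancellation}.
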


We also need the following notion, which is weaker than periodicity.
\begin{df}[$E$--left/right-periodic] \label{D: right-periodic} Let $E$ be a maximal loxodromic subgroup. 
An isometry $u$ is $E$--left-periodic at $x_0$ if 
$$\hbox{$x_0$ is in $C_E^{+190 \delta}$ and }  \diam([ux_0,x_0]\cap C_E^{+190 \delta})>3 \nu [E] + A \delta + 10^7\delta, $$
 and $[ux_0,x_0]\cap C_E^{+190 \delta}$ is the \emph{$E$--left-period} of $u$ at $x_0$.  

 If $u^{-1}$ is $E$--left-periodic at $x_0$, then $u$ is called \emph{$E$--right-periodic at $x_0$} and $[u^{-1}x_0,x_0]\cap C_E^{+190 \delta}$ is the \emph{$E$--right-period} of $u$ at $x_0$. 

\end{df}

An isometry can be right-periodic at $x_0$ with respect to two distinct maximal loxodromic subgroups. 
\begin{ex}
In the free group on generators $a$ and $b$ acting on its Cayley graph, $u_1=bba (ba^{13})^{13}$ and $u_2=(ba^{13})^{13}$ are both $\langle ba^{13} \rangle$-- and $\langle a \rangle$--right-periodic at the vertex representing the identity.
\end{ex}
But we still have the following.

\begin{lem}[Uniqueness of right-periods] \label{L: right-period}
If $E\not=E'$, if $[E]\geqslant [E']$ and if $u_1$ and $u_2$ are both $E$-- and $E'$--right-periodic at $x_0$, then their $E'$--right-periods are of Hausdorff distance at most $240 \delta$.
\end{lem}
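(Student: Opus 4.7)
Set $P_i := [u_i^{-1}x_0,x_0]\cap C_E^{+190\delta}$, $P_i' := [u_i^{-1}x_0,x_0]\cap C_{E'}^{+190\delta}$, and $D := 3\nu[E]+A\delta+1684\delta$. First I would show that each $E'$-right-period is, up to quasi-convex slack, an initial subsegment of the corresponding $E$-right-period, with $\diam(P_i')\leqslant D$. Both $P_i$ and $P_i'$ contain $x_0$ and sit on the same geodesic $[u_i^{-1}x_0,x_0]$; by $10\delta$-quasi-convexity of $C_E^{+190\delta}$ and $C_{E'}^{+190\delta}$ (Remark \ref{R: neighbourhood quasi-convex}), the connected components of $P_i$ and $P_i'$ containing $x_0$ are initial intervals from $x_0$. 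Their overlap lies in $C_E^{+190\delta}\cap C_{E'}^{+190\delta}\subset C_E^{+400\delta}\cap C_{E'}^{+400\delta}$, whose diameter is at most $D$ by Lemma \ref{L: small cancellation} (using $[E]\geqslant [E']$). Since $\diam(P_i)>3\nu[E]+A\delta+10^7\delta>D$ by $E$-periodicity, $P_i$ strictly extends past the overlap, forcing $\diam(P_i')\leqslant D$ and $P_i'\subset P_i\subset C_E^{+200\delta}$.

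Next I would show that the two geodesics $[u_1^{-1}x_0,x_0]$ and $[u_2^{-1}x_0,x_0]$ fellow-travel closely on their initial portions of length $\leqslant D$. By Lemma \ref{L: invariant cylinder}, $P_i\subset C_E^{+200\delta}$ lies in a small neighbourhood of any axis $L_E$ of a hyperbolic element realising $[E]$. Since each $P_i$ has length strictly exceeding $D\geqslant \diam(C_E^{+400\delta}\cap C_{E'}^{+400\delta})$, the initial portion of each geodesic must enter $L_E$ in the direction that stays in $C_{E'}^{+190\delta}$ long enough to realise the $E'$-period; this direction is forced up to $O(\delta)$ by the bounded size of $C_E\cap C_{E'}$. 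Applying the stability Lemma \ref{L: stability finite} and the discrete quasi-geodesic Lemma \ref{L: discrete quasi-geodesics} to the two initial sub-paths, both following a common sub-arc of $L_E$, I deduce that the points at equal parameter $t\leqslant \min(\diam(P_1),\diam(P_2))$ on the two geodesics lie within about $200\delta$ of each other.

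To conclude the Hausdorff bound, given $p\in P_1'$ at parameter $t$ from $x_0$, let $q$ be the point at parameter $t$ on $[u_2^{-1}x_0,x_0]$; the fellow-traveling gives $|p-q|\leqslant 200\delta$. Since $p\in C_{E'}^{+190\delta}$ and $q$ sits on $[u_2^{-1}x_0,x_0]$ between $x_0\in P_2'$ and a point further along, the $10\delta$-quasi-convexity of $C_{E'}^{+190\delta}$ places $q$ within an additional $40\delta$ of a point of $P_2'$, so that $d(p,P_2')\leqslant 240\delta$. By symmetry the Hausdorff distance of $P_1'$ and $P_2'$ is at most $240\delta$.

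The main obstacle is the second step: one must rule out the two geodesics entering $C_E^{+190\delta}$ from opposite sides of $x_0$ along $L_E$, a configuration that would prevent fellow-traveling. This is pinned down by the fact that the $E$-periods have length strictly exceeding the small cancellation diameter $D$, which forces both geodesics to commit to the direction compatible with the bounded intersection $C_E\cap C_{E'}$ near $x_0$; it is here that Lemma \ref{L: small cancellation} is used most sharply.
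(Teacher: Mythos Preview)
Your approach parallels the paper's: both use Lemma~\ref{L: small cancellation} to bound $\diam(C_E\cap C_{E'})$, then try to show the two geodesics $[x_0,u_i^{-1}x_0]$ fellow-travel along $L_E$ near $x_0$, and finally transfer the $E'$-period of $u_1$ onto the second geodesic. The paper carries this out by fixing points $y_i$ in the $E$-right-periods at distance $\diam(C_E^{+190\delta}\cap C_{E'}^{+190\delta})+10^6\delta$ from $x_0$, asserting $|y_1-y_2|<2000\delta$ via proximity to a single quasi-line $L_g$, and then projecting the extremal point $y_1'$ of $P_1'$ onto $[x_0,u_2^{-1}x_0]$.

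There is, however, a genuine gap in your Step~2, and you have correctly located it in your final paragraph. Your resolution---that the $E$-periods exceeding $D$ ``forces both geodesics to commit to the direction compatible with the bounded intersection $C_E\cap C_{E'}$''---does not go through. If $x_0$ sits well inside $C_E^{+190\delta}\cap C_{E'}^{+190\delta}$, then \emph{both} directions along $L_E$ from $x_0$ remain in $C_{E'}^{+190\delta}$ long enough to yield an $E'$-right-period; nothing in the hypotheses prevents $u_1^{-1}x_0$ and $u_2^{-1}x_0$ from lying on opposite sides of $x_0$ along $L_E$. Concretely, in a free group take $E=\langle a^{100}b\rangle$, $E'=\langle a\rangle$, $x_0=a^{50}$: both directions along the axis of $a^{100}b$ meet $C_a$ in a segment of length $50>3\nu[E']$, yet the resulting $E'$-right-periods $[a^{50},a^{100}]$ and $[1,a^{50}]$ are far apart. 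The paper's proof has precisely the same lacuna: the sentence ``Hence, $|y_1-y_2|<2000\delta$'' does not follow merely from $x_0,y_1,y_2\in L_g^{+290\delta}$ with $|y_1-x_0|=|y_2-x_0|$, since $y_1$ and $y_2$ could lie on opposite sides of $x_0$ along $L_g$. A separate, smaller issue: your claim $P_i'\subset P_i$ in Step~1 conflates the full intersection with its connected component at $x_0$; $P_i'$ need not be connected and need not lie inside $P_i$, though this can be repaired by working in $C_E^{+200\delta}$ via quasi-convexity.
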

 
\begin{proof}
With Lemma \ref{L: small cancellation} we deduce that $$\diam\left([u_1^{-1}x_0,x_0]\cap C_E^{+190 \delta}\right)>\diam\left(C_E^{+190 \delta}\cap C_{E'}^{+190 \delta}\right) + 10^6 \delta .$$ The same statement holds for $u_2$. 

Let $y_1$ and $y_2$ be the points in the $E$--right-period of $u_1$, or $u_2$ respectively, which are at distance $\diam\left(C_E^{+190 \delta}\cap C_{E'}^{+190 \delta}\right) + 10^6 \delta$ from $x_0$. By Lemma \ref{L: invariant cylinder}, $x_0$, $y_1$ and $y_2$ are in the $290\delta$--neighbourhood of a $200\delta$--local $1$--quasi--geodesic $L_g$. Hence, $|y_1-y_2|<2000\delta$. 

Therefore, if $y'_1$ is the point in the $E'$--right period of $u_1$ which is closest to $u_1^{-1}x_0$, then $y'_1$ is at distance at most $10\delta$ from $[u_2^{-1}x_0,x_0]$, and hence, the projection of $y'_1$ in $[u_2^{-1}x_0,x_0]$ is in $C_{E'}^{+200\delta}$. 

Now, by quasi-convexity of the invariant cylinders, cf. \cite[Lemme 2.2.2]{delzant_courbure_2008},
  $$\diam \left( C_E^{+200 \delta}\cap C_{E'}^{+200\delta}\right)< \diam \left( C_E^{+190 \delta}\cap C_{E'}^{+190 \delta}\right)+230\delta.$$ 
Thus, the projection of $y'_1$ onto $[u_2^{-1}x_0,x_0]$ is at distance at most 
 $\diam \left( C_E^{+190 \delta}\cap C_{E'}^{+190 \delta}\right)+230\delta$ from $x_0$. Hence, $y'_1$ is of distance at most $240\delta$ from the $E'$--right-period of $u_2$, and  $y'_2$,  the point in the $E'$--right period of $u_2$ which is closest to $u_2^{-1}x_0$, from the $E'$--right-period of $u_1$. This implies the claim.
\end{proof}

\subsubsection{Equations of reduced products}

Let us now consider a set of $n$ equations  $$u_0vw_0=u_1vw_1=\ldots =u_nvw_n,$$ where the products $u_0v$, $u_1v$, $\ldots$, $u_nv$ and $vw_0$, $vw_1$, $\ldots$, $vw_n$ are reduced at $x_0\in X$, and 
\begin{itemize}
\item $|u_0x_0-x_0|\leqslant |u_1x_0-x_0|\leqslant \ldots \leqslant |u_nx_0-x_0|$,
\item $26\delta <  |vx_0-x_0|$,
\item for all $0\leqslant i,j\leqslant n$, $|u_ix_0-u_jx_0|\leqslant |vx_0-x_0|$
\end{itemize}

\begin{lem}[Periodic isometries]\label{L: periodic} 
If $n\geqslant  5\nu$, and if, for all  $1\leqslant i < n$,  
$$|u_ix_0-u_{i+1}x_0|> A\delta +10^8\delta, $$
then all $u_i^{-1}u_{i+1}$ are hyperbolic and belong to the same  maximal loxodromic subgroup $E:=E(u_i^{-1}u_{i+1})$.

 Furthermore, $v$ is periodic with period $E$ at $x_0$, 
  $u_{n-1}$ and $u_{n}$ are $E$--right-periodic  at $x_0$ and their $E$--right-periods  are  at Hausdorff distance $>250 \delta$.
\end{lem}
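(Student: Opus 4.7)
The plan is to apply Proposition~\ref{P: reduced products} to each consecutive pair $(u_i,u_{i+1})$, extract a common axis structure, and force all the resulting maximal loxodromic subgroups to coincide using the acylindrical small cancellation inequality (Lemma~\ref{L: small cancellation}). The hypotheses of Proposition~\ref{P: reduced products} are satisfied by each pair $(u_i,u_{i+1})$, so $x_0,vx_0\in C_{u_i^{-1}u_{i+1}}^{+190\delta}$; the ``moreover'' clause, applied to $(u_0,u_n)$, places $u_0x_0,\dots,u_nx_0$ within $O(\delta)$ of a geodesic from $x_0$ to $u_0vw_0x_0$ with their projections monotone in the prescribed order, whence
\[
\sum_{i=0}^{n-1}|u_{i+1}x_0-u_ix_0|\leqslant |vx_0-x_0|+O(\delta\,n).
\]

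Each $g_i:=u_i^{-1}u_{i+1}$ must be hyperbolic: an elliptic $g_i$ with $x_0\in C_{g_i}^{+190\delta}$ would satisfy $|u_{i+1}x_0-u_ix_0|\leqslant 2\cdot 190\delta+10\delta<A\delta+10^8\delta$ by Lemma~\ref{L: axis}, contradicting the gap hypothesis. Writing $E_i:=E(g_i)$, Lemma~\ref{L: axis} also yields $[E_i]\leqslant[g_i]\leqslant|u_{i+1}x_0-u_ix_0|+10\delta$. Pigeonhole on the $n-1\geqslant 5\nu-1$ controlled gaps produces $i_0\in\{1,\dots,n-1\}$ with $|u_{i_0+1}x_0-u_{i_0}x_0|\leqslant|vx_0-x_0|/(5\nu-1)$; setting $E:=E_{i_0}$, we have $[E]\leqslant|vx_0-x_0|/(5\nu-1)+10\delta$, and the lower bound $|vx_0-x_0|\geqslant 5\nu(A\delta+10^8\delta)$ yields $|vx_0-x_0|>3\nu[E]+A\delta+10^7\delta$, so $v$ is $E$-periodic at $x_0$ in the sense of Definition~\ref{D: periodic}.

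The main difficulty is establishing $E_j=E$ for \emph{every} $j$. Since both $x_0,vx_0$ lie in $C_E^{+400\delta}\cap C_{E_j}^{+400\delta}$, if $E_j\neq E$ then Lemma~\ref{L: small cancellation} gives
\[
|vx_0-x_0|\leqslant 3\nu\max\{[E],[E_j]\}+A\delta+1684\delta.
\]
The case $[E_j]\leqslant[E]$ contradicts the $E$-periodicity of $v$, so $[E_j]>[E]$ and $[E_j]\geqslant(|vx_0-x_0|-A\delta-1684\delta)/(3\nu)$; combined with $[E_j]\leqslant|u_{j+1}x_0-u_jx_0|+10\delta$, the $j$-th gap must exceed $|vx_0-x_0|/(3\nu)-O(\delta)$. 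Because the gaps sum to at most $|vx_0-x_0|+O(\delta\,n)$, only $O(\nu)$ indices can be bad. To exclude them all I will iterate: any putative family of bad pairs would share a common maximal loxodromic $E'$ (smallest bad gap pair), and one shows either that $v$ is then also $E'$-periodic, forcing $E'=E$ by uniqueness of periods (Lemma~\ref{L: uniqueness of periods}), or that $[E']$ is large enough to make the sum-of-gaps bookkeeping incompatible with the hypothesis $n\geqslant 5\nu$.

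Finally, Proposition~\ref{P: reduced products} applied to the pairs $(u_0,u_{n-1})$ and $(u_0,u_n)$ locates $x_0$, $u_{n-1}x_0$, $u_nx_0$ along $C_E^{+190\delta}$ with ordered projections; translating by $u_{n-1}^{-1}$ and $u_n^{-1}$ respectively, this puts large segments of $[u_{n-1}^{-1}x_0,x_0]$ and $[u_n^{-1}x_0,x_0]$ inside $C_E^{+190\delta}$. The pigeonhole together with the monotonicity of $|u_ix_0-x_0|$ yields $|u_{n-1}x_0-x_0|,\, |u_nx_0-x_0|>3\nu[E]+A\delta+10^7\delta$, so both $u_{n-1}$ and $u_n$ are $E$-right-periodic by Definition~\ref{D: right-periodic}. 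The Hausdorff distance of their $E$-right-periods is essentially $|u_nx_0-u_{n-1}x_0|>A\delta+10^8\delta\gg 250\delta$, giving the final assertion.
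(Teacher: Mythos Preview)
Your argument has a genuine gap at the step ``$E_j=E$ for every $j$''. You compare $C_E$ with $C_{E_j}$ directly through Lemma~\ref{L: small cancellation}, which yields $|vx_0-x_0|\leqslant 3\nu\max\{[E],[E_j]\}+A\delta+1684\delta$ when $E_j\neq E$. When $[E_j]>[E]$ this only tells you that the $j$-th gap is at least roughly $|vx_0-x_0|/(3\nu)$, so at most about $3\nu$ indices can be bad. Your proposed iteration does not close this: if $E'=E_k$ is the period of the bad pair with smallest gap, the very inequality you just derived gives $|vx_0-x_0|\leqslant 3\nu[E']+A\delta+1684\delta<3\nu[E']+A\delta+10^7\delta$, so $v$ is \emph{not} $E'$-periodic and uniqueness of periods (Lemma~\ref{L: uniqueness of periods}) cannot be invoked; and the sum-of-gaps bookkeeping with $n\geqslant 5\nu$ only bounds the number of bad indices by $3\nu$, it does not exclude them.

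The paper sidesteps this obstruction by comparing $C_E$ not with $C_{E_j}$ but with its \emph{translate} $g_jC_E$, where $g_j=u_j^{-1}u_{j+1}$. The point is that $g_jC_E$ is the cylinder of $g_jEg_j^{-1}$, which has the \emph{same} invariant $[E]$, so the small cancellation bound reads $3\nu[E]+A\delta+1684\delta$ regardless of how large $[g_j]$ is. One checks (via Proposition~\ref{P: reduced products} and quasi-convexity) that both $g_jx_0$ and $vx_0$ lie in $C_E^{+352\delta}\cap g_jC_E^{+352\delta}$, and that $|vx_0-g_jx_0|\geqslant |vx_0-x_0|-|u_{j+1}x_0-u_jx_0|\geqslant (n-1)\cdot(\text{min gap})-48n\delta$; for $n=5\nu$ this exceeds $3\nu[E]+A\delta+10^7\delta$ since $[E]\leqslant(\text{min gap})$. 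Hence $g_jEg_j^{-1}=E$, so $g_j\in E$ by maximality, giving $E_j=E$ for every $j$ in one stroke. Once this is in place, the telescoping $u_0^{-1}u_i=g_0g_1\cdots g_{i-1}\in E$ immediately yields $u_0x_0,\dots,u_ix_0\in u_iC_E^{+190\delta}$, from which the right-periodicity of $u_{n-1}$ and $u_n$ and the separation of their right-periods follow cleanly.
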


This extends Safin's  \cite[Lemma 3]{safin_powers_2011} in the free group.

\begin{proof} Let us first assume that $n=5\nu$ and let $0\leqslant i < n$. Then $x_0$, $u_i^{-1}u_{i+1}x_0$ and $vx_0$ are in $C_{u_i^{-1}u_{i+1}}^{+190\delta}$  by Proposition \ref{P: reduced products}.  As $|u_ix_0-u_{i+1}x_0|>590 \delta$, the translation length $[u_i^{-1}u_{i+1}]>200 \delta$ by Lemma \ref{L: axis}. By Lemma \ref{L: invariant lines}, $u_i^{-1}u_{i+1}$ is hyperbolic.

In particular,  $E({u}_i^{-1}{u}_{i+1})$ is a loxodromic subgroup, and $x_0$, $u_i^{-1}u_{i+1}x_0$ and $vx_0$ are in  the $190 \delta$--neighbourhood of the $E({u}_i^{-1}{u}_{i+1})$--invariant cylinder, see Lemma \ref{L: invariant cylinder}. 

Let us fix $u_j$ such that  $|{u}_jx_0-{u}_{j+1}x_0|=\min_{0\leqslant i< n}|{u}_ix_0-{u}_{i+1}x_0|$.

As by Proposition \ref{P: reduced products} and Remark \ref{R: inversed triangle inequality}, 
\begin{align}
|{u}_{i}x_0-x_0| \geqslant |{u}_ix_0-{u}_{i-1}x_0| + |u_{i-1}x_0-x_0|- 48\delta, \label{reduced product nonsense}
 \end{align}
  for all $n\geqslant \iota\geqslant  2$ 
\begin{align}
|{u}_{\iota}x_0-{u}_0x_0| &\geqslant \sum_{l=0}^{\iota-1} |{u}_lx_0-{u}_{l+1}x_0|- 48\iota \delta  \label{crucial computation}\\
& \geqslant \iota \; |{u}_jx_0-{u}_{j+1}x_0| -48 \iota \delta. \nonumber 
\end{align}

Let us now set $E:=E ({u}_j^{-1}{u}_{j+1})$. We first show that $v$ is $E({u}_j^{-1}{u}_{j+1})$--periodic at $x_0$: indeed, by assumption, $|vx_0-x_0|\geqslant |{u}_nx_0-{u}_{0}x_0|$. By \eqref{crucial computation},
 \begin{align}
|vx_0-x_0| &\geqslant n \; |{u}_jx_0-{u}_{j+1}x_0| - 48n \delta. \nonumber \\
& = 5\nu  \; |{u}_jx_0-{u}_{j+1}x_0| - 240 \nu \delta >  3 \nu  \, |{u}_jx_0-{u}_{j+1}x_0|  + A \delta + 10^7 \delta. \nonumber
\end{align}
Therefore, $v$ is is $E({u}_j^{-1}{u}_{j+1})$--periodic at $x_0$.

Next, we fix $i$ as above and prove that ${u}_{i}^{-1}{u}_{i+1} E {u}_{i+1}^{-1}{u}_{i}=E$.

Recall that for $\alpha>10\delta$, $C_E^{+\alpha}$ is $10\delta$--quasi-convex, see Lemma \ref{L: invariant cylinder} and Remark \ref{R: neighbourhood quasi-convex}. By Proposition \ref{P: reduced products} and Lemma \ref{L: Gromov product 2}, $d({u}_i^{-1}{u}_{i+1} x_0,[x_0,vx_0])\leqslant 70\delta$, hence,  ${u}_i^{-1}{u}_{i+1} x_0 \in C_{E}^{+270\delta}$. Similarly, $d(vx_0,[{u}_i^{-1}{u}_{i+1} x_0,{u}_i^{-1}{u}_{i+1} vx_0])\leqslant 142\delta$ and $vx_0 \in {u}_i^{-1}{u}_{i+1}C_{E}^{+342\delta}$, which is the ${u}_{i}^{-1}{u}_{i+1}E {u}_{i+1}^{-1}{u}_{i}$--invariant cylinder.
By $10\delta$--quasi-convexity, $[{u}_i^{-1} {u}_{i+1} x_0,vx_0]\in C_E^{+352\delta}$ and $[{u}_i^{-1} {u}_{i+1} x_0,vx_0]\in {u}_i^{-1}{u}_{i+1} C_E^{+352\delta}$. 
 Now, 
$$|vx_0-{u}_i^{-1} {u}_{i+1} x_0|\geqslant |vx_0-x_0| - |{u}_{i+1}x_0-{u}_ix_0| \geqslant |{u}_{n}x_0-{u}_0x_0| - |{u}_{i+1}x_0-{u}_ix_0|.$$
 With \eqref{crucial computation}, we deduce that   
 \begin{align*}
 \diam \left( C_E^{+352\delta} \cap {u}_i^{-1}{u}_{i+1} C_E^{+352\delta}\right) & \geqslant (n-1) \; |{u}_jx_0-{u}_{j+1}x_0| -48 n\delta \\
& \geqslant 4 \nu  \, |{u}_jx_0-{u}_{j+1}x_0|  -240\nu \delta\\
&>  3 \nu  \, |{u}_jx_0-{u}_{j+1}x_0|  + A \delta + 10^7 \delta.
 \end{align*} 
By Lemma \ref{L: small cancellation}, ${u}_{i}^{-1}{u}_{i+1} E {u}_{i+1}^{-1}{u}_{i}=E$. 

Then, firstly, by maximality of $E$, ${u}_{i}^{-1}{u}_{i+1} \in E$, and, hence, $E=E({u}_{i}^{-1}{u}_{i+1} )$.

Secondly, ${u}_{0} E {{u}_{0}}^{-1}=\ldots = {u_i} E {u_i}^{-1}$, and hence, $ {u}_0x_0,\, \ldots,\, {u}_{i}x_0$ are in ${u}_{i} C_{E}^{+190 \delta}=C_{u_iEu_i^{-1}}^{+190\delta}$. Using \eqref{crucial computation} as before, we see that for all $i\geqslant 4\nu $, ${u}_{i}^{-1}$ is ${u}_{i} E {u_i}^{-1}$--left-periodic at ${u}_ix_0$. Hence, ${u}_{i}$ is $E$--right-periodic at $x_0$. 
By \eqref{reduced product nonsense}, $|u_{i+1}x_0-x_0|>|u_{i+1}x_0-u_ix_0|+|u_ix_0-x_0|-48\delta>|u_ix_0-x_0|+250\delta$, so that the respective $E$--right-periods have Hausdorff distance $>250\delta$.

This yields the assertion if $n=5\nu$. 
As the periods of $v$ at $x_0$ are unique (Lemma \ref{L: uniqueness of periods}), the result now follows for $n>5\nu$.
\end{proof}

\subsubsection{Equations of reduced products and acylindricity} \label{S: separation}

Recall that the action of $G$ on $X$ is $(\kappa_0,N_0)$--acylindrical. We keep the notations of the previous section. 
 
  \begin{prop}\label{P: periodic}  
 If  $n\geqslant 10^{12}N_0^4\frac{\kappa_0^2}{\rho_0^2}$ and $2500\kappa_0 \leqslant |vx_0-x_0|,$ then there is $i<n$ such that  $v$ is $E:=E(u_i^{-1}u_{n})$--periodic at $x_0$. In addition, $u_i$ and $u_n$ are $E$--right-periodic at $x_0$ and their $E$--right periods are of Hausdorff distance at least $250 \delta$.
\end{prop}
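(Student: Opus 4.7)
The plan is to extract from $u_0, \ldots, u_n$ a subsequence ending at $u_n$ that satisfies the hypotheses of Lemma \ref{L: periodic}, proceeding greedily from the largest index. If this extraction succeeds, the proposition follows from Lemma \ref{L: periodic} applied to the reversed subsequence; if it fails, a pigeonhole combined with acylindricity will yield a contradiction.

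First, I set $j_0 := n$ and, having defined $j_k$, let $j_{k+1}$ be the largest index $< j_k$ with $|u_{j_{k+1}}x_0 - u_{j_k}x_0| > A\delta + 10^8\delta$; the process stops at some $j_m$. This partitions $\{0, 1, \ldots, n\}$ into $m+1$ clusters $C_0, \ldots, C_m$, where $C_k \ni j_k$ consists of all indices $i$ with $|u_ix_0 - u_{j_k}x_0| \leq A\delta + 10^8\delta$. If $m \geq 5\nu$, I apply Lemma \ref{L: periodic} to the reversed sequence $u_{j_m}, u_{j_{m-1}}, \ldots, u_{j_0} = u_n$, whose monotonicity and reduced-product hypotheses are inherited from the setup and whose consecutive separations exceed $A\delta + 10^8\delta$ by construction. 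The lemma then yields a maximal loxodromic subgroup $E = E(u_{j_1}^{-1}u_{j_0}) = E(u_{j_1}^{-1}u_n)$ such that $v$ is $E$-periodic at $x_0$ and $u_{j_1}, u_n$ are $E$-right-periodic at $x_0$ with right-periods at Hausdorff distance $> 250\delta$; setting $i := j_1$ is the desired conclusion.

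If instead $m < 5\nu$, pigeonhole provides a cluster $C_k$ of size $\geq (n+1)/(5\nu) \geq 5 \cdot 10^{10} N_0^3 \kappa_0/\rho_0$. The one-sided version of Lemma \ref{L: reduced products and geodesics} (applied to each $u_ivw_i = g$) places every $u_ix_0$ with $i \in C_k$ within $6\delta$ of the fixed geodesic $[x_0, gx_0]$, so the orthogonal projections of these points concentrate in a subsegment of length $\leq 2(A\delta + 10^8\delta) + O(\delta)$. A second pigeonhole over subsegments of length $200\delta$ produces a subcluster $C'\subset C_k$ of size $\geq 10^5 N_0$ whose projections lie in one common subsegment. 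For any $i, i' \in C'$ this forces $|u_ix_0 - u_{i'}x_0| \leq 250\delta$ and, by the analogous estimate for $u_ivx_0$ and $u_{i'}vx_0$, also $|u_ivx_0 - u_{i'}vx_0| \leq 250\delta$. Hence the elements $u_i^{-1}u_{i'}$ move both $x_0$ and $vx_0$ by at most $250\delta$; since $|vx_0 - x_0| \geq 2500\kappa_0 \geq \kappa_0 + 1300\delta$, Lemma \ref{L: acylindrical} with $d = 3$ bounds their number by $N(3) = 69N_0$, contradicting $|C'| \geq 10^5 N_0$.

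The key obstacle will be a scale mismatch: Lemma \ref{L: periodic} demands consecutive separations of order $A\delta \sim N_0^3 \kappa_0$, whereas the hypothesis $|vx_0 - x_0| \geq 2500\kappa_0$ only enables acylindricity at order $\sim \delta$. Bridging this gap is exactly the role of the two-step pigeonhole: the reduced-product structure aligns the $u_ix_0$ along the one-dimensional geodesic $[x_0, gx_0]$, converting a ball-cluster at the large scale $A\delta$ into an interval-cluster at the small scale $200\delta$ where acylindricity finally bites. The hypothesis $n \geq 10^{12} N_0^4\kappa_0^2/\rho_0^2$ is calibrated precisely to absorb both pigeonhole losses and still leave a subcluster too large to fit the acylindricity bound $N(3) = 69N_0$.
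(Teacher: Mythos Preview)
Your argument is correct and takes a genuinely different route from the paper. The paper proceeds constructively: it fixes the regular subsequence $u'_p := u_{p \cdot (A+10^6) \cdot 150N_0}$ and uses two preparatory lemmas (Lemmas~\ref{L: large transl} and~\ref{L: large transl 2}) to show directly that consecutive $u'_p$ are separated by more than $A\delta + 10^8\delta$, after which Lemma~\ref{L: periodic} applies immediately. You instead run a dichotomy: either the greedy extraction from $u_n$ downwards produces $5\nu$ well-separated points and Lemma~\ref{L: periodic} applies, or some cluster is too large, and then a second pigeonhole at scale $200\delta$ along the geodesic $[x_0,gx_0]$ violates acylindricity. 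Both arguments ultimately use acylindricity at scale $\sim\delta$ to manufacture the separation $\sim A\delta$ that Lemma~\ref{L: periodic} requires; the paper accumulates many small gaps into a large one, while you collapse a large cluster down to a small one. A pleasant by-product of your greedy extraction is that the subsequence ends exactly at $u_n$, so the conclusion involving $u_n$ comes for free, whereas the paper's evenly spaced subsequence does not naturally terminate there. Your numerics are slightly generous (a careful count gives $|C'|\gtrsim 4\cdot 10^4 N_0$ rather than $10^5 N_0$), and the step ``analogous estimate for $u_ivx_0$ and $u_{i'}vx_0$'' needs the reduced-product identity $|u_ivx_0-x_0|=|u_ix_0-x_0|+|vx_0-x_0|\pm 2\delta$ rather than literal symmetry; but the margin against $N(3)=69N_0$ is enormous and both points are routine.
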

 
\begin{ex}  If $X$ is a tree, the proposition follows immediately from Lemma \ref{L: periodic}. More generally, let $\delta >0$ and the injectivity radius of the action larger than $10^9\delta$. Then we take $\kappa_0=\delta$, so that $A\leqslant 10^7$, and the proposition follows immediately by Lemma \ref{L: periodic}.
\end{ex}

In the general situation we assume that $2500\kappa_0 \leqslant |vx_0-x_0|$ and apply the acylindricity assumption as follows. 
It will be convenient to to fix $$n_1:=150N_0,$$ so that, by acylindricity, Lemma \ref{L: acylindrical}, at most $n_1$ group elements move $[x_0,vx_0]$ by at most $590\delta$. 

 \begin{lem} \label{L: large transl} Let $m>1$ and $n\geqslant mn_1$.  Let ${u'}_0:=u_0,\, {u'}_1:=u_{n_1},\, \ldots ,\, {u'}_{m}:=u_{mn_1}.$

 Then, for all $0\leqslant i \leqslant m$, $|{u'}_ix_0-{u'}_{i+1}x_0|>200\delta.$ 
\end{lem}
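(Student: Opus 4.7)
The plan is to argue by contradiction using acylindricity. Suppose, for some $0\leqslant i < m$, that $|u'_i x_0 - u'_{i+1} x_0|\leqslant 200\delta$. The strategy is to exhibit $n_1+1$ distinct isometries, each of which moves both $x_0$ and $vx_0$ by at most $590\delta$. This then contradicts Lemma~\ref{L: acylindrical} applied with $d = 5.9$: indeed $\kappa(5.9) = \kappa_0 + 2460\delta \leqslant 2461\kappa_0 \leqslant |vx_0-x_0|$ since $|vx_0-x_0|\geqslant 2500\kappa_0$, and $N(5.9) = 135.7\,N_0 < n_1$.

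The first step is to localise all the orbit points on a single geodesic. Let $g$ denote the common value $u_\ell v w_\ell$ (independent of $\ell$) and fix a geodesic $\gamma = [x_0, gx_0]$. For any pair of indices $\ell < \ell'$ in the window $W_i := \{in_1, in_1+1, \ldots, (i+1)n_1\}$, the hypotheses of Proposition~\ref{P: reduced products} are met by the pair $(u_\ell, u_{\ell'})$, so Lemma~\ref{L: reduced products and geodesics} places each of $u_\ell x_0$, $u_{\ell'}x_0$, $u_\ell vx_0$, $u_{\ell'}vx_0$ within $6\delta$ of $\gamma$. Since $g$ is the same for every $\ell$, the single geodesic $\gamma$ works simultaneously for all $\ell \in W_i$.

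Next I would derive pairwise distance bounds. Let $p_\ell$ and $q_\ell$ be projections of $u_\ell x_0$ and $u_\ell v x_0$ onto $\gamma$, which appear in the order $x_0, p_\ell, q_\ell, gx_0$ (as $u_\ell v$ being reduced forces $|x_0 - u_\ell vx_0| \geqslant |x_0 - u_\ell x_0| + |vx_0-x_0| - 2\delta$, cf.\ Remark~\ref{R: inversed triangle inequality}). The numbers $|u_\ell x_0 - x_0|$ for $\ell \in W_i$ range over an interval of length at most $200\delta$, so the $p_\ell$ lie in a sub-segment of $\gamma$ of length at most $212\delta$, giving $|u_\ell x_0 - u_{\ell'} x_0| \leqslant 224\delta$ for all $\ell,\ell' \in W_i$. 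Using $|p_\ell - q_\ell| = |vx_0-x_0| \pm 12\delta$, the $q_\ell$ likewise lie in a sub-segment of length at most $236\delta$, whence $|u_\ell v x_0 - u_{\ell'} v x_0| \leqslant 248\delta$.

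To finish, set $g_{\ell'} := u_{in_1}^{-1} u_{\ell'}$ for $\ell' \in W_i$. Since the $u_\ell$ are distinct, these form $n_1+1$ distinct isometries satisfying $|g_{\ell'} x_0 - x_0|\leqslant 224\delta \leqslant 590\delta$ and $|g_{\ell'} v x_0 - v x_0|\leqslant 248\delta \leqslant 590\delta$, contradicting acylindricity as in the first paragraph. The main subtlety I anticipate is making sure that both families $\{u_\ell x_0\}$ and $\{u_\ell v x_0\}$ concentrate on the same geodesic $\gamma$; this is exactly what the reducedness of every $u_\ell v$ and $vw_\ell$ at $x_0$ buys us through Lemma~\ref{L: reduced products and geodesics}, and it is what allows the purely linear pigeonhole on distances from $x_0$ to translate into the double displacement bound needed to invoke acylindricity.
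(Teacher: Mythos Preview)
Your argument is correct and follows the paper's strategy: assuming $|u'_ix_0-u'_{i+1}x_0|\leqslant 200\delta$, exhibit $n_1+1$ distinct group elements moving both $x_0$ and $vx_0$ by at most $590\delta$, and contradict acylindricity via Lemma~\ref{L: acylindrical}. The only difference is in how you control the displacement at $vx_0$. The paper uses the full conclusion of Proposition~\ref{P: reduced products}, namely that $x_0$ and $vx_0$ both lie in $C_{u_j^{-1}u_k}^{+190\delta}$, and then Lemma~\ref{L: axis} immediately gives $|u_j^{-1}u_k\,vx_0-vx_0|\leqslant |u_jx_0-u_kx_0|+390\delta$, so a bound at $x_0$ transfers directly to one at $vx_0$. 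You instead invoke only the sub-lemma, Lemma~\ref{L: reduced products and geodesics}, to land all the points $u_\ell x_0$ and $u_\ell vx_0$ on the single geodesic $[x_0,gx_0]$ and then compare their positions there. Both routes are valid; the paper's is a touch quicker (no need to track the $q_\ell$ separately), while yours makes the clustering of the $u_\ell x_0$ for $\ell$ in the window completely explicit---something the paper's terse proof leaves to the reader.
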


\begin{proof}
By Proposition \ref{P: reduced products}, for all $0\leqslant i\leqslant n$, $x_0$ and $vx_0$ are in $C_{u_i^{-1}u_{i+1}}^{+190 \delta}$. In particular, by Lemma \ref{L: axis},
 $|u_i^{-1}u_{i+1}vx_0-vx_0|\leqslant [u_i^{-1}u_{i+1}] +390 \delta$. Then $|u_i^{-1}u_{i+1}vx_0-vx_0|\leqslant |u_i^{-1}u_{i+1}x_0-x_0|+390\delta$. By assumption, $|vx_0-x_0|\geqslant 2500 \kappa_0$. We use acylindricity, see Lemma \ref{L: acylindrical}, to conclude the claim.
\end{proof}

\begin{lem} \label{L: large transl 2} Let $ p\geqslant 1$, $ m>1$ and $n\geqslant pmn_1$.  Let 
${u'}_0:=u_0,\, {u'}_1:=u_{mn_1},\, \ldots,\, {u'}_{p}:=u_{pmn_1}.$

Then, for all $1\leqslant j\leqslant p$, $|{u'}_jx_0- {u'}_{j+1}x_0|> 100 m \delta.$
 \end{lem}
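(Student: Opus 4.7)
The plan is to reduce Lemma \ref{L: large transl 2} to a telescoping argument of the type \eqref{crucial computation} applied to a finer sub-sequence of the $u_i$, combined with Lemma \ref{L: large transl} applied to that same sub-sequence.

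First I would fix an index $j$ and decompose the coarse step from $u'_j = u_{jmn_1}$ to $u'_{j+1} = u_{(j+1)mn_1}$ into $m$ fine steps of length $n_1$, by setting $w_l := u_{jmn_1 + l n_1}$ for $l = 0, 1, \ldots, m$, so that $w_0 = u'_j$ and $w_m = u'_{j+1}$. The sub-sequence $(w_l)$ is a step-$n_1$ sub-sequence of $(u_i)$ starting at index $jmn_1$. The proof of Lemma \ref{L: large transl} does not actually use that the starting index is $0$: it only invokes Proposition \ref{P: reduced products} on consecutive pairs of chosen indices together with the acylindricity hypothesis at the two points $x_0$ and $v x_0$, and both inputs transfer to any consecutive sub-block of $m{+}1$ indices spaced by $n_1$. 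Thus, applying the same argument to $(w_l)_{l=0}^m$, I obtain $|w_{l-1} x_0 - w_l x_0| > 200 \delta$ for every $1 \leqslant l \leqslant m$.

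Next, I iterate the one-step bound from Proposition \ref{P: reduced products}. All the hypotheses of that proposition are inherited by the pair $(w_{l-1}, w_l)$: reducedness of $w_{l-1} v$, $w_l v$ and of the corresponding products of the form $v w'$, the bound $|v x_0 - x_0| > 26 \delta$ (which holds since $|v x_0 - x_0| \geqslant 2500 \kappa_0$), the bound $|w_{l-1} x_0 - w_l x_0| \leqslant |v x_0 - x_0|$, and the ordering $|w_{l-1} x_0 - x_0| \leqslant |w_l x_0 - x_0|$. Hence $(x_0, w_l x_0)_{w_{l-1} x_0} \leqslant 24 \delta$, and Remark \ref{R: inversed triangle inequality} yields
\[ |x_0 - w_l x_0| \geqslant |x_0 - w_{l-1} x_0| + |w_{l-1} x_0 - w_l x_0| - 48 \delta. \]
Summing over $l = 1, \ldots, m$ and combining with the fine-step bound from the previous paragraph,
\[ |x_0 - w_m x_0| - |x_0 - w_0 x_0| \;\geqslant\; \sum_{l=1}^{m} |w_{l-1} x_0 - w_l x_0| - 48 m \delta \;>\; 200 m \delta - 48 m \delta \;=\; 152 m \delta. \]
Since $|x_0 - w_0 x_0| \leqslant |x_0 - w_m x_0|$, the triangle inequality gives
\[ |u'_j x_0 - u'_{j+1} x_0| \;=\; |w_m x_0 - w_0 x_0| \;\geqslant\; |x_0 - w_m x_0| - |x_0 - w_0 x_0| \;>\; 152 m \delta \;>\; 100 m \delta, \]
which is the desired inequality.

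The only point requiring real care is that the hypotheses of both Lemma \ref{L: large transl} and Proposition \ref{P: reduced products} are preserved under passage to the sub-sequence $(w_l)$. Since the $w_l$ form a subset of the $u_i$ with the inherited ordering, and since the reducedness of the relevant products and the constraint $|v x_0 - x_0| \geqslant 2500 \kappa_0$ transfer directly, I do not anticipate a serious obstacle: the argument is essentially a telescoping version of the proof of Lemma \ref{L: periodic}, decoupled from the maximality/uniqueness statements there.
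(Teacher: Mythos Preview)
Your proposal is correct and follows essentially the same route as the paper. The paper introduces the global step-$n_1$ sequence $g_i:=u_{in_1}$, invokes Lemma \ref{L: large transl} for $|g_ix_0-g_{i+1}x_0|>200\delta$, and then telescopes via Proposition \ref{P: reduced products} and Remark \ref{R: inversed triangle inequality} to obtain $|u'_jx_0-u'_{j+1}x_0|\geqslant\sum_{l=0}^{m-1}|g_{jm+l}x_0-g_{jm+l+1}x_0|-48m\delta>152m\delta$; your local sequence $w_l=g_{jm+l}$ and the final triangle-inequality step unpack exactly this computation.
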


 \begin{proof}  Let $g_0:=u_0,\, g_1:=u_{n_1},\, \ldots ,\, g_{m}:=u_{mn_1}, \ldots$. By Lemma \ref{L: large transl}, $|g_ix_0- g_{i+1}x_0|>200\delta$. 

We note that ${u'}_0=g_0$, ${u'}_1=g_{m}$, $\ldots$, ${u'}_{k}=g_{km}$.  Finally,  by Proposition \ref{P: reduced products} and Remark \ref{R: inversed triangle inequality}, 
$$|{u'}_jx_0- {u'}_{j+1}x_0|\geqslant \sum_{l=0}^{l=m-1} |g_{jm+l}x_0-g_{jm+l+1}x_0| - 48m\delta  > 100 m \delta.$$
  \end{proof}

\begin{proof}[Proof of Proposition \ref{P: periodic}] We note that  $n\geqslant  (5\nu) \cdot (A+10^6) \cdot 150N_0$. We recall that $n_1=150 N_0$ and let $m=A+10^6$. For $ p\geqslant 1$, let $ {u'}_{p}:=u_{pmn_1}$. By Lemma \ref{L: large transl 2}, $|{u'}_jx_0- {u'}_{j+1}x_0|> A\delta +10^8 \delta.$ By Lemma \ref{L: periodic}, we conclude Proposition \ref{P: periodic}. 
\end{proof}

\section{The case of bi-periodic sets} \label{S: Estimation}

Safin \cite[Section 3]{safin_powers_2011}, in the case of a free group, first proves Theorem \ref{IT: hyperbolic groups} in the case of a set $U$ that is contained in coset $Et$ of a cyclic subgroup $E$. In this case $E=\langle h\rangle$ for some hyperbolic element $h$, and  this implies that $U=\{h^{n_1}t,\, h^{n_2}t,\, \ldots\}$. We adapt this result in the case of an acylindrical action on $X$. 

We recall that the group $G$ acts on a $\delta$--hyperbolic space $X$, and that the action is $(\kappa_0,N_0)$--acylindrical.

\subsection{Bi-periodic sets}

Note that if $v$ is $E$--periodic at $x_0$ then $v^{-1}$ is $v^{-1}Ev$--periodic at $x_0$ because $v^{-1}C_E=C_{v^{-1}Ev}$.

\begin{df}[Bi-periodic set] \label{D: bi-periodic set} A finite subset $V\subset G$ is \emph{bi-periodic at $x_0\in X$}, if there are maximal loxodromic subgroups $E_1$ and $E_2$ such that for all $v\in V$ the isometry $v$ is $E_1$--periodic at $x_0$ and $v^{-1}$ is $E_2$--periodic at $x_0$. 
\end{df}  

\begin{ex} If $G$ is the free group, $a,b\in G$, $ab$ and $ba$ are reduced, then $\{(ab)^{13}a,(ab)^{14}a,(ab)^{15}a\}$ is a bi-periodic set at $1$.
\end{ex} 

By Lemma \ref{L: uniqueness of periods}, every bi-periodic set is contained in a coset of a maximal loxodromic subgroup:

\begin{lem}\label{L: bi-periodic set} Let $V\subset G$ be periodic at $x_0$. Then there is a maximal loxodromic subgroup $E$ and $t\in G$ such that $V\subset  Et$. 
\end{lem}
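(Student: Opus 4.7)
The plan is: (1) show that for every $v\in V$ one has $E_1=vE_2v^{-1}$; (2) deduce from this that any two elements of $V$ differ by an element of $E_2$, so $V$ lies in a single coset of $E_1$.

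For step (1), fix $v\in V$. Since $v$ is $E_1$--periodic at $x_0$, both $x_0$ and $vx_0$ belong to $C_{E_1}^{+190\delta}$. Since $v^{-1}$ is $E_2$--periodic at $x_0$, both $x_0$ and $v^{-1}x_0$ belong to $C_{E_2}^{+190\delta}$; applying the isometry $v$ and using $vC_{E_2}=C_{vE_2v^{-1}}$, both $vx_0$ and $x_0$ lie in $C_{vE_2v^{-1}}^{+190\delta}$. Hence
\[
\{x_0,vx_0\}\subset C_{E_1}^{+400\delta}\cap C_{vE_2v^{-1}}^{+400\delta}.
\]
Since translation lengths are conjugation invariant, $[vE_2v^{-1}]=[E_2]$, and since $|vx_0-x_0|=|v^{-1}x_0-x_0|$, the two periodicity assumptions combine to give
\[
|vx_0-x_0|>3\nu\max\{[E_1],[vE_2v^{-1}]\}+A\delta+10^7\delta.
\]
As $10^7\delta>1684\delta$, this strictly exceeds the diameter bound provided by Lemma~\ref{L: small cancellation} when the two subgroups are distinct; hence $E_1=vE_2v^{-1}$.

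For step (2), apply step (1) to arbitrary $v,w\in V$: we get $vE_2v^{-1}=E_1=wE_2w^{-1}$, so $w^{-1}v$ normalises $E_2$. The normaliser of $E_2$ in $G$ coincides with $E_2$ itself: any $g\in N_G(E_2)$ must send the pair of fixed points $\{x^+,x^-\}\subset\partial X$ of the hyperbolic elements of $E_2$ to itself (because the fixed points of $ghg^{-1}$ are $g\{x^+,x^-\}$, and membership $ghg^{-1}\in E_2$ forces these to remain $\{x^+,x^-\}$ for any hyperbolic $h\in E_2$), so $g$ lies in the stabiliser of $\{x^+,x^-\}$, which is precisely the maximal loxodromic subgroup $E_2$. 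Thus $w^{-1}v\in E_2$, so $V\subset vE_2=(vE_2v^{-1})v=E_1v$. Fixing any $v_0\in V$ and setting $E:=E_1$, $t:=v_0$ yields the conclusion.

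The only delicate point is step~(1): one must recognise that the right--periodicity ($E_1$ for $v$) and the left--periodicity ($E_2$ for $v^{-1}$, conjugated by $v$) produce a long common substring in two $C_\bullet^{+400\delta}$ cylinders, so that Lemma~\ref{L: small cancellation} forces the two cylinders to coincide. The constants $190\delta$ and $10^7\delta$ appearing in Definition~\ref{D: periodic} are precisely calibrated to sit comfortably below $400\delta$ and above $1684\delta$ respectively, so this comparison is automatic once the geometric setup is in place.
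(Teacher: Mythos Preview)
Your proof is correct and follows essentially the same route as the paper's. The paper is simply terser: it cites Lemma~\ref{L: uniqueness of periods} directly (where you unpack its content via Lemma~\ref{L: small cancellation} in step~(1)), and for step~(2) it just says ``by maximality'' where you spell out the normaliser-equals-stabiliser-of-limit-set argument.
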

\begin{proof} Let $E_1$ be the period of the isometries $v\in V$, and let $E_2$ be the period of the  $v^{-1}\in V^{-1}$. 
Let $v_1,v_2\in V$. As $v_1$ is $E_1$--periodic at $x_0$, its inverse $v_1^{-1}$ is $v_1^{-1}E_1v_1$--periodic at $x_0$. By Lemma \ref{L: uniqueness of periods}, $v_1^{-1}E_1v_1=E_2$. Analogously, $E_2=v_2^{-1}E_1v_2$. Therefore, $E_1=v_1v_2^{-1}\cdot E_1 \cdot v_1^{-1}v_2$. By maximality, $v_1v_2^{-1} \in E_1$. We conclude that $V\subset E_1   v_2$.
\end{proof}

\subsection{Ping-pong with bi-periodic sets} \label{S: ping pong}

  If $V$ is a bi-periodic set, we adapt the arguments of \cite[\S 3]{delzant_sous_1991} to estimate the cardinality of $V^n$.

\begin{df}Let $t\in G$ and let $E$ be a maximal loxodromic subgroup. We say $t$ is \emph{$E$--reduced} at $x_0$ if 
$$x_0\in C_E \hbox{ and if } |tx_0-x_0|= \min_{e,f \in E} |(e\, t \,  f ) \cdot x_0 - x_0 |.$$
\end{df}

\begin{lem}\label{L: E reduction} Let $E$  be a maximal loxodromic subgroup and $x_0\in C_E$. If $t$ is $E$--reduced at $x_0\in C_E$, then for all $v\in E$ 
$$ (t^{\pm 1} x_0, vx_0)_{x_0}\leqslant [E]/2 + 1500\delta.$$
\end{lem}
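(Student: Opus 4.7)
The idea is that if the Gromov product $(tx_0, vx_0)_{x_0}$ were too large, then $[x_0,tx_0]$ and $[x_0,vx_0]$ would fellow-travel along the $E$--invariant cylinder for a long initial segment, and one could shorten $t$ by pre-multiplying by an appropriate power of a short element of $E$, contradicting $E$--reducedness.

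\emph{Step 1 (symmetry $t\leftrightarrow t^{-1}$).} I first observe that $t^{-1}$ is also $E$--reduced at $x_0$: indeed
\[|et^{-1}fx_0-x_0|=|(et^{-1}f)^{-1}x_0-x_0|=|f^{-1}te^{-1}x_0-x_0|,\]
and as $(e,f)$ ranges over $E\times E$, so does $(f^{-1},e^{-1})$. Hence it suffices to bound $(tx_0,vx_0)_{x_0}$ for every $v\in E$.

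\emph{Step 2 (setup and the cancellation point).} Set $\alpha':=(tx_0,vx_0)_{x_0}$. If $|vx_0-x_0|\le [E]/2+1500\delta$ the bound is automatic, so suppose for contradiction that $\alpha'>[E]/2+1500\delta$ and pick $p\in[x_0,vx_0]$ with $|p-x_0|=\alpha'$. By Lemma \ref{L: thin triangles}, $(x_0,tx_0)_p\le\delta$, so the inverse triangle inequality of Remark \ref{R: inversed triangle inequality} gives
\[|p-tx_0|\le|tx_0-x_0|-\alpha'+2\delta.\]
Since $x_0,vx_0\in C_E$ (the latter by $E$--invariance, Lemma \ref{L: invariant cylinder}), quasi-convexity of $C_E$ puts $p\in C_E^{+10\delta}$.

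\emph{Step 3 (approximation of $p$ by an $h$-orbit point).} Choose $h\in E$ realising $[E]$; then $[h]>200\delta$ and by Lemma \ref{L: invariant cylinder} we have $C_E\subset L_h^{+100\delta}$ and $|hx_0-x_0|\le[E]+210\delta$. Hence $d(p,L_h)\le 110\delta$. The $h$-orbit $\{h^kx_0\}_{k\in\mathbb{Z}}$ is an $h$-translate of a point within $100\delta$ of $L_h$ and has consecutive spacing between $[E]$ and $[E]+210\delta$; standard parametrisation along the quasi-geodesic $L_h$ therefore yields an integer $k$ with
\[|h^kx_0-p|\le[E]/2+C\delta\]
for an absolute constant $C$ (roughly $400$), combining the $110\delta$ projection, the $100\delta$ gap between orbit and $L_h$, and the half-spacing $([E]+210\delta)/2$.

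\emph{Step 4 (contradicting reducedness).} Combining Steps 2 and 3,
\[|h^{-k}tx_0-x_0|=|tx_0-h^kx_0|\le|h^kx_0-p|+|p-tx_0|\le|tx_0-x_0|+[E]/2+(C+2)\delta-\alpha'.\]
Since $\alpha'>[E]/2+1500\delta$ and $C+2\ll 1500$, the right-hand side is strictly less than $|tx_0-x_0|$. But $h^{-k}\in E$ and $1\in E$, so by $E$--reducedness of $t$ one must have $|h^{-k}tx_0-x_0|\ge|tx_0-x_0|$, a contradiction. This proves the bound for $tx_0$; the bound for $t^{-1}x_0$ follows from Step 1.

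\emph{Main obstacle.} The crux is the orbit-to-cylinder matching in Step 3: one must quantify that the $h$-orbit behaves like an $[E]$-spaced lattice along the quasi-geodesic $L_h$, with $p$ close enough to find an orbit point within $[E]/2+O(\delta)$. The generous slack $1500\delta$ in the statement absorbs all the quasi-convexity and local quasi-geodesic error terms from Lemma \ref{L: invariant cylinder} and Lemma \ref{L: invariant lines}.
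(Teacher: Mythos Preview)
Your argument is correct and shares the paper's core idea: derive a contradiction by exhibiting an element of $E$ that shortens $|tx_0-x_0|$. The execution differs, though. Rather than searching for an arbitrary power $h^k$ near the divergence point $p$, the paper fixes a single element $g$ (equal to $h$, or to $h^m$ with $[g]\in[[E]+423\delta,[E]+1000\delta]$ when $[E]\le 423\delta$), observes that one of $g^{\pm1}x_0$ lies within $211\delta$ of $[x_0,vx_0]$, and then bounds $(tx_0,x_0)_{gx_0}$ via a two-case analysis on whether the projection of $gx_0$ onto $[x_0,vx_0]$ falls before or after that of $tx_0$. Your lattice argument in Step~3 is conceptually cleaner and sidesteps this casework, but as written it is only sketched: to make it rigorous, project $x_0$ and $p$ to points $x_0',p'\in L_h$ (at distance $\le 100\delta$ and $\le 110\delta$ respectively), use that $h$ acts on $L_h$ by arc-length translation of size at most $[E]+\delta$, so some $h^kx_0'\in L_h$ lies within arc-length $([E]+\delta)/2$ of $p'$, and conclude $|h^kx_0-p|\le [E]/2+211\delta$. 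This is well under your $1500\delta$ budget, so the contradiction in Step~4 goes through.
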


\begin{proof}
By contradiction let us suppose that there is $v\in E$ such that $ (tx_0, vx_0)_{x_0}>  [E]/2 +1500\delta$. Without restriction we may further assume that $|vx_0-x_0|>[E]+1500\delta$.

 Let $g\in E$ realise $[E]$ and recall that $[E]>200\delta$.  By Lemma \ref{L: invariant cylinder}, $x_0$, $gx_0$ and $vx_0$ are in $ L_g^{+100\delta}$. Moreover, $|gx_0-x_0|\leqslant [E]+ 210 \delta$. By $11\delta$--quasi-convexity of $L_g^{+100 \delta}$ (Remark \ref{R: neighbourhood quasi-convex}), $[x_0,vx_0]\in L_g^{+111 \delta}$.  Without loss of generality - otherwise this holds for $g^{-1}$ - $d(gx_0,[x_0,vx_0])\leqslant 211 \delta$. 
  
  Let $p$ be a projection of $gx_0$ on a geodesic $[x_0,vx_0]$. Then $|p-x_0| \leqslant [E]+ 421 \delta$. 
 
   Let $q$ be a projection of $tx_0$ on $[x_0,vx_0]$. By Lemma \ref{L: Gromov product}, then 
   $$|q-x_0|\geqslant |tx_0-q|- |tx_0-x_0|\geqslant (x_0, vx_0)_{tx_0}-|tx_0-x_0|= (tx_0, vx_0)_{x_0}>[E]/2 +1500\delta.$$ If $p$ is on the left of $q$, $(tx_0,x_0)_{p}\leqslant (tx_0,x_0)_q \leqslant 4 \delta$ by Lemma \ref{L: thin triangles 2}. 
   
  Otherwise, as $(tx_0,x_0)_p \leqslant (tx_0,x_0)_q+|p-q| \leqslant 4\delta + |p-q|$ by Lemma \ref{L: thin triangles 2}, 
  \begin{align*}
  (tx_0,x_0)_p &  \leqslant  |p-x_0|-|q-x_0|  +4\delta <[E]/2-1075\delta .
  \end{align*}
Thus, 
$$(tx_0,x_0)_{gx_0}\leqslant (tx_0,x_0)_{p} + |p-gx_0| \leqslant [E]/2-864 \delta.$$ 
We conclude that 
  $$|tx_0-x_0|\geqslant |tx_0- gx_0| + |gx_0-x_0| - [E] +1728 \delta> |tx_0-gx_0|,$$ see Remark \ref{R: inversed triangle inequality}.  But then $|g^ {-1}tx_0 -x_0| < |tx_0-x_0|$, a contradiction.
  
  The argument for $t^{-1}$ is symmetric. 
\end{proof}

Recall that $\nu=4N_0\frac{\kappa_0}{\rho_0}$ and $A=10^7  N_0^2 \frac{\kappa_0}{\rho_0}.$ 
Let $E$   be a maximal loxodromic subgroup, let $t\not =1 $ be $E$--reduced at $x_0$, and let $$ a:= 3\nu\, [E] + A \delta + 10^5 \delta.$$

\begin{prop}\label{P: ping pong} Let $V\subset E$ be finite such that   
for all $v \in V$, $ |vx_0-x_0|\geqslant 10 a$, and  for all distinct $v,v' \in V$, $|vx_0-v'x_0|\geqslant 10 a$. Let $n$ be a natural number. Then $|(Vt)^n|\geqslant |V|^n$.
\end{prop}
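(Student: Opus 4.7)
The plan is to prove that the map $\Phi\colon V^n\to G$, $(v_1,\ldots,v_n)\mapsto v_1t\, v_2t\cdots v_nt$, is injective, which gives $|(Vt)^n|\ge|V|^n$ at once. The mechanism is a ping-pong argument in the spirit of \cite{delzant_sous_1991}: to each sequence I associate a discrete trajectory of $x_0$ in $X$, show it is a $200\delta$-local $1$-quasi-geodesic, and use the separation hypothesis on $V$ to distinguish sequences via their trajectories.

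Set $g_0:=1$, $g_k:=v_1t\cdots v_kt$ and introduce the vertices $y_0:=x_0$, $y_{2k-1}:=g_{k-1}v_kx_0$, $y_{2k}:=g_kx_0$ for $k=1,\ldots,n$, joined by geodesic segments. Edges alternate between $v$-edges of length $|v_kx_0-x_0|\ge 10a$ and $t$-edges of length $|tx_0-x_0|$. By $G$-invariance of the Gromov product, at each interior vertex one has
\[ (y_{2k-2},y_{2k})_{y_{2k-1}}=(v_k^{-1}x_0,\,tx_0)_{x_0}\quad\text{and}\quad (y_{2k-1},y_{2k+1})_{y_{2k}}=(t^{-1}x_0,\,v_{k+1}x_0)_{x_0},\]
both of which Lemma \ref{L: E reduction} bounds by $[E]/2+1500\delta$, since $v_k^{\pm 1}$ and $v_{k+1}$ lie in $E$ and $t$ is $E$-reduced at $x_0\in C_E$. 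Because $10a\ge 30\nu[E]$ dominates $[E]+3000\delta$, Lemma \ref{L: discrete quasi-geodesics} identifies the piecewise geodesic through $y_0,\ldots,y_{2n}$ with a $200\delta$-local $1$-quasi-geodesic whose vertices moreover satisfy $|y_i-y_j|\ge\alpha|i-j|$ for some $\alpha>0$.

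For injectivity, assume $\Phi(v)=\Phi(v')$: the trajectories $P$ and $P'$ share the endpoint $y_{2n}=y'_{2n}$, so by Lemma \ref{L: stability finite} they lie within Hausdorff distance $22\delta$. Let $k$ be the first index where the sequences disagree. Then $g_j=g'_j$ for $j<k$, $y_{2k-2}=y'_{2k-2}$, and after translating by $g_{k-1}^{-1}$ we may assume $k=1$; the two candidates for the first interior vertex are $v_1x_0$ and $v'_1x_0$ at mutual distance $\ge 10a$ by separation on $V$, while the $1$-quasi-geodesic property (parametrising $P$ and $P'$ by arc length and invoking stability) constrains both to lie $22\delta$-close to the common geodesic $[x_0,y_{2n}]$ at comparable positions. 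Since $10a\gg 44\delta$ dominates the stability bound, this forces $v_1=v'_1$, and induction yields $v_i=v'_i$ for all $i$.

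The main technical obstacle is that the raw trajectory may fail the half-shortest-edge hypothesis of Lemma \ref{L: discrete quasi-geodesics} at a $t$-edge when $|tx_0-x_0|$ is small. The remedy is to coarsen $P$ by replacing each pair $(y_{2k-1},y_{2k})$ with the single composite edge $(y_{2k-2},y_{2k})$ of length $\ge 10a-|tx_0-x_0|\ge 9a$, re-estimate the Gromov products at the coarse vertices from Lemma \ref{L: E reduction} together with the triangle inequality, and run the stability--injectivity argument on the resulting $n$-step polygonal path.
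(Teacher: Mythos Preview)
Your overall plan—build a broken geodesic for each word and argue injectivity of $\Phi$—is sound in spirit, but there is a genuine gap in the coarsening step, and the injectivity argument is not complete.

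The main problem is your treatment of short $t$-edges. You propose to drop the odd vertices $y_{2k-1}$ and bound the resulting Gromov products
\[
(y_{2k-2},y_{2k+2})_{y_{2k}}=(t^{-1}v_k^{-1}x_0,\;v_{k+1}tx_0)_{x_0}
\]
using only Lemma~\ref{L: E reduction} and the triangle inequality. This cannot work. Lemma~\ref{L: E reduction} controls $(t^{\pm1}x_0,vx_0)_{x_0}$ for $v\in E$, but here $t^{-1}v_k^{-1}x_0$ lies in $t^{-1}C_E=C_{t^{-1}Et}$, not in $C_E$, and $|t^{-1}v_k^{-1}x_0-v_k^{-1}x_0|$ is the displacement of $t^{-1}$ at a point far from $x_0$; it is \emph{not} controlled by $|tx_0-x_0|$. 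Concretely, if $t$ fixes $x_0$ and $v_k^{-1},v_{k+1}$ translate $x_0$ to the same side of the axis of $E$, the coarse Gromov product above is of order $\min(|v_kx_0-x_0|,|v_{k+1}x_0-x_0|)\ge 10a$, and the coarse path backtracks. What is actually needed is that $C_E\cap t^{\pm1}C_E$ has diameter $\le a$, i.e.\ the small-cancellation Lemma~\ref{L: small cancellation}; this is precisely what Lemma~\ref{L: ping pong 2} packages and what your sketch omits.

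The paper avoids both difficulties by a different bookkeeping. Given two sequences with $v_n\neq w_n$ (after cancelling a common tail), it forms the \emph{single} path for
\[
(v_1)(tv_2)\cdots(tv_{n-1})(tv_nw_n^{-1})(t^{-1}w_{n-1}^{-1})\cdots(t^{-1}w_1^{-1}),
\]
so every step has length $>8a$ (each $t^{\pm1}$ is absorbed into an adjacent $E$-element of translation length $>9a$) and every Gromov product has the shape $(gt^{\pm1}x_0,\,t^{\pm1}hx_0)_{x_0}$ or $(gx_0,\,t^{\pm1}hx_0)_{x_0}$ with $[g],[h]>9a$. These are exactly the quantities bounded by $2a$ in Lemma~\ref{L: ping pong 2}, and Lemma~\ref{L: discrete quasi-geodesics}(1) then shows the path does not close up. This also dispenses with your ``comparable positions'' step, which as written is incomplete: two quasi-geodesics with the same endpoints being $22\delta$-Hausdorff-close does not force their $k$-th vertices to be close, since $|v_1x_0-x_0|$ and $|v_1'x_0-x_0|$ may differ by $\ge 10a$.
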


In order to prove this proposition, let $v_1,\ldots, v_{n},\, w_1,\ldots w_n\in V$ such that $v_n\not= w_n$. We need to show that 
$$ v_1tv_2t\cdots v_nt\not= w_{1}tw_{2}t\cdots w_{n}t,$$
 or, equivalently, that
 $$(v_1)(tv_2)(tv_3)\cdots (t v_nw_{n}^{-1})(t^{-1}w_{n-1}^{-1}) \cdots (t^{-1} w_{1}^{-1})\not= 1.$$
We now define a sequence of points 
$ x_1:=v_1x_0$, $x_2:=(v_1)(tv_2)x_0$, $ \ldots ,$ $$  x_{2n-1}:=(v_1)(tv_2)(tv_3)\cdots (t v_nw_{n}^{-1})(t^{-1}w_{n-1}^{-1}) \cdots (t^{-1} w_{1}^{-1})x_0.$$
 In order to show that $x_0\not=x_{2n-1}$ we estimate the distance of $x_0$ to $x_{2n-1}$. To this end we verify that for all $1\leqslant i \leqslant 2n-3$, 
$$(x_i,x_{i+2})_{x_{i+1}}\leqslant \frac{1}{2}\min\{|x_i-x_{i+1}|,|x_{i+1}-x_{i+2}|\}-2\delta.$$ 
Then Lemma \ref{L: discrete quasi-geodesics} implies that $|x_{2n-1}-x_0|>0$, hence, the assertion of the Lemma. 

We note that for all $1\leqslant i \leqslant n$, the translation lengths $[w_i]>9a$, $[v_i]>9a$ and $ [v_nw_{n}^{-1}]>9a$: indeed, as for all $v,\, v'\in V$, $|vx_0-v'x_0|\geqslant 10a$ and $x_0\in C_E$, by Lemma \ref{L: invariant lines} 
$$[v'v^{-1}]=[v^{-1}v']\geqslant |vx_0-v'x_0| -210\delta >9a.$$

We first prove:
 \begin{lem}\label{L: ping pong 2} Let $h,\, g\in E$ such that $[g]>9a$ and $[h]>9a$. Then 
 \begin{enumerate}
 \item $(gt^{\pm 1}x_0,tht^{-1} (tx_0))_{x_0}< 2a$ and  $(gx_0,tht^{-1} (tx_0))_{x_0}< 2a$, 
 \item $(gt^{\pm 1}x_0,t^{-1}ht (t^{-1}x_0))_{x_0}< 2a$ and $(gx_0,t^{-1}ht (t^{-1}x_0))_{x_0}< 2a$.
\end{enumerate}  
 \end{lem}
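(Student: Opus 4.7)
The plan is to prove each of the four Gromov-product bounds by decomposing via a carefully chosen intermediate point and combining Lemma~\ref{L: E reduction} with either the hyperbolic four-point condition (when $|tx_0-x_0|$ is large) or the small cancellation Lemma~\ref{L: small cancellation} (when $|tx_0-x_0|$ is small). First observe $tht^{-1}(tx_0)=thx_0$ and $t^{-1}ht(t^{-1}x_0)=t^{-1}hx_0$, so (1) amounts to bounding $(gt^{\pm 1}x_0,thx_0)_{x_0}$ and $(gx_0,thx_0)_{x_0}$, and (2) is the analogous statement with $t^{-1}$ in place of $t$. Since the set $\{|etfx_0-x_0|:e,f\in E\}$ is invariant under $t\mapsto t^{-1}$, the element $t^{-1}$ is also $E$-reduced at $x_0$; hence Lemma~\ref{L: E reduction} yields $(t^{\pm 1}x_0,vx_0)_{x_0}\leqslant [E]/2+1500\delta$ for every $v\in E$, in particular for $v\in\{g,h\}$.

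The crux is the bound $(gx_0,thx_0)_{x_0}<2a$. I would introduce $tx_0$ as an intermediate. Applying the isometry $t$ to $(t^{-1}x_0,hx_0)_{x_0}\leqslant [E]/2+1500\delta$ gives $(x_0,thx_0)_{tx_0}\leqslant [E]/2+1500\delta$; combined with the identity $(p,q)_r+(q,r)_p=|r-p|$ (a direct expansion of the definition), this yields
\[
(tx_0,thx_0)_{x_0}\;\geqslant\;|tx_0-x_0|-[E]/2-1500\delta.
\]
If $|tx_0-x_0|>[E]+3002\delta$, then $(tx_0,thx_0)_{x_0}$ strictly exceeds $(gx_0,tx_0)_{x_0}+\delta$, so the $\delta$-thin property of the three Gromov products $(gx_0,tx_0)_{x_0}$, $(tx_0,thx_0)_{x_0}$, $(gx_0,thx_0)_{x_0}$ at $x_0$ forces $(gx_0,thx_0)_{x_0}\leqslant (gx_0,tx_0)_{x_0}+\delta\leqslant [E]/2+1501\delta<2a$. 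If instead $|tx_0-x_0|\leqslant [E]+3002\delta$, I would invoke small cancellation: since $t$ is $E$-reduced and $t\neq 1$, either $tx_0=x_0$ (a degenerate case handled directly) or $t\notin E$, in which case $tEt^{-1}\neq E$ by maximality of $E$ and Lemma~\ref{L: small cancellation} gives $\diam(C_E^{+400\delta}\cap tC_E^{+400\delta})<a$. The $E$-reduction of $t$ places this intersection within distance $|tx_0-x_0|+O(\delta)$ of $x_0$, so the geodesic $[x_0,thx_0]$, which must transit from $C_E$ to $tC_E$ through the intersection, leaves a bounded neighborhood of $C_E$ within distance less than $2a$ of $x_0$. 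A thin-triangle comparison with $[x_0,gx_0]\subset C_E^{+O(\delta)}$ (using quasi-convexity from Lemma~\ref{L: invariant cylinder}) then yields $(gx_0,thx_0)_{x_0}<2a$.

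The remaining three Gromov products in (1) are handled by the same scheme, with the additional intermediate $gx_0$ inserted when $gt^{\pm 1}x_0$ appears (using $g\in E$ and the isometric action of $g$ to translate the relevant Lemma~\ref{L: E reduction} estimates). Part (2) is symmetric, replacing $t$ by $t^{-1}$ throughout; the $E$-reduction of $t^{-1}$ observed above ensures all ingredients carry over. The main obstacle is the small-$|tx_0-x_0|$ subcase: here the four-point condition alone is insufficient and the small cancellation input is essential. This is precisely why the constant $a=3\nu[E]+A\delta+10^5\delta$ is defined to incorporate the small-cancellation parameters $\nu$ and $A$, so that $2a$ comfortably accommodates both the diameter of $C_E^{+400\delta}\cap tC_E^{+400\delta}$ and its distance from $x_0$.
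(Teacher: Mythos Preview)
Your approach differs genuinely from the paper's, and the split into two cases according to $|tx_0-x_0|$ is where the trouble lies.

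The paper gives a single unified argument: it fixes the geodesic $[x_0,gx_0]$ (which lies in $C_E^{+10\delta}$ by quasi-convexity) and projects $tx_0$, $thx_0$, and $gt^{-1}x_0$ onto it, obtaining points $p$, $q$, $r$. Lemma~\ref{L: E reduction} pins $p$ near $x_0$ and $r$ near $gx_0$. The key step is that if $|p-q|>9\delta$ then a thin-triangle computation forces both $p$ and $q$ into $[tx_0,thx_0]^{+9\delta}\subset tC_E^{+19\delta}$, so $p,q\in C_E^{+19\delta}\cap tC_E^{+19\delta}$ and Lemma~\ref{L: small cancellation} gives $|p-q|\leqslant a$ unconditionally. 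Since $[g]>9a$ pushes $r$ far to the right of both, one reads off $(gt^{-1}x_0,thx_0)_{x_0}\leqslant |x_0-p|+|p-q|+9\delta<2a$. No case split is needed.

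Your Case~1 argument via the four-point inequality with intermediate $tx_0$ is correct and pleasant. The gap is Case~2. Your claim that ``the geodesic $[x_0,thx_0]$ must transit from $C_E$ to $tC_E$ through the intersection'' is not justified and is not literally true: the geodesic begins in $C_E$ and ends in $tC_E$, but nothing forces it to pass through $C_E^{+400\delta}\cap tC_E^{+400\delta}$; it could leave the first thickened cylinder, traverse a region in neither, and then enter the second. Likewise the assertion that ``$E$-reduction of $t$ places this intersection within distance $|tx_0-x_0|+O(\delta)$ of $x_0$'' has no argument behind it: $E$-reduction controls the Gromov product $(t^{\pm1}x_0,vx_0)_{x_0}$, not the location of the cylinder overlap. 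What you actually need is that the portion of $[x_0,thx_0]$ that fellow-travels $[x_0,gx_0]$ is short, and establishing this rigorously amounts to projecting onto $[x_0,gx_0]$ and bounding the projection of $thx_0$---which is exactly the paper's argument. So your Case~2 is not so much wrong as a placeholder for the projection computation you were hoping to avoid. Once you carry out that computation you will see the case split was unnecessary: the projection bound $|p-q|\leqslant a$ holds regardless of $|tx_0-x_0|$, subsuming your Case~1 as well.
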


\begin{proof} 
 We prove that $(gt^{-1}x_0,tht^{-1} (tx_0))_{x_0}< 2a.$ 
Let $p$ be a projection of $tx_0$, $q$ a projection of $y:=(tht^{-1})tx_0$ and $r$ a projection of $z:=gt^{-1}x_0$ on $[x_0,gx_0]$.  
By Lemma \ref{L: thin triangles 2}, and as $t$ is $E$--reduced at $x_0$ (Lemma \ref{L: E reduction}), 
\begin{align*}
|x_0-p|&\leqslant (tx_0,gx_0)_{x_0} +4\delta\leqslant [E]/2+1504 \delta< \frac13 a \hbox{ and}\\
|gx_0-r|&\leqslant (x_0,z)_{gx_0} +4\delta\leqslant [E]/2+1504 \delta<\frac13 a.
\end{align*}

We now show that $|p-q|\leqslant a$. Suppose that  $|p-q|>9\delta$. We first prove that $p$ and $q$ are  in $[tx_0,y]^{+9\delta}$. By Lemma \ref{L: thin triangles 2}, $(tx_0,q)_p\leqslant 4\delta$ and $(p,y)_q\leqslant 4\delta$. Then, by hyperbolicity, $\min\{(q,y)_p,(y,tx_0)_p\}\leqslant 5\delta$. As $(y,q)_p=|p-q|-(y,p)_q>5\delta$, then $(y,tx_0)_p\leqslant 5\delta$ and by Lemma \ref{L: Gromov product 2}, $d(p,[y,tx_0])\leqslant 9\delta.$ Inversing the role of $p$ and $q$  in this argument gives that $d(q,[y,tx_0])\leqslant 9 \delta$. This yields the claim.

Now, as $tC_E$ is $10\delta$--quasi-convex, see Lemma \ref{L: invariant cylinder}, $p$ and $q$ are in $tC_E^{+19\delta}$ and therefore in $C_E^{+19\delta}\cap tC_E^{+19\delta}$. Recall that $tC_E$ is the $t^{-1}Et$--invariant cylinder, and that $E\not= tEt^{-1}$ since $t$ is not in $E$. By Lemma \ref{L: small cancellation},  
$|p-q|\leqslant a.$

As $[g]>9a$ and $|gx_0-r|<\frac{1}{3} a$, $|r-x_0|>8a$, so that $r$ is on the right of $p$ and on the right of $q$. Moreover, $|r-q|>6a$.  
 By Lemma \ref{L: thin triangles 2}, $(y,r)_q\leqslant 4\delta$ and $(q,z)_r\leqslant 4\delta$. Then $(y,q)_r=|q-r|-(y,r)_q> |q-r|- 4\delta>5\delta$, and, by hyperbolicity, $\min\{(z,y)_r,(y,q)_r\}\leqslant 5\delta$, so that $(z,y)_r\leqslant 5\delta$.
Thus, using Remark \ref{R: inversed triangle inequality}, $$(z,y)_{x_0}\leqslant (r,q)_{x_0}+(z,y)_r+(r,y)_q\leqslant |x_0-p|+|p-q|+9\delta < 2a.$$
 
By symmetry the proofs of the remaining inequalities are analogous. 
\end{proof}

\begin{proof}[Proof of Proposition \ref{P: ping pong}]

  If $i<n $, let $g_i=v_i$, if $i>n$ let  $g_i=w_{i-n}^{-1}$, and let $g_n=v_nw_{n}^{-1}$. 
 Then for all $1\leqslant i \leqslant 2n-2$, $[g_i]>9a.$
    By  Lemma \ref{L: ping pong 2}, this gives that  $(x_i,x_{i+2})_{x_{i+1}}\leqslant 2a$.
 
As by Lemma \ref{L: E reduction} $(t^{\pm1}x_0,g_ix_0)_{x_0}\leqslant \frac13 a$, we have that (Remark \ref{R: inversed triangle inequality}) 
$$|x_i-x_{i+1}|=|t^{\pm 1}x_0 - g_ix_0|\geqslant |t^{\pm 1}x_0 -x_0| +[g_i] - 2a/3 >8a.$$
Therefore  $(x_i,x_{i+2})_{x_{i+1}}< \frac{1}{2}\min\{|x_i-x_{i+1}|,|x_{i+1}-x_{i+2}|\}-2\delta$, which we needed to prove. 

The assertion now follows from Lemma \ref{L: discrete quasi-geodesics}.
\end{proof}

\subsection{Separation of isometries in loxodromic groups}

Let $E$ be a maximal loxodromic subgroup, let $x_0\in C_E$ and let $V\subset E$ be finite. 

 By acylindricity (Lemma \ref{L: acylindrical}) and Lemma \ref{L: invariant cylinder}, there at most $N(1054/100) < 10^4 N_0$ elements $v$ of $E$ that satisfy 
$[v]\leqslant 844 \delta$.

\begin{lem}\label{L: separation} Let $r\geqslant1$ and assume that $|V|>12 \cdot 10^5 rN_0$. Then there is a subset $V_0 \subset V$ of cardinality at least $\frac{1}{6\cdot 10^5 rN_0} \; |V|$ such that 
$$ \hbox{ for all $v \in V_0$, $|vx_0-x_0|\geqslant r[E] $, and for all distinct $v,v'\in V_0$, $|vx_0-v'x_0|\geqslant  r[E] $.}$$
\end{lem}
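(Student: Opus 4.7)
\smallskip
\noindent\textit{Plan.} Set $U := \{u \in E : |ux_0 - x_0| < r[E]\}$ and $M := 4 \cdot 10^5 r N_0$. I will prove the lemma in two steps.

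\smallskip
\noindent\emph{Step 1 (counting): $|U| \leqslant M$.} Pick a hyperbolic $g \in E$ realising $[E]$, and consider the $g$-invariant $[E]$-local $1$-quasi-geodesic line $L_g$ provided by Lemma~\ref{L: invariant lines}. By Lemma~\ref{L: invariant cylinder}, $C_E$ is $E$-invariant and satisfies $C_E \subset L_g^{+100\delta}$, so for every $u \in E$ the point $ux_0$ lies within $O(\delta)$ of $L_g$. Project the orbit onto $L_g$; for $u \in U$ the projection $\pi(ux_0)$ lies in an interval $I \subset L_g$ of length $\leqslant 2r[E] + O(\delta)$ around $\pi(x_0)$. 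Partition $I$ into at most $4r$ sub-intervals (``buckets'') of width a little less than $[E]/2$. If $u_1, u_2 \in U$ fall in the same bucket, then, by the triangle inequality and Lemma~\ref{L: axis},
\[
[u_1^{-1} u_2] \leqslant |u_2 x_0 - u_1 x_0| < [E].
\]
By the very definition of $[E]$ this forces $[u_1^{-1} u_2] \leqslant 200\delta$, so the acylindricity count recalled immediately above the lemma bounds the cardinality of each bucket by $10^5 N_0$. Summing over at most $4r$ buckets gives $|U| \leqslant M$.

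\smallskip
\noindent\emph{Step 2 (greedy selection).} Initialise $V_0 = \emptyset$ and process $V$ in any order: add $v$ to $V_0$ precisely when $|vx_0 - x_0| \geqslant r[E]$ and $|vx_0 - v'x_0| \geqslant r[E]$ for every $v' \in V_0$ already chosen. An element $v \in V$ can be rejected only for one of two reasons: either $|vx_0 - x_0| < r[E]$, in which case $v \in U$ (so at most $M$ such rejections); or $v$ conflicts with some $v' \in V_0$, in which case $v^{-1}v' \in U \setminus \{1\}$, so each $v' \in V_0$ accounts for at most $M - 1$ conflict-rejections. Consequently
\[
|V| \leqslant |V_0| + M + |V_0|(M-1) = M(|V_0| + 1),
\]
which gives $|V_0| \geqslant |V|/M - 1$. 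The hypothesis $|V| > 12 \cdot 10^5 r N_0 = 3M$ then implies $|V_0| \geqslant 2|V|/(3M) = |V|/(6 \cdot 10^5 r N_0)$, as required, and $V_0$ satisfies both separation conditions by construction.

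\smallskip
\noindent\emph{Main obstacle.} The substantive step is the counting in Step~1. The virtually cyclic nature of $E$ is essential, but turning it into the quantitative bound $|U| \leqslant M$ requires tracking the $O(\delta)$ projection errors finely enough that same-bucket pairs give displacement \emph{strictly} less than $[E]$ (so that the definition of $[E]$ can be invoked to drop into the ``small'' regime). The constant $4$ in front of $r$ is tight against the hypothesis $|V| > 12 \cdot 10^5 r N_0$, which is why bucket widths close to $[E]/2$ are needed. The edge case where $[E]$ is of the same order as $\delta$ is handled separately by noting that in that regime the relevant displacements already lie within the ``small'' window $\leqslant 500\delta$ covered directly by the acylindricity count.
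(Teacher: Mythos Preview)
Your overall strategy (bound $|U|$ uniformly, then greedy selection) is sound and genuinely different from the paper's route, and Step~2 is clean and correct.  The problem is entirely in Step~1: the bucket count does not close with the constant you need.

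To force $[u_1^{-1}u_2]<[E]$ for same-bucket pairs you need $|u_1x_0-u_2x_0|<[E]$, hence bucket width $<[E]-200\delta$ after the two $100\delta$ projection errors.  The interval on $L_g$ has length about $2r[E]+O(\delta)$, so the number of buckets is roughly $\dfrac{2r[E]}{[E]-200\delta}$, which is $\approx 4r$ only when $[E]\geqslant 400\delta$ and blows up as $[E]\searrow 200\delta$.  Your edge-case remark does not repair this: when $200\delta<[E]\leqslant 500\delta$ and $r>1$, displacements in $U$ range up to $r[E]>500\delta$, so they do \emph{not} fall into the ``small'' window covered directly by the acylindricity count.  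A more careful case split (small $[E]$: buckets of width $\approx 250\delta$ and invoke the $500\delta$ count directly; large $[E]$: buckets of width $\approx[E]/2$) still yields at best $4r+O(1)$ buckets, and the extra $O(1)$ already breaks the arithmetic in Step~2, since $M=4\cdot10^5rN_0$ is exactly three times smaller than the hypothesis $|V|>12\cdot10^5rN_0$.  So as written the proof does not deliver the stated constant $\tfrac{1}{6\cdot10^5rN_0}$.

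For comparison, the paper avoids this constant-chasing.  It first passes to the half of $V$ that moves $x_0$ toward a fixed endpoint of $C_E$, sorts by displacement, and then thins greedily so that consecutive (hence all) ratios have translation length $>500\delta$ --- this costs a factor $10^5N_0$ by the acylindricity count and guarantees $|v_i'x_0-v_j'x_0|\geqslant[v_i'^{-1}v_j']\geqslant[E]$ via the definition of $[E]$.  Finally it takes every $2r$-th element.  The factors $2$, $10^5N_0$, and $2r$ (plus rounding) give the $6\cdot10^5rN_0$ directly, with no delicate bucket bookkeeping near $[E]\approx200\delta$.  If you want to keep your approach, you should either sharpen Step~1 (e.g.\ use a maximal $[E]$-separated net in $\{ux_0:u\in U\}$ and bound its size by $2r+1$ after projecting to $L_g$, being explicit about the $\delta$-errors and the case of orientation-reversing elements of $E$), or relax $M$ and correspondingly adjust the hypothesis and conclusion.
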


\begin{proof} 
Let $x_0\in C_E$. Up to a factor of $1/2$, all elements of $V$ move $x_0$ closer to one limit point than to the other.
  Let  $V' \subseteq V$ be the maximal subset such that for all $v$, $v'\in V$, $[v]>844\delta$ and $[v^{-1}v']>844\delta$. By acylindricity, $|V'|\geqslant 1/(10^5 N_0) \; |V|-1$.

Let us enumerate the isometries of $V'$ such that 
$|v_ix_0-x_0|\leqslant |v_{i+1}x_0-x_0|$.  By Lemma \ref{L: invariant cylinder},  $x_0$, $v_ix_0$ and $v_{i+1}x_0$ are in the $100\delta$--neighbourhood of a bi-invariant line $L_g$. By $11\delta$--quasi-convexity of $L_g^{+100 \delta}$ (Remark \ref{R: neighbourhood quasi-convex}), $[x_0,v_{i+1}x_0]\subset L_g^{+111\delta}$, and, hence, $d(v_ix_0,[x_0,v_{i+1}x_0])\leqslant 211\delta$. By Lemma \ref{L: Gromov product}, $(x_0,v_{i+1}x_0)_{v_ix_0}\leqslant 211\delta$. Thus, $| v_{i+1}x_0 -x_0|\geqslant |v_ix_0-{v_{i+1}}x_0|+| v_{i}x_0 -x_0|-422\delta$, see Remark \ref{R: inversed triangle inequality}. In particular, $| v_{i+1}x_0 -x_0|\geqslant [v_i^{-1}v_{i+1}]+[v_{i-1}^{-1}v_i]+| v_{i-1}x_0 -x_0| -844 \delta \geqslant [E] +| v_{i-1}x_0 -x_0| $.

Let $V_0 \subset V'$ consist of every $2r$-th element of $V'$. The set $V_0$ is thus as required.
\end{proof}

\begin{lem}[Product growth of bi-periodic sets] \label{L: product growth of bi-periodic sets} Let $\gamma\geqslant 10^{14} N_0^3 \frac{\kappa_0}{\rho_0}$. 
Let  $t$ be an isometry that is not in $E$. If $|V|>4 \gamma$, then 
$$ | (Vt)^n|\geqslant \left(\frac{1}{\gamma}\, |V|\right)^{n}.$$
\end{lem}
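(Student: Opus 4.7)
The plan is to combine Lemma \ref{L: separation} with Proposition \ref{P: ping pong}, after first replacing $t$ by an $E$-reduced element in the same double coset $EtE$.

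\textbf{Reduction to $E$-reduced $t$.} I choose $e_1,e_2 \in E$ minimising the displacement $|f_1 t f_2 x_0 - x_0|$ over $f_1,f_2 \in E$ and set $t_0 := e_1 t e_2$. Then $t_0$ is $E$-reduced at $x_0$ and, since $e_1,e_2 \in E$ while $t \notin E$, also $t_0 \notin E$. Setting $W := e_2^{-1} V e_1^{-1} \subset E$, a direct manipulation gives
\[
 (Vt)^n \,=\, e_2 \,(W t_0)^n\, e_2^{-1},
\]
so that $|(Vt)^n| = |(W t_0)^n|$ with $|W|=|V|$. I may therefore assume $t$ is $E$-reduced and work with $W$ in place of $V$.

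\textbf{Separation and ping-pong.} I apply Lemma \ref{L: separation} to $W$ with parameter $r := C_1 N_0^2 \kappa_0/\rho_0$ for a universal constant $C_1$ to be fixed. The hypothesis $|V| > 4\gamma$ exceeds $12\cdot 10^5 r N_0$ for a suitable $C_1$, and the lemma produces $V_0 \subset W$ of size at least $|V|/(6\cdot 10^5 r N_0)$ satisfying $|vx_0-x_0|\geqslant r[E]$ for all $v\in V_0$ and $|vx_0-v'x_0|\geqslant r[E]$ for distinct $v,v' \in V_0$. To feed $V_0$ into Proposition \ref{P: ping pong}, I need $r[E] \geqslant 10a$ with $a = 3\nu[E] + A\delta + 10^5\delta$. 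Using $\nu = 4N_0 \kappa_0/\rho_0$, $A = 10^7 N_0^2 \kappa_0/\rho_0$, together with the lower bounds $[E]\geqslant \rho_0$ (from Remark \ref{R: injectivity radius}) and $[E]\geqslant 200\delta$ (from the definition of $[E]$), this reduces to choosing $C_1$ as an absolute constant. The proposition then yields $|(V_0 t_0)^n|\geqslant |V_0|^n$, and since $(V_0 t_0)^n \subset (W t_0)^n$,
\[
 |(Vt)^n| \,=\, |(W t_0)^n| \,\geqslant\, |V_0|^n \,\geqslant\, \Bigl(\frac{|V|}{6 \cdot 10^5 r N_0}\Bigr)^{\!n}.
\]
The denominator is $6 C_1 \cdot 10^5 N_0^3 \kappa_0/\rho_0$, which is bounded by $\gamma$ for $C_1 \leqslant 10^8$, concluding the proof.

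\textbf{Main obstacle.} The delicate point is the calibration of $C_1$ in the separation step: the quantity $r[E]$ must simultaneously dominate the $\nu[E]$-contribution and the $A\delta$-contribution to $a$. It is precisely the two lower bounds $[E] \geqslant \rho_0$ and $[E] \geqslant 200\delta$ that allow the $A\delta$-term to be absorbed, and together they explain the cubic dependence $N_0^3 \kappa_0/\rho_0$ in the final constant $\gamma$. The reduction step is purely formal once one grants the existence of an $E$-reduced representative of $EtE$, which follows from the discreteness of the relevant orbit under acylindricity; everything else is bookkeeping comfortably absorbed by the slack in the hypothesis on $\gamma$.
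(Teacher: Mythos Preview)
Your proof is correct and follows essentially the same approach as the paper: reduce to an $E$-reduced representative of $EtE$ by a conjugation (the paper phrases this as rewriting the equation $v_1tv_2t\cdots = v_{n+1}t\cdots$ in terms of $t'$ and $fVe$), then apply Lemma~\ref{L: separation} with $r$ on the order of $N_0^2\kappa_0/\rho_0$ to extract a well-separated subset, and feed it into Proposition~\ref{P: ping pong}. Your numerical calibration is more explicit than the paper's one-line check ``$\frac{\gamma}{6\cdot 10^5} \geqslant 10 (3\nu + A + 10^5)N_0$'', but the content is identical; the bound $[E]>200\delta$ alone suffices to absorb the $A\delta$ term (the separate invocation of $[E]\geqslant \rho_0$ is harmless but not needed).
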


\begin{proof} Let $x_0$ in $C_E$. We remark that $t=et'f$ such that $t'$ is not in $E$ and $E$--reduced at $x_0$.  Let $v_1,\ldots, v_{2n}\in V$ and let us suppose that $v_1tv_2t\cdots t v_n= v_{n+1}t \cdots t v_{2n}$. This is the case, if 
$$(fv_1e) t'(fv_2e)t'\cdots t' (fv_ne)= (fv_{n+1}e)t' \cdots t' (f v_{2n}e).$$
Then we apply Lemma \ref{L: separation} and obtain $V_0\subset fVe$ such that $|V_0|\geqslant {\gamma} |V|$ and so that Proposition \ref{P: ping pong}  can be applied to $t'$ and $V_0$. To see this note that $\frac{\gamma}{6\cdot 10^5} \geqslant 10 (3\nu + A + 10^5)N_0$. This yields the result.
\end{proof}

\begin{prop}\label{P: product growth bi-periodic sets}  Let $\gamma \geqslant 10^{14} N_0^3 \frac{\kappa_0}{\rho_0}$. 
 Let $U\subset G$ be finite and let $\beta>0$. Suppose $V\subset U$ with $|V|>4\beta |U|$ is periodic at $x_0$. If $U$ is not contained in an elementary subgroup, then $$|U^n|\geqslant  \left(\frac{\beta}{\gamma}\, |U|\right)^{[(n+1)/2]}.$$
\end{prop}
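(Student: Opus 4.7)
\emph{Proof proposal.}
The plan is to combine Lemma~\ref{L: bi-periodic set}, which places $V$ inside a single coset of a maximal loxodromic subgroup, with Lemma~\ref{L: product growth of bi-periodic sets}, after a case split on whether the chosen coset representative lies in that subgroup. First I would dispose of the trivial range: if $\beta|U|<\gamma$ then $(\beta|U|/\gamma)^{[(n+1)/2]}\leqslant 1\leqslant|U^n|$ and there is nothing to do, so I may assume $\beta|U|\geqslant\gamma$; then $|V|>4\beta|U|\geqslant 4\gamma$, which is precisely the size hypothesis of Lemma~\ref{L: product growth of bi-periodic sets}.

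Next, by Lemma~\ref{L: bi-periodic set} I choose a maximal loxodromic subgroup $E$ and a representative $t_0\in V$ with $V\subset Et_0$, and write $V=V_0t_0$ with $V_0\subset E$ and $|V_0|=|V|$. If $t_0\notin E$, then Lemma~\ref{L: product growth of bi-periodic sets} applied to the pair $(V_0,t_0)$ already gives
$$|U^n|\geqslant|V^n|=|(V_0t_0)^n|\geqslant(|V|/\gamma)^n\geqslant(\beta|U|/\gamma)^{[(n+1)/2]},$$
since $n\geqslant[(n+1)/2]$. This case actually produces a stronger exponent than is claimed; the sharp exponent $[(n+1)/2]$ is forced only in the remaining case.

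If instead $t_0\in E$, then $V\subset E$, and because $U$ is not contained in any elementary subgroup while $E$ itself is elementary, some $u\in U\setminus E$ exists. I apply Lemma~\ref{L: product growth of bi-periodic sets} to the pair $(V,u)$. For even $n=2k$, the containment $(Vu)^k\subset U^n$ gives $|U^n|\geqslant|(Vu)^k|\geqslant(|V|/\gamma)^k$. For odd $n=2k+1$, the containment $(Vu)^kV\subset U^n$ combined with the fact that right multiplication by $u$ is a bijection from $(Vu)^kV$ onto $(Vu)^{k+1}$ yields $|U^n|\geqslant|(Vu)^kV|=|(Vu)^{k+1}|\geqslant(|V|/\gamma)^{k+1}$. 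In both sub-cases the exponent equals $[(n+1)/2]$, and the inequality $(|V|/\gamma)^{[(n+1)/2]}\geqslant(\beta|U|/\gamma)^{[(n+1)/2]}$ completes the argument.

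There is no genuine obstacle in this proof: the bi-periodicity of $V$ is used only to feed Lemma~\ref{L: bi-periodic set} and thereby locate a maximal loxodromic $E$ with $V\subset Et_0$, and the non-elementarity of $U$ is invoked only in the case $t_0\in E$ to furnish a ping-pong companion $u\notin E$. The reason the exponent $[(n+1)/2]$, rather than $n$, appears in the statement is the odd-$n$ book-keeping in that second case, where the alternating $u$'s in $(Vu)^k V$ consume slots of $U^n$ without contributing to the cardinality bound, costing exactly the factor of two that appears in the exponent.
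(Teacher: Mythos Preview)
Your proof is correct and follows essentially the same route as the paper: dispose of the trivial range $\beta|U|<\gamma$, use Lemma~\ref{L: bi-periodic set} to place $V$ inside a coset $Et$, apply Lemma~\ref{L: product growth of bi-periodic sets} directly when $t\notin E$, and otherwise pick $s\in U\setminus E$ and apply the lemma to $Vs$. The paper's own proof is terser (it only writes out $|U^3|\geqslant|VsVs|$ in the second branch), whereas you carry out the odd/even split for general $n$ explicitly; this is the same argument made precise.
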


This proposition extends \cite[Section 3]{safin_powers_2011}.
\begin{proof} If $|U|\leqslant \frac{\gamma}{\beta}$ the inequality is true. Let $|U|>\frac{\gamma}{\beta}$. By Lemma \ref{L: bi-periodic set}, $V\subset Et$. If $t\not \in E$ we are done by Lemma \ref{L: product growth of bi-periodic sets}. Otherwise, there is $s\in U$ that is not in $E$. Then $|U^3|\geqslant | V s Vs|$ and we apply Lemma \ref{L: product growth of bi-periodic sets} to $Vs$.
\end{proof}

\section{Choice of the base point} \label{S: energy discussion} 

Let $U$ be a finite subset of $G$. In order to prove Theorems \ref{IT: hyperbolic groups}, \ref{IT: acylindrical trees} and \ref{IT: acylindrical hyperbolic}, as Safin \cite[Lemma 1]{safin_powers_2011} and Button \cite[Theorem 2.2]{button_explicit_2013}, we want to find two large subsets in $U$ whose products are reduced at a point $x_0\in X$. To this end we fix $x_0$ to minimise the  $\ell^1$--energy of $U$. 

\subsection{Energy and displacement}

Let us recall the definition of energy and displacement.

\begin{df}\label{D: energy} Let $U$ be a subset of $G$. The \emph{$\ell^1$--energy} of $U$ is $$E(U):= \inf_{x\in X} \frac{1}{|U|}\; \sum_{u\in U} |ux-x|.$$

Given a subset $U$, we fix $x_0 \in X$  such that $\frac{1}{|U|}\; \sum_{u\in U} |ux_0-x_0| \leqslant E(U)  + \delta$. 
\end{df}

The base point $x_0$ is fixed from now on.

\begin{df} Let $U$ be a subset of $G$. The \emph{displacement} of $U$ is $\lambda_0(U):=\max_{u\in U} |ux_0-x_0|$.
\end{df}

\begin{ex}\label{E: Energy and displacement}
If $G$ acts on a tree and $U\subset G$ is contained in an elliptic subgroup, then $U$ is fixing a point and $E(U)=0$ and $\lambda_0(U)=0$. 
\end{ex}
In general, $G$ acts on a $\delta$--hyperbolic space and we have:

\begin{prop}\label{P: Energy and displacement} If $U$ is contained in an elliptic subgroup, then $E(U)\leqslant 10 \delta$.

If, in addition, $|U|\geqslant 11N_0$, then $\lambda_0(U)\leqslant 2\kappa_0 + 12\delta$.
\end{prop}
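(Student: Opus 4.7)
For the first assertion, my approach is to construct a quasi-fixed point of the elliptic subgroup $H$ containing $U$. Since $H$ is elliptic, every orbit $H\cdot x$ is bounded. I would let $y$ be an approximate circumcenter of such an orbit, i.e.\ a point realizing $\inf_z \sup_{h\in H}|hx-z|$ up to an additive $\delta$. For every $h\in H$, the set $H\cdot x$ is $h$--invariant, so $hy$ is another approximate circumcenter of the same orbit; since in a $\delta$--hyperbolic space any two approximate circumcenters of a given bounded set lie at distance $O(\delta)$ from one another (a standard center-of-mass estimate along the lines of \cite{coornaert_geometry_1990,delzant_courbure_2008}), this forces $|hy-y|\leqslant 10\delta$ for every $h\in H$, and in particular for every $u\in U$. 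Evaluating the $\ell^1$--energy of $U$ at $y$ then immediately gives $E(U)\leqslant 10\delta$.

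For the second assertion, the plan is to combine this energy bound with an averaging argument and the acylindricity hypothesis. By the defining property of $x_0$ and Part~1,
\[
\sum_{u\in U}|ux_0-x_0|\leqslant (E(U)+\delta)|U|\leqslant 11\delta\,|U|.
\]
Markov's inequality then bounds the number of elements that displace $x_0$ by more than $100\delta$ by $\tfrac{11}{100}|U|$, so the complementary set
\[
S:=\{u\in U \mid |ux_0-x_0|\leqslant 100\delta\}
\]
has cardinality at least $\tfrac{89}{100}|U|$. Using the hypothesis $|U|\geqslant 11N_0$, I conclude $|S|>N_0$.

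The final step is a contradiction argument: assume $|x_0-y|\geqslant \kappa_0$, where $y$ is the quasi-fixed point of $H$ produced in Part~1. Acylindricity (Definition~\ref{D: acylindrical}) then permits at most $N_0$ isometries $g\in G$ satisfying simultaneously $|gx_0-x_0|\leqslant 100\delta$ and $|gy-y|\leqslant 100\delta$. But every $u\in U\subset H$ already satisfies $|uy-y|\leqslant 10\delta$ by Part~1, and strictly more than $N_0$ of them lie in $S$, violating acylindricity. Hence $|x_0-y|<\kappa_0$, and the triangle inequality gives, for every $u\in U$,
\[
|ux_0-x_0|\leqslant 2|x_0-y|+|uy-y|< 2\kappa_0+10\delta\leqslant 2\kappa_0+12\delta,
\]
which yields the claimed bound on $\lambda_0(U)$.

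The only mildly delicate point in this plan is pinning down the explicit constant $10\delta$ in Part~1: the standard quasi-center construction in a $\delta$--hyperbolic space needs to be applied carefully to recover the stated bound, and the rest of the proof is essentially just Markov plus a single application of acylindricity.
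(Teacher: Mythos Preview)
Your proposal is correct and follows essentially the same approach as the paper. The paper also invokes the existence of a quasi-fixed set $C_F=\{x:|fx-x|\leqslant 10\delta\ \forall f\in F\}$ (citing \cite{coulon_geometry_2014}) for Part~1, and then for Part~2 uses the same combination of the energy bound at $x_0$, a counting/Markov argument, and a single application of acylindricity at the pair $(x_0,y)$ to force $d(x_0,C_F)\leqslant\kappa_0$, followed by the triangle inequality; the only cosmetic difference is that the paper runs the averaging with threshold $11\delta$ rather than your $100\delta$.
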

\begin{proof} Let $U$ be contained in an elliptic subgroup $F$. The set the of $\delta$--almost fixed points $C_F:=\{x\in X \mid |fx-x|\leqslant 10\delta \hbox{ for all $f\in F$}\}$ is non-empty \cite[Proposition 2.36]{coulon_geometry_2014} and $8\delta$--quasi-convex \cite[Corollary 2.37]{coulon_geometry_2014}.

Then $E(U)\leqslant 10 \delta$. Moreover, if $|U|\geqslant 11N_0$, then $x_0 \in C_F^{+\kappa_0}$. Indeed, otherwise there are at most $N_0$--many $f\in F$ such that $|fx_0-x_0|\leqslant 11\delta$. Then $10\delta \geqslant E(U) > \frac{|U|-N_0}{|U|} 11\delta$, which would imply that $|U|< 11N_0$. 
 
  Finally, this implies that $\lambda_0(U)\leqslant 2\kappa_0 +12 \delta $. Indeed, if we take $p\in C_F$ such that  $ |p-x_0|\leqslant d(x_0,C_F)+\delta$, then $|fx_0-x_0|\leqslant 2d(x_0,C_F) + |fp-p| + 2\delta\leqslant 2\kappa_0 + 12 \delta$.
\end{proof}

\subsection{Concentrated and diffuse energy}
 As in the article of Button \cite{button_explicit_2013}, the proof splits into two cases, the case of concentrated energy (cf. \cite[Lemma 2.4]{button_explicit_2013}) and the case of diffuse energy (cf. \cite[Lemma 2.5 ff]{button_explicit_2013}). We recall that $U\subset G$ is a finite subset that is not contained in a loxodromic subgroup. 
If $G$ is a hyperbolic group acting properly and cocompactly on a $\delta$--hyperbolic space $X$ (Theorem \ref{IT: hyperbolic groups}), or if $G$ is a group acting  acylindrically on $X$ and $X$ is a tree (Theorem \ref{IT: acylindrical trees}), we let $d= 1$. If $G$ is a group acting acylindrically on $X$, $\delta >0$ and $X$ is a $\delta$--hyperbolic space (Theorem \ref{IT: acylindrical hyperbolic}), we let $d=\log_2(2|U|)$. 

We assume that $\lambda_0(U)>10^{14}d\kappa_0$. 

\begin{df}[Case 1 (Concentrated energy)]
The set $U$ is of \emph{concentrated energy} if 
for more than $1/4$ of the elements of $U\subset G$,  $$|ux_0-x_0|\leqslant 10^{10}d\kappa_0.$$
\end{df}
\begin{df}[Case 2 (Diffuse energy)] 
The set $U$ is of \emph{diffuse energy} if 
 for at least $3/4$ of the elements of $U\subset G$, $$|ux_0-x_0|>10^{10} d\kappa_0.$$
\end{df}

\begin{rem} The case of concentrated energy does not appear in free groups acting on trees as we can then set $\kappa_0=\rho_0/10^{10}$.  More generally, if the group $G$ acts acylindrically on a $\delta$--hyperbolic space and the injectivity radius of the action is larger than $10^{10}d\kappa_0$ then there are no subsets $U$ of concentrated energy.
\end{rem}

\begin{ex} \label{E: energy}  
Let $G$ split as a free product amalgamated over a finite subgroup. Then $d=1$ and let $\kappa_0= \rho_0/10^{14}$. Let $U\subset G$ be a finite subset that is not contained in an elliptic subgroup.  Then $\lambda_0(U) \geqslant  10^{14} \kappa_0$. 
 If $1/4$ of the elements of $U$ are conjugated into  free factor, then $U$ is of concentrated energy.
\end{ex}

\begin{ex}[Hyperbolic groups]
Let us assume that $G$ is hyperbolic and $X$ is a Cayley graph of $G$ with respect to a finite generating set that is $\delta$--hyperbolic.  Then  $d=1$. We let $\kappa_0=\delta$ and $b$ be the cardinality of the ball of radius $10^{14}\delta$. 
\label{L: small displacement hyperbolic} Thus,
\hbox{ if $|U|>b$, then $\lambda_0(U)> 10^{14} \kappa_0$. }
 
At this point let us note that if $|U|\leqslant b$ for some fixed $b>0$, then the product set estimates of Theorem \ref{IT: hyperbolic groups} are trivially satisfied whenever $\alpha \leqslant 1/b$. We can therefore assume that $|U|>b$ without restriction.
\end{ex}

\section{The case of concentrated energy}   
\label{S: small displacement}

If $G=A*B$ is a free product, $X$ its Bass-Serre tree, and $x_0\in X$ the fixed point of $A$, then $U \subset G$ is of concentrated energy if and only if there is a large subset $U_1\subset U$ that is contained in $A$, and $v\in U$ that is not in $A$. Then $U_1$ is fixing $x_0$ and $|vx_0-x_0| >0$. Thus $|U^3|\geqslant |U_1vU_1|\geqslant |U_1|^2$, cf. Button \cite[Lemma 2.4]{button_explicit_2013}. We adapt  Button's  strategy and use acylindricity to estimate the growth of the product sets $U^3$ and $U^n$ in the case of a set $U$ of concentrated energy.

We let $\delta >0$, and recall that $X$ is a $\delta$--hyperbolic space. The group $G$ acts on $X$ and the action assumed to be $(\kappa_0,N_0)$--acylindrical in the sense of Definition \ref{D: acylindrical}, and $\rho_0= \delta/N_0$.

\subsection{Third powers}

The next lemma is an application of Proposition \ref{P: reduced products} on reduced products. 

Let $U\subset G$. We fix $x_0$ and let $U_1\subset U$ consist of all $u\in U$ such that $|ux_0-x_0|\leqslant \kappa_0$. 
\begin{lem} \label{L: small displacement} There is a positive $c$ such that for all $v\in G$ with $|vx_0-x_0|\geqslant 10^4 \kappa_0$, $|U_1\, v \, U_1|\geqslant c^2 \; |U_1|^2.$
\end{lem}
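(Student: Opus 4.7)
The plan is to bound the fiber of the multiplication map $\mu\colon U_1\times U_1 \to U_1 v U_1$, $(u_1,u_2)\mapsto u_1 v u_2$, using acylindricity, and then deduce the lemma by a simple counting argument.

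First I would analyse when two pairs give the same product. Suppose $u_1 v u_2 = u_1' v u_2'$ with all four elements in $U_1$. Rewriting, one gets the key identity
\[
w := u_1^{-1}u_1' \;=\; v\,(u_2 u_2'^{-1})\,v^{-1}.
\]
Since $u_1,u_1',u_2,u_2'\in U_1$, each element moves $x_0$ by at most $\kappa_0$; hence by the triangle inequality
\[
|w x_0 - x_0|\leqslant 2\kappa_0 \qquad\text{and}\qquad |v^{-1}wv\,x_0 - x_0|=|u_2 u_2'^{-1}x_0 - x_0|\leqslant 2\kappa_0,
\]
the second of which is equivalent to $|w(vx_0)-vx_0|\leqslant 2\kappa_0$. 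Thus $w$ moves both $x_0$ and $vx_0$ by at most $2\kappa_0$.

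Next I would apply the quantitative acylindricity Lemma~\ref{L: acylindrical} with the integer $d := \lceil \kappa_0/(50\delta)\rceil$, so that $100 d\delta\geqslant 2\kappa_0$. The hypothesis $|vx_0-x_0|\geqslant 10^4\kappa_0$ comfortably exceeds $\kappa(d)=\kappa_0+400d\delta+100\delta\leqslant 9\kappa_0+500\delta$ (using $\kappa_0\geqslant\delta$), so the lemma applies and the number of such $w\in G$ is bounded by $M:=N(d)=23\,d\,N_0$, a constant depending only on $\kappa_0,\delta,N_0$. Now the fiber of $\mu$ over any $g\in U_1 v U_1$ has size at most $M$: indeed, fixing one preimage $(u_1^0,u_2^0)$, any other preimage $(u_1,u_2)$ is determined by the element $u_1^{-1}u_1^0 = w$ (which lies in the bounded set just described) since $u_1=u_1^0 w^{-1}$ and then $u_2 = v^{-1}u_1^{-1}u_1^0 v u_2^0$.

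Consequently
\[
|U_1 v U_1|\;\geqslant\; \frac{|U_1|^2}{M},
\]
and setting $c := 1/\sqrt{M}$ gives the desired inequality $|U_1 v U_1|\geqslant c^2|U_1|^2$ with a positive constant $c$ depending only on $(\kappa_0,\delta,N_0)$. The only subtlety is calibrating the integer $d$ so that both displacement conditions in Lemma~\ref{L: acylindrical} are met while the separation condition $|vx_0-x_0|\geqslant\kappa(d)$ remains comfortably satisfied by the hypothesis on $v$; this is what the generous constant $10^4$ in the lemma's hypothesis is there for, and it is essentially the only calculation needed.
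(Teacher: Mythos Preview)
Your proof is correct, and in fact more direct than the paper's. Both arguments end with the same acylindricity count (Lemma~\ref{L: acylindrical}), but the way each bounds the displacement of $w=u_1^{-1}u_1'$ at the far point $vx_0$ is different.

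The paper treats $X$ as a $\kappa_0$--hyperbolic space, observes that the products $u_i v$ and $v w_i$ are reduced in this coarser sense, and invokes Proposition~\ref{P: reduced products} to place $x_0$ and $vx_0$ in the thickened axis $C_{u_1^{-1}u_i}^{+190\kappa_0}$; Lemma~\ref{L: axis} then yields $|(u_1^{-1}u_i)vx_0-vx_0|\leqslant [u_1^{-1}u_i]+390\kappa_0\leqslant 392\kappa_0$. You bypass this entirely with the algebraic identity $w=u_1^{-1}u_1'=v(u_2u_2'^{-1})v^{-1}$, which immediately gives $|w(vx_0)-vx_0|=|u_2u_2'^{-1}x_0-x_0|\leqslant 2\kappa_0$. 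Your route avoids both Proposition~\ref{P: reduced products} and Lemma~\ref{L: axis}, and incidentally produces a sharper bound on the displacement (and hence a slightly better constant $c$). The paper's route, on the other hand, illustrates in a simple setting the reduced-products/axis machinery that is essential in the diffuse-energy case, where no such conjugation shortcut is available.
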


In fact, in this section we take $c=\frac{1}{ 10^6} \cdot \frac{\rho_0}{\kappa_0}$.

\begin{proof} 
Let $u_1,w_1\in U_1$. Let $n>0$ such that for $n$-many $u_i\in U_1$ and $n$-many $w_i\in U_1$  
$$ u_1vw_1=u_ivw_i.$$
Note that $(u_1^{-1}x_0,vx_0)_{x_0}\leqslant \kappa_0$, $(u_i^{-1}x_0,vx_0)_{x_0}\leqslant \kappa_0$, $(v^{-1}x_0,w_1x_0)_{x_0}\leqslant \kappa_0$ and $(v^{-1}x_0,w_ix_0)_{x_0}\leqslant \kappa_0$. 
By the triangle inequality $|u_1{x_0}-u_i{x_0}| \leqslant 2\kappa_0$. By Proposition \ref{P: reduced products} (to apply the proposition, we note that $X$ is a $\kappa_0$--hyperbolic space), ${x_0}$ and $vx_0$ are in $C_{u_1^{-1}u_{i}}^{+190 \kappa_0}$. In particular, 
$| (u_1^{-1}u_i) vx_0-vx_0|\leqslant [u_1^{-1}u_i] + 390 \kappa_0 \leqslant 392 \kappa_0 .$

The result now follows  by Lemma \ref{L: acylindrical}. Indeed,  
 $\kappa_0(392 \kappa_0 /100\delta) < 2000 \kappa_0$ and $N(392 \kappa_0 /100\delta)\leqslant 150 N_0 \kappa_0/\delta $. Hence,  $n\leqslant 10^6 N_0 \kappa_0/\delta_0 = 10^6 \kappa_0/\rho_0$. 
\end{proof}

\subsection{Higher powers} In order to study higher powers of $U$, we keep the same notation for $c$ and $U_1$.

Let $v\in G$ such that  $|vx_0-x_0|\geqslant 10^4 \kappa_0$. Let  $m$  be the point on a geodesic $[x_0,vx_0]$ such that $$|m-x_0|=  500 \kappa_0.$$
We define $U_2\subset U_1$ to be the subset of maximal cardinality such that for all distinct $u,u'\in U_2$,
$$ |um-u'm| > 42\kappa_0.$$
Recall that $c=\frac{1}{ 10^6} \cdot \frac{\rho_0}{\kappa_0}$. 
\begin{lem}\label{L: higher powers one} Then  $|U_2|\geqslant c |U_1|$.
\end{lem}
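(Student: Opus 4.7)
The plan is a standard covering argument combined with acylindricity. By the maximality of $U_2$, every $u' \in U_1$ must satisfy $|u'm - um| \leqslant 42\kappa_0$ for some $u \in U_2$ (else $U_2 \cup \{u'\}$ would still satisfy the separation condition defining $U_2$, contradicting maximality). Setting
\[
B_u := \{u' \in U_1 : |u'm - um| \leqslant 42\kappa_0\},
\]
we obtain $U_1 = \bigcup_{u \in U_2} B_u$, and hence $|U_1| \leqslant |U_2| \cdot \max_{u \in U_2} |B_u|$. This reduces the lemma to a uniform upper bound on $|B_u|$.

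Fix $u \in U_2$. For $u' \in B_u$, set $g := u^{-1}u'$. Since $u,u' \in U_1$, the triangle inequality gives
\[
|g x_0 - x_0| = |u' x_0 - u x_0| \leqslant 2\kappa_0,
\]
while the defining property of $B_u$ gives $|g m - m| = |u'm - um| \leqslant 42\kappa_0$. The map $u' \mapsto u^{-1}u'$ is injective, so $|B_u|$ is bounded by the number of isometries $g \in G$ simultaneously displacing $x_0$ and $m$ by these small amounts.

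This is precisely the quantity controlled by acylindricity, Lemma \ref{L: acylindrical}. I would choose $d \geqslant 1$ so that $100 d\delta$ dominates both $2\kappa_0$ and $42\kappa_0$, and so that the threshold $\kappa(d) = \kappa_0 + 400 d\delta + 100\delta$ is bounded by $|m - x_0| = 500\kappa_0$. Under the standing hypothesis $\kappa_0 \geqslant \delta$, the choice $d \asymp \kappa_0/\delta$ fits comfortably within this margin, giving
\[
|B_u| \;\leqslant\; N(d) \;=\; 23\, d\, N_0 \;=\; O\!\left(\kappa_0/\rho_0\right),
\]
since $\rho_0 = \delta/N_0$.

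Combining the two bounds yields $|U_2| \geqslant \Omega(\rho_0/\kappa_0)\,|U_1|$, which is well within the very generous constant $c = \rho_0/(10^6\kappa_0)$; the numerical slack absorbs any suboptimal constants arising in the choice of $d$. The only point requiring care is matching the acylindricity thresholds to the fixed distance $|m-x_0| = 500\kappa_0$, and this is routine once $\kappa_0 \geqslant \delta$ is exploited.
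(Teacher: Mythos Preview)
Your proof is correct and follows essentially the same approach as the paper's: maximality of $U_2$ gives a covering of $U_1$ by the sets $B_u$, and acylindricity (Lemma~\ref{L: acylindrical}) applied to the pair $x_0, m$ bounds each $|B_u|$ by $O(\kappa_0/\rho_0)$. The paper's proof is simply a terser version of the same argument, invoking the acylindricity bound at threshold $100\kappa_0$ rather than $42\kappa_0$ and leaving the covering step implicit.
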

 \begin{proof}
  By acylindricity, Lemma \ref{L: acylindrical}, for every $u\in U_2$ there are at most $\frac1c$--many $u'\in U_1$ such that $|um-u'm|\leqslant 100\kappa_0$. Hence, $|U_2| \frac{1}{c}\geqslant |U_1|$, so that $|U_2|\geqslant c |U_1|$. 
 \end{proof}

\begin{lem}\label{L: ping pong small displacement} Let $u,u' \in U_2$, and $u\not=u'$.  
\begin{enumerate}
\item Then $(uvx_0,u'vx_0)_{x_0}\leqslant 506\kappa_0$.
\item If $(v^{- 1}x_0,uvx_0)_{x_0}> 506\kappa_0$, then $(v^{-1}x_0,u'vx_0)_{x_0}\leqslant 506\kappa_0$.
\end{enumerate}
\end{lem}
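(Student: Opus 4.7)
The plan is to argue both parts by contradiction, exploiting the separation $|um-u'm|>42\kappa_0$ guaranteed by the choice of $U_2\subset U_1$. The preliminary step, common to both parts, is to pin down the projection $p_{um}$ of $um$ on the geodesic $[x_0,uvx_0]$. Since $|ux_0-x_0|\leq\kappa_0$ and $m$ lies on $[x_0,vx_0]$ at distance $500\kappa_0$ from $x_0$, a direct calculation shows $(x_0,uvx_0)_{um}\leq\kappa_0$, so Lemma~\ref{L: Gromov product 2} gives $|um-p_{um}|\leq 5\kappa_0$; combined with Lemma~\ref{L: thin triangles 2} applied via $(um,uvx_0)_{x_0}\leq 501\kappa_0$, this yields $|p_{um}-x_0|\in[494\kappa_0,505\kappa_0]$. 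The analogous bracket holds for the projection $p_{u'm}$ of $u'm$ on $[x_0,u'vx_0]$.

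For (1), I would assume for contradiction $(uvx_0,u'vx_0)_{x_0}>505\kappa_0$. Since $|p_{um}-x_0|\leq 505\kappa_0$, Lemma~\ref{L: thin triangles} yields $(x_0,u'vx_0)_{p_{um}}\leq\kappa_0$, and Lemma~\ref{L: Gromov product 2} produces a point $\tilde p\in[x_0,u'vx_0]$ with $|p_{um}-\tilde p|\leq 5\kappa_0$. Since both $\tilde p$ and $p_{u'm}$ lie on $[x_0,u'vx_0]$ at distance in $[489\kappa_0,510\kappa_0]$ from $x_0$, I get $|\tilde p-p_{u'm}|\leq 21\kappa_0$. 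Concatenating, $|um-u'm|\leq 5+5+21+5=36\kappa_0$, contradicting the choice of $U_2$.

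For (2), I would run a similar argument with the common geodesic $[x_0,v^{-1}x_0]$ now playing the role of middle geodesic. Assuming for contradiction that both $(v^{-1}x_0,uvx_0)_{x_0}$ and $(v^{-1}x_0,u'vx_0)_{x_0}$ exceed $505\kappa_0$, two applications of Lemma~\ref{L: thin triangles} place $p_{um}$ and $p_{u'm}$ within $5\kappa_0$ of points $s_1,s_2\in[x_0,v^{-1}x_0]$ at distance in $[489\kappa_0,510\kappa_0]$ from $x_0$, giving $|s_1-s_2|\leq 21\kappa_0$. The resulting chain yields $|um-u'm|\leq 5+5+21+5+5=41\kappa_0<42\kappa_0$, again a contradiction. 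Notice that (2) does \emph{not} follow from (1) via the four-point inequality alone, since that would only yield $(uvx_0,u'vx_0)_{x_0}>504\kappa_0$, compatible with (1)'s bound of $505\kappa_0$; the geodesic argument genuinely has to be redone.

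The main difficulty here is not conceptual but numerical: the constants $500\kappa_0$ in the location of $m$ and $42\kappa_0$ in the separation defining $U_2$ are calibrated precisely so that the totals $36\kappa_0$ and $41\kappa_0$ in the two chains just squeeze under $42\kappa_0$. Everything else is a routine combination of Lemmas~\ref{L: thin triangles}, \ref{L: thin triangles 2} and \ref{L: Gromov product 2}, using $\kappa_0$ as the hyperbolicity constant (as permitted since $\kappa_0\geq\delta$).
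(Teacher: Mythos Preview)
Your proof is correct and follows essentially the same strategy as the paper: argue both parts by contradiction, use thin-triangle estimates to pin the points $um$, $u'm$ to a common geodesic, and derive $|um-u'm|$ too small. The only cosmetic difference is that for (1) the paper translates by $u^{-1}$ and projects $u^{-1}u'm$ directly onto the fixed geodesic $[x_0,vx_0]$ (obtaining the sharper bound $23\kappa_0$), whereas you project $um$ and $u'm$ onto their respective geodesics and then transfer via the hypothesis; for (2) the paper projects $um$, $u'm$ directly onto $[x_0,v^{-1}x_0]$ rather than through the intermediate points $p_{um},p_{u'm}$. Both versions comfortably clear the $42\kappa_0$ threshold, and your observation that (2) does not follow from (1) via the four-point inequality is apt.
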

 
\begin{proof} By Lemma \ref{L: thin triangles}, as $|ux_0-x_0|\leqslant \kappa_0$,  $d(um,[x_0,uvx_0])\leqslant 5\delta$. Similarly, $d(u'm,[x_0,u'vx_0])\leqslant 5\delta$.

To prove (1) assume by contradiction that 
$(uvx_0,u'vx_0)_{x_0}=(vx_0,u^{-1}u'vx_0)_{u^{-1}x_0}> 506\kappa_0.$ Let $r$ be a projection of $u'm$ on $[x_0,u'vx_0]$. As $d(u'm,[x_0,u'vx_0])\leqslant 5\delta$, $|x_0-r|\leqslant  506 \delta$. By Lemma \ref{L: thin triangles}, $d(r,[x_0,uvx_0])\leqslant 5\delta$. 
 Let $p$ be a projection of $u^{-1}u'm$ on $[x_0,vx_0]$. Then $|p-u^{-1}u'm|\leqslant d(u'm,[x_0,u'vx_0])+d(r,[x_0,uvx_0]) \leqslant 10 \delta$. 
Thus 
\begin{align*}
|um-u'm|& \leqslant \big|\,  |p-x_0| -|m-x_0| \, \big | + |p-u^{-1}u'm|\\
 & \leqslant  2|p-u^{-1}u'm| + |ux_0-x_0|+|u'x_0-x_0| \leqslant 23 \kappa_0, 
\end{align*}
which contradicts the assumption. This completes the proof of assertion (1).

Let us prove (2). Let $p$ be a projection of $um$ and $q$ a projection of $u'm$ on $[x_0,v^{-1}x_0]$.  
Assume by contradiction that $506\kappa_0< (v^{-1}x_0,u'vx_0)_{x_0}$. As in the proof of (1), we use Lemma \ref{L: thin triangles} to conclude that $|u'm-q|\leqslant d(u'm,[x_0,vx_0])+5\delta \leqslant 10 \delta$. Similarly, $|um-p|\leqslant 10\delta$.  
 Thus 
\begin{align*}
|um-u'm|& \leqslant \big|\,  |q-x_0| -|p-x_0| \, \big | + |p-um| + |q-u'm|\\
 & \leqslant  2|p-um| + 2 |q-u'm| + |ux_0-x_0|+|u'x_0-x_0| \leqslant 42 \kappa_0, 
\end{align*}
which contradicts the assumption. This completes the proof of assertion (2).
\end{proof} 

By Lemma \ref{L: ping pong small displacement}, up to removing one element from $U_2$, we now assume that for all distinct $u,u'\in U_2$,  
$$(v^{-1}ux_0,u'vx_0)_{x_0}\leqslant (v^{-1}x_0,u'vx_0)_{x_0} +|ux_0-x_0|\leqslant 507\kappa_0.$$
 
 We now prove:
\begin{lem}\label{eq: small displacement 3} Let $n\geqslant 0$ be a natural number. Then 
$
|(U_2\, v)^{n} \, |\geqslant  |U_2|^{n} 
$
\end{lem}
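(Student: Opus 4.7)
The approach follows the ping-pong strategy of Proposition~\ref{P: ping pong}, adapted to our setting. I will establish the stronger statement that the map $\Phi:U_2^n\to G$, $(u_1,\ldots,u_n)\mapsto u_1vu_2v\cdots u_nv$, is injective, by induction on $n$. The cases $n\leqslant 1$ are immediate. For $n\geqslant 2$, suppose two sequences in $U_2^n$ give the same product. If the first letters agree, $u_1=u_1'$, cancel $u_1v$ on the left and invoke the inductive hypothesis; so I may assume $u_1\neq u_1'$ and derive a contradiction.

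To each sequence $(u_1,\ldots,u_n)$ associate the piecewise geodesic path $y_0=x_0$, $y_i=u_1vu_2v\cdots u_iv\cdot x_0$ for $i=1,\ldots,n$. Each edge has length $|y_i-y_{i-1}|=|u_ivx_0-x_0|\geqslant |vx_0-x_0|-\kappa_0\geqslant 10^4\kappa_0-\kappa_0$. At each interior vertex $y_i$, translation by $(u_1v\cdots u_iv)^{-1}$ identifies $(y_{i-1},y_{i+1})_{y_i}$ with $(v^{-1}u_i^{-1}x_0,u_{i+1}vx_0)_{x_0}$, which is bounded by $506\kappa_0$ by Lemma~\ref{L: ping pong small displacement}(2) (as already observed in the paragraph preceding the statement of the lemma to be proved). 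Thus the hypothesis of Lemma~\ref{L: discrete quasi-geodesics}(1) holds with $\alpha\geqslant 4000\kappa_0\geqslant 9\delta$, and consequently Lemma~\ref{L: discrete quasi-geodesics}(2) places the piecewise geodesic within $10\delta+506\kappa_0$ of any geodesic segment $[y_0,y_n]$.

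Apply this conclusion to both paths associated to the two sequences, which share the common endpoint $y_n=y_n'=P$. Orthogonally projecting $y_1$ and $y_1'$ onto $[x_0,P]$ and using $\bigl||y_1-x_0|-|y_1'-x_0|\bigr|\leqslant 2\kappa_0$ yields $|y_1-y_1'|\leqslant 2026\kappa_0+40\delta$. On the other hand, Lemma~\ref{L: ping pong small displacement}(1) gives $(y_1,y_1')_{x_0}\leqslant 505\kappa_0$, whence $|y_1-y_1'|=|y_1-x_0|+|y_1'-x_0|-2(y_1,y_1')_{x_0}\geqslant 2(|vx_0-x_0|-\kappa_0)-1010\kappa_0\geqslant 18988\kappa_0$. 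These bounds are incompatible since $\delta\leqslant \kappa_0$, which gives the desired contradiction and completes the inductive step.

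The main technical obstacle is the careful bookkeeping of constants: the quasi-geodesic stability forces $|y_1-y_1'|$ to be of order $\kappa_0$, while the Gromov product of Lemma~\ref{L: ping pong small displacement}(1) forces it to be of order $|vx_0-x_0|\geqslant 10^4\kappa_0$. The large gap guaranteed by the displacement hypothesis $|vx_0-x_0|\geqslant 10^4\kappa_0$ provides ample room for the contradiction.
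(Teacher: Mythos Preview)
Your proof is correct. Both arguments reduce by induction to the case $u_1\neq u_1'$ and then use Lemma~\ref{L: ping pong small displacement} together with Lemma~\ref{L: discrete quasi-geodesics}, but the packaging differs. The paper concatenates the two words into a single word
\[
(v^{-1}a_m^{-1})\cdots(v^{-1}a_1^{-1})(b_1v)\cdots(b_mv),
\]
builds the associated sequence $x_0,\ldots,x_{2m}$, checks that every Gromov product is at most $506\kappa_0$ (the critical one at the midpoint being $(a_1vx_0,b_1vx_0)_{x_0}\leqslant 505\kappa_0$, since $a_1\neq b_1$), and invokes only part~(1) of Lemma~\ref{L: discrete quasi-geodesics} to get $|x_0-x_{2m}|>0$. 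Your route instead keeps the two paths separate, uses part~(2) to trap each of them near the geodesic $[x_0,P]$, and then forces a numerical contradiction between the projection estimate and the lower bound coming from Lemma~\ref{L: ping pong small displacement}(1). The paper's version is a bit leaner (one path, one appeal to part~(1)); yours is more geometric and makes the role of the first letter visually transparent. Both exploit exactly the same inputs.

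One small wording point: the bound you cite from the paragraph preceding the lemma is written there for $(v^{-1}ux_0,u'vx_0)_{x_0}$ with $u\in U_2$, whereas you need $(v^{-1}u_i^{-1}x_0,u_{i+1}vx_0)_{x_0}$. This is harmless because the actual content of that paragraph (after removing one element) is $(v^{-1}x_0,u'vx_0)_{x_0}\leqslant 505\kappa_0$ for all $u'\in U_2$, and your extra $|u_i^{-1}x_0-x_0|=|u_ix_0-x_0|\leqslant\kappa_0$ gives the same $506\kappa_0$ bound.
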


\begin{proof}
Let us fix $m\leqslant n$, $a_1, a_{2}, \ldots,\,a_{m},b_1, b_{2},\, \ldots,\,b_{m} \in U_2$ such that $a_1\not=b_1$, and show that   
  $$a_1va_{2} v \cdots a_mv\not=b_{1}vb_{2}v \cdots b_mv. $$
  In other words, we want to show that 
  $$(v^{-1}a_m^{-1})\cdots (v^{-1}a_1^{-1})\cdot(b_{1}v)\cdot(b_{2}v) \cdots (b_mv)\not=1. $$
To this end, we define a sequence of points $x_1:= (v^{-1}a_{m}^{-1}) x_0$, $x_2:= (v^{-1}a_{m}^{-1}) \cdot (v^{-1}a_{m-1}^{-1})x_0$, $\ldots,$ $x_{2m}:=(v^{-1}a_{m}^{-1}) \cdots 
(v^{-1}a_{1}^{-1}) \cdot(b_{1}v) \cdots ( b_{m}v)x_0.$ 
To estimate the distance of $x_0$ to $x_{2m}$, we will observe  that all Gromov products $(x_i,x_{i+2})_{i+1}\leqslant \frac12 \min\{|x_{i+1}-x_i|,|x_{i+1}-x_{i+2}|\} -2\delta$. 
 Then Lemma \ref{L: discrete quasi-geodesics} implies that $x_0\not=x_{2m}$.
 
 Now, $(x_i,x_{i+2})_{i+1}\leqslant 507\kappa_0$, see Lemma \ref{L: ping pong small displacement} and for all  $0\leqslant j< 2m$, $|x_j-x_{j+1}|\geqslant 9999 \kappa_0$. Indeed, for some $u\in U_2$, $|x_j-x_{j+1}|=|x_0-v^{\pm 1}u^{\pm 1}x_0|\geqslant |x_0-v^{\pm 1}x_0| - |u^{\pm 1}x_0-x_0|$, hence, the claim. 
\end{proof}
We conclude:
\begin{prop} \label{P: product growth in small displacement} 
If $|U_1|\geqslant 1/4 |U|$ and $\lambda_0(U)\geqslant  10^4  \kappa_0$, then
$$|U^n|\geqslant \left(\frac{c}{4} |U|\right)^{[(n+1)/2]}.$$ 
\end{prop}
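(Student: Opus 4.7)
The plan is to recognise that this proposition is essentially a packaging of the ping-pong bound of Lemma~\ref{eq: small displacement 3}: once we extract a good $v \in U$ and convert $|U_2|$ into $|U|$, only a parity argument remains in order to match the exponent $[(n+1)/2]$.

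First I would pick $v \in U$ realising $\lambda_0(U) \geqslant 10^4\kappa_0$, so that the construction of $U_2 \subset U_1$ from Lemmas~\ref{L: higher powers one} and~\ref{L: ping pong small displacement} applies to this $v$. Combining Lemma~\ref{L: higher powers one} with the hypothesis $|U_1| \geqslant |U|/4$ yields $|U_2| \geqslant c|U_1| \geqslant c|U|/4$; the single element that Lemma~\ref{L: ping pong small displacement} may force us to discard is harmless, since the conclusion is trivial whenever $c|U|/4$ is smaller than an absolute constant.

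Since $U_2 \subset U$ and $v \in U$, we have $(U_2 v)^m \subset U^{2m}$ and $(U_2 v)^m U_2 \subset U^{2m+1}$ for every $m \geqslant 0$. For even $n = 2m$ Lemma~\ref{eq: small displacement 3} gives
$$|U^{2m}| \;\geqslant\; |(U_2 v)^m| \;\geqslant\; |U_2|^m \;\geqslant\; (c|U|/4)^m,$$
as required. For odd $n = 2m+1$, the key remark is that right-multiplication by $v$ is a bijection of $G$, so
$$|(U_2 v)^m U_2| \;=\; |(U_2 v)^m U_2 \cdot v| \;=\; |(U_2 v)^{m+1}| \;\geqslant\; |U_2|^{m+1} \;\geqslant\; (c|U|/4)^{m+1},$$
matching $[(n+1)/2] = m+1$. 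All the substantive content sits in Lemma~\ref{eq: small displacement 3}, so I do not expect any real obstacle beyond this bookkeeping; the only minor point to keep track of is absorbing the $-1$ loss coming from Lemma~\ref{L: ping pong small displacement} into the trivial small-$|U|$ regime.
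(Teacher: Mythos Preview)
Your argument is correct and is essentially the paper's own proof: both rely on Lemma~\ref{L: higher powers one} to pass from $|U_1|$ to $|U_2|$ and on Lemma~\ref{eq: small displacement 3} for the growth bound, absorbing the $-1$ loss into the small-$|U|$ regime. The one minor difference is that the paper invokes Lemma~\ref{L: small displacement} separately for $n\leqslant 4$ and only cites Lemma~\ref{eq: small displacement 3} for $n>4$, whereas you treat all $n$ uniformly via the bijection trick $|(U_2v)^m U_2| = |(U_2v)^{m+1}|$ for the odd case; your version is slightly cleaner and makes the parity bookkeeping explicit where the paper leaves it implicit.
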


\begin{proof} Assume without restriction that $|U_1|\geqslant 4/c$. Let $v\in U_1$ such that $|vx_0-x_0|\geqslant 10^4\kappa_0$.  If $n\leqslant 4$, the assertion follows from Lemma \ref{L: small displacement}.
If $n>4$, the assertion follows from the previous Lemma and Lemma \ref{L: higher powers one}.
\end{proof}

Proposition \ref{P: product growth in small displacement} completes the proof of Theorems \ref{IT: hyperbolic groups}, \ref{IT: acylindrical trees} and \ref{IT: acylindrical hyperbolic} in the case that $U$ is of concentrated energy:

\begin{cor} If $U\subset G$ is not contained in an elementary subgroup and of concentrated energy and $c=\frac{1}{ 10^{6}} \cdot \frac{\rho_0}{\kappa_0}$, then $$|U^n|\geqslant  \left(\frac{1}{10^{10}}\frac{1}{d}\frac{c}{4} |U|\right)^{[(n+1)/2]},$$
where $d=1$ in the case of hyperbolic groups and groups acting on trees, and $d=\log_2(2|U|)$ in the general case.
\end{cor}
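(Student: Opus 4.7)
The plan is to apply Proposition \ref{P: product growth in small displacement} at the enlarged scale $\kappa' := 10^{10} d \kappa_0$ in place of $\kappa_0$. Since $\kappa' \geqslant \kappa_0 \geqslant \delta$, the space $X$ is a fortiori $\kappa'$--hyperbolic, and by Lemma \ref{L: acylindrical} (applied with an appropriate multiplier $d'$ so that $100 d' \delta \geqslant 400 \kappa'$) the action of $G$ on $X$ is acylindrical at scale $\kappa'$, with the number of competing group elements controlled by a constant multiple of $d N_0 \kappa_0/\rho_0$.

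The first step is to redefine $U_1 := \{u \in U : |ux_0 - x_0| \leqslant \kappa'\}$. By the concentrated energy hypothesis one has $|U_1| \geqslant |U|/4$. Since $U$ is not elementary, the standing assumption of Section \ref{S: energy discussion} gives $\lambda_0(U) \geqslant 10^{14} d \kappa_0 = 10^4 \kappa'$, so that there exists $v \in U$ with $|vx_0 - x_0| \geqslant 10^4 \kappa'$, exactly what Proposition \ref{P: product growth in small displacement} requires (with $\kappa_0$ replaced by $\kappa'$).

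The second step is to re-execute the arguments of Lemmas \ref{L: small displacement}, \ref{L: higher powers one}, \ref{L: ping pong small displacement} and \ref{eq: small displacement 3} verbatim with $\kappa_0$ systematically replaced by $\kappa'$. The Gromov-product estimates, reduced-product inputs from Proposition \ref{P: reduced products}, and the ping-pong criterion of Lemma \ref{L: discrete quasi-geodesics} are all scale-invariant as soon as the working constant dominates $\delta$; only the acylindricity call inside Lemma \ref{L: small displacement} changes quantitatively. The rescaled version of the constant $c$ is then
\[
c' \;=\; \frac{1}{10^{6}} \cdot \frac{\rho_0}{\kappa'} \;=\; \frac{c}{10^{10} d},
\]
and the conclusion of Proposition \ref{P: product growth in small displacement} at this scale gives $|U^n| \geqslant \left(\frac{c'}{4}\, |U|\right)^{[(n+1)/2]}$, which is exactly the stated bound.

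The main obstacle is purely accounting: one must check that every appearance of $\kappa_0$ in Section \ref{S: small displacement} is either a geometric cutoff insensitive to scale or the parameter fed to Lemma \ref{L: acylindrical}, and that the additional factor of $d$ introduced by the acylindricity at scale $\kappa'$ is absorbed cleanly in the rescaling of $c$. Since the original argument only uses $\kappa_0$ via $\rho_0/\kappa_0$ in the counting step and via Gromov-product bounds that scale linearly, no new geometric input is required.
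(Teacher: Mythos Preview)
Your approach is essentially the paper's: replace $\kappa_0$ by $\kappa' = 10^{10} d \kappa_0$ and apply Proposition \ref{P: product growth in small displacement} at that scale, obtaining $c' = c/(10^{10} d)$. Two points deserve cleanup. First, your appeal to Lemma \ref{L: acylindrical} is unnecessary and slightly misdirected: that lemma enlarges the \emph{displacement threshold} $100\delta$, not the parameter $\kappa_0$. The observation you actually need is the trivial monotonicity that a $(\kappa_0,N_0)$--acylindrical action is automatically $(\kappa',N_0)$--acylindrical for any $\kappa' \geqslant \kappa_0$, with the \emph{same} $N_0$. The factor $d$ in the final bound comes solely from $\kappa'/\kappa_0$ in the ratio $\rho_0/\kappa'$, not from any degradation of $N_0$. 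Second, you omit the tree case: Section \ref{S: small displacement} assumes $\delta>0$ throughout, so when $X$ is a simplicial tree one must first redefine $\delta := \rho_0$ (which is harmless since a tree is $\rho_0$--hyperbolic and $\kappa_0 \geqslant \rho_0$) before running the argument.
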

\begin{proof} Let us first assume that $\delta>0$ and that $X$ is $\delta$--hyperbolic. Let $\kappa_0':=10^{10}d\kappa_0$. Note that the action of $G$ on $X$ is $(\kappa_0',N_0)$--acylindrical. The claim now follows from Proposition \ref{P: product growth in small displacement} .

If $X$ is a simplicial tree, we redefine $\delta:=\rho_0$ so that $\delta>0$ and $\kappa_0\geqslant \delta$. The result now follows as before.
\end{proof}

\section{The case of diffuse energy} \label{S: the case of diffuse energy}

 We now let $U\subset G$ be of diffuse energy and estimate the growth of $U^3$ and $U^n$ in this case. We recall that we fixed $x_0$ to minimise the energy $E(U)(x):= 1/|U|\; \sum_{u\in U} |ux-x|$ in Section \ref{E: Energy and displacement}. 
 
\subsection{Reduction Lemmas} \label{S: Energy and reduced products}

Recall that $\delta>0$, and that $X$ is a $\delta$--hyperbolic space or $\delta=0$ and $X$ is a simplicial tree. The group $G$ acts $(\kappa_0,N_0)$--acylindrical on $X$. 
 
 Recall that $d=1$ if $G$ is a hyperbolic group or if the group $G$ is acting on a  tree (Sections \ref{S: reduction bounded} and \ref{S: reduction trees} below), but $d= \log_2(2|U|)$ for groups acting on $\delta$--hyperbolic spaces (Section~\ref{S: reduction acylindrical}). 
 
As $U\subset G$ is of diffuse energy for at least $3/4$ of the  $u\in U$, $|ux_0- x_0|\geqslant 10^{10} d \kappa_0$. The aim of this section is to find two subsets $U_1$ and $U_2$ of $U$ of large cardinality such that all products $u_1u_2$ and $u_2u_1$ are reduced at $x_0$. 
This extends Safin's \cite[Lemma 1]{safin_powers_2011} in the case of a free group and Button's \cite[Theorem 2.2]{button_explicit_2013} in the case of a free product.

 In this section, after discussing some preliminaries in \ref{S: minimal energy}, we separately treat $3$ cases: first, we extend Safin's treatment of free groups \cite{safin_powers_2011} to groups acting acylindrically on a graph of bounded geometry (Section \ref{S: reduction bounded}).  
  Then we extend Button's treatment of free products \cite{button_explicit_2013}, first to groups acting acylindrically on trees (Section \ref{S: reduction trees}), and, finally, to groups with an acylindrical action on a general hyperbolic space (Section~\ref{S: reduction acylindrical}).

 \subsubsection{Preliminaries} \label{S: minimal energy}

  Let $S=S(x_0,1000 d\delta)$ denote the sphere of radius $1000 d\delta$ at $x_0$. Let $y$ and $z\in S$. Let 
 $$U_{y,z}:=\{u\in U\mid  |ux_0-x_0|\geqslant 4000 d\delta,\,  (x_0,ux_0)_{y}\leqslant d\delta \hbox{ and } (x_0,u^{-1}x_0)_{z}\leqslant d\delta \}.$$
  This is the set of isometries $u\in U$ such that $y$ is on a geodesic segment $[x_0,ux_0]$, such that $z$ is on a geodesic segment $[x_0,u^{-1}x_0]$ and such that $|y-uz|\geqslant 2000d\delta$.

\begin{lem}\label{L: thin triangles 3}
 If   $|z_0-y_1|> 6d\delta$ and $|y_0-z_1|> 6d\delta$, then 
  $$\left( U_{y_0,z_0}^{-1}x_0, U_{y_1,z_1}x_0 \right)_{x_0}\leqslant 1000 d\delta \hbox{ and } \left( U_{y_0,z_0 }x_0, U_{y_1,z_1}^{-1}x_0 \right)_{x_0}\leqslant 1000 d\delta.$$
\end{lem}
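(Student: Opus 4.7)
The plan is to prove both inequalities in parallel; by the symmetry that swaps $y$'s with $z$'s and isometries with their inverses, it will suffice to prove the first one. So I will fix $u_0\in U_{y_0,z_0}$ and $u_1\in U_{y_1,z_1}$ and bound $(u_0^{-1}x_0,u_1x_0)_{x_0}$. The picture to have in mind is that $z_0$ lies within $d\delta$ of a geodesic $[x_0,u_0^{-1}x_0]$ at distance $1000d\delta$ from $x_0$, and likewise $y_1$ on $[x_0,u_1x_0]$; the hypothesis $|z_0-y_1|>6d\delta$ should then force these two geodesics to diverge before reaching depth $1000d\delta$ from $x_0$, which is exactly what the conclusion is asserting quantitatively.

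First I will convert this picture into Gromov-product estimates at $x_0$. Using the metric identity $(a,b)_c+(b,c)_a=|a-c|$ together with the defining inequalities of $U_{y,z}$ and the fact that $y_1,z_0\in S(x_0,1000d\delta)$, one immediately gets
\[
(z_0,u_0^{-1}x_0)_{x_0}\geqslant 999d\delta,\qquad (y_1,u_1x_0)_{x_0}\geqslant 999d\delta,\qquad (z_0,y_1)_{x_0}<997d\delta.
\]

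Next I will apply the $\delta$--hyperbolicity four-point condition $(a,c)_x\geqslant\min\{(a,b)_x,(b,c)_x\}-\delta$ twice. First, with middle point $u_1x_0$: the large value of $(y_1,u_1x_0)_{x_0}$ forces the minimum to be attained by $(z_0,u_1x_0)_{x_0}$, giving $(z_0,u_1x_0)_{x_0}<997d\delta+\delta$. A second application with middle point $u_0^{-1}x_0$ then uses the large value of $(u_0^{-1}x_0,z_0)_{x_0}$ to force $(u_1x_0,u_0^{-1}x_0)_{x_0}$ to realise the minimum, giving $(u_0^{-1}x_0,u_1x_0)_{x_0}<997d\delta+2\delta\leqslant 1000d\delta$, using $d\geqslant 1$ in all the regimes considered in the paper.

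The only obstacle will be bookkeeping of strict versus non-strict inequalities: the boundary case $d=1$ saturates the equality $999d\delta=997d\delta+2\delta$, so one has to be careful that the strict hypothesis $|z_0-y_1|>6d\delta$ is used to propagate a strict inequality through both applications of hyperbolicity. This provides exactly enough margin to close the argument.
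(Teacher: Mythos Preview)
Your proof is correct and follows essentially the same approach as the paper: both arguments use two applications of the four-point inequality to link $(u_0^{-1}x_0,u_1x_0)_{x_0}$ with $(z_0,y_1)_{x_0}$ via the bounds $(z_0,u_0^{-1}x_0)_{x_0},(y_1,u_1x_0)_{x_0}\geqslant 999d\delta$. The only cosmetic difference is that the paper runs the argument by contradiction (assuming the Gromov product exceeds $1000d\delta$ and deducing $|z_0-y_1|\leqslant 6d\delta$) and uses the weaker $d\delta$-hyperbolicity, whereas you argue directly with $\delta$-hyperbolicity and track the strict inequality through; your careful bookkeeping in the $d=1$ case is exactly right.
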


\begin{proof}
By contradiction assume that $\left( U_{y_0,z_0}^{-1}x_0, U_{y_1,z_1}x_0 \right)_{x_0}> 1000 d\delta $.  Let $u_0\in U_{y_0,z_0}$ and $u_1\in U_{y_1,z_1}$. By definition of hyperbolicity, 
$(z_0,y_1)_{x_0}\geqslant \min\{(z_0,u_0^{-1}x_0)_{x_0},(u_0^{-1}x_0,u_1x_0)_{x_0},(u_1x_0,y_1)_{x_0}\}-2d\delta.$

As $1000d\delta \geqslant (z_0,u_0^{-1}x_0)_{x_0}= |z_0-x_0|-  (x_0,u_0^{-1}x_0)_{z_0} \geqslant 999d\delta $,  and, similarly, $1000d\delta \geqslant (y_1,u_1x_0)_{x_0} \geqslant 999d\delta $, 
$(z_0,y_1)_{x_0}=1000d\delta -\frac12 |z_0-y_1|\geqslant 997 d\delta. $ 

Therefore, $6d\delta \geqslant |z_0-y_1|$, a contradiction. This proves the first assertion of the Lemma, the proof for the second assertion is symmetric.
\end{proof}

\begin{lem}[Minimal energy]\label{L: minimal energy} Suppose that $\delta>0$. 
 Let $y_0\in S(x_0,1000d\delta)$ and let  $B(y_0,100d\delta)$ be the ball of radius $100d\delta$ at $y_0$, then 
 $$ \left| \bigcup_{y,z\in B(y_0,100d\delta)} U_{y,z}\; \right|  \leqslant 2/3\;  |U|.$$
\end{lem}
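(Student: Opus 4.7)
The plan is to proceed by contradiction with the quasi-minimality of the base point $x_0$. Set $V := \bigcup_{y,z \in B(y_0,100d\delta)} U_{y,z}$. I want to show that if $|V| > \tfrac{2}{3}|U|$, then the energy at $y_0$ is strictly smaller than the energy at $x_0$ by more than $\delta$, contradicting the choice $\frac{1}{|U|}\sum_{u\in U}|ux_0-x_0| \leq E(U)+\delta$.

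The first step is, for each $u \in V$, to fix $y_u, z_u \in B(y_0,100d\delta)$ with $u \in U_{y_u,z_u}$ and to establish the two side estimates
\[
|y_0 - ux_0| \leq |ux_0 - x_0| - 798\,d\delta, \qquad |x_0 - uy_0| \leq |ux_0 - x_0| - 798\,d\delta.
\]
The first follows from the definition of the Gromov product $(x_0, ux_0)_{y_u} \leq d\delta$ together with $|y_u - x_0| \geq 900\,d\delta$ (giving $|y_u - ux_0| \leq |ux_0-x_0| - 898\,d\delta$) plus a triangle-inequality correction of $|y_u - y_0| \leq 100\,d\delta$; the second is symmetric, using the condition $(x_0, u^{-1}x_0)_{z_u} \leq d\delta$ and translating by $u$. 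The second step is to feed these bounds into the four-point condition of $\delta$-hyperbolicity applied to $(y_0, x_0, uy_0, ux_0)$. Since the hypothesis $|ux_0-x_0| \geq 4000\,d\delta$ built into the definition of $U_{y,z}$ forces $|y_0 - ux_0| + |x_0 - uy_0|$ to dominate $|y_0 - x_0| + |uy_0 - ux_0| = 2000\,d\delta$, the four-point inequality collapses to
\[
|y_0 - uy_0| \leq |ux_0 - x_0| - 1594\,d\delta.
\]
For $u \notin V$, the crude triangle inequality gives only $|uy_0 - y_0| \leq |ux_0-x_0| + 2000\,d\delta$. Summing both bounds and combining with the quasi-minimality of $x_0$ yields $3594\,d\delta\,|V| \leq (2000 d + 1)\delta\,|U|$, hence $|V| \leq \tfrac{2001}{3594}|U| < \tfrac{2}{3}|U|$.

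The main obstacle is the sharpness of the estimate in the second step. A crude triangle inequality is useless here; one genuinely needs a subtractive bound $|y_0-uy_0| \leq |ux_0-x_0| - c\,d\delta$ with a constant $c$ large enough to outweigh the worst-case additive error $+2000\,d\delta$ coming from elements outside $V$. Hyperbolicity, through the four-point condition, is what delivers such a bound, and this is precisely why the definition of $U_{y,z}$ imposes Gromov-product conditions on \emph{both} $y$ (controlling the location of $ux_0$) and $z$ (controlling the location of $u^{-1}x_0$): a single such condition would only yield an additive estimate. The quantitative threshold $|ux_0-x_0|\geq 4000\,d\delta$ plays the complementary role of ensuring that the desired branch of the four-point maximum is selected.
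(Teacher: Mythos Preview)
Your proof is correct and follows essentially the same approach as the paper: both argue by contradiction with the near-minimality of $x_0$, establish for $u\in V$ the key bound $|uy_0-y_0|\leqslant |ux_0-x_0|-1594\,d\delta$, use the crude $+2000\,d\delta$ bound for $u\notin V$, and sum. The only cosmetic difference is that you obtain the key bound in one shot via the four-point inequality (having first secured the two side estimates $|y_0-ux_0|,\,|x_0-uy_0|\leqslant |ux_0-x_0|-798\,d\delta$), whereas the paper chains two Gromov-product computations through the intermediate quantity $(x_0,uy_0)_{y_0}$; both routes yield the same constant.
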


This lemma corresponds to Case 2 in Safin's proof of \cite[Lemma 1]{safin_powers_2011}.

\begin{proof} 
By contradiction, let us assume that  
$$ \left| \bigcup_{y,z\in B(y_0,100d\delta)} U_{y,z}\; \right|  >2/3\;  |U|.$$

First, let $y,z\in B(y_0,100d\delta)$ and let  $u\in U_{y,z}$.  
 We prove that $|u y_0-y_0|\leqslant |ux_0-x_0|-1594 d\delta$.

As $|y-y_0|\leqslant 100 d\delta$ and $|z-y_0|\leqslant 100d \delta$,
\begin{align*}
(x_0,ux_0)_{y_0}&\leqslant (ux_0,x_0)_{y} +|y-y_0| \leqslant 101 d\delta \hbox{ and }\\
 (x_0,ux_0)_{uy_0}&=(x_0,u^{-1}x_0)_{y_0}\leqslant (u^{-1}x_0,x_0)_z +|z-y_0|\leqslant 101 d\delta.
\end{align*}
Then, by Remark \ref{R: inversed triangle inequality},
$$|ux_0-x_0| \geqslant |uy_0-x_0| + |y_0-x_0| - 202d \delta = |uy_0-x_0| + 798d \delta .$$ 

By definition of hyperbolicity, $(x_0,ux_0)_{y_0}\geqslant \min\{(x_0,uy_0)_{y_0} , (uy_0,ux_0)_{y_0}\} -d\delta$. 

We prove that  $(uy_0,ux_0)_{y_0}\geqslant 102 d \delta$. Otherwise, as $|y_0-x_0|= 1000d\delta$, $(uy_0,ux_0)_{y_0}= \frac12 (|uy_0-y_0|+|ux_0-y_0|-1000 d\delta)\geqslant\frac12 |ux_0-y_0| - 500 d\delta,$ and, hence, $|ux_0-y_0|\leqslant 1204 d\delta$. Then $|ux_0-x_0|\leqslant |ux_0-y_0| + |y_0-x_0|\leqslant 2204 d\delta $, a contradiction as $|ux_0-x_0|>4000d\delta$. 
 
Thus $(x_0,uy_0)_{y_0} \leqslant 102 d \delta$ and, see Remark \ref{R: inversed triangle inequality}, 
$$|uy_0-x_0|\geqslant |uy_0-y_0| + |x_0-y_0| -204d\delta =  |uy_0-y_0| + 796 d\delta.$$ 
We conclude that $|ux_0-x_0| \geqslant |uy_0-y_0| + 1594 d\delta ,$ which implies the claim.

Otherwise, if $u\not\in \bigcup_{y,z\in B(y_0,100d\delta)} U_{y,z}$, then
\begin{align*}
|uy_0-y_0|& \leqslant  |u y_0-ux_0| + |ux_0-x_0| +  |x_0-y_0| \\
& =  2 |x_0-y_0|+|ux_0-x_0|\leqslant |ux_0-x_0|+ 2000d\delta.
\end{align*}

Then 
$
\frac{1}{|U|} \sum_{u\in U} |uy_0-y_0| \leqslant E(U)-\frac23\cdot 1594 d\delta+\frac13\, \cdot 2000 d\delta +\delta  \leqslant E(U)-395d\delta.
$ 
This implies that $x_0$ was not minimising for the energy of $U$, a contradiction.
\end{proof}

\subsubsection{Reduction lemma 1: the case of graphs of bounded geometry}\label{S: reduction bounded} In this section, let us assume further that $X$ is a graph of uniformly bounded valence, for instance $X$ is a Cayley graph, or a regular $d$-valenced tree.  Let $b$ be a uniform bound on the number of vertices in a closed ball of radius $1000\delta$ in $X$.

The following lemma is a geometrization of Safin's Lemma 1 \cite{safin_powers_2011}, except that in his case the assumption is  trivially satisfied.

\begin{lem}[Reduction]\label{L: reduction}
If at most $1/4$ of the isometries  $u\in U$ have displacement $|ux_0- x_0|\leqslant 10^{10}\kappa_0$, then there are $U_1,U_2\subset U$ of  cardinalities  at least $\frac1{100 {b}^2}\; |U|$  such that 
$$ \hbox{$(U_1^{-1}x_0, U_2x_0)_{x_0}\leqslant 1000 \delta$ and $(U_2^{-1}x_0 ,U_1x_0)_{x_0} \leqslant 1000 \delta$.}$$
 
In addition, $|u_1x_0-x_0|\geqslant 10^{10}\kappa_0$ and $|u_2x_0-x_0|\geqslant 10^{10}\kappa_0$ for all $u_1\in U_1$ and all $u_2\in U_2$.
\end{lem}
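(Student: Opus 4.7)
The plan is to discretize directions at $x_0$ using the bounded geometry hypothesis, then apply pigeonhole and a case analysis driven by the Minimal Energy Lemma. First, I would note that by the diffuse-energy hypothesis, at least $3|U|/4$ elements $u \in U$ satisfy $|ux_0-x_0|\geqslant 10^{10}\kappa_0 \geqslant 4000\delta$. For each such $u$, I can choose a vertex $y_u$ on a geodesic $[x_0, ux_0]$ at distance $1000\delta$ from $x_0$, and similarly $z_u$ on $[x_0, u^{-1}x_0]$; both lie on the sphere $S=S(x_0,1000\delta)$, whose ambient ball has at most $b$ vertices by hypothesis. Thus $u\in U_{y_u,z_u}$ in the notation of Section~\ref{S: minimal energy}, and the pair $(y_u,z_u)$ takes at most $b^2$ values.

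Next I would apply pigeonhole: since the $\geqslant 3|U|/4$ good elements partition into at most $b^2$ bins, some bin $(y_0,z_0)$ satisfies $|U_{y_0,z_0}|\geqslant \tfrac{3|U|}{4b^2}$. Setting $U_1:=U_{y_0,z_0}$ already gives the cardinality lower bound and the large-displacement condition, so it remains to produce $U_2$ so that the reduction conditions hold.

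The plan is to split on whether the max bin is ``off-diagonal''. If $|y_0-z_0|>6\delta$, I would take $U_2:=U_1$ and invoke Lemma~\ref{L: thin triangles 3} with $(y_1,z_1)=(y_0,z_0)$ and $d=1$, directly yielding $(U_1^{-1}x_0,U_2x_0)_{x_0}\leqslant 1000\delta$ and its symmetric counterpart. Otherwise $|y_0-z_0|\leqslant 6\delta$: the Minimal Energy Lemma (Lemma~\ref{L: minimal energy}) applied at $y_0$ forces $|\{u:y_u,z_u\in B(y_0,100\delta)\}|\leqslant \tfrac{2}{3}|U|$, so at least $\tfrac{3}{4}|U|-\tfrac{2}{3}|U|=\tfrac{1}{12}|U|$ good elements have $(y_u,z_u)$ escaping this ``diagonal'' region. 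A further pigeonhole over at most $b^2$ bins then produces a candidate $(y_1,z_1)$ with $|U_{y_1,z_1}|\geqslant |U|/(12b^2)$, from which Lemma~\ref{L: thin triangles 3} would conclude once the compatibility of $(y_0,z_0)$ and $(y_1,z_1)$ is checked.

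The main obstacle is precisely this compatibility in the second case: one needs $(y_1,z_1)$ to satisfy simultaneously $|y_1-z_0|>6\delta$ and $|z_1-y_0|>6\delta$, not merely one of them. Handling this requires subdividing the escapees according to which coordinate leaves $B(y_0,100\delta)$, and then applying the Minimal Energy Lemma a second time at $z_0$ (which is only $6\delta$ away from $y_0$) to exclude the configurations in which one of the two coordinates of the second bin still falls into the diagonal region. Once both coordinates of $(y_1,z_1)$ are pushed outside the relevant $100\delta$-neighbourhoods, the separation hypothesis of Lemma~\ref{L: thin triangles 3} is automatic and the reduction conclusion follows.
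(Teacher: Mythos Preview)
Your overall plan --- bin by direction pairs on $S(x_0,1000\delta)$, then pigeonhole, then split on whether the top bin is off-diagonal, then use the Minimal Energy Lemma to escape the diagonal --- is exactly the paper's strategy, and your identification of the obstacle (the second bin might satisfy only one of the two separation conditions needed for Lemma~\ref{L: thin triangles 3}) is precisely the delicate point.

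However, your proposed resolution of that obstacle does not work. A second application of Lemma~\ref{L: minimal energy} centred at $z_0$ yields no new information: since $|z_0-y_0|\leqslant 6\delta$, the ball $B(z_0,100\delta)$ is essentially the same as $B(y_0,100\delta)$, so you are rebounding the same set by the same $\tfrac{2}{3}|U|$. The Minimal Energy Lemma only controls the elements with \emph{both} coordinates near a given point; it says nothing about elements with one coordinate near $y_0$ and the other far away, which is exactly the bad configuration you are trying to exclude.

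The correct fix is simpler and is what the paper does in a slightly different order. Observe that if the escapee bin $(y_1,z_1)$ has one coordinate outside $B(y_0,100\delta)$ and the other still inside $B(y_0,6\delta)$, then $|y_1-z_1|>94\delta>6\delta$, so this bin is itself off-diagonal and you may take $U_1=U_2=U_{y_1,z_1}$. Otherwise $(y_1,z_1)$ is near-diagonal with one coordinate at distance $>100\delta$ from $y_0$, which forces the other coordinate to be at distance $>94\delta$ too, and both separation conditions for Lemma~\ref{L: thin triangles 3} hold. The paper streamlines this by dispatching \emph{all} off-diagonal bins first (not just the maximal one): if no off-diagonal bin has mass $\geqslant \tfrac{1}{100b^2}|U|$, their total mass is $\leqslant \tfrac{1}{100}|U|$, so the near-diagonal bins carry $\geqslant \tfrac{74}{100}|U|$; the second bin is then automatically near-diagonal, and the ``one-in-one-out'' case never arises.
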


 \begin{proof}  
 If $|z-y|>6\delta$ and $|U_{z,y}|>\frac{1}{100b^2} |U|$ we set $U_1=U_2=U_{z,y}$. Let us assume that there are no such $z,y\in S$. Then 
$$\left |  \; \bigcup_{\substack{(z,y)\in S\times S\\ |z-y|>6\delta}} U_{z,y}\; \right | \leqslant \sum_{ \substack{(z,y)\in S\times S\\ |z-y|>6\delta}} |U_{z,y}| \leqslant \frac1{100} |U| $$ 
and 
$$\left |  \; \bigcup_{\substack{(z,y)\in S\times S\\|z-y|\leqslant 6\delta}} U_{z,y}\; \right | \geqslant |U|-\frac1{100}|U|-\frac1{4}|U|\geqslant \frac{74}{100} |U|. $$

Hence, there is $z_0,y_0\in S$  of distance $|z_0-y_0|\leqslant 6\delta$ such that $|U_{z_0,y_0}|\geqslant \frac{74}{100{b}^2}|U|$. 
Indeed, otherwise 
$$\left |  \; \bigcup_{\substack{(z,y)\in S\times S\\ |z-y|\leqslant 6\delta}} U_{z,y}\; \right | \leqslant \sum_{\substack{(z,y)\in S\times S\\ |z-y|\leqslant 6\delta}} |U_{z,y}| < \frac{74}{100}\;  |U|$$

Let us fix $z_0,y_0\in S$  with $|z_0-y_0|\leqslant 6\delta$ such that $|U_{z_0,y_0}|\geqslant \frac{74}{100{b}^2} |U|$. 
If there are $z_1,y_1$ with $|z_1-y_1|\leqslant 6\delta$, $|z_1-z_0|>100 \delta$ and $|y_1-y_0|>100\delta$ and such that $|U_{z_1,y_1}|\geqslant \frac1{100 {b}^2}\; |U|$, then we set $U_1=U_{z_0,y_0}$ and $U_2=U_{z_1,y_1}$. 

 Let us show that such a set $U_2$ exists. 
Let us assume that  there are no $z_1,y_1$ with $|z_1-y_1|\leqslant 6\delta$, $|z_1-z_0|>100 \delta$ and $|y_1-y_0|>100\delta$ such that $|U_{z_1,y_1}|\geqslant \frac1{100 {b}^2}\; |U|$. Then $$ \left| \bigcup_{\substack{(z,y)\in S\times S\\ y,z\in B(y_0,100\delta)}} U_{y,z}\; \right| \geqslant  \frac{74}{100}\; |U|  - \frac1{100} \; |U| >\frac23\;  |U|.$$
 This contradicts the assumption of minimal energy at $x_0$ by Lemma \ref{L: minimal energy} (we can without restriction assume that $\delta>0$). 
 Thus, we have found subsets $U_1$ and $U_2$ with the desired properties, see Lemma \ref{L: thin triangles 3} . 
\end{proof}

\subsubsection{Reduction lemma 2: the case of trees} \label{S: reduction trees}

We still denote by $x_0$  a minimiser for the energy of $U$ and let $r \leqslant 1/4 \kappa_0$  be a fixed positive constant.

The proof of the following key lemma is due to Button \cite[Theorem 2.2]{button_explicit_2013}, formulated in a slightly different language. 

\begin{lem}[Reduction]\label{L: reduction trees}  
If at most $1/4$ of the isometries  $u\in U$ have displacement $|ux_0- x_0|\leqslant \kappa_0 $, then there are $U_1,U_2\subset U$ of  cardinalities  at least $\frac{1}{100} |U|$  such that 
$$\hbox{$(U_1^{-1}x_0,U_2 x_0)_{x_0}\leqslant r$ and $(U_2^{-1}x_0,U_1 x_0)_{x_0}\leqslant r$.}$$
 
In addition, $|u_1x_0-x_0|\geqslant \kappa_0 $ and $|u_2x_0-x_0|\geqslant \kappa_0 $ for all $u_1\in U_1$ and all $u_2\in U_2$.
\end{lem}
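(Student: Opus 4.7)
My plan is to adapt Button's argument \cite[Theorem 2.2]{button_explicit_2013} by parametrising elements of $U$ by their initial directions at $x_0$. Let $U':=\{u\in U:|ux_0-x_0|\geqslant\kappa_0\}$, so $|U'|\geqslant (3/4)|U|$ by hypothesis. For each $u\in U'$, let $\vec{y}_u$ denote the first edge of $[x_0,ux_0]$ at $x_0$, and $\vec{z}_u$ the first edge of $[x_0,u^{-1}x_0]$ at $x_0$. The crucial tree-specific observation is that $(u_1^{-1}x_0,u_2 x_0)_{x_0}$ equals the length of the common initial segment of $[x_0,u_1^{-1}x_0]$ and $[x_0,u_2x_0]$; hence whenever $\vec{z}_{u_1}\neq\vec{y}_{u_2}$ this Gromov product is $0\leqslant r$. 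So the task reduces to finding $U_1,U_2\subset U'$ each of cardinality $\geqslant |U|/100$ such that $\vec{z}_{u_1}\neq\vec{y}_{u_2}$ and $\vec{z}_{u_2}\neq\vec{y}_{u_1}$ hold pairwise.

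The second ingredient is a minimality estimate at $x_0$, the tree analogue of Lemma \ref{L: minimal energy}. Since $x\mapsto\sum_u|ux-x|$ is piecewise linear and convex on the tree, the first-order condition in an arbitrary direction $\vec{d}$ at $x_0$ yields
\[T(\vec{d}):=\big|\{u\in U:\vec{y}_u=\vec{z}_u=\vec{d},\ x_0\notin C_u\}\big|\leqslant |U|/2,\]
because shifting $x_0$ by $\epsilon$ along $\vec{d}$ decreases $|ux_0-x_0|$ by $2\epsilon$ for each $u\in T(\vec{d})$ and increases it by at most $2\epsilon$ for every other element. Restricting to $U'$, $|\{u\in U':\vec{y}_u=\vec{z}_u=\vec{d}\}|\leqslant |U|/2$.

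The bulk of the argument is a case analysis on the partition of $U'$ by pair classes $P_{\vec{a},\vec{b}}:=\{u\in U':\vec{y}_u=\vec{a},\ \vec{z}_u=\vec{b}\}$. If some \emph{off-diagonal} class ($\vec{a}\neq\vec{b}$) has $|P_{\vec{a},\vec{b}}|\geqslant|U|/100$, I set $U_1=U_2:=P_{\vec{a},\vec{b}}$: for every $u_1,u_2$ in it, $\vec{z}_{u_1}=\vec{b}\neq\vec{a}=\vec{y}_{u_2}$, so the conditions hold automatically. Otherwise I fix the direction $\vec{d}^*$ maximising $|T(\vec{d}^*)|$ and decompose $U'$ as $T(\vec{d}^*)\sqcup A\sqcup B\sqcup W$, where $A$ (resp.\ $B$) collects elements of $U'$ with $\vec{y}_u=\vec{d}^*$ and $\vec{z}_u\neq\vec{d}^*$ (resp.\ $\vec{y}_u\neq\vec{d}^*=\vec{z}_u$), and $W$ those with $\vec{y}_u,\vec{z}_u\neq\vec{d}^*$. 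I then try three options: (i) $U_1=U_2:=A$ when $|A|\geqslant|U|/100$, (ii) the symmetric $U_1=U_2:=B$, or (iii) $U_1:=T(\vec{d}^*)$, $U_2:=W$ provided both have mass $\geqslant|U|/100$. The required directional incompatibility is immediate in each case.

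The main obstacle, and the technical heart of Button's argument that requires the most care in the tree setting, is verifying that one of these alternatives always applies. When $|W|<|U|/100$, one has $|T(\vec{d}^*)|+|A|+|B|>|U'|-|U|/100$, which forces $|T(\vec{d}^*)|>|U|/2$ if also $|A|,|B|<|U|/100$, contradicting minimality. The genuinely delicate regime is when every $T(\vec{d})$, every $F_{\vec{d}}:=|\{u\in U':\vec{y}_u=\vec{d}\}|$, and every pair class $P_{\vec{a},\vec{b}}$ is below $|U|/100$; there no single direction controls the mass, and I would instead aggregate $\vec{y}$-directions into two disjoint blocks $D_1,D_2$ and set $U_i:=\{u\in U':\vec{y}_u,\vec{z}_u\in D_i\}$, arguing by a pigeonhole / maximum-cut type argument (using the bound $|T(\vec{d})|\leqslant |U|/2$ to preclude concentration) that $D_1,D_2$ can be chosen with both $|U_1|,|U_2|\geqslant|U|/100$. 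This is the combinatorial bookkeeping from Button's proof transcribed into the directional geometry of the tree.
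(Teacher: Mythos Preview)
Your framework is right—parametrising $U'$ by the initial direction at $x_0$ and invoking the minimality of $E(U)$ at $x_0$ are exactly the two ingredients—but the proof has a real gap in what you call the ``delicate regime'', which is in fact the entire content of the lemma.

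The preliminary case analysis around a distinguished direction $\vec d^{\,*}$ is neither needed nor sound as stated: when every diagonal class $T(\vec d)$ vanishes, $\vec d^{\,*}$ is arbitrary, and your trichotomy $A/B/W$ behaves differently for different choices. More seriously, the final step—``aggregate directions into $D_1,D_2$ by a pigeonhole / max-cut argument''—is not a proof, and actually fails on simple configurations. Take a \emph{star}: every $u\in U'$ has $\vec y_u=\vec d_0$, while the $\vec z_u$ are spread thinly over many other directions $\vec d_1,\dots,\vec d_k$, each pair class below $|U|/100$. Then for \emph{any} bipartition $(D_1,D_2)$ of the set of directions, one of $U_{D_1,D_1}$, $U_{D_2,D_2}$ is empty (whichever block misses $\vec d_0$). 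So a pure partition-of-directions argument cannot succeed; one must also exploit the ``cross'' sets $U_{D_1,D_2}$.

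The paper (following Button) handles all cases uniformly with a one-parameter sweep. Enumerate the directions and move them one by one from $A^{(0)}=S'$ to its complement $B^{(n)}$. If at some step $|U_{A^{(n)},B^{(n)}}|$ or $|U_{B^{(n)},A^{(n)}}|$ exceeds $|U|/100$, take that set as $U_1=U_2$; since $A^{(n)}\cap B^{(n)}=\emptyset$, the Gromov-product condition is automatic. Otherwise there is a first step at which $|U_{B^{(n)},B^{(n)}}|$ crosses $|U|/100$, and the key counting lemma (Lemma~\ref{L: button}) shows that at this same step $|U_{A^{(n)},A^{(n)}}|>|U|/100$ as well: the drop $|U_{A,A}|-|U_{A',A'}|$ is contained in $U_{A',B'}\cup U_{B',A'}\cup U_{a,a}$, the first two are small by hypothesis, and $|U_{a,a}|\leqslant 2|U|/3$ (your sharper variational bound $|U|/2$ works too). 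This last inequality is precisely where minimality of the energy enters, and it is the step your sketch omits. Once you insert the sweep and this counting lemma, your directional set-up carries the proof.
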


Let $S:=S(x_0,r)$ be the sphere of radius $r$ at $x_0$. For $A,B\subset S$ we denote by $U_{A,B}$ the set of isometries $u\in U$ such that $ |ux_0-x_0|\geqslant 4r$ and such that there are $a\in A$ and $b\in B$ with 
$$(x_0,ux_0)_{a}=0 \hbox{ and } (x_0,u^{-1}x_0)_{b}=0.$$ 
  
In other words, $U_{A,B}$ is the set of $u\in U$ such that $ |ux_0-x_0|\geqslant 4r$ and such that a geodesic segment from $x_0$ to $ux_0$ meets $A$ and such that a geodesic segment from $x_0$ to $u^{-1}x_0$ meets $B$ respectively.  

As we are working in a tree, the following lemma is immediate.

\begin{lem} Let $A \subset S$ and $B\subset S\setminus A$. Then  
 $$\left( \; U_{\star ,B}^{-1}x_0,U_{A,\star }x_0\; \right)_{x_0}\leqslant r.
$$ \qed
\end{lem}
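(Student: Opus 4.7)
The plan is to exploit the fact that $X$ is a tree (hence $\delta=0$ and all geodesic segments are unique). In a tree the Gromov product based at $x_0$ has the literal geometric meaning: $(p,q)_{x_0}$ equals the length of the common initial segment of the unique geodesics $[x_0,p]$ and $[x_0,q]$. This interpretation reduces the statement to an elementary combinatorial fact about where two geodesics leave the sphere $S$.

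First, I would unpack the definitions. Pick $u\in U_{\star,B}$ and $v\in U_{A,\star}$, and set $p:=u^{-1}x_0$, $q:=vx_0$. By definition of $U_{\star,B}$, there is $b\in B$ with $(x_0,p)_b=0$, i.e.\ $b$ lies on the geodesic segment $[x_0,p]$; likewise there is $a\in A$ lying on $[x_0,q]$. Moreover $|p-x_0|\geqslant 4r>r$ and $|q-x_0|\geqslant 4r>r$, so each of these geodesics is a continuous arc from $x_0$ to a point strictly outside the sphere $S=S(x_0,r)$. In a tree, such an arc meets $S$ in a single point; hence $b$ is the \emph{only} point of $S$ on $[x_0,p]$, and $a$ is the only point of $S$ on $[x_0,q]$.

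Next, I would argue by contradiction: assume $(p,q)_{x_0}>r$. Since in a tree this Gromov product is the length of the common initial subsegment of $[x_0,p]$ and $[x_0,q]$, the two geodesics would agree on an initial segment of length strictly greater than $r$, so they would coincide through and beyond the sphere $S$. In particular they would cross $S$ at the same point, forcing $a=b$. But $A$ and $B$ are disjoint because $B\subset S\setminus A$, a contradiction. Hence $(p,q)_{x_0}\leqslant r$, which is exactly the claim.

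There is essentially no obstacle: the entire content is the uniqueness of geodesics in a tree plus the identification of the Gromov product with the length of the shared initial subsegment. The only mild care needed is to verify that $|p-x_0|>r$ and $|q-x_0|>r$ so that the intersection with $S$ is a single point --- this is guaranteed by the inequality $|ux_0-x_0|\geqslant 4r$ built into the definition of $U_{A,B}$.
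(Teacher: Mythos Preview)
Your argument is correct and is exactly the reasoning the paper has in mind: the lemma is marked with a \qed\ and the surrounding text says ``As we are working in a tree, the following lemma is immediate,'' so there is no written proof to compare against beyond the tree interpretation of the Gromov product that you spelled out.
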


Here we use $\star$ as a placeholder for a subset of $S$. 

\medskip

Let $$S':= \{ s\in S \mid \hbox{ there is $u\in U$ with $(ux_0,x_0)_{s}=0$ or $(u^{-1}x_0,x_0)_{s}=0$}\}.$$
In particular, $S'$ is a finite subset of $S$.

\medskip

Let $A\subset S'$ and let $B:= S'\setminus A$.

 Let us fix a point $a\in A$. Then we write  $A':= A \setminus \{a\}$, $B':= B \cup \{a\}$. 

\begin{rem} 
$|U_{A,A}| \geqslant |U_{A',A'}|$ and  $|U_{B,B}| \leqslant |U_{B',B'}|$.
\end{rem}

Lemma \ref{L: minimal energy} on minimal energy gives that $|U_{a,a}| \leqslant 2/3\; |U|$. A counting argument that takes this into account shows:

\begin{lem} \label{L: button}
If 
\begin{enumerate}
\item $|U_{A,B}|\leqslant \frac{1}{100} |U|$, $|U_{B,A}|\leqslant \frac{1}{100}   |U|$ and $|U_{B,B}|\leqslant \frac{1}{100} |U|$, and if
\item $|U_{A',B'}|\leqslant \frac{1}{100}  |U|$, $|U_{B',A'}|\leqslant \frac{1}{100} |U|$ and $|U_{B',B'}|> \frac{1}{100} |U|$,
\end{enumerate}
then $|U_{A',A'}|> \frac{1}{100}  |U|$.
\end{lem}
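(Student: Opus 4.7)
The plan is a counting argument based on a refinement of the partition of $U^\ast:=\{u\in U:\, |ux_0-x_0|\geqslant 4r\}$. Since $X$ is a tree, for every $u\in U^\ast$ the geodesic $[x_0,ux_0]$ crosses the sphere $S$ at a unique point, which by the definition of $S'$ lies in $S'=A'\sqcup\{a\}\sqcup B$; the same holds for $[x_0,u^{-1}x_0]$. Consequently the nine sets $U_{X,Y}$ with $X,Y\in\{A',\{a\},B\}$ form a disjoint decomposition of $U^\ast$. The hypothesis that at most $1/4$ of $U$ has displacement $\leqslant\kappa_0$, together with $4r\leqslant\kappa_0$, gives $|U^\ast|\geqslant \frac{3}{4}|U|$.

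Next I would extract individual bounds from hypotheses (1) and (2) by using the disjoint decompositions
\[
U_{A,B}=U_{A',B}\sqcup U_{\{a\},B},\qquad U_{B,A}=U_{B,A'}\sqcup U_{B,\{a\}},
\]
\[
U_{A',B'}=U_{A',B}\sqcup U_{A',\{a\}},\qquad U_{B',A'}=U_{B,A'}\sqcup U_{\{a\},A'}.
\]
From the first line plus $|U_{B,B}|\leqslant \frac{1}{100}|U|$, the five sets $U_{A',B},U_{\{a\},B},U_{B,A'},U_{B,\{a\}},U_{B,B}$ are each of size at most $\frac{1}{100}|U|$; from the second line, the same bound applies to $U_{A',\{a\}}$ and $U_{\{a\},A'}$. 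Combined with the minimal energy estimate $|U_{\{a\},\{a\}}|\leqslant \frac{2}{3}|U|$ recalled just before the lemma, this yields
\[
|U_{A',A'}|\;\geqslant\;\frac{3}{4}|U|-7\cdot\frac{1}{100}|U|-\frac{2}{3}|U|\;=\;\frac{1}{75}|U|\;>\;\frac{1}{100}|U|,
\]
which is the desired conclusion.

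The only point requiring argument outside the bookkeeping is the bound $|U_{\{a\},\{a\}}|\leqslant \frac{2}{3}|U|$, which is the tree version of Lemma \ref{L: minimal energy}: if more than two thirds of $U$ sat in $U_{\{a\},\{a\}}$, then moving the base point a small distance along $[x_0,a]$ would strictly decrease the $\ell^1$--energy, contradicting the minimality of $x_0$. The extra assumption $|U_{B',B'}|>\frac{1}{100}|U|$ in (2) is not needed in the count itself; it only fixes the role of this lemma within the dichotomy of the surrounding reduction argument.
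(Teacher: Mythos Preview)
Your argument is correct and follows essentially the same route as the paper's proof: both use the nine-fold partition of $U^\ast$ induced by $S'=A'\sqcup\{a\}\sqcup B$, bound seven of the nine pieces by $\tfrac{1}{100}|U|$ from hypotheses (1) and (2), invoke the minimal-energy estimate $|U_{\{a\},\{a\}}|\leqslant\tfrac{2}{3}|U|$, and use $|U^\ast|\geqslant\tfrac{3}{4}|U|$. The only difference is cosmetic: the paper phrases the count as a proof by contradiction (assuming $|U_{A',A'}|\leqslant\tfrac{1}{100}|U|$ and deriving $|U_{a,a}|>\tfrac{2}{3}|U|$), whereas you give the direct inequality, obtaining the slightly sharper $|U_{A',A'}|\geqslant\tfrac{1}{75}|U|$. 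Your observation that the hypothesis $|U_{B',B'}|>\tfrac{1}{100}|U|$ is unused also matches the paper.
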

\begin{proof} By contradiction, let us assume that $$|U_{A',A'}|\leqslant \frac{1}{100} |U|.$$

By (1), $|U_{A,A}|> \frac{97}{100} |U| - \frac{1}{4} |U|$. Hence,
$$|U_{A,A} \setminus U_{A',A'}| \geqslant \frac{96}{100} |U| - \frac{1}{4} |U|.$$

We note that $A \times A \setminus A'\times A' = A' \times \{ a \} \cup \{ a \} \times A' \cup \{ a \} \times \{ a \}$. Therefore 
\begin{align*}
|U_{A,A} \setminus U_{A',A'}| &\leqslant |U_{A',  a }| + |U_{ a, A'}| + |U_{a,a}|\\
  &\leqslant |U_{A', B' }| + |U_{ B', A'}| + |U_{a,a}| \leqslant \frac{2}{100} |U| + |U_{a,a}|
\end{align*}
Hence, $|U_{a,a}| \geqslant \frac{94}{100} |U|- \frac{1}{4} |U|>\frac{2}{3}|U|$, a contradiction to Lemma \ref{L: minimal energy} on minimal energy (note that $X$ is $\frac{r}{1000}$--hyperbolic, so that the lemma applies with $\delta=\frac{r}{1000}$). 
\end{proof}

\begin{proof}[Proof of Reduction Lemma] We define subsets $A^{(n)},B^{(n)}$ of $S'$ by recursion:

\begin{itemize}
\item
Let $A^{(0)}:= S'$, and let $B^{(0)}$ be the empty set.
\item
For $n>0$, let $a_n\in A^{(n-1)}$ and let $A^{(n)}:= A^{(n-1)} \setminus \{a_n\}$, $B^{(n)}:= B^{(n-1)} \cup \{a_n\}$.
\end{itemize}

 \medskip

If  there is $n$ such that 
$|U_{A^{(n)},B^{(n)}}|> \frac{1}{100} |U|$ or  $|U_{B^{(n)},A^{(n)}}|> \frac{1}{100} |U|$, we set $U_1=U_2=U_{A^{(n)},B^{(n)}}$, or $U_{B^{(n)},A^{(n)}}$ respectively. 

\medskip

We assume that this is not the case for all $n\geqslant 0$. By the remark, there is $n >0$ such that
$$\hbox{ $|U_{B^{(n-1)},B^{(n-1)}}|\leqslant \frac{1}{100}  |U|$ and $|U_{B^{(n)},B^{(n)}}|> \frac{1}{100}  |U|$. }$$
By the lemma, $$|U_{A^{(n)},A^{(n)}}|> \frac{1}{100}  |U|.$$
 We then set $U_1=U_{A^{(n)},A^{(n)}}$ and $U_2=U_{B^{(n)},B^{(n)}}$. 
\end{proof}

\subsubsection{Reduction lemma 3: the case of hyperbolic spaces} \label{S: reduction acylindrical}
 Let $\delta>0$ and recall that $x_0$ is a minimiser for the energy of $U$. The idea is to use Gromov's tree approximation lemma,  Theorem \ref{T: tree approximation}, to adapt Button's argument.

\begin{lem}[Reduction]\label{L: reduction acylindrical}

If at most $1/4$ of the isometries  $u\in U$ have displacement $|ux_0- x_0|\leqslant  10^{10}\kappa_0\log_2 (2|U|)  $, then there are $U_1,U_2\subset U$ of  cardinalities  at least $\frac{1}{100} |U|$  such that 
$$\hbox{$(U_1^{-1}x_0,U_2 x_0)_{x_0}\leqslant 1000 \log_2 (2|U|) \delta $ and $(U_2^{-1}x_0,U_1 x_0)_{x_0}\leqslant 1000 \log_2 (2|U|) \delta $.}$$
 
In addition, for all $u_1\in U_1$ and all $u_2\in U_2$ 
$$\hbox{$|u_1x_0-x_0|\geqslant 10^{10}\kappa_0\log_2 (2|U|) $ and $|u_2x_0-x_0|\geqslant 10^{10}\kappa_0\log_2 (2|U|) $.}$$
\end{lem}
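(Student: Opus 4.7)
The plan is to transfer the proof of Lemma \ref{L: reduction trees} from trees to $X$ by approximating the $2|U|$ relevant segments by a metric tree via Gromov's tree approximation (Theorem \ref{T: tree approximation}). Setting $d := \log_2(2|U|)$, I would apply Theorem \ref{T: tree approximation} to the family $\{[x_0, ux_0], [x_0, u^{-1}x_0] : u \in U\}$; this yields a tree $T$ and a map $f$ from the union $A$ of these segments to $T$ that is an isometry on each segment, is $1$-Lipschitz on $A$, and satisfies $|f(x)-f(x')| \geqslant |x-x'| - 4d\delta$. Since $f$ is isometric on each $[x_0, gx_0]$ and $1$-Lipschitz on $A$, a direct computation gives
\[
(gx_0, hx_0)^X_{x_0} \;\leqslant\; (f(gx_0), f(hx_0))^T_{f(x_0)} \qquad (g, h \in U \cup U^{-1}),
\]
so it suffices to produce $U_1, U_2$ whose Gromov products in $T$ at $f(x_0)$ are bounded by $1000 d\delta$.

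Next I would replay Button's proof of Lemma \ref{L: reduction trees} inside the tree $T$ at scale $r := 1000 d\delta$. Restrict to the at least $3/4$ of the $u\in U$ with $|ux_0 - x_0| \geqslant 10^{10} d\kappa_0 \geqslant 4r$; for such $u$, the images in $T$ of the segments $[x_0, u^{\pm 1}x_0]$ cross the sphere $S_T := S(f(x_0), r)$ at uniquely determined points. Let $S'_T$ be the finite set of such crossing points, and for $A, B \subseteq S'_T$ define $U^T_{A,B}$ by the evident tree analogue of Button's definition. The combinatorial partition/greedy argument of Lemma \ref{L: reduction trees}, together with Lemma \ref{L: button}, then produces the desired $U_1, U_2$ in $T$, provided one still has $|U^T_{a,a}| \leqslant \tfrac{2}{3}|U|$ for every $a \in S'_T$.

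The main obstacle is precisely this last bound, which is where the tree-approximation error must be absorbed into the minimal-energy argument. Fix $a \in S'_T$ and choose a lift $y_0 \in A$ with $f(y_0) = a$, so $|y_0 - x_0|_X = r = 1000 d\delta$. For each $u \in U^T_{a,a}$, pick $y_u \in [x_0, ux_0]_X$ and $z_u \in [x_0, u^{-1}x_0]_X$ with $f(y_u) = f(z_u) = a$; the lower bound in Theorem \ref{T: tree approximation} yields $|y_u - y_0|_X \leqslant |f(y_u) - f(y_0)|_T + 4d\delta = 4d\delta$, so $y_u, z_u \in B(y_0, 100 d\delta)$. Since $y_u, z_u$ lie on geodesics in $X$ one has $(x_0, ux_0)^X_{y_u} = 0$ and $(x_0, u^{-1}x_0)^X_{z_u} = 0$, and $|ux_0 - x_0| \geqslant 4000 d\delta$ is automatic; hence $u$ lies in $\bigcup_{y, z \in B(y_0, 100 d\delta)} U_{y,z}$ in the notation of the preliminaries. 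Thus $|U^T_{a,a}| > \tfrac{2}{3}|U|$ would contradict Lemma \ref{L: minimal energy}. The radii $r = 1000d\delta$ and $100 d\delta$ are chosen precisely so that the tree-approximation error $4d\delta$ fits comfortably inside this contradiction, and the Gromov-product bound $1000 d\delta$ in $X$ then follows from the displayed inequality above.
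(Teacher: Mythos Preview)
Your proposal is correct and follows essentially the same approach as the paper: apply Gromov's tree approximation to the $2|U|$ segments $[x_0,u^{\pm1}x_0]$, replay Button's partition argument in the resulting tree at radius $r=1000d\delta$, and absorb the $4d\delta$ approximation error into Lemma \ref{L: minimal energy} via the inclusion $\overline{U_{a,a}}\subset\bigcup_{y,z\in B(y_0,100d\delta)}U_{y,z}$. The key inequality $(gx_0,hx_0)^X_{x_0}\leqslant (f(gx_0),f(hx_0))^T_{f(x_0)}$ you derive is exactly the lemma the paper states and uses to transfer the Gromov-product bound back to $X$.
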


 Let $$r:=1000  \log_2 (2|U|) \delta$$ and let $S:=S\left( x_0,r \right)$ be the sphere of radius $1000 \log_2 (2|U|) \delta$ at $x_0$. 

Let $T$ be a tree approximating $x_0 \cup Ux_0 \cup U^{-1}x_0$, and let $f$ be a map from $\bigcup_{x\in Ux_0\cup U^{-1}x_0} [x_0,x]$ to $T$  that is given by  Theorem \ref{T: tree approximation}. We recall that $f$ is an isometry on each geodesic $[x_0,x]$. In addition, for all $y,z\in \bigcup_{x\in Ux_0\cup U^{-1}x_0} [x_0,x]$  we have $$|y-z| -4\delta \log_2 (2|U|) < |f(y)-f(z)| \leqslant |y-z|.$$

Let us denote by $\overline{S}$ the sphere of radius $r$ at $f(x_0)$ in $T$. It is the image of $S\bigcap \bigcup_{x\in Ux_0\cup U^{-1}x_0} [x_0,x]$ under $f$. 
For $A,B\subset \overline{S}$ we denote by  $\overline{U_{A,B}}$ the set of isometries $u\in U$ such that $|ux_0-x_0|\geqslant 4000 \log_2(2|U|) \delta$ and such that there are $a\in A$, $b\in B$ with 
$$\hbox{  $(f(x_0),f(ux_0))_{a}=0$ and $(f(x_0),f(u^{-1}x_0))_{b}=0$}.$$

\begin{lem} Let $A \subset \overline{S}$ and $B\subset \overline{S}\setminus A$. Then  
 $$\left( \; \overline{U_{\star ,B}}^{-1}x_0,\overline{U_{A,\star }}x_0\; \right)_{x_0}\leqslant r.
$$ 
\end{lem}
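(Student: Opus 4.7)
The plan is to reduce to the tree case via the tree approximation $f : \bigcup_{x \in Ux_0 \cup U^{-1}x_0} [x_0, x] \to T$ from Theorem \ref{T: tree approximation}, and then transfer the resulting inequality back to $X$. Fix $u_1 \in \overline{U_{\star,B}}$ and $u_2 \in \overline{U_{A,\star}}$. By definition there exist $a \in A$ lying on the geodesic $[f(x_0), f(u_2 x_0)]$ in $T$, and $b \in B$ lying on $[f(x_0), f(u_1^{-1} x_0)]$, both sitting on the sphere $\overline{S}$ of radius $r$ around $f(x_0)$. Since $A \cap B = \emptyset$, we have $a \neq b$, so the two geodesics emanating from $f(x_0)$ in the tree $T$ must branch at some point $m$ with $|f(x_0) - m| < r$. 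The standard tree identity $(p, q)_o = |o - m|$ then gives, in $T$,
$$(f(u_1^{-1} x_0),\, f(u_2 x_0))_{f(x_0)} \;\leqslant\; r.$$

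To transfer this bound back to $X$, I would invoke both parts of Theorem \ref{T: tree approximation}. Part (1) says that $f$ restricts to an isometry on each geodesic $[x_0, u_i^{\pm 1} x_0]$, so $|x_0 - u_1^{-1} x_0| = |f(x_0) - f(u_1^{-1} x_0)|$ and $|x_0 - u_2 x_0| = |f(x_0) - f(u_2 x_0)|$; part (2) says that $f$ is $1$-Lipschitz, so $|f(u_1^{-1} x_0) - f(u_2 x_0)| \leqslant |u_1^{-1} x_0 - u_2 x_0|$. Substituting these three comparisons into the definition of the Gromov product in $X$ shows
$$(u_1^{-1} x_0,\, u_2 x_0)_{x_0} \;\leqslant\; (f(u_1^{-1} x_0),\, f(u_2 x_0))_{f(x_0)} \;\leqslant\; r,$$
which is the desired inequality.

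The argument is essentially formal; the main point to verify is that the $1$-Lipschitz direction of the tree approximation is the \emph{useful} direction for bounding a Gromov product from above, and so no additive log-correction appears on the right-hand side of the displayed inequality. The additive distortion in Theorem \ref{T: tree approximation}(2) in the other direction is only used indirectly, through the choice of the radius $r = 1000 \log_2(2|U|)\delta$, which is what will later absorb that distortion in Lemma \ref{L: reduction acylindrical}.
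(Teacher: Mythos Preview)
Your proof is correct and follows essentially the same approach as the paper's own proof: reduce to the tree $T$ where disjointness of $A$ and $B$ forces the branching point to occur before radius $r$, then transfer the Gromov product inequality back to $X$ using that $f$ is an isometry on radial geodesics and $1$-Lipschitz globally. The paper's proof is terser, simply asserting the transfer step as ``the tree approximation gives that $(u_0x_0,u_1^{-1}x_0)_{x_0}\leqslant (f(u_0x_0),f(u_1^{-1}x_0))_{f(x_0)}$''; you have correctly unpacked why this holds and correctly identified that the $1$-Lipschitz direction is the one that matters here.
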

Here we use $\star$ as a placeholder for a subset of $S'$.
\begin{proof}
We let $u_0\in \overline{U_{A,\star }}$ and $u_1\in \overline{U_{\star ,B}}x_0$. 
By definition, there are $a\in A$ and $b\in B$ such that 
$$\left( f(x_0),f(u_0x_0) \right)_a=0 \hbox{ and } \left( f(x_0),f(u_1^{-1}x_0)\right)_b=0.$$
In $T$ we hence have that $$\left(f(u_0x_0),f(u_1^{-1}x_0)\right)_{f(x_0)}\leqslant r.$$
 The tree approximation gives that 
 $$\left(u_0x_0,u_1^{-1}x_0\right)_{x_0}\leqslant \left(f(u_0x_0),f(u_1^{-1}x_0)\right)_{f(x_0)}.$$
This implies our claim.
\end{proof}

Let $A\subset \overline{S}$, let $B:= \overline{S}\setminus A$. 

 Let us now fix $a\in A$ and write  $A':= A \setminus \{a\}$, $B':= B \cup \{a\}$. 

\begin{rem} 
$|\overline{U_{A,A}}| \geqslant |\overline{U_{A',A'}}|$ and  $|\overline{U_{B,B}}| \leqslant |\overline{U_{B',B'}}|$.
\end{rem}

\begin{lem} 
If 
\begin{enumerate}
\item $|\overline{U_{A,B}}|\leqslant 1/100 \; |U|$, $|\overline{U_{B,A}}|\leqslant 1/100 \;  |U|$ and $|\overline{U_{B,B}}|\leqslant 1/100 \;  |U|$, and if
\item $|\overline{U_{A',B'}}|\leqslant 1/100 \;  |U|$, $|\overline{U_{B',A'}}|\leqslant 1/100 \;  |U|$ and $|\overline{U_{B',B'}}|> 1/100 \;  |U|$,
\end{enumerate}
then $|\overline{U_{A',A'}}|> 1/100 \;  |U|$.
\end{lem}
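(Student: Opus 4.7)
The plan is to mimic the proof of Lemma \ref{L: button} in the tree case, replacing the role of the sphere $S\subset X$ by its tree-approximated counterpart $\overline S\subset T$, and using Lemma \ref{L: minimal energy} to control the ``diagonal'' set $\overline{U_{a,a}}$. The argument proceeds by contradiction: assume $|\overline{U_{A',A'}}|\leqslant \frac{1}{100}|U|$. Since at least $\tfrac34 |U|$ elements of $U$ satisfy $|ux_0-x_0|\geqslant 4000d\delta$ (where $d=\log_2(2|U|)$, using the diffuse energy hypothesis of Lemma \ref{L: reduction acylindrical}) and these $u$'s must contribute to one of the four sets $\overline{U_{A,A}}, \overline{U_{A,B}}, \overline{U_{B,A}}, \overline{U_{B,B}}$, hypothesis (1) gives $|\overline{U_{A,A}}|>\frac{97}{100}|U|-\frac14|U|$, so $|\overline{U_{A,A}}\setminus \overline{U_{A',A'}}|\geqslant \frac{96}{100}|U|-\frac14|U|$.

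Next, I would use the elementary set identity $A\times A\setminus A'\times A'=A'\times\{a\}\cup\{a\}\times A'\cup\{a\}\times\{a\}$ to bound
\[
 |\overline{U_{A,A}}\setminus \overline{U_{A',A'}}|\leqslant |\overline{U_{A',a}}|+|\overline{U_{a,A'}}|+|\overline{U_{a,a}}|\leqslant |\overline{U_{A',B'}}|+|\overline{U_{B',A'}}|+|\overline{U_{a,a}}|,
\]
since $\{a\}\subset B'$ and $A'\subset A'$, and similarly in the second factor. Hypothesis (2) bounds the first two terms by $\tfrac{1}{100}|U|$ each, forcing $|\overline{U_{a,a}}|\geqslant \tfrac{94}{100}|U|-\tfrac14|U|>\tfrac23|U|$. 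It remains to derive a contradiction with Lemma \ref{L: minimal energy}.

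The main obstacle, and the only place where the hyperbolic setting differs from the tree case, is to show that $\overline{U_{a,a}}$ can be located inside a set of the form $\bigcup_{y,z\in B(y_0,100d\delta)} U_{y,z}$ for a single point $y_0\in S(x_0,1000d\delta)$. For each $u\in \overline{U_{a,a}}$, by definition there are $y_u\in[x_0,ux_0]$ and $z_u\in[x_0,u^{-1}x_0]$ with $f(y_u)=f(z_u)=a$. Since $f$ is an isometry on each such geodesic and $|f(x_0)-a|=r=1000d\delta$, the points $y_u,z_u$ lie on the sphere $S=S(x_0,1000d\delta)$, and the Gromov products $(x_0,ux_0)_{y_u}=0\leqslant d\delta$ and $(x_0,u^{-1}x_0)_{z_u}=0\leqslant d\delta$ hold in $X$. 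Moreover, the tree approximation inequality $|y-y'|-4d\delta\leqslant |f(y)-f(y')|$ implies that for any two $u,u'\in\overline{U_{a,a}}$ one has $|y_u-y_{u'}|\leqslant 4d\delta$, and similarly $|y_u-z_{u'}|\leqslant 4d\delta$. Fixing any $u_0\in\overline{U_{a,a}}$ and setting $y_0:=y_{u_0}\in S$, all $y_u$ and $z_u$ lie in $B(y_0,100d\delta)$, so $u\in U_{y_u,z_u}\subset \bigcup_{y,z\in B(y_0,100d\delta)}U_{y,z}$.

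Therefore $\overline{U_{a,a}}\subset \bigcup_{y,z\in B(y_0,100d\delta)}U_{y,z}$, and Lemma \ref{L: minimal energy} yields $|\overline{U_{a,a}}|\leqslant \tfrac23|U|$, contradicting the lower bound $|\overline{U_{a,a}}|>\tfrac23|U|$ obtained above. This completes the proof.
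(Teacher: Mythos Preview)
Your proof is correct and follows essentially the same approach as the paper's. Both arguments reduce to the tree-case counting of Lemma~\ref{L: button} to obtain $|\overline{U_{a,a}}|>\tfrac{2}{3}|U|$, then use the tree approximation error bound to show that $\overline{U_{a,a}}\subset\bigcup_{y,z\in B(y_0,100d\delta)}U_{y,z}$ for some $y_0\in S$, contradicting Lemma~\ref{L: minimal energy}; you simply spell out both steps in more detail than the paper does (the paper writes ``exactly as in the case of trees'' for the first part and uses the slightly smaller ball $B(y_0,4d\delta)$ for the second).
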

\begin{proof} By contradiction, let us assume that $$|\overline{U_{A',A'}}|\leqslant \frac{1}{100} |U|.$$
Exactly as in the case of trees, see Lemma \ref{L: button}, we conclude that $|\overline{U_{a,a}}| > \frac{2}{3}|U|$. Now, let $y_0 \in f^{-1}(a)$.  Then, by tree approximation $f^{-1}(a)\subset B(y_0,4\delta \log_2(2|U|))\cap S$. Let us also note that 
$$\overline{U_{a,a}}\subseteq \bigcup_{y,z\in B(y_0,4\delta \log_2(2|U|))} U_{y,z}.$$
Hence, $\left| \bigcup_{y,z\in B(y_0,4\delta \log_2(2|U|))} U_{y,z} \right| > \frac{2}{3} |U|$. 
 This contradicts Lemma \ref{L: minimal energy} on minimal energy.
\end{proof}

We conclude the proof of Lemma \ref{L: reduction acylindrical} by the same argument used for Lemma \ref{L: reduction trees} (where $\overline{S}$ plays the role of $S'$ and sets $\overline{U_{\star,\star'}}$ the role of $U_{\star,\star'}$). \qed

\subsection{Reduction to bi-periodic sets} \label{S: Periodicity}

If $U$ is of diffuse energy in $G$ and not contained in a loxodromic subgroup, we let $U_1$ and $U_2$ be the sets given by the reduction lemmas in the previous section and define $W_l=(U_1U_2)^{l-2}U_1$, where $l=[(n+1)/2]$, so that $|U^n|\geqslant |U_1U_2W_l|$. 
 As Safin \cite{safin_powers_2011}, we will show that there is $c>0$ such that $|U^n|\geqslant |U_1U_2W_l| \geqslant c |U_1||W_l|$ unless $U_2$ is bi-periodic at $x_0$, that is, all elements of $U_2$ are periodic with same period and same tale. The case of bi-periodic sets was already treated in Section \ref{S: Estimation}.  
This will complete the proofs of Theorems \ref{IT: hyperbolic groups}, \ref{IT: acylindrical trees} and \ref{IT: acylindrical hyperbolic}.

\subsubsection{Equations of reduced products, revisited} We recall that $X$ is $\delta$--hyperbolic, and that the action of $G$ on $X$ is $(\kappa_0,N_0)$--acylindrical in the sense of Definition \ref{D: acylindrical}. Moreover, if $\delta>0$, then $\rho_0=\delta/N_0$, if $\delta=0$ then $X$ is a simplicial tree of edge length $\rho_0$. 

Let us now fix three finite subsets $U_1$, $U_2$ and $W$ of $G$ and a point $x_0\in X$ such that, firstly, the products in the sets $U_1U_2$ and $U_2W$ are reduced at $x_0$, and secondly, for all $u\in U_1$ and all $v\in U_2$,  $2500 \kappa_0 \leqslant |ux_0-x_0|\leqslant |vx_0-x_0|$.

\begin{lem}\label{ML: counting}    Let $c\geqslant  10^{12}N_0^4 \frac{\kappa_0^2}{\rho_0^2}$.
 If $|U_1|>2c$ and $|W|>2c$ then, for all $v\in U_2$,  
 $$|U_1vW|> \frac1{2c} \; |U_1||W|$$ 
 unless there are maximal loxodromic subgroups $E$ and $u,u' \in U_1 $ and $w,w' \in W$ such that 
 \begin{enumerate}
 \item $v$ is $E$--periodic at $x_0$
\item $u$ and $u'$ are $E$--right-periodic at $x_0$ and $w$ and $w'$ are $v^{-1}Ev$--left-periodic at $x_0$.
 \end{enumerate}
The $E$--right-periods of $u$ and $u'$ at $x_0$, and the $vEv^{-1}$--left-periods of $w$ and $w'$ at $x_0$ respectively, are of Hausdorff distance  $>250 \delta$.
\end{lem}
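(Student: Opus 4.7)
My plan is to argue by contrapositive: assume $|U_1vW|\leq \tfrac{1}{2c}|U_1||W|$ and produce the claimed data $E,u,u',w,w'$. A pigeonhole argument on the map $U_1\times W\to G$, $(u,w)\mapsto uvw$, furnishes $g\in U_1vW$ with at least $2c$ preimages $(u_1,w_1),\ldots,(u_n,w_n)$, $n\geq 2c$. Here both $\{u_i\}$ and $\{w_i\}$ are pairwise distinct because $u_i$ determines $w_i$ and vice versa.

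First I will apply Proposition~\ref{P: periodic} on the ``$u$-side''. After relabeling so that $|u_1x_0-x_0|\leq\cdots\leq|u_nx_0-x_0|$, I will check its hypotheses: reducedness of $u_iv,\,vw_i$ at $x_0$ is granted by the standing assumption; $|vx_0-x_0|\geq|u_nx_0-x_0|\geq 2500\kappa_0$ is immediate; and the opening computation of Lemma~\ref{L: reduced products and geodesics} (which uses only reducedness) places every $u_ix_0$ within $6\delta$ of a common geodesic $[x_0,gx_0]$, so projection together with $|u_ix_0-x_0|\leq|vx_0-x_0|$ yields $|u_ix_0-u_jx_0|\leq|vx_0-x_0|+18\delta$, an $O(\delta)$ slack that the generous constants of Proposition~\ref{P: periodic} absorb. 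Since $n\geq 2c\geq 10^{12}N_0^4\kappa_0^2/\rho_0^2$, Proposition~\ref{P: periodic} will supply a maximal loxodromic subgroup $E$ such that $v$ is $E$-periodic at $x_0$, together with two elements $u:=u_{i_0}$ and $u':=u_n$ that are $E$-right-periodic at $x_0$ with $E$-right-periods of Hausdorff distance $>250\delta$.

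For the ``$w$-side'' I will invert the equations to get $w_i^{-1}v^{-1}u_i^{-1}=g^{-1}$ and apply Proposition~\ref{P: periodic} to $\tilde u_i:=w_i^{-1}$, $\tilde v:=v^{-1}$, $\tilde w_i:=u_i^{-1}$. Reducedness of $\tilde u_i\tilde v$ and $\tilde v\tilde w_i$ at $x_0$ is symmetric, and $|\tilde vx_0-x_0|\geq 2500\kappa_0$. The only non-trivial hypothesis is $|\tilde u_ix_0-\tilde u_jx_0|=|w_jw_i^{-1}x_0-x_0|\leq|vx_0-x_0|$. I will establish it by rearranging $u_ivw_i=u_jvw_j$ into the conjugation identity $w_jw_i^{-1}=v^{-1}(u_j^{-1}u_i)v$; using $u_j^{-1}u_i\in E$ and $vx_0\in C_E^{+190\delta}$ from the $u$-side output, Lemma~\ref{L: invariant cylinder} will give
\[
  |w_jw_i^{-1}x_0-x_0|=|(u_j^{-1}u_i)(vx_0)-vx_0|\leq[u_j^{-1}u_i]+O(\delta)\leq|u_ix_0-u_jx_0|+O(\delta)\leq|vx_0-x_0|+O(\delta),
\]
where the middle step uses $x_0\in C_E^{+190\delta}$. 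Proposition~\ref{P: periodic} then produces a maximal loxodromic $E'$ such that $v^{-1}$ is $E'$-periodic at $x_0$, and $w_{j_0}^{-1},w_n^{-1}$ that are $E'$-right-periodic at $x_0$ with right-periods at Hausdorff distance $>250\delta$. Since $v$ is $E$-periodic at $x_0$, $v^{-1}$ is $v^{-1}Ev$-periodic at $x_0$, so uniqueness of periods (Lemma~\ref{L: uniqueness of periods}) forces $E'=v^{-1}Ev$. Hence $w:=w_{j_0}$ and $w':=w_n$ are $v^{-1}Ev$-left-periodic at $x_0$ with left-periods of Hausdorff distance $>250\delta$ (the $E'$-right-period of $w_i^{-1}$ coincides with the $E'$-left-period of $w_i$).

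The main obstacle will be exactly this $w$-side verification: because no a priori displacement bound is imposed on $W$, Proposition~\ref{P: periodic} cannot be invoked symmetrically until the $E$-periodicity of $v$ is first harvested from the $u$-side. The needed bound on $|\tilde u_ix_0-\tilde u_jx_0|$ must then be routed through the conjugation identity together with the quasi-convexity of $C_E$, converting the already-known estimate on $|u_ix_0-u_jx_0|$ into the required estimate on $|w_jw_i^{-1}x_0-x_0|$. A further bookkeeping task is to ensure the various $O(\delta)$ slacks introduced along the way remain within the (generous) constants of Proposition~\ref{P: periodic}.
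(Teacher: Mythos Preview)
Your overall plan---pigeonhole to produce many equations $u_0vw_0=\ldots=u_nvw_n$, apply Proposition~\ref{P: periodic} on the $u$--side and on the $w$--side, then invoke uniqueness of periods (Lemma~\ref{L: uniqueness of periods}) to identify $E'=v^{-1}Ev$---is exactly the paper's approach. The paper's proof text is one sentence because it has isolated the only nontrivial hypothesis verification as the preceding Lemma~\ref{L: symmetry}, which shows \emph{both} $|u_ix_0-u_jx_0|\leqslant|vx_0-x_0|$ and $|w_i^{-1}x_0-w_j^{-1}x_0|\leqslant|vx_0-x_0|$ directly from Proposition~\ref{P: reduced products} and the standing lower bound $|u_jx_0-x_0|\geqslant 2500\kappa_0$.

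Your $w$--side detour through the $u$--side output is where you diverge, and it is both unnecessary and slightly under-justified. You claim ``$u_j^{-1}u_i\in E$'' for \emph{all} pairs $i,j$; Proposition~\ref{P: periodic} does not say this (it produces $E=E(u_{i_0}^{-1}u_n)$ for one index $i_0$ coming from a subsequence), and your appeal to Lemma~\ref{L: invariant cylinder} needs $[u_j^{-1}u_i]>200\delta$, which can fail. The fix is simpler than what you propose: apply Proposition~\ref{P: reduced products} to the pair $(u_j,u_i)$ directly to get $vx_0\in C_{u_j^{-1}u_i}^{+190\delta}$, then Lemma~\ref{L: axis} gives $|w_jw_i^{-1}x_0-x_0|=|(u_j^{-1}u_i)vx_0-vx_0|\leqslant[u_j^{-1}u_i]+390\delta\leqslant|u_ix_0-u_jx_0|+390\delta$. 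Combined with the bound on the $u$--side this yields $|w_i^{-1}x_0-w_j^{-1}x_0|\leqslant|vx_0-x_0|$ \emph{exactly}, since the $390\delta$ is absorbed by the unused term $|u_jx_0-x_0|\geqslant 2500\kappa_0$ (this is precisely the computation in Lemma~\ref{L: symmetry}). So there is no $O(\delta)$ slack to worry about at either stage, and Proposition~\ref{P: periodic} applies on the nose.
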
 
If $|U_1vW|\leqslant \frac1{2c}  |U_1||W|$, then there are distinct $u_0,\ldots,u_{c}\in U_1$ and $w_0,$ $\ldots,$ $w_{c}\in W$ such that 
$u_0vw_0=\ldots=u_{c}vw_{c}$. 

  In Section \ref{S: reduced and periodich}, we discussed assumptions on the equations $u_0vw_0=\ldots=u_{c}vw_{c}$ that imply the periodicity of $v$.  
 In order to prove the lemma we want to apply Proposition \ref{P: periodic} to $U_1vW$ and to $W^{-1}v^{-1}U_1^{-1}$. 

We verify the assumptions of the proposition. We recall that for all $u\in U_1$ and all $v\in U_2$, 
$$2500 \kappa_0 \leqslant |ux_0-x_0|\leqslant |vx_0-x_0|.$$ 

\begin{lem}  \label{L: symmetry}
Let $n>1$. If there are $u_0,\ldots \, u_n \in U_1$,  $w_0,\ldots \, w_n \in W$ and $v\in U_2$ such that 
$$u_0vw_0=\ldots = u_nvw_n, $$ 
then for all $1\leqslant i,j \leqslant n$, 
$ |u_ix_0-u_{j}x_0| \leqslant |vx_0-x_0| \hbox{ and } |w_i^{-1}x_0-w_{j}^{-1}x_0| \leqslant |vx_0-x_0|. $
\end{lem}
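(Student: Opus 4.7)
The plan is to place the four points $u_i x_0$, $u_j x_0$, $u_i v x_0$, $u_j v x_0$ within $6\delta$ of a common geodesic $\gamma := [x_0, g x_0]$, where $g := u_i v w_i = u_j v w_j$, and to read off both inequalities from the approximate positions of their nearest-point projections along $\gamma$. The placement follows from the computation underlying Lemma \ref{L: reduced products and geodesics}, which only uses the reduced conditions $(u_k^{-1} x_0, v x_0)_{x_0} \leqslant \delta$, $(v^{-1} x_0, w_k x_0)_{x_0} \leqslant \delta$ together with $|v x_0 - x_0| > 3\delta$; applied separately to $(u_i, v, w_i)$ and to $(u_j, v, w_j)$ (each hypothesis being available, since $|v x_0 - x_0| \geqslant 2500 \kappa_0$), it yields $d(u_k x_0, \gamma) \leqslant 6\delta$ and $d(u_k v x_0, \gamma) \leqslant 6\delta$ for $k \in \{i, j\}$.

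Let $p_k, q_k$ denote nearest-point projections of $u_k x_0, u_k v x_0$ on $\gamma$. The reduced condition $(u_k^{-1} x_0, v x_0)_{x_0} \leqslant \delta$ gives $|u_k v x_0 - x_0| \geqslant |u_k x_0 - x_0| + |v x_0 - x_0| - 2\delta$, whence $|q_k - x_0| \geqslant |u_k x_0 - x_0| + |v x_0 - x_0| - O(\delta)$, while $|p_k - x_0| = |u_k x_0 - x_0| \pm 6\delta$. In particular, along $\gamma$ (parametrised from $x_0$) both projections $p_i, p_j$ sit at distance at most $|v x_0 - x_0| + 6\delta$ from $x_0$, by the hypothesis $|u_k x_0 - x_0| \leqslant |v x_0 - x_0|$. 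Hence $|p_i - p_j| = \bigl||p_i - x_0| - |p_j - x_0|\bigr| \leqslant |v x_0 - x_0| + 6\delta$, and the triangle inequality yields the first claim
$$|u_i x_0 - u_j x_0| \leqslant |p_i - p_j| + 12\delta \leqslant |v x_0 - x_0| + O(\delta).$$

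For the second claim I would combine two observations. Algebraically, $u_k v w_k = g$ rewrites as $w_k^{-1} = g^{-1} u_k v$, so $w_k^{-1} x_0 = g^{-1}(u_k v x_0)$ and, since $g^{-1}$ is an isometry, $|w_i^{-1} x_0 - w_j^{-1} x_0| = |u_i v x_0 - u_j v x_0|$. Geometrically, $|q_k - p_k| = |v x_0 - x_0| \pm O(\delta)$ for both values of $k$, so on $\gamma$ the gap $|q_i - q_j|$ equals $|p_i - p_j| \pm O(\delta)$; combined with the first claim this gives $|u_i v x_0 - u_j v x_0| \leqslant |v x_0 - x_0| + O(\delta)$. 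The main technical point is the $O(\delta)$ slack between the geodesic approximation and the sharp inequality stated: this slack is negligible compared to $|v x_0 - x_0| \geqslant 2500 \kappa_0$ and is absorbed in the downstream application of Proposition \ref{P: reduced products}, whose $190\delta$-neighbourhood is already wide enough to tolerate it; in the tree case $\delta = 0$ the whole argument is exact.
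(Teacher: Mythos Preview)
Your overall strategy --- place $u_kx_0$ and $u_kvx_0$ within $6\delta$ of $\gamma=[x_0,gx_0]$ via Lemma~\ref{L: reduced products and geodesics}, and identify $|w_i^{-1}x_0-w_j^{-1}x_0|$ with $|u_ivx_0-u_jvx_0|$ via the isometry $g^{-1}$ --- is exactly the paper's. But you do not actually prove the lemma as stated: you only obtain $|u_ix_0-u_jx_0|\leqslant |vx_0-x_0|+O(\delta)$ and likewise for the $w$'s, and then claim the slack is ``absorbed downstream''. That is not a proof of the sharp inequality, and the downstream use is precisely to feed the hypothesis $|u_ix_0-u_jx_0|\leqslant |vx_0-x_0|$ into Proposition~\ref{P: reduced products} (through Proposition~\ref{P: periodic}), which is stated with the sharp bound.

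The gap is easy to close, and the missing ingredient is one you never invoke: the standing assumption $|u_kx_0-x_0|\geqslant 2500\kappa_0$. When you write $|p_i-p_j|=\bigl||p_i-x_0|-|p_j-x_0|\bigr|$ and bound it by the maximum, you throw away a term of size at least $2500\kappa_0-6\delta$. Keep it instead: assuming without loss of generality $|u_jx_0-x_0|\leqslant|u_ix_0-x_0|$, you get
\[
|u_ix_0-u_jx_0|\leqslant |p_i-p_j|+12\delta
= |p_i-x_0|-|p_j-x_0|+12\delta
\leqslant |vx_0-x_0|-|u_jx_0-x_0|+24\delta
< |vx_0-x_0|.
\]
This is exactly the paper's computation $|vx_0-x_0|\geqslant |u_ix_0-x_0|\geqslant |u_ix_0-u_jx_0|+|u_jx_0-x_0|-48\delta$, phrased via projections rather than via the Gromov-product bound $(x_0,u_ix_0)_{u_jx_0}\leqslant 24\delta$ from Proposition~\ref{P: reduced products}. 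The same buffer then absorbs the $O(\delta)$ in your second inequality (the paper does this slightly differently, passing through $C_{u_i^{-1}u_j}^{+190\delta}$ and Lemma~\ref{L: axis} to compare $|u_ix_0-u_jx_0|$ with $|u_ivx_0-u_jvx_0|$, but your direct projection argument works once the buffer is used).
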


\begin{proof} Without restriction, $|u_0x_0-x_0|\leqslant |u_1x_0-x_0|\leqslant \ldots \leqslant |u_nx_0-x_0|$. 
 By Proposition \ref{P: reduced products} and Remark \ref{R: inversed triangle inequality}, if $i>j$,
\begin{align}
|vx_0-x_0|  \geqslant |u_ix_0-x_0|\geqslant |u_ix_0-u_jx_0| + |u_jx_0-x_0| - 48 \delta  \label{Eq: inegality symmetry}
\end{align}
As $|u_jx_0-x_0|\geqslant 2500 \kappa_0$,  $|u_ix_0-u_jx_0| \leqslant  |vx_0-x_0|$, which is the first assertion of the lemma. 

Moreover, by Proposition \ref{P: reduced products}, $x_0,vx_0\in C_{u_i^{-1}u_j}^{+190 \delta}$. Thus, by Lemma \ref{L: axis}, $|u_ix_0-u_jx_0|\geqslant [u_i^{-1}u_j]\geqslant |u_ivx_0-u_jvx_0| - 390\delta.$
Let $h:=u_ivw_i=u_jvw_j$. Then  $h^{-1}u_ivx_0=w_i^{-1}x_0$ and $h^{-1}u_jvx_0=w_j^{-1}x_0$. We conclude that $|u_ivx_0-u_jv x_0| =|w_i^{-1}x_0 - w_j^{-1}x_0|.$ Thus, using \eqref{Eq: inegality symmetry}, 
\begin{align*}
|vx_0-x_0| \geqslant |w_i^{-1}x_0 - w_j^{-1}x_0| + |u_jx_0-x_0| - 438\delta  
\end{align*}
As $|u_jx_0-x_0|\geqslant 2500 \kappa_0$, $ |w_j^{-1}x_0-w_i^{-1}x_0|\leqslant |vx_0-x_0|$.
\end{proof}

\begin{proof}[Proof of Lemma \ref{ML: counting}]
If $|U_1vW|\leqslant \frac1{2c}  |U_1||W|$, then there are distinct $u_0,\ldots,u_{c}\in U_1$ and $w_0,$ $\ldots,$ $w_{c}\in W$ such that 
$u_0vw_0=\ldots=u_{c}vw_{c}$. As periods are unique, see Lemma \ref{L: uniqueness of periods}, the claim follows.
\end{proof}

\subsubsection{Reduction to the case of bi-periodic sets} 

Recall from Definition \ref{D: bi-periodic set} that $U_2\subset G$ is bi-periodic at $x_0$, if there are maximal loxodromic subgroups $E_1$ and $E_2$ such that for all $v\in U_2$ the isometry $v$ is $E_1$--periodic at $x_0$ and $v^{-1}$ is $E_2$--periodic at $x_0$. 

 Recall that the products in $U_1U_2$ and $U_2W$ are reduced at $x_0$  and that for all $u\in U_1$ and all $v\in U_2$, $2500 \kappa_0 \leqslant |ux_0-x_0|\leqslant |vx_0-x_0|$. 
 
\begin{prop}[Bi-periodic sets] \label{P: periods in product sets} Let $c\geqslant 10^{12}N_0^4 \frac{\kappa_0^2}{\rho_0^2}$.  If $|U_1|> 8 c$ and  if $|W|> 8 c$,
 then 
$$ |U_1U_2W|>\frac{|U_1||W|}{8c} \hbox{ unless $U_2$ is bi-periodic at $x_0$.}$$
\end{prop}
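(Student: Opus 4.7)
Assume toward contradiction that $|U_1U_2W| \leqslant \frac{|U_1||W|}{8c}$. For each $v \in U_2$, $|U_1vW| \leqslant |U_1U_2W| \leqslant \frac{|U_1||W|}{8c} \leqslant \frac{|U_1||W|}{2c}$, so Lemma~\ref{ML: counting} yields a (unique, by Lemma~\ref{L: uniqueness of periods}) maximal loxodromic subgroup $E_v$ such that $v$ is $E_v$-periodic at $x_0$, together with $u_v, u_v' \in U_1$ that are $E_v$-right-periodic with $E_v$-right-periods of Hausdorff distance $>250\delta$, and $w_v, w_v' \in W$ that are $(v^{-1}E_vv)$-left-periodic with left-periods of Hausdorff distance $>250\delta$.

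Bi-periodicity of $U_2$ (Definition~\ref{D: bi-periodic set}) amounts to the two constancy statements over $U_2$: (i) $v \mapsto E_v$ is constant; (ii) $v \mapsto v^{-1}E_vv$ is constant. Statement (ii) will follow from (i) by a symmetric argument in the inverted configuration $(W^{-1}, U_2^{-1}, U_1^{-1})$, since reducedness of $gh$ at $x_0$ is the same as reducedness of $h^{-1}g^{-1}$ at $x_0$, the threshold hypothesis on $U_2$ is preserved under inversion, and the roles of $U_1$ and $W$ are exchanged. To prove (i), suppose for contradiction that $E_{v_1} \neq E_{v_2}$ for some $v_1, v_2 \in U_2$, ordered so that $[E_{v_1}] \geqslant [E_{v_2}]$. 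Then $u_{v_2}$ and $u_{v_2}'$ are $E_{v_2}$-right-periodic with $E_{v_2}$-right-periods of Hausdorff distance $>250\delta$, and Lemma~\ref{L: right-period} (applied with $E = E_{v_1}$, $E' = E_{v_2}$) forbids them from both being also $E_{v_1}$-right-periodic: otherwise the $E_{v_2}$-right-period separation would collapse to at most $240\delta$.

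The main obstacle is then to turn this geometric incompatibility into a quantitative contradiction with $|U_1v_1W| \leqslant \frac{|U_1||W|}{8c}$. My plan is to revisit the proof of Lemma~\ref{ML: counting} (which invokes Proposition~\ref{P: periodic}): the smallness of $|U_1v_iW|$ yields, by pigeonhole, a target $g_i$ with at least $8c$ distinct representations $u v_i w = g_i$ whose $u$-entries, by Lemma~\ref{L: periodic}, lie in a single coset of $E_{v_i}$ and are $E_{v_i}$-right-periodic (at least outside a bounded initial segment of the enumeration). Carrying out this analysis for both $v_1$ and $v_2$, and combining the hypothesis $|U_1|>8c$, the parallel $w$-side hypothesis $|W|>8c$, and the small cancellation Lemma~\ref{L: small cancellation} which bounds the intersection of the two distinct invariant cylinders $C_{E_{v_1}}$ and $C_{E_{v_2}}$, I expect to be able to locate a pair of elements in $U_1$ that are simultaneously $E_{v_1}$- and $E_{v_2}$-right-periodic with their $E_{v_2}$-right-periods separated by $>250\delta$. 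Applying Lemma~\ref{L: right-period} to this pair then contradicts the $>250\delta$ separation, completing (i) and thus the proof. The delicate point---ensuring that a \emph{pair} (and not just a single element) with sufficiently separated $E_{v_2}$-right-periods survives the coset comparison under the quantitative bounds on $|U_1|$, $|W|$ and $c$---is where I expect the main technical work to lie.
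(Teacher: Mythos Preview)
Your overall strategy---reduce to showing constancy of $v \mapsto E_v$ via Lemma~\ref{L: right-period}, then symmetrize---matches the paper's. But the gap you flag at the end is the whole point, and your proposed route to it (opening up Proposition~\ref{P: periodic}, pigeonholing on a single target $g_i$, then invoking Lemma~\ref{L: small cancellation} directly) is both vaguer and harder than necessary. A single application of Lemma~\ref{ML: counting} to $v_2$ hands you a specific pair $u_{v_2},u_{v_2}'\in U_1$ that are $E_{v_2}$-right-periodic with separated periods; there is no reason these particular elements should also be $E_{v_1}$-right-periodic, and nothing in your sketch forces them to be.

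The paper's device is to \emph{restrict} $U_1$ before reapplying Lemma~\ref{ML: counting}. Set $U_{00}:=\{u\in U_1: u\text{ is both $E$-- and $E'$--right-periodic at }x_0\}$ and bound its complement. If $U_{01}:=\{u\in U_1: u\text{ not $E$-right-periodic}\}$ had $|U_{01}|>2c$, then Lemma~\ref{ML: counting} applied to $(U_{01},v,W)$ would force $|U_{01}vW|>\frac{1}{2c}|U_{01}||W|$: the alternative branch is impossible because the period of $v$ is uniquely $E$ (Lemma~\ref{L: uniqueness of periods}) and by definition no $u\in U_{01}$ is $E$-right-periodic. Combined with $|U_{01}vW|\leqslant|U_1U_2W|\leqslant\frac{1}{8c}|U_1||W|$ this gives $|U_{01}|\leqslant\frac14|U_1|$, and likewise for $U_{02}$; hence $|U_{00}|\geqslant\frac12|U_1|>4c$ and $|U_{00}U_2W|\leqslant\frac{1}{4c}|U_{00}||W|$. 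Now apply Lemma~\ref{ML: counting} once more with $U_{00}$ in place of $U_1$ and with $v'$: the output pair lies in $U_{00}$, hence is automatically both $E$-- and $E'$--right-periodic, with $E'$-right-periods separated by $>250\delta$---contradicting Lemma~\ref{L: right-period}. This is the missing step; once you have it the rest of your outline goes through.
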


The proposition extends \cite[Lemma 2]{safin_powers_2011}. In the remainder of this section, we mimic the argument of Safin \cite[Section 4]{safin_powers_2011} to prove the proposition, keeping his notation.

We keep the notation of Proposition \ref{P: periods in product sets} and assume that $$ |U_1U_2W|\leqslant \frac{|U_1||W|}{8c}$$
so that by Lemma \ref{ML: counting} all elements in $U_2$ are periodic. We now show that all periods of the elements of $U_2$ are the same.  By applying the same arguments to $U_2^{-1}$, this then implies that $U_2$ is a bi-periodic set.

Let $v$, $v'$ in $U_2$  and let  $E$ and $E'$ be maximal loxodromic subgroups such that $v$ is $E$--periodic at $x_0$ and $v'$ is $E'$--periodic at $x_0$. 
 Let  $$U_{00}:=\{ u\in U_1 \mid \hbox{$u$ is $E$-- and $E'$--right-periodic at $x_0$}\}.$$

\begin{lem}\label{L: counting 2}
If $ |U_1U_2W|\leqslant \frac{1}{8 c}\, |U_1||W|$, then
$|U_{00}|> 4c   \hbox{ and } |U_{00}U_2W| \leqslant \frac1{4c}\;  |U_{00}||W|.$
\end{lem}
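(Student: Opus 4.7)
The plan is to reduce the statement to Lemma \ref{ML: counting} by bounding, from above, the size of the subsets of $U_1$ whose elements fail to be right-periodic with respect to $E$ or $E'$, and then to conclude by inclusion-exclusion and a direct comparison of product-set sizes.

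Concretely, I fix $v, v' \in U_2$ witnessing the periods $E, E'$ respectively, and set
$$A := \{u \in U_1 \mid u \text{ is not } E\text{-right-periodic at } x_0\}, \quad A' := \{u \in U_1 \mid u \text{ is not } E'\text{-right-periodic at } x_0\}.$$
Assuming $|A| > 2c$ (otherwise $|A| < \tfrac14 |U_1|$ holds trivially from $|U_1| > 8c$), I apply Lemma \ref{ML: counting} to the triple $(A, v, W)$. The hypotheses on reduced products at $x_0$ and on displacements pass from $(U_1, U_2, W)$ to $A \subset U_1$. Either $|AvW| > \tfrac{1}{2c}|A||W|$, or there is a maximal loxodromic subgroup $F$ such that $v$ is $F$-periodic at $x_0$ and some $u \in A$ is $F$-right-periodic at $x_0$. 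In the latter case, Lemma \ref{L: uniqueness of periods} (uniqueness of the period of $v$) forces $F = E$, so $u$ is $E$-right-periodic, contradicting $u \in A$. Hence the inequality holds, and combined with $|AvW| \leqslant |U_1 U_2 W| \leqslant \tfrac{1}{8c} |U_1||W|$ it gives $|A| < \tfrac14 |U_1|$; the symmetric argument with $v'$ yields $|A'| < \tfrac14 |U_1|$.

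From $|U_1| > 8c$ and inclusion-exclusion I then obtain
$$|U_{00}| = |U_1 \setminus (A \cup A')| \geqslant |U_1| - |A| - |A'| > \tfrac12 |U_1| > 4c,$$
which is (1). For (2) I bound directly
$$|U_{00} U_2 W| \leqslant |U_1 U_2 W| \leqslant \tfrac{1}{8c}|U_1||W| < \tfrac{1}{8c}\cdot 2|U_{00}||W| = \tfrac{1}{4c}|U_{00}||W|,$$
where the last step uses $|U_1| < 2|U_{00}|$ from the proof of (1).

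The main obstacle is conceptual rather than technical: one must recognize that Lemma \ref{ML: counting} can be used \emph{contrapositively}, restricting its input $U_1$ to the subset of elements that by definition fail the conclusion with respect to a fixed loxodromic subgroup. Uniqueness of the period of $v$ (Lemma \ref{L: uniqueness of periods}) is then what collapses the loxodromic $F$ produced by Lemma \ref{ML: counting} onto the fixed $E$, converting the dichotomy into a usable counting inequality.
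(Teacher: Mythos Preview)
Your proof is correct and follows essentially the same approach as the paper's own proof: define the subsets of $U_1$ failing to be $E$-- (respectively $E'$--) right-periodic, apply Lemma~\ref{ML: counting} to each, combine with the hypothesis $|U_1U_2W|\leqslant \tfrac{1}{8c}|U_1||W|$ to bound their sizes by $\tfrac14|U_1|$, and conclude. The only difference is that you make explicit the use of Lemma~\ref{L: uniqueness of periods} to force $F=E$ in the ``unless'' branch of Lemma~\ref{ML: counting}, whereas the paper leaves this implicit; your exposition is arguably cleaner on this point.
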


\begin{proof} 
Let  
$U_{01}$  be the set of all $u\in U_1$ that are not $E$--right-periodic at $x_0$, and let 
$U_{02}$ be the set of all $u\in U_1$ that are not $E'$--right-periodic at $x_0$. We note that $U_1=U_{00}\cup U_{01} \cup U_{02}$. 
By Lemma \ref{ML: counting}, $$\hbox{$|U_{01}|\leqslant 2c$ or $|U_{01}vW|> \frac1{2c} \; |U_{01}||W|$.}$$ 
On the other hand, $|U_1|> 8c \hbox{ and } |U_{01}vW|\leqslant |U_{01}U_2W| \leqslant \frac{1}{8c}\,|U_1||W|.$ Thus   
$|U_{01}|\leqslant \frac14\, |U_1|$. Analogously $|U_{02}|\leqslant \frac14\, |U_1|$. Then $|U_{00}| \geqslant \frac12\, |U_1|.$  Therefore, $|U_{00}| > 4c$  and $ |U_{00}U_2W|\leqslant \frac1{4c}\; |U_{00}||W|.$
\end{proof}

\begin{proof}[Proof of Proposition \ref{P: periods in product sets}] We assume that $|U_1U_2W|\leqslant \frac{|U_1||W|}{4c},$ that $E\not=E'$ and that $[E]\geqslant [E']$. Lemma \ref{L: counting 2}  allows to apply Lemma \ref{ML: counting} again, now using $U_{00}$ instead of $U_1$. 
Thus, the $E'$--right-periods of at least two elements of $U_{00}$ are of Hausdorff distance at least $250 \delta$. This contradicts Lemma \ref{L: right-period}. We conclude that $E=E'$, and, hence, that the period of all $v\in U_2$ is uniquely determined. 

We then apply these arguments to $W^{-1}$, $U_2^{-1}$ and $U_1^{-1}$ at $x_0$. We conclude that the period of all $v^{-1}$ at $x_0$, where $v\in U_2$, is uniquely determined. Thus, $U_2$ is bi-periodic at $x_0$.
\end{proof}

\subsection{Growth of sets of diffuse energy.}
Recall that $U$ is a set of diffuse energy. 
We now combine the reduction lemmas of Section \ref{S: Energy and reduced products} with Proposition \ref{P: periods in product sets} to estimate the growth of $U$. 

If $G$ is hyperbolic and acts properly and cocompactly on a $\delta$--hyperbolic space,  let  $b=10|B(x_0,1000\delta)|$ and recall that $d=1$. 
 If the group $G$ acts on a tree,  $b=10$ and $d=1$. Finally, if $G$ acts acylindrically on $X$, then  $b =10$ and $d=\log_2(2|U|)$.

\begin{lem}\label{L: median} There are subsets $U_1\subset U$  and $U_2\subset U$ of cardinalities $>\frac{1}{2b^2}|U|$ such that for all $u_1\in U_1$ and $u_2\in U_2$ 
$$(u_1^{-1}x_0,u_2x_0)_{x_0}\leqslant 1000d\delta \hbox{ and } (u_2^{-1}x_0,u_1x_0)_{x_0}\leqslant 1000d\delta$$ 
and 
$$10^{10}d\kappa_0 \leqslant |u_1x_0-x_0|\leqslant |u_2x_0-x_0|.$$
\end{lem}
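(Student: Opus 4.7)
The plan is to invoke the reduction lemmas of Section \ref{S: Energy and reduced products} and then impose the displacement ordering via a median refinement. The reduction step already supplies subsets satisfying the Gromov product inequalities and the lower displacement bound; the median step costs at most a factor of two in cardinality.

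First I would apply the reduction lemma appropriate to the setting: Lemma \ref{L: reduction} for a hyperbolic group acting on its Cayley graph (where $d=1$), Lemma \ref{L: reduction trees} for a group acting acylindrically on a tree (where $d=1$), or Lemma \ref{L: reduction acylindrical} for a group acting acylindrically on a general $\delta$--hyperbolic space (where $d=\log_2(2|U|)$). The diffuse-energy hypothesis on $U$ is exactly what each of these lemmas requires: at most $1/4$ of the elements $u\in U$ satisfy $|ux_0-x_0|\leqslant 10^{10}d\kappa_0$. The output is a pair of subsets $\tilde U_1,\tilde U_2\subset U$, each of cardinality at least $\frac{1}{100 b_0^2}|U|$ with $b_0=|B(x_0,1000\delta)|$ or $b_0=1$ according to the case, such that
\[
(\tilde U_1^{-1}x_0,\tilde U_2 x_0)_{x_0}\leqslant 1000d\delta,\qquad (\tilde U_2^{-1}x_0,\tilde U_1 x_0)_{x_0}\leqslant 1000d\delta,
\]
and $|ux_0-x_0|\geqslant 10^{10}d\kappa_0$ for every $u\in\tilde U_1\cup\tilde U_2$. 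In particular $b=10 b_0$ in the notation preceding the lemma.

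Second, I would refine by a median split. Let $\tau_i$ denote the median of $\{|ux_0-x_0|:u\in\tilde U_i\}$ for $i=1,2$. Since the Gromov product inequalities above are symmetric under swapping $\tilde U_1$ and $\tilde U_2$, after possibly relabelling we may assume $\tau_1\leqslant\tau_2$. Fix any $\tau\in[\tau_1,\tau_2]$ and set
\[
U_1:=\{u\in\tilde U_1\mid |ux_0-x_0|\leqslant\tau\},\qquad U_2:=\{u\in\tilde U_2\mid |ux_0-x_0|\geqslant\tau\}.
\]
By the definition of the median, $|U_i|\geqslant |\tilde U_i|/2\geqslant \frac{1}{200 b_0^2}|U|=\frac{1}{2b^2}|U|$. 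The Gromov product inequalities and the lower bound on $|ux_0-x_0|$ are inherited from $\tilde U_1,\tilde U_2$, and the desired ordering $|u_1 x_0-x_0|\leqslant\tau\leqslant|u_2 x_0-x_0|$ holds by construction.

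The nontrivial content of Lemma \ref{L: median} is entirely absorbed into the three reduction lemmas; the median trick adds only combinatorial bookkeeping. I do not anticipate any real obstacle beyond matching the constants $b$ in the three cases and using symmetry of the Gromov product conditions to justify the relabelling step.
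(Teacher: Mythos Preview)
Your proposal is correct and follows essentially the same approach as the paper: invoke the appropriate reduction lemma, then halve each set by a median split on displacement. Your handling of the case $\tau_1>\tau_2$ by relabelling via the symmetry of the two Gromov-product conditions is in fact cleaner than the paper's own case split (which contains what appears to be a typo in the second case), and your bookkeeping $b=10b_0$ matches the constants set up before the lemma.
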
 
 \begin{proof} Let $U_1$ and $U_2$ be the sets given by the respective version of the Reduction lemma, Lemma \ref{L: reduction} (hyperbolic groups), Lemma \ref{L: reduction trees} (trees) or Lemma \ref{L: reduction acylindrical} (general acylindrical actions). 
Let $m_1$ be the median of $\{|u_1x_0-x_0| \mid u_1\in U_1 \}$ and let $m_2$ be the median of $\{|u_2x_0-x_0| \mid u_2\in U_2 \}$. 
 If $m_1\leqslant m_2$, let $U_1':=\{u \in U_1 \mid |ux_0-x_0|\leqslant m_1\}$ and $U_2':=\{u \in U_2 \mid |ux_0-x_0|\geqslant m_2\}$. Otherwise, if $m_2 < m_1$, let $U_1':=\{u \in U_1 \mid |ux_0-x_0|\geqslant m_1\}$ and $U_2':=\{u \in U_1 \mid |ux_0-x_0|\leqslant m_1\}$. Finally, redefine $U_1:=U_1'$ and $U_2:=U_2'$.
 \end{proof}

Let us fix subsets $U_1$ and $U_2$ given by Lemma \ref{L: median}. 
Fix $n\geqslant 3$, let $l:=[(n+1)/2]$ and let $W_l:=(U_1U_2)^{l-2}U_1$. Then $|U^n|\geqslant |U_1U_2W_l|$. 
\begin{lem}\label{L: the set W} For all $u_2\in U_2$ and all $w_l\in W_l$, $(u_2^{-1}x_0,w_lx_0)_{x_0} \leqslant 1010 d \delta.$
\end{lem}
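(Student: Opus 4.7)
The plan is to reduce the desired Gromov-product bound to a stability statement for discrete quasi-geodesics. Writing $w_l = u_1^{(1)} u_2^{(1)} u_1^{(2)} \cdots u_2^{(l-2)} u_1^{(l-1)}$ with $u_1^{(i)}\in U_1$ and $u_2^{(i)}\in U_2$, I prepend $u_2^{-1} x_0$ to the partial-product orbit of $x_0$ to form the sequence
\begin{align*}
y_0 &:= u_2^{-1}x_0, \quad y_1 := x_0, \quad y_2 := u_1^{(1)}x_0, \\
y_3 &:= u_1^{(1)}u_2^{(1)}x_0, \quad \ldots, \quad y_{2l-2} := w_l x_0.
\end{align*}
I aim to show this is a discrete quasi-geodesic with consecutive Gromov products bounded by $1000d\delta$ and consecutive distances bounded below by $10^{10}d\kappa_0\geqslant 10^{10}d\delta$. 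Then Lemma \ref{L: discrete quasi-geodesics}(2) places $y_1 = x_0$ within $10\delta + 1000d\delta \leqslant 1010 d\delta$ of any geodesic $[y_0, y_{2l-2}] = [u_2^{-1}x_0, w_l x_0]$, and Lemma \ref{L: Gromov product} converts this into the inequality $(u_2^{-1}x_0, w_l x_0)_{x_0} \leqslant d(x_0, [u_2^{-1}x_0, w_l x_0]) \leqslant 1010 d\delta < 1011 d\delta$.

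The content lies in verifying the Gromov-product hypothesis, which I handle by translation invariance. Left-multiplying the $i$-th consecutive triple by the inverse of the partial product defining $y_i$, the Gromov product $(y_{i-1}, y_{i+1})_{y_i}$ becomes either $(u_2^{-1}x_0, u_1 x_0)_{x_0}$ (for the initial triple, with $u_1 = u_1^{(1)} \in U_1$) or, by the alternating structure of the decomposition, $(u^{-1}x_0, v x_0)_{x_0}$ with $u \in U_1$, $v \in U_2$ or vice versa. Every such quantity is bounded by $1000 d\delta$ by Lemma \ref{L: median}. The distance bounds $|y_{i-1}-y_i| \geqslant 10^{10} d\kappa_0$ are immediate from the displacement conclusion of Lemma \ref{L: median} together with $|g x_0 - x_0| = |x_0 - g^{-1} x_0|$.

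The full hypothesis of Lemma \ref{L: discrete quasi-geodesics}, namely $(y_{i-1}, y_{i+1})_{y_i} \leqslant \tfrac{1}{2}\min\{|y_i - y_{i-1}|, |y_i - y_{i+1}|\} - \alpha - \delta$, then reduces to the comfortable inequality $1000 d\delta \leqslant \tfrac{1}{2}\cdot 10^{10} d\delta - \alpha - \delta$ with the choice $\alpha = 9\delta$, and Lemma \ref{L: discrete quasi-geodesics}(2) applies with $\beta = 1000 d\delta$. No step presents a genuine obstacle; the argument is essentially the observation that the reduced-products property at $U_1,U_2$ is preserved under $G$-translates, so it propagates across the entire partial-product sequence uniformly, and the classical stability of discrete quasi-geodesics in hyperbolic spaces supplies a length-independent bound.
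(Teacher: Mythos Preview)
Your proof is correct and follows essentially the same approach as the paper: both form a discrete quasi-geodesic from partial products, verify the Gromov-product and displacement hypotheses via Lemma~\ref{L: median}, apply Lemma~\ref{L: discrete quasi-geodesics}(2), and convert the distance bound to a Gromov-product bound via Lemma~\ref{L: Gromov product}. The only cosmetic difference is that the paper prepends a full $U_1U_2$-pair $a_1b_1$ to $w_l$ and reads off the result after translating by $(a_1b_1)^{-1}$, whereas you prepend $u_2^{-1}x_0$ directly; your version is slightly more direct and avoids the superfluous $a_1$.
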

\begin{proof}
Let $1\leqslant i\leqslant l$ and $a_i\in U_1$ and $b_i\in U_2$ and let $w_l=a_2b_2\cdots a_{l-1}b_{l}$. Applying Lemma \ref{L: discrete quasi-geodesics}(2) to the sequence of points  $x_1:=a_1x_0$, $x_2:=a_1b_1x_0$, $\ldots$, $x_{2l}:=a_1b_1w_lx_0$, yields that $d(x_2,[x_1,x_{2l-1}])\leqslant 1010d\delta$. Thus $(x_1,x_{2l})_{x_2}=(b_1^{-1}x_0,w_lx_0)_{x_0}\leqslant 1010 d\delta  $ by Lemma \ref{L: Gromov product}. 
\end{proof}

We recall $X$ is $\delta$--hyperbolic, and that the group $G$ acts $(\kappa_0,N_0)$--acylindrically on $X$. Moreover, if $\delta>0$ then $\rho_0=\delta/N_0$, and if $\delta=0$ then $X$ is a simplicial tree of edge length $\rho_0$.
Let  
$c = 10^{12}N_0^4\frac{\kappa_0^2}{\rho_0^2}.$

\begin{prop} \label{P: general product set growth}  Let $U\subset G$ be a finite set of diffuse energy. Then  
 for all positive natural numbers $n$,
$$ |U^n|\geqslant \left(\frac{1}{8}\, \frac{1}{10^{36}d^6} \frac{1}{c} \, \frac{1}{2b^2} |U|\right)^{[(n+1)/2]}.$$ 
\end{prop}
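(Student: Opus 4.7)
The plan is to reduce the estimate on $|U^n|$ to an iterative application of the dichotomy in Proposition \ref{P: periods in product sets}, with the bi-periodic case handled separately via Proposition \ref{P: product growth bi-periodic sets}. First, if $|U|$ lies below some explicit threshold depending on $b$, $c$ and $d$, the right-hand side of the claimed inequality is at most $1$ and the statement is trivial. Otherwise, apply the reduction lemma appropriate to the setting (Lemma \ref{L: reduction}, \ref{L: reduction trees}, or \ref{L: reduction acylindrical}) to extract subsets $U_1, U_2 \subset U$ of cardinality at least $\frac{1}{2b^2}|U|$ such that all products $u_1u_2$ and $u_2u_1$ with $u_i \in U_i$ are reduced at $x_0$ in the sense that the relevant Gromov products at $x_0$ are bounded by $1000 d\delta$, and so that $|u_1 x_0 - x_0| \leqslant |u_2 x_0 - x_0|$, both at least $10^{10} d\kappa_0 \geqslant 2500\kappa_0$. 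Set $l := [(n+1)/2]$ and $W_l := (U_1U_2)^{l-2}U_1$, so that $|U^n| \geqslant |U_1 U_2 W_l|$; Lemma \ref{L: the set W} ensures that the products $u_2 w_l$ are also reduced at $x_0$ with the same $O(d\delta)$ Gromov-product bound.

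Next, split according to the dichotomy. If $U_2$ is bi-periodic at $x_0$, then Proposition \ref{P: product growth bi-periodic sets} applied with $V := U_2$ and $\beta := \frac{1}{2b^2}$ (noting $U$ is not elementary since it is of diffuse energy and not contained in a virtually cyclic subgroup) yields $|U^n| \geqslant \bigl(\frac{\beta}{\gamma} |U|\bigr)^l$ with $\gamma$ depending only on $N_0$, $\kappa_0$, $\rho_0$, which is stronger than what we need. Otherwise, $U_2$ is not bi-periodic, and Proposition \ref{P: periods in product sets} applied to $(U_1, U_2, W_l)$ gives
$$|U_1 U_2 W_l| > \frac{|U_1|\,|W_l|}{8c}.$$
Writing $W_l = U_1 U_2 W_{l-1}$ with $W_{l-1} := (U_1U_2)^{l-3}U_1$ and iterating — at each step invoking either the bi-periodic estimate (which would already finish the proof) or the above dichotomy — one obtains $|W_l| \geqslant \bigl(\frac{|U_1|}{8c}\bigr)^{l-2}|U_1|$, and hence
$$|U^n| \geqslant \left(\frac{|U_1|}{8c}\right)^l \geqslant \left(\frac{|U|}{16\, b^2 c}\right)^l,$$
after absorbing one factor of $|U_1|$ into the exponent via the threshold assumption on $|U|$. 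Throughout the induction, Lemma \ref{L: the set W} applied to each intermediate $W_j$ guarantees that the required reducedness condition at $x_0$ persists.

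The main obstacle is that Propositions \ref{P: reduced products}, \ref{P: periodic} and \ref{P: periods in product sets} are formulated for products reduced in the strict sense $(u^{-1}x_0, v x_0)_{x_0} \leqslant \delta$, whereas the reduction lemmas and Lemma \ref{L: the set W} only produce the weaker bound $O(d\delta)$. One must therefore verify that the geometric constants in each of those propositions rescale correctly when $\delta$ is replaced by $O(d\delta)$: each application of Proposition \ref{P: reduced products} enlarges the relevant quasi-axis neighbourhood to $C_E^{+O(d\delta)}$, the periodicity thresholds in Definitions \ref{D: periodic} and \ref{D: right-periodic} incur a factor of $d$, and the small cancellation bound of Lemma \ref{L: small cancellation} picks up a corresponding additional multiplicative $d$. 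Tracking these dependencies through the chain Proposition \ref{P: reduced products} $\Rightarrow$ Lemma \ref{L: periodic} $\Rightarrow$ Proposition \ref{P: periodic} $\Rightarrow$ Lemma \ref{ML: counting} $\Rightarrow$ Proposition \ref{P: periods in product sets} produces the polynomial $d^6$ denominator in the statement. Once this bookkeeping is carried out, the iteration above closes and delivers the bound.
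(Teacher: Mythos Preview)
Your outline is correct and follows the paper's argument closely: reduce via Lemma \ref{L: median}, set up $W_l$, apply the dichotomy of Proposition \ref{P: periods in product sets}, and handle the bi-periodic branch with Proposition \ref{P: product growth bi-periodic sets}. The one place where your presentation diverges from the paper is in how the mismatch between ``reduced'' (Gromov product $\leqslant \delta$) and the $1000d\delta$ bound from the reduction lemmas is resolved. You propose to trace the dependence on $\delta$ through the entire chain Proposition \ref{P: reduced products} $\Rightarrow$ Lemma \ref{L: periodic} $\Rightarrow$ Proposition \ref{P: periodic} $\Rightarrow$ Lemma \ref{ML: counting} $\Rightarrow$ Proposition \ref{P: periods in product sets}. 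The paper avoids this entirely: it simply sets $\delta':=1011d\delta$, observes that $X$ is $\delta'$--hyperbolic (hyperbolicity is monotone), so that the products in $U_1U_2$ and $U_2W_l$ are genuinely reduced in the $\delta'$--sense, and then invokes Lemma \ref{L: acylindrical} once to get new acylindricity parameters $\kappa_0'=\kappa(1011d)$, $N_0'=N(1011d)$. A one-line computation gives $c'=10^{12}{N_0'}^4{\kappa_0'}^2/{\rho_0'}^2\leqslant 10^{36}d^6 c$, and Proposition \ref{P: periods in product sets} applies verbatim with $c'$ in place of $c$. This is what produces the $d^6$ cleanly, with no bookkeeping through the chain. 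Your route would work in principle, but the rescaling trick is both shorter and safer.
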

\begin{rem} If $\delta=0$, then $ |U^n|\geqslant \left(\frac{1}{8}\, \frac{1}{c} \, \frac{1}{200} |U|\right)^{[(n+1)/2]}.$
\end{rem}

\begin{proof} Fix $n\geqslant 3$ and let $\delta':=1010 d \delta$. Then $X$ is $\delta'$--hyperbolic and all the all the products in $U_1U_2$ and $U_2W_l$ are reduced at $x_0$. Moreover, the action of $G$ is $(\kappa_0':=\kappa(1010 d),N_0':=N(1010d))$--acylindrical,  see Lemma \ref{L: acylindrical}, and $10^4\kappa_0'\leqslant 10^{10}d\kappa_0$,  $N_0'\leqslant 10^5 d N_0$. Then $c':= 10^{12}{N'_0}^4\frac{{\kappa'_0}^2}{{\rho'_0}^2}\leqslant 10^{36} d^6 c$. 

 If $|U_1|\leqslant 8c'$ or if $|W_l| \leqslant 8 c'$, then, obviously, for every $v\in U_2$, $|U_1U_2W_l|\geqslant|U_1vW_l|\geqslant \frac{|U_1||W_l|}{8c'}$.

Otherwise, by Proposition \ref{P: periods in product sets}
$$ |U_1U_2W_l|>\frac{1}{8c'} {|U_1||W_l|}\hbox{ unless $U_2$ is bi-periodic at $x$.}$$ 
If $U_2$ is not bi-periodic at $x_0$, we conclude by induction that for all positive $n$,
$$ |U^n|\geqslant \left(\frac{1}{8c'}|U_1| \right)^{[(n+1)/2]}\geqslant \left(\frac{1}{8}\, \frac{1}{10^{36}d^6}\,   \frac{1}{c} \, \frac{1}{2b^2}  |U|\right)^{[(n+1)/2]}.$$
 Otherwise, we conclude with Proposition \ref{P: product growth bi-periodic sets}.
\end{proof}
 
 This concludes the proof of Theorems \ref{IT: hyperbolic groups}, \ref{IT: acylindrical trees} and \ref{IT: acylindrical hyperbolic}.


\addtocontents{toc}{\setcounter{tocdepth}{-10}}
\bibliographystyle{alpha}

\bibliography{product}

\begin{thebibliography}{BCGS17}

\bibitem[AL06]{arzhantseva_lower_2006}
G.~N. {Arzhantseva} and I.~G. {Lysenok}.
\newblock {A lower bound on the growth of word hyperbolic groups.}
\newblock {\em {J. Lond. Math. Soc., II. Ser.}}, 73(1):109--125, 2006.

\bibitem[BBF15]{bestvina_constructing_2015}
Mladen Bestvina, Ken Bromberg, and Koji Fujiwara.
\newblock Constructing group actions on quasi-trees and applications to mapping
  class groups.
\newblock {\em Publ. Math. Inst. Hautes \'Etudes Sci.}, 122:1--64, 2015.

\bibitem[BCGS17]{besson_cuvature_2017}
G{\'e}rard Besson, Gilles Courtois, Sylvestre Gallot, and Andrea Sambusetti.
\newblock {Curvature-Free Margulis Lemma for Gromov-Hyperbolic Spaces}.
\newblock {\em arXiv preprint arXiv:1712.08386}, 2017.

\bibitem[BF18]{breuillard_joint_2018}
Emmanuel Breuillard and Koji Fujiwara.
\newblock On the joint spectral radius for isometries of non-positively curved
  spaces and uniform growth.
\newblock {\em arXiv preprint arXiv:1804.00748}, 2018.

\bibitem[BGT12]{breuillard_structure_2012}
Emmanuel Breuillard, Ben Green, and Terence Tao.
\newblock The structure of approximate groups.
\newblock {\em Inst. Hautes \'Etudes Sci. Publ. Math.}, 116(1):115--221, Nov
  2012.

\bibitem[Bow08]{bowditch_tight_2008}
Brian~H. Bowditch.
\newblock Tight geodesics in the curve complex.
\newblock {\em Invent. Math.}, 171(2):281--300, 2008.

\bibitem[But13]{button_explicit_2013}
J.~O. Button.
\newblock Explicit {H}elfgott type growth in free products and in limit groups.
\newblock {\em J. Algebra}, 389:61--77, 2013.

\bibitem[CDP90]{coornaert_geometry_1990}
M.~Coornaert, T.~Delzant, and A.~Papadopoulos.
\newblock {\em G\'eom\'etrie et th\'eorie des groupes}, volume 1441 of {\em
  Lecture Notes in Mathematics}.
\newblock Springer-Verlag, Berlin, 1990.
\newblock Les groupes hyperboliques de Gromov. [Gromov hyperbolic groups].

\bibitem[CG17]{coulon2017small}
R{\'e}mi Coulon and Dominik Gruber.
\newblock Small cancellation theory over burnside groups.
\newblock {\em arXiv preprint arXiv:1705.09651}, 2017.

\bibitem[Cha08]{chang_product_2008}
Mei-Chu Chang.
\newblock Product theorems in {${\rm SL}_2$} and {${\rm SL}_3$}.
\newblock {\em J. Inst. Math. Jussieu}, 7(1):1--25, 2008.

\bibitem[Cou14]{coulon_geometry_2014}
R\'emi Coulon.
\newblock On the geometry of {B}urnside quotients of torsion free hyperbolic
  groups.
\newblock {\em Internat. J. Algebra Comput.}, 24(3):251--345, 2014.

\bibitem[Cou16]{coulon_partial_2016}
R\'emi~B. Coulon.
\newblock Partial periodic quotients of groups acting on a hyperbolic space.
\newblock {\em Ann. Inst. Fourier (Grenoble)}, 66(5):1773--1857, 2016.

\bibitem[Del91]{delzant_sous_1991}
Thomas Delzant.
\newblock Sous-groupes \`a deux g\'en\'erateurs des groupes hyperboliques.
\newblock In {\em Group theory from a geometrical viewpoint ({T}rieste, 1990)},
  pages 177--189. World Sci. Publ., River Edge, NJ, 1991.

\bibitem[Del96]{delzant_sous-groupes_1996}
Thomas Delzant.
\newblock Sous-groupes distingu\'es et quotients des groupes hyperboliques.
\newblock {\em Duke Math. J.}, 83(3):661--682, 1996.

\bibitem[Del99]{delzant_access_1999}
Thomas Delzant.
\newblock Sur l'accessibilit\'e acylindrique des groupes de pr\'esentation
  finie.
\newblock {\em Ann. Inst. Fourier (Grenoble)}, 49(4):1215--1224, 1999.

\bibitem[DG08]{delzant_courbure_2008}
Thomas Delzant and Misha Gromov.
\newblock Courbure m\'esoscopique et th\'eorie de la toute petite
  simplification.
\newblock {\em J. Topol.}, 1(4):804--836, 2008.

\bibitem[DGO17]{dahmani_hyperbolically_2017}
F.~Dahmani, V.~Guirardel, and D.~Osin.
\newblock Hyperbolically embedded subgroups and rotating families in groups
  acting on hyperbolic spaces.
\newblock {\em Mem. Amer. Math. Soc.}, 245(1156):v+152, 2017.

\bibitem[GdlH90]{ghys_hyperboliques_1990}
\'E. Ghys and P.~de~la Harpe, editors.
\newblock {\em Sur les groupes hyperboliques d'apr\`es {M}ikhael {G}romov},
  volume~83 of {\em Progress in Mathematics}.
\newblock Birkh\"auser Boston, Inc., Boston, MA, 1990.
\newblock Papers from the Swiss Seminar on Hyperbolic Groups held in Bern,
  1988.

\bibitem[Gro87]{gromov_hyperbolic_1987}
M.~Gromov.
\newblock Hyperbolic groups.
\newblock In {\em Essays in group theory}, volume~8 of {\em Math. Sci. Res.
  Inst. Publ.}, pages 75--263. Springer, New York, 1987.

\bibitem[Ker]{kerr_tree_2020}
Alice Kerr.
\newblock Tree approximation in quasi-trees.
\newblock in preparation.

\bibitem[Kou98]{koubi_croissance_1998}
Malik Koubi.
\newblock Croissance uniforme dans les groupes hyperboliques.
\newblock {\em Ann. Inst. Fourier (Grenoble)}, 48(5):1441--1453, 1998.

\bibitem[Raz14]{razborov_product_2014}
Alexander~A. Razborov.
\newblock A product theorem in free groups.
\newblock {\em Ann. of Math. (2)}, 179(2):405--429, 2014.

\bibitem[Saf11]{safin_powers_2011}
S.~R. Safin.
\newblock Powers of subsets of free groups.
\newblock {\em Mat. Sb.}, 202(11):97--102, 2011.

\bibitem[Sel97]{sela_acylindricity_1997}
Z.~Sela.
\newblock Acylindrical accessibility for groups.
\newblock {\em Invent. Math.}, 129(3):527--565, 1997.

\bibitem[Tao08]{terence_product_2008}
Terence Tao.
\newblock Product set estimates for non-commutative groups.
\newblock {\em Combinatorica}, 28(5):547--594, 2008.

\bibitem[Tao10]{tao_freiman_2010}
Terence Tao.
\newblock Freiman's theorem for solvable groups.
\newblock {\em Contrib. Discrete Math.}, 5(2):137--184, 2010.

\end{thebibliography}

\end{document}